\newcommand{\brbinom}[2]{\genfrac{[}{]}{0pt}{}{#1}{#2}}
\DeclareFontFamily{U}{txcal}{\skewchar \font =45}
\DeclareFontShape{U}{txcal}{m}{n}{<-> txr-cal}{}
\DeclareMathAlphabet{\mathcalpxtx}{U}{txcal}{m}{n}
\DeclareMathAlphabet{\mathdutchcal}{U}{dutchcal}{m}{n}
\DeclareFontFamily{U}{stixtwobb}{\skewchar\font=45}%
\DeclareFontShape{U}{stixtwobb}{m}{n}{<->\mathalfa@bbscaled stix2-mathbb}{}
\DeclareMathAlphabet{\mathbbst}{U}{stixtwobb}{m}{n}
\newcommand{\PreserveBackslash}[1]{\let\temp=\\#1\let\\=\temp}
\newcolumntype{C}[1]{>{\PreserveBackslash\centering}p{#1}}
\newcolumntype{R}[1]{>{\PreserveBackslash\raggedleft}p{#1}}
\newcolumntype{L}[1]{>{\PreserveBackslash\raggedright}p{#1}}
\definecolor{babyblueeyes}{rgb}{0.54, 0.81, 0.94}
\definecolor{blizzardblue}{rgb}{0.67, 0.9, 0.93}
\definecolor{blue(munsell)}{rgb}{0.0, 0.5, 0.69}
\definecolor{bluegray}{rgb}{0.4, 0.6, 0.8}
\definecolor{bondiblue}{rgb}{0.0, 0.58, 0.71}
\definecolor{cadetblue}{rgb}{0.37, 0.62, 0.63}
\definecolor{carolinablue}{rgb}{0.6, 0.73, 0.89}
\definecolor{cinnamon}{rgb}{0.82, 0.41, 0.12}
\definecolor{darkcandyapplered}{rgb}{0.64, 0.0, 0.0}
\definecolor{darkcyan}{rgb}{0.0, 0.55, 0.55}
\definecolor{darkmidnightblue}{rgb}{0.0, 0.2, 0.4}
\definecolor{darkpastelblue}{rgb}{0.47, 0.62, 0.8}
\definecolor{frenchblue}{rgb}{0.0, 0.45, 0.73}
\definecolor{lamme}{RGB}{40,91,110}
\definecolor{evry}{RGB}{236, 128,35}
\definecolor{azul}{rgb}{0.54, 0.81, 0.94}
\DeclareMathOperator{\ob}{\mathsf{ob}}
\DeclareMathOperator{\id}{\mathsf{id}}
\DeclareMathOperator{\op}{\mathsf{op}}
\DeclareMathOperator{\hocolim}{\mathsf{hocolim}}
\DeclareMathOperator{\upc}{\mathsf{c}}
\DeclareMathOperator{\upd}{\mathsf{d}}
\DeclareMathOperator{\uph}{\mathsf{h}}
\DeclareMathOperator{\Crect}{\mathsf{C}}
\DeclareMathOperator{\Drect}{\mathsf{D}}
\DeclareMathOperator{\Hrect}{\mathsf{H}}
\DeclareMathOperator{\Krect}{\mathsf{K}}
\DeclareMathOperator{\Lrect}{\mathsf{L}}
\DeclareMathOperator{\Nrect}{\mathsf{N}}
\DeclareMathOperator{\Prect}{\mathsf{P}}
\DeclareMathOperator{\Rrect}{\mathsf{R}}
\DeclareMathOperator{\Trect}{\mathsf{T}}
\DeclareMathOperator{\Urect}{\mathsf{U}}
\DeclareMathOperator{\Vrect}{\mathsf{V}}
\DeclareMathOperator{\Wrect}{\mathsf{W}}
\DeclareMathOperator{\Ctt}{\mathtt{C}}
\DeclareMathOperator{\Wtt}{\mathtt{W}}
\DeclareMathOperator{\C}{\EuScript{C}}
\DeclareMathOperator{\Op}{\EuScript{O}}
\DeclareMathOperator{\D}{\EuScript{D}}
\DeclareMathOperator{\Disc}{\mathsf{Disc}}
\DeclareMathOperator{\Disj}{\mathsf{Disj}}
\DeclareMathOperator{\W}{\EuScript{W}}
\DeclareMathOperator{\Lfrak}{\mathdutchcal{L}}
\DeclareMathOperator{\CSSpc}{\EuScript{CSS}\mathsf{pc}}
\DeclareMathOperator{\Catinfty}{\EuScript{C}\mathsf{at}_{\infty}}
\DeclareMathOperator{\Qfrak}{\mathdutchcal{Q}}
\DeclareMathOperator{\Fun}{\mathsf{Fun}}
\DeclareMathOperator{\Map}{\mathsf{Map}}
\DeclareMathOperator{\Hom}{\mathsf{Hom}}
\DeclareMathOperator{\Conf}{\mathsf{Conf}}
\DeclareMathOperator{\Emb}{\mathsf{Emb}}
\DeclareMathOperator{\Fin}{\mathsf{Fin}}
\DeclareMathOperator{\Opdinfty}{\EuScript{O}\mathsf{pd}_{\infty}}
\DeclareMathOperator{\sm}{\mathsf{sm}}
\DeclareMathOperator{\Env}{\mathsf{Env}}
\DeclareMathOperator{\Sym}{\mathsf{Sym}}
\DeclareMathOperator{\Fscr}{\mathscr{F}}
\DeclareMathOperator{\boxdottedline}{\mathbin{\ThisStyle{\ensurestackMath{%
  \stackinset{c}{}{c}{-.2\LMpt}%
  {\SavedStyle\scaleobj{.5}{\bullet}}{\SavedStyle\boxminus}}}}}
 \newtheorem{thm}{Theorem}[section]
 \newtheorem{cor}[thm]{Corollary}
 \newtheorem{lem}[thm]{Lemma}
 \newtheorem{prop}[thm]{Proposition}
  \newtheorem{conj}[thm]{Conjecture}
 \theoremstyle{definition}
 \newtheorem{defn}[thm]{Definition}
 \newtheorem{notat}[thm]{Notation}
 \theoremstyle{remark}
 \newtheorem{rem}[thm]{Remark}
 \newtheorem{ex}[thm]{Example}
 \newtheorem{exs}[thm]{Examples}
\title{\textsc{Algebras over not too little discs}
%Not too little discs and the large scale behaviour of factorization algebras
}
\author{Damien Calaque\thanks{IMAG, Univ Montpellier, CNRS, Montpellier, France \\ \href{mailto:damien.calaque@umontpellier.fr}{damien.calaque@umontpellier.fr}},\quad Victor Carmona\thanks{Max-Planck-Institut f\"ur Mathematik in den Naturwissenschaften. Leipzig, Germany \\ \href{mailto:victor.carmona@mis.mpg.de}{victor.carmona@mis.mpg.de}}}
\date{2024}
\begin{document}

\maketitle

 \begin{abstract}
By the introduction of locally constant prefactorization
algebras at a fixed scale, we show a mathematical incarnation of the fact that observables at a given scale of a topological field theory propagate to every scale over euclidean spaces. The key is
that these prefactorization algebras over $\mathbbst{R}^n$ are equivalent to algebras over the little $n$-disc operad. For topological field theories with defects, we
get analogous results by replacing $\mathbbst{R}^n$ with the spaces modelling corners $\mathbbst{R}^p\times\mathbbst{R}^q_{\geq 0}$. As a toy example in $1d $, we quantize, once more, constant Poisson structures.
 \end{abstract}

\setcounter{tocdepth}{2} 
\tableofcontents

\section{Introduction}
\subsection*{Algebraic structure of observables of a QFT}

According to Costello--Gwilliam \cite{costello_factorization_2017,costello_factorization_2021}, observables of a Quantum Field Theory (QFT) should provide the following assignment, known as a \textit{factorization algebra}: \begin{itemize}
\item To every open region $U$ of a space-time manifold $M$, it assigns an object $\mathcal O_U$ of observables located/supported in $U$; 
\item For a finite number of pairwise disjoint open regions $U_1,\dots,U_n$ sitting in a bigger open region $V$, there is a product-expansion map $\mathcal O_{U_1}\otimes\cdots\otimes\mathcal O_{U_n}\to \mathcal O_V$ (that one can view as an open analog of the point supported operator product expansion). 
\end{itemize}
The above shall satisfy natural axioms: 
\begin{enumerate}
\item Symmetry and associativity. 
\item Multiplicativity: if $U_1\sqcup U_2=V$ then $\mathcal O_{U_1}\otimes \mathcal O_{U_2}\to\mathcal O_V$ is an equivalence. 
\item Descent/glueing with respect to nice enough covers. 
\end{enumerate}

Several observations are in order. Condition 1 states that $\mathcal O$ is an algebra over a certain operad $\Disj(M)$ whose colors are open subsets of $M$ (and which comes with an operation $(U_1,\dots,U_n)\to V$ whenever $U_1\sqcup\dots\sqcup U_n\subset V$). Condition 2 ensures that one can restrict to connected open subsets. Condition 3 allows one to restrict to a nice enough basis of the topology; in the case when $M=\mathbbst{R}^n$ (that is the situation of interest in this paper) one can consider the basis consisting of open (euclidean) discs. We write $\Disc(\mathbbst{R}^n)$ for the full sub-operad of $\Disj(\mathbbst{R}^n)$ spanned by open (euclidean) discs. 

\medskip

One says that a QFT is \textit{topological} if it does not really depend on any geometric data (such as a metric, or the size of open regions) but only on the \textit{shape} of the region where we are ``making measurements''. For our observables, this means that the map $\mathcal O_U\to\mathcal O_V$ associated with an inclusion of two open discs $U\subset V$ is an equivalence. Algebras over the operad $\Disc(\mathbbst{R}^n)$ that satisfy the above property are called \textit{locally constant}, and are known to be the same as $\mathbbst{E}_n$-algebras (see \cite[Theorem 5.4.5.9]{lurie_higher_nodate}, or \cite[Corollary 5.2.11]{harpaz_little_nodate}). 
We refer to \cite{karlsson_assembly_2024} for the details and subtleties concerning condition 3, that is essentially automatically satisfied in the locally constant case (this is the case we are interested in). See also \cite{carmona_model_2022} for this last claim; in particular, Theorem 6.14 in loc.cit..
In a similar spirit as the result of Lurie, let us also mention a result \cite[Theorem 2.29]{Elliott-Safronov} by Elliott--Safronov identifying $\mathbbst{E}_n$-algebras with locally constant algebras over an operad whose colors are radii of discs (note that, contrary to us, they don't impose any lower bound on the radius of discs, which is the main source of difficulty in the present paper).

\subsection*{Renormalization}
 
Concrete examples of QFTs in the physics literature frequently involve \textit{a priori} ill-defined mathematical operations, e.g.\ integrating over infinite-dimensional spaces or discarding divergences to extract meaningful values. A mathematical treatment of these \textit{renormalization} problems was proposed by Costello in \cite{costello_renormalization_2011}, where an axiomatic framework for the ideas of low-energy effective field theory, developed by Kadanoff, Wilson and others, is proposed for the lagrangian formalism.
 An oversimplified summary of this approach is: first, due to natural physical limitations, we can only describe phenomena that can be observed in a bounded range of energies; second, if our description works for some energy bound, it should restrict to lower bounds; finally, relating the family of these descriptions for different bounds approaches a meaningful quantum field theory, describing phenomena at arbitrarily high-energies, if renormalization applies. A major breakthrough in the work of Costello--Gwilliam  \cite{costello_factorization_2017,costello_factorization_2021} is to make the renormalization framework of \cite{costello_renormalization_2011} compatible with their factorization algebra formalism. This requires a considerable amount of efforts to make the analysis ``get along well'' with the homological and homotopical algebra. 

\medskip

Despite tremendous technicalities, the idea is relatively simple, at least in the case $M=\mathbbst{R}^n$ that we explain now. Let $R$ be a positive real parameter that controls length scale (i.e.~inverse energy scale), and consider the family of operads $\Drect^{R}_{n}$ whose operations correspond to the algebraic structure that observables defined at scale bigger than $R>0$ have (in other words, $\Drect^{R}_{n}$ is the full sub-operad of $\Disc(\mathbbst{R}^n)$ spanned by discs of radius bigger than $R$)\footnote{In principle, condition 3 shall still be enforced but we are going to ignore it. }. Given a factorization algebra $\mathcal O$ on $\mathbbst{R}^n$, by restricting it to discs of radius bigger than $R$, for every $R$, one gets a compatible family $\{\mathcal O^{(R)}\}_{R>0}$, where $\mathcal O^{(R)}$ is
a $\Drect^{R}_{n}$-algebra. Conversely, one can ask whether such a family admits a meaningful limit when $R\to 0$ (i.e. when we approach arbitrarily small scale, or equivalently arbitrarily high energy). 

\medskip

The goal of this paper is to describe an interesting phenomenon occurring in the topological (meaning, locally constant) case: if $\mathcal O$ is a locally constant (pre)factorization algebra on $\mathbbst{R}^n$, then $\mathcal O$ can be reconstructed from the data $\mathcal O^{(R)}$ of its observables at scale bigger than $R$, for any choice of $R>0$. 
In other words, if we are given an effective theory at a fixed scale $R_0$ that is already topological (meaning that $\mathcal{O}^{(R_0)}$ is locally constant\footnote{Note that $\mathcal{O}^{(R_0)}$ will not be \emph{a priori} defined on small enough regions.}), then $\mathcal{O}^{(R_0)}$ produces an $\mathbbst{E}_{n}$-algebra (or equivalently a locally constant factorization algebra over $\mathbbst{R}^n$); in loose terms, for topological field theories it suffices to provide a description of observables (as a locally constant $\Drect^{R}_{n}$-algebra) for a fixed scale because it canonically propagates to every  scale. This seems to be an incarnation of the claim that renormalization is ``easy'' for topological field theories.

%Then $\{\mathdutchcal{F}_{R}\}_{R}$ can be taken to be a family of $\Drect^R_{n}$-algebras for varying $R>0$, i.e.\
%$$
%\{\mathdutchcal{F}_{R}\in \EuScript{A}\mathsf{lg}_{\Drect_n^R}(\EuScript{V}):\; R>0\},
%$$
%with relations between them  controlled by morphisms of $\Drect^{\lambda R}_{n}$-algebras
%$$
%\upsigma_{\lambda}\colon\mathdutchcal{F}_{\lambda R}\longrightarrow  \lambda_*\mathdutchcal{F}_{R},
%$$
%where $\lambda\colon \mathbbst{R}^n\to \mathbbst{R}^n$ is the dilation by $\lambda>0$ map. We have denoted $\lambda_*\mathdutchcal{F}_{R}$ the pushforward $\Drect^{\lambda R}_{n}$-algebra
%$$
%\Urect\longmapsto \lambda_*\mathdutchcal{F}_{R}(\Urect):=\mathdutchcal{F}_{R}(\lambda^{-1}(\Urect))=\mathdutchcal{F}_{R}\left(\frac{1}{\lambda}\cdot\Urect\right). 
%$$

\subsection*{Main results and organization of the paper}

The main result of the paper, that is the mathematical incarnation of the fact observables at a given scale of a topological field theory propagate to every scale, is stated and proven in Section \ref{sect:Renormalization}, as Theorem \ref{thm:RenormalizationTFTs}. It says that there is an equivalence between the $\infty$-category of locally constant $\Drect_n^R$-algebras and the $\infty $-category of $\mathbbst{E}_n$-algebras. 
As we already mentioned above in this introduction, a similar result had been proven (by Lurie\footnote{However, our proof is orthogonal to Lurie's. Ultimately, his result follows from a shrinking argument and it works for any manifold $M$ while our approach is very specific to euclidean spaces $\mathbbst{R}^n$: for instance, it partly relies on the fact that one can move discs far away enough from each other. }) for locally constant $\Disc(\mathbbst{R}^n)$-algebras; it is a consequence of a more fundamental result claiming that the morphism of operads $\Disc(\mathbbst{R}^n)\to \mathbbst{E}_n$ exhibits $\mathbbst{E}_n$ as the localization of $\Disc(\mathbbst{R}^n)$ at all unary operations (see \cite[Lemma 5.4.5.11]{lurie_higher_nodate}, or \cite[Proposition 5.2.4]{harpaz_little_nodate}).
In the case of $\Drect_n^R$ this is also true: Theorem \ref{thm_LocalizationForTruncatedDisj} says that the morphism of operads $\Drect_n^R\to \mathbbst{E}_n$ exhibits $\mathbbst{E}_n$ as the localization of $\Drect_n^R$ at all unary operations. Nevertheless, the strategy of proof is different from the one of Lurie \cite{lurie_higher_nodate} and Harpaz \cite{harpaz_little_nodate}. In fact, their strategy, that uses weak approximations, cannot work in our case: indeed, we show in Appendix \ref{sect:CounterexampleWeakApprox} that the morphism $\Drect_n^R\to \mathbbst{E}_n$ is not a weak approximation. 

\medskip

In Section \ref{sect:Defects}, we extend the results from Section \ref{sect:Renormalization} to generalizations of $\mathbbst{E}_n$-algebras such as, for example, algebras over higher dimensional Swiss-cheese operads (see Theorems \ref{thm:ConstructiblePrefactAlgsScaleRonRn} and \ref{thm:ConstructiblePrefactAlgsScaleRonRpq}). The physical importance of these extensions
is that it also works for various types of topological field theories with defects.

\medskip

Section \ref{sect:Quantization} is devoted to an application of our main result: given a constant Poisson structure, we construct its Weyl algebra quantization using locally constant $\Drect^{R}_1$-algebras for $R=\frac12$. This toy example is meant to indicate the potential of Theorem \ref{thm:RenormalizationTFTs} when combined  with discrete models. Our strategy is inspired by the discussion of quantum mechanics in \cite{costello_factorization_2017} and $1d$ Chern-Simons theory from \cite[\textsection 3]{grady_batalin-vilkovisky_2017}, but strongly differs from both approaches, since ours does not rely on any analytical technique. In fact, we use a discretized version of the compactly supported de Rham complex, allowing us to avoid the traditional infinite-dimensional analysis. As a counterpart, we loose the locally constant property when the scale is smaller than the discretization mesh, but this is resolved using our main result.

%a toy example in $1d$, which produces an $\mathbbst{E}_1$-algebra equivalent to the Weyl algebra quantization of a constant Poisson structure. Our strategy is inspired by the discussion of quantum mechanics in \cite{costello_factorization_2017} and $1d$ Chern-Simons theory from \cite[\textsection 3]{grady_batalin-vilkovisky_2017}, but strongly differs from both approaches, since ours does not rely on any analytical technique. In fact, we use a discretized version of the compactly supported de Rham complex, allowing us to avoid the traditional infinite-dimensional analysis. As a counterpart, we loose the locally constant property when the scale is smaller than the discretization mesh, but this is resolved using our main result. 

\medskip

Appendix \ref{sect:LocalizationQuasioperads} contains useful results about localization of ($\infty$-)operads, that we needed in Section \ref{sect:Renormalization} and couldn't find in the literature. 

\medskip

The whole paper makes a crucial use of $\infty$-categories (see \cite{lurie_higher_2009}) and $\infty$-operads (see \cite{lurie_higher_nodate}).

\subsubsection*{Acknowledgments}

Damien Calaque thanks Giovanni Felder for countless discussions about discrete models, factorization algebras, and quantization.  Victor Carmona would like to thank Rune Haugseng for spotting a mistake in a previous proof of Corollary \ref{cor:InfinityLocalizationOfOperatorCats}. Both authors thank Roy Magen, who essentially proved a version of Theorem \ref{thm_LocalizationForTruncatedDisj} at the level of $\pi_0$ during an internship with the first author in 2019. VC was partially supported by the grant PID2020-117971GB-C21 funded by MCIN/AEI/10.13039/501100011033. 
Both authors have received funding from the European Research Council (ERC) under the
European Union’s Horizon 2020 research and innovation programme (Grant Agreement No.
768679). 

\section{Little discs at large scale}\label{sect:Renormalization}
We adopt here the point of view on prefactorization algebras in terms of (colored) operads from \cite[Ch.~3 \S1.2]{costello_factorization_2017}. Recall the operad with values in sets $\Disc(\mathbbst{R}^n)$ whose colors are (euclidean bounded) open discs of $\mathbbst{R}^n$, and having a (unique) multimorphism $(U_1,\dots,U_k)\to U_0$ if and only if $U_1\sqcup \dots\sqcup U_k\subset U_0$. 
Let $\EuScript{V}$ be a symmetric monoidal (sm) $\infty$-category. Recall that a \emph{prefactorization algebra} in $\EuScript{V}$ is a $\Disc(\mathbbst{R}^n)$-algebra in $\EuScript{V}$. 
A prefactorization algebra is \emph{locally constant} if it sends unary operations (that are inclusions of discs) to weak equivalences. 

\medskip

Lurie proved that locally constant prefactorization algebras are the same as $\mathbbst{E}_n$-algebras. More precisely, the functor $\upgamma^*\colon\EuScript{A}\mathsf{lg}_{\mathbbst{E}_n}(\EuScript{V})\to \EuScript{A}\mathsf{lg}_{\Disc(\mathbbst{R}^n)}^{\mathsf{lc}}(\EuScript{V})$ is an equivalence of $\infty$-categories. The latter functor is induced by the morphism of topological operads $\upgamma\colon\Disc(\mathbbst{R}^n)\to\mathbbst{E}_n$, that sends the multimorphism associated with an inclusion of discs $U_1\sqcup \dots\sqcup U_k\subset U_0$ to the following configuration of discs in the unit disc $\mathbbst{D}^n$: it is the image of the configuration $U_1\sqcup \dots\sqcup U_k$ through the unique affine bijection $U_0\cong\mathbbst{D}^n$. 
All colors of $\Disc(\mathbbst{R}^n)$ are sent to the single color of the topological operad $\mathbbst{E}_n$. 

%\textcolor{red}{COMMENT: There is no affine bijection $\mathbbst{R}^n\cong \mathbbst{D}^n$!! Thus, to define the map of operads $\upgamma$, I have restricted the colors to be bounded open discs.}
%AGREED !!!

\medskip

\begin{defn}\label{defn:TruncatedDisj} Let $R>0$. The $R$\emph{-truncated $n$-discs operad}, denoted  $\Drect_n^{R}$, is the full suboperad of $\Disc(\mathbbst{R}^n)$ spanned by open discs of radii strictly bigger than $R$.
\end{defn}

\begin{defn}\label{defn:PrefactAtUnitaryScale} Let $R>0$ and $\EuScript{V}$ be a symmetric monoidal $\infty$-category. 
\begin{itemize}
    \item A \emph{prefactorization algebra at scale} $R$ over $\mathbbst{R}^n$ is a $\Drect_n^{R}$-algebra (in $\EuScript{V}$), i.e.\ it is a prefactorization algebra over $\mathbbst{R}^n$ but only defined over euclidean discs of radii $>R$.
%    \item A  \emph{prefactorization algebra at unitary scale} over $\mathbbst{R}^n$ is a $\Drect_n^1$-algebra (in $\EuScript{V}$).
    \item A prefactorization algebra (possibly at a fixed scale) is \emph{locally constant} if it sends inclusions of discs to equivalences in $\EuScript{V}$.
\end{itemize}
\end{defn}

We still denote by $\upgamma\colon\Drect_n^{R}\to \mathbbst{E}_n$ the restriction of the morphism of topological operads $\upgamma$ to the $R$-truncated $n$-discs operad. The pullback functor 
$\upgamma^*\colon\EuScript{A}\mathsf{lg}_{\mathbbst{E}_n}(\EuScript{V})\to \EuScript{A}\mathsf{lg}_{\Drect_n^R}(\EuScript{V})$ 
factors through the full $\infty$-subcategory 
$\EuScript{A}\mathsf{lg}_{\Drect_n^R}^{\mathsf{lc}}(\EuScript{V})\subset\EuScript{A}\mathsf{lg}_{\Drect_n^R}(\EuScript{V})$ 
spanned by locally constant prefactorization algebras at scale $R$ over $\mathbbst{R}^n$. 

\begin{thm}\label{thm:RenormalizationTFTs} 
For any symmmetric monoidal $\infty$-category $\EuScript{V}$, the functor 
\[
\upgamma^*\colon\EuScript{A}\mathsf{lg}_{\mathbbst{E}_n}(\EuScript{V})\longrightarrow\EuScript{A}\mathsf{lg}^{\mathsf{lc}}_{\Drect_n^R}(\EuScript{V})
\]
is an equivalence of $\infty$-categories, between $\mathbbst{E}_n$-algebras and locally constant prefactorization algebras at scale $R$ over $\mathbbst{R}^n$. 
\end{thm}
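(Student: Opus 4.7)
The plan is to split the proof into a geometric $\infty$-operadic step and a formal categorical step. The geometric step is the announced Theorem \ref{thm_LocalizationForTruncatedDisj}, to the effect that the morphism $\upgamma\colon\Drect_n^R\to\mathbbst{E}_n$ exhibits $\mathbbst{E}_n$ as the $\infty$-operadic localization of $\Drect_n^R$ at the class $W$ of unary operations, i.e.\ inclusions of open discs of radius $>R$ into larger such discs. Granted this, the theorem under consideration becomes a formal consequence.

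Indeed, the formal step consists in invoking the universal property of $\infty$-operadic localizations developed in Appendix \ref{sect:LocalizationQuasioperads}: for any symmetric monoidal $\infty$-category $\V$, the pullback $\upgamma^*$ realizes $\EuScript{A}\mathsf{lg}_{\mathbbst{E}_n}(\V)$ as the full subcategory of $\EuScript{A}\mathsf{lg}_{\Drect_n^R}(\V)$ spanned by those algebras sending every arrow of $W$ to an equivalence. Combining this with Definition \ref{defn:PrefactAtUnitaryScale} identifies that essential image with $\EuScript{A}\mathsf{lg}^{\mathsf{lc}}_{\Drect_n^R}(\V)$, which yields the equivalence claimed in the theorem. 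Note that this reduction is insensitive to $\V$, so no extra input beyond the unary-localization property of $\upgamma$ is required.

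The whole difficulty is therefore concentrated in the operadic step. Crucially, and as the introduction emphasizes, the weak approximation technique of Lurie and Harpaz is \emph{not} available here: Appendix \ref{sect:CounterexampleWeakApprox} exhibits an obstruction showing that $\upgamma$ fails to be a weak approximation of $\infty$-operads. The route I would take is to compare $\Drect_n^R$ with the full operad $\Disc(\mathbbst{R}^n)$, for which the corresponding localization statement is Lurie's theorem, and to prove that the inclusion $\Drect_n^R\hookrightarrow\Disc(\mathbbst{R}^n)$ induces an equivalence of $\infty$-operadic localizations at unary operations. The key geometric input, and the main obstacle, is the fact that by using translations and dilations in $\mathbbst{R}^n$ one can spread apart an arbitrary configuration of small discs inside a sufficiently large ambient disc and then enlarge each of them past the threshold $R$, so that every operation of $\Disc(\mathbbst{R}^n)$ can be related — via zig-zags of unary inclusions — to an operation already present in $\Drect_n^R$. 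This maneuver is precisely what would break on a general manifold, and explains why the statement is intrinsically tied to the euclidean setting.
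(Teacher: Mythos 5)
Your reduction is exactly the paper's: Theorem \ref{thm:RenormalizationTFTs} is derived from Theorem \ref{thm_LocalizationForTruncatedDisj} together with the universal property of operadic localizations (in the paper, Corollary \ref{cor:InfinityLocalizationOfOperatorCats}), so that $\upgamma^*$ identifies $\EuScript{A}\mathsf{lg}_{\mathbbst{E}_n}(\EuScript{V})$ with the full subcategory of $\Drect_n^R$-algebras inverting unary operations, which is precisely $\EuScript{A}\mathsf{lg}^{\mathsf{lc}}_{\Drect_n^R}(\EuScript{V})$ by Definition \ref{defn:PrefactAtUnitaryScale}. You also correctly observe that the weak-approximation route is unavailable, citing Appendix \ref{sect:CounterexampleWeakApprox}. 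So for the statement at hand, this is the paper's argument.

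Where you diverge is in the sketched strategy for proving Theorem \ref{thm_LocalizationForTruncatedDisj}. You propose to factor the inclusion $\Drect_n^R\hookrightarrow\Disc(\mathbbst{R}^n)$ and piggy-back on Lurie's localization theorem for $\Disc(\mathbbst{R}^n)$, with the spreading-and-inflating heuristic as the geometric engine. The paper does not take this detour: it compares $\Drect_n^R$ (and $\mathbbst{E}_n$) directly with \emph{closed-disc} variants $\overline{\Drect}\!\,^R_n$ and $\overline{\mathbbst{E}}_n$ (Lemmas \ref{lem:EnvsClosedEn} and \ref{lem:DRvsClosedDR}), because subtle point-set issues with tangent boundaries make the open version awkward, and then verifies the localization property via the Haugseng--Kock/Mazel-Gee criterion (Proposition \ref{prop_HinichCriteria}), nested fattened configuration spaces (Lemmas \ref{lem:InflationInducesHomotopyEquivalenceNestedVersion}--\ref{lem:NestedD0ConfigurationsInDisc}), and Lurie--Seifert--van Kampen. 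Your spreading/dilating heuristic is morally the inflation map $\mathsf{infl}$ of Lemma \ref{lem:InflationInducesHomotopyEquivalence}, so the geometric intuition is sound. But note that the detour through $\Disc(\mathbbst{R}^n)$ does not obviously buy anything: to show that $\Drect_n^R\hookrightarrow\Disc(\mathbbst{R}^n)$ induces an equivalence on localizations at unary operations you would still need a criterion like Proposition \ref{prop_HinichCriteria} and would still face precisely the same poset-contractibility and boundary-tangency difficulties (which are what motivated the closed-disc detour in the first place). So the comparison you propose is roughly as hard as the direct proof, not easier, and the nontrivial content would remain concentrated where the paper puts it.
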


This follows from the more abstract result concerning the $\infty$-operads themselves:
\begin{thm}\label{thm_LocalizationForTruncatedDisj}
The morphism $\upgamma\colon \Drect_n^{R}\to\mathbbst{E}_n$ exhibits $\mathbbst{E}_n$ as the $\infty$-localization of 
$\Drect_n^R$ at all unary operations as an $\infty$-operad.
\end{thm}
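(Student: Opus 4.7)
The plan is to reduce to the known result of Lurie/Harpaz for $\Disc(\mathbbst{R}^n)$ by factoring $\upgamma$ as $\Drect_n^R\xrightarrow{\iota}\Disc(\mathbbst{R}^n)\xrightarrow{\upgamma'}\mathbbst{E}_n$, where $\iota$ is the tautological inclusion. Since $\upgamma'$ already exhibits $\mathbbst{E}_n$ as the $\infty$-localization of $\Disc(\mathbbst{R}^n)$ at all unary operations (by \cite[Lemma 5.4.5.11]{lurie_higher_nodate} or \cite[Proposition 5.2.4]{harpaz_little_nodate}), the universal property of localization reduces the theorem to showing that $\iota$ induces an equivalence of $\infty$-operads between the $\infty$-localization of $\Drect_n^R$ at its unary operations and the $\infty$-localization of $\Disc(\mathbbst{R}^n)$ at its unary operations.

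To establish this, I would invoke the criterion for $\infty$-operadic localization developed in Appendix \ref{sect:LocalizationQuasioperads}; recall that, as shown in Appendix \ref{sect:CounterexampleWeakApprox}, weak approximation techniques in the style of Lurie--Harpaz are not available here. The heuristic is that $\Drect_n^R$ is ``operadically cofinal'' inside $\Disc(\mathbbst{R}^n)$: every disc admits a unary morphism into a disc of radius $>R$, and every multimorphism admits a ``dilated'' version in $\Drect_n^R$. Concretely, given a configuration $U_1\sqcup\cdots\sqcup U_k\subset U_0$, one dilates about the center of $U_0$ by a large factor $\lambda>0$ and then inscribes each dilated $U_i$ into a slightly larger disc $V_i$ of radius $>R$, placing everything inside an enlarged $V_0$; for $\lambda$ sufficiently large the $V_i$ remain pairwise disjoint and fit inside $V_0$, yielding the required multimorphism of $\Drect_n^R$ together with inclusions $U_i\hookrightarrow V_i$ and $U_0\hookrightarrow V_0$ in $\Disc(\mathbbst{R}^n)$.

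The main obstacle lies in organizing these dilations coherently. The space of admissible parameters $(\lambda,V_1,\dots,V_k,V_0)$ fibers over the multimorphism space of $\Disc(\mathbbst{R}^n)$, and the crucial point is that each fiber is non-empty and contractible: once $\lambda$ is taken large enough, the admissibility constraints (radii $>R$, pairwise disjointness, containment in $V_0$) cut out an intersection of open linear half-spaces in parameter space, which is convex. Packaging these contractible spaces of choices into the criterion of Appendix \ref{sect:LocalizationQuasioperads} should exhibit a quasi-inverse, up to coherent homotopy, to the comparison map induced by $\iota$ on $\infty$-localizations, completing the proof.
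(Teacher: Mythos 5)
Your overall plan has a sound skeleton (factor $\upgamma$ through $\Disc(\mathbbst{R}^n)$, reduce to comparing the two localizations, and somehow exploit dilations), and the dilation heuristic is indeed the right intuition. However, the specific mechanism you propose does not work, and the gap is not a matter of detail.

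The concrete error is the claim that after dilating a configuration $U_1\sqcup\cdots\sqcup U_k\subset U_0$ about the center of $U_0$ by a large factor $\lambda$, and then inflating each $\lambda U_i$ slightly to $V_i$, one obtains \emph{inclusions} $U_i\hookrightarrow V_i$. This is false: a dilation centered at $z(U_0)$ moves the centers of the $U_i$ radially outward, so in general $\lambda U_i\cap U_i=\diameter$ (take $U_0=\mathbbst{D}(0,1)$, $U_1=\mathbbst{D}(0.5,0.1)$, $\lambda=10$). Consequently, dilation does \emph{not} produce morphisms in the slice/overcategory structure that any cofinality or Quillen--A argument would require; it produces entirely new, unrelated operations. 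If you instead try to enlarge each $V_i$ so that it contains both $U_i$ and $\lambda U_i$, disjointness of the $V_i$ fails as soon as two of the original $U_i$ are close to each other. So the ``operadic cofinality'' of $\Drect_n^R$ inside $\Disc(\mathbbst{R}^n)$, in the sense that every $k$-chain in $\Disc(\mathbbst{R}^n)$ admits a (co)final sub-$k$-chain in $\Drect_n^R$, is simply not true: the relevant slices $\mathsf{weq}\left[[k],\mathdutchcal{D}^R\right]_{\upsigma/}$ over $\upsigma\in\mathsf{weq}\left[[k],\mathdutchcal{D}^0\right]$ are not (co)filtered and one has no cheap reason for their weak contractibility. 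This is exactly why the paper does \emph{not} compare $\Drect_n^R$ to $\Disc(\mathbbst{R}^n)$ via slices.

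The paper's actual strategy uses dilations at a different level: it identifies the homotopy fibers of $\mathsf{weq}\left[[k],-\right]\to\mathsf{weq}\left[[k],\Fin_*^\sharp\right]$ with spaces of nested configurations $\Drect^R\Conf_{\upalpha}(\mathbbst{R}^n)$ and $\Drect^0\Conf_{\upalpha}(\mathbbst{R}^n)$, and it is \emph{these configuration spaces} that are shown to be homotopy equivalent via the inflation map (Lemmas \ref{lem:InflationInducesHomotopyEquivalence} and \ref{lem:InflationInducesHomotopyEquivalenceNestedVersion}). Replacing the overcategory contractibility step, the proof uses Lurie--Seifert--van Kampen for the map from a poset of tuples of open discs to $\Drect^R\Conf_\upalpha(\mathbbst{R}^n)$. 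Two further technical points you omit are essential: the passage to the closed-disc operads $\overline{\Drect}\!\,^R_n$ and $\overline{\mathbbst{E}}_n$ (needed to guarantee non-emptiness of the cofiltered posets in the van Kampen step), and the modified poset $\widetilde{\mathsf{weq}}\left[[k],\overline{\mathdutchcal{D}}\right]_{\upalpha}$ (needed because components over $\upalpha_{r+1}^{-1}(*)$ would otherwise break cofilteredness). None of these issues are visible from the slice/quasi-inverse picture you sketch; you would rediscover them only once you tried to make ``packaging the contractible spaces of choices'' precise, and at that point you would be reproving the paper's argument rather than shortcutting it.
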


We will focus on Theorem \ref{thm_LocalizationForTruncatedDisj} and its proof will take the remainder of this section. 
It will require some detours: $(i)$  a topological discussion about configuration spaces in $\mathbbst{R}^n$ 
(Subsection \ref{subsect:DConfAndEquivalences})  and $(ii)$ a technically convenient variation of the operads $\Drect^R_n$ and $\mathbbst{E}_n$
(Subsection \ref{subsect:ClosedLittleDiscs}).

\begin{rem} In this document, we will focus on the \emph{unital} side of the story, e.g.\ $\mathbbst{E}_n$ is considered to have $\mathbbst{E}_n(0)\simeq \mathsf{pt}$ and $\Drect^R_n$ contains nullary operations. However, our results, and proofs, are valid for the obvious \emph{non-unital} variants. In practice, this means restricting to operations of arity strictly bigger than $0$ in $\Drect_n^R$ and taking $\mathbbst{E}_n^{\mathsf{nu}}=\big(\mathbbst{E}_n(k)\big)_{k\geq 1}$.
\end{rem}

%%%%% Subsection 2.1 starts here %%%%%

\subsection{Fattened configurations in euclidean space}\label{subsect:DConfAndEquivalences}

\paragraph{Fattened configurations.} 
We now provide a brief discussion of fattened configurations spaces and their remarkable property of being equivalent 
to ordinary configuration spaces in $\mathbbst{R}^n$. This last property is key to prove 
Theorem \ref{thm_LocalizationForTruncatedDisj}.

\begin{defn}\label{defn:FattenedConfigurationsInRn}
Let $m$ be a non-negative integer. We define the space of \emph{configurations of $R$-fattened $m$-points in} 
$\mathbbst{R}^n$ to be the subspace of rectilinear embeddings of closed discs$\,$\footnote{Intersections of boundaries 
are not allowed.}
	\[
	\Drect^R\Conf_{m}(\mathbbst{R}^n)\subseteq\Emb^{\mathsf{rect}}(\overline{\mathbbst{D}}_1\sqcup\cdots\sqcup\overline{\mathbbst{D}}_m;\mathbbst{R}^n)
	\]
spanned by embeddings $f\colon\overline{\mathbbst{D}}_1\sqcup\cdots\sqcup\overline{\mathbbst{D}}_m\hookrightarrow \mathbbst{R}^n$ 
such that  $f(\overline{\mathbbst{D}}_j)$ is a closed disc with radius strictly bigger than $R$ for any $j$. By convention, we set $\Drect^R\Conf_0(\mathbbst{R}^n)=\mathsf{pt}$.
\end{defn}

Notice that the previous definition makes sense for $R=0$.

One fundamental feature about these configuration spaces is that, working on $\mathbbst{R}^n$, they are homotopy 
equivalent to genuine configurations spaces of points. .

\begin{rem}\label{rem:dilations}
Before proving that this is the case, we recall the following obvious facts: a dilation sends discs to discs, and is bijective (in particular it preserves disjointness and inclusions). 
\end{rem}

\begin{lem}\label{lem:InflationInducesHomotopyEquivalence} 
Evaluation at centers of discs determines homotopy equivalences 
	\[
        \Drect^R\Conf_{m}(\mathbbst{R}^n) \simeq \Drect^0\Conf_{m}(\mathbbst{R}^n) \simeq \Conf_{m}(\mathbbst{R}^n)\,,
    \]
where $\Conf_{m}(\mathbbst{R}^n)=\Emb(\{1,\dots,m\};\mathbbst{R}^n)$ denotes the space of configurations of $m$ 
(ordered) points in $\mathbbst{R}^n$. 
\end{lem}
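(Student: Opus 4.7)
The plan is to prove both equivalences by exhibiting explicit deformation retractions, treating them in the order $\Drect^0\Conf_m \simeq \Conf_m$ and then $\Drect^R\Conf_m\hookrightarrow\Drect^0\Conf_m$.

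For $\Drect^0\Conf_m(\mathbbst{R}^n) \simeq \Conf_m(\mathbbst{R}^n)$, I would construct a section $\sigma$ of the evaluation-at-centers map $\mathrm{ev}$ by sending a configuration of points $(x_1,\dots,x_m)$ to the configuration of closed discs with the same centers and radii $r_i := \tfrac{1}{3}\min_{j\neq i}|x_i - x_j|$. The estimate $r_i + r_j \leq \tfrac{2}{3}|x_i - x_j|$ keeps the discs pairwise disjoint, so $\sigma$ is well-defined and continuous. Clearly $\mathrm{ev}\circ\sigma = \mathrm{id}$. For the other composition, I would homotope $\sigma\circ\mathrm{ev}$ to $\mathrm{id}$ by linear interpolation of the radii, keeping centers fixed; the disjointness condition $r_i + r_j < |c_i - c_j|$ is convex in the radii, so the interpolating configurations remain inside $\Drect^0\Conf_m(\mathbbst{R}^n)$.

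For the inclusion $\Drect^R\Conf_m\hookrightarrow\Drect^0\Conf_m$, I would use global dilations of $\mathbbst{R}^n$, as in Remark~\ref{rem:dilations}. Given $f$ with radii $r_i$, define $\lambda(f) := \max\!\bigl(1,\tfrac{2R}{\min_i r_i}\bigr)$ and let the retraction $\rho$ send $f$ to the dilation of $f$ by $\lambda(f)$: this scales centers and radii by $\lambda(f)$ and preserves disjointness, since dilations are homeomorphisms. The scaled radii satisfy $\lambda(f)\,r_i \geq 2R > R$, so $\rho(f) \in \Drect^R\Conf_m(\mathbbst{R}^n)$. The straight-line homotopy $H_t(f) := \bigl((1-t) + t\lambda(f)\bigr)\cdot f$ connects $\mathrm{id}$ to $\rho$ inside $\Drect^0\Conf_m$, and because the scaling factor is always $\geq 1$, it restricts to a self-homotopy of $\Drect^R\Conf_m$. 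Composing this deformation retraction with the previous equivalence completes the chain.

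The main obstacle I anticipate is bookkeeping continuity: one must verify that $\min_i r_i$, $\min_{j\neq i}|x_i - x_j|$, and $\lambda$ are continuous functions of the configuration, and that every interpolation remains in the prescribed subspace. Conceptually, convexity of the disjointness condition in radii is what powers the first step, whereas the dilation trick is essential for the second: evaluation from $\Drect^R\Conf_m$ to $\Conf_m$ is not surjective (its image is the open subspace where all pairwise distances exceed $2R$), so one cannot directly pick radii on arbitrary configurations of points without first spreading them out.
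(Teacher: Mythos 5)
Your proof is correct and follows essentially the same route as the paper: your inflation factor $\lambda(f)=\max\bigl(1,\tfrac{2R}{\min_i r_i}\bigr)$ coincides with the paper's $\lambda(f)=\tfrac{\upepsilon}{\min(\upepsilon,r_1,\dots,r_m)}$ under the choice $\upepsilon=2R$, and the straight-line homotopy $(1-t)+t\lambda(f)$ is identical, the only addition being that you spell out the ``classical shrinking argument'' for $\mathsf{ev}$ that the paper treats as standard. One trivial point to patch: your section $\sigma$ uses $\min_{j\neq i}|x_i-x_j|$, which is an empty minimum when $m\leq 1$, so those cases need a separate (but immediate) convention.
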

\begin{proof} Let us fix a number $\upepsilon>R$ in the rest of this proof. We have a diagram of spaces
$$
\begin{tikzcd}[ampersand replacement=\&]
     \Drect^R\Conf_{m}(\mathbbst{R}^n) \ar[r,hookrightarrow,"\mathsf{inc}", shift left=1] \ar[r,"\mathsf{infl}"', leftarrow, dashed, shift right=1] \& \Drect^0\Conf_{m}(\mathbbst{R}^n) \ar[r,"\mathsf{ev}"] \& \Conf_{m}(\mathbbst{R}^n)\,,
\end{tikzcd}
$$
where $\mathsf{ev}$ denotes evaluation at centers of discs, $\mathsf{inc}$ is the canonical inclusion and $\mathsf{infl}$ is the continuous map described as follows: 
$$
\big(f\colon \overline{\mathbbst{D}}_1\sqcup\cdots\sqcup\overline{\mathbbst{D}}_m\hookrightarrow \mathbbst{R}^n\big)\;\longmapsto\; \big(\,\overline{\mathbbst{D}}_1\sqcup\cdots\sqcup\overline{\mathbbst{D}}_m\xrightarrow{\;\;\;f\;\;\;} \mathbbst{R} ^n\xrightarrow{\;\;\;\uplambda(f)\cdot\;\;\;}\mathbbst{R}^n\big),
$$
where $\uplambda(f)\cdot\colon \mathbbst{R}^n\to \mathbbst{R}^n$ is the dilation by $\uplambda(f):=\frac{\upepsilon}{\mathsf{min}(\upepsilon,r_1,\dots,r_m)}$ with $r_i$ the radius of the $i^{\text{th}}$-disc $f(\overline{\mathbbst{D}}_i)$.
In other words, $\mathsf{infl}$ sends the $i^{\text{th}}$-closed ball $f(\mathbbst{D}_i)=\overline{\mathbbst{D}}(x_i,r_i)$ in the configuration to $\overline{\mathbbst{D}}(\uplambda(f)x_i,\uplambda(f)r_i)$. Remark \ref{rem:dilations} ensures that the inflation map $\mathsf{infl}$ is well-defined.

The map $\mathsf{ev}$ is a homotopy equivalence by a classical shrinking argument. It remains to see that $\mathsf{infl}$ is a homotopy inverse of $\mathsf{inc}$. The composition $\mathsf{inc}\circ\mathsf{infl}\simeq \id$ is homotopic to the identity via the following $h\colon \Drect^0\Conf_m(\mathbbst{R}^n)\times [0,1]\to \Drect^0\Conf_m(\mathbbst{R}^n)$:
\[
\big(f\colon \overline{\mathbbst{D}}_1\sqcup\cdots\sqcup\overline{\mathbbst{D}}_m\hookrightarrow \mathbbst{R}^n,t\big)\;\longmapsto\; \big(\,\overline{\mathbbst{D}}_1\sqcup\cdots\sqcup\overline{\mathbbst{D}}_m\xrightarrow{\;\;\;f\;\;\;} \mathbbst{R} ^n\xrightarrow{\;\;\;\uplambda(f,t)\cdot\;\;\;}\mathbbst{R}^n\big),
\]
with $\uplambda(f,t)=1-t+\uplambda(f)t$. The other composition $\mathsf{infl}\circ\mathsf{inc}\simeq \id$ is analogous.
\end{proof}

In the proof of Theorem \ref{thm_LocalizationForTruncatedDisj}, we will need a more general version of this where we allow for \textit{nested configurations} of discs in $\mathbbst{R}^n$. For this reason, it is better to focus on the homotopy equivalence $\Drect^R\Conf_m(\mathbbst{R}^n)\simeq \Drect^0\Conf_m(\mathbbst{R}^n)$ rather than $\Drect^R\Conf_m(\mathbbst{R}^n)\simeq\Conf_m(\mathbbst{R}^n)$.

\paragraph{Nested configurations.} To keep track of the amount of closed discs conforming the configuration and how nested they are, we use a $k$-chain $\upalpha\in\Fun([k],\Fin_*)$, i.e.
\[
\upalpha\colon \left[\langle m_0\rangle \xrightarrow{\;\;\upalpha_1\;\;}\langle m_1\rangle  \xrightarrow{\;\;\upalpha_2\;\;} \cdots  \xrightarrow{\;\;\upalpha_k\;\;}\langle m_k\rangle \right].
\]

Our goal is to define the space of $\upalpha$\textit{-nested configurations} $\Drect^R\Conf_{\upalpha}(\mathbbst{R}^n)$ and to show that it is homotopy equivalent to $\Drect^0\Conf_{\upalpha}(\mathbbst{R}^n)$. A direct definition of such a space for a general $\upalpha$ is far from enlightening, and so we present a definition in two steps for the sake of visualization.

Let us start by assuming that all the components of $\upalpha$ are active maps in $\Fin_*$, i.e.\ the pointed function $\upalpha_r\colon \langle m_{r-1}\rangle \to \langle m_r\rangle$ satisfies $\vert \upalpha_r^{-1}(*)\vert =1$ for any $1\leq r\leq k$, and that $m_k=1$. In this case, $\upalpha$ represents a leveled-tree, see Picture \ref{fig:TreeAndPointDRConf}.
\begin{defn} Assume $\upalpha\in \Fun([k],\Fin_*)$ is a $k$-chain of active maps and that its target is $\langle m_k\rangle=\langle 1\rangle$. We define the space of $\upalpha$\emph{-nested configurations of $R$-fattened points in} 
$\mathbbst{R}^n$ to be the subspace
	\[
	\Drect^R\Conf_{\upalpha}(\mathbbst{R}^n)\subseteq\prod_{1\leq i_r\leq m}\Emb^{\mathsf{rect}}(\overline{\mathbbst{D}};\mathbbst{R}^n),
	\]
with $m=m_0+\dots +m_{k-1}$ and where we label the factors corresponding to $m_r$ by indices of the form $i_r$, spanned by collections of rectilinear embeddings $\big(f_{i_r}\colon\overline{\mathbbst{D}}\hookrightarrow \mathbbst{R}^n\big)_{i_r}$ 
such that:
\begin{itemize}
    \item $f_{i_r}(\overline{\mathbbst{D}})$ is a closed disc with radius strictly bigger than $R$ for any $i_r$.
    \item $f_{i_r}(\overline{\mathbbst{D}})\cap f_{j_r}(\overline{\mathbbst{D}})=\diameter$ for any pair of distinct indices $1_r\leq i_r, j_r\leq m_r$ (equivalently $(f_{1_r},\dots,f_{m_r})\colon \overline{\mathbbst{D}}_{1_r}\sqcup \dots \sqcup \overline{\mathbbst{D}}_{m_r}\hookrightarrow \mathbbst{R}^n$ is an element of $\Drect^{R}\Conf_{m_r}(\mathbbst{R}^n)$)
    \item $f_{i_{r-1}}(\overline{\mathbbst{D}})\subseteq f_{j_r}(\overline{\mathbbst{D}})$ for any $i_{r-1}\in \upalpha_r^{-1}(j_r)$ (equivalently $(f_{\upalpha_r^{-1}(j_r)})\colon \overline{\mathbbst{D}}\sqcup \dots\sqcup \overline{\mathbbst{D}}\hookrightarrow \mathbbst{R}^n$ factors through $f_{j_r}(\overline{\mathbbst{D}})$). Moreover, either the equality is met, $f_{i_{r-1}}(\overline{\mathbbst{D}})=f_{j_r}(\overline{\mathbbst{D}})$, in which case $\upalpha_r^{-1}(j_r)=\{i_{r-1}\}$, or $f_{i_{r-1}}(\overline{\mathbbst{D}})\subset f_{j_r}(\mathbbst{D})$ for any $i_{r-1}\in\upalpha_r^{-1}(j_r)$.
\end{itemize}
\end{defn}

See Picture \ref{fig:TreeAndPointDRConf} to visualize a point in this space. The conditions  are just the natural nesting rules associated to the $k$-chain $\upalpha$ plus mild point-set impositions that we need in the proof of Theorem \ref{thm_LocalizationForTruncatedDisj}.

\begin{figure}[htp]
	\centering
	\includegraphics[width=16cm]
	{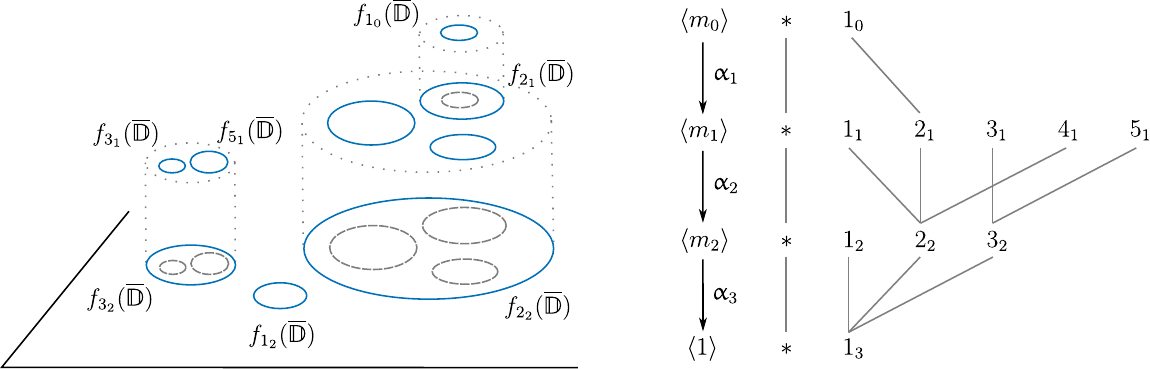}
	\caption{On the right hand side, a $k=3$ chain $\upalpha\colon [3]\to \Fin_* $ of active maps. On the left hand side, a point in $\Drect^R\Conf_{\upalpha}(\mathbbst{R}^n)$ for the previous $ \upalpha$.}\label{fig:TreeAndPointDRConf}
\end{figure}

If we consider more generally that $\upalpha$ is a $k$-chain of active maps, but $\langle m_k\rangle$ is arbitrary, the definition is the same, but replacing the second condition by $f_{i_r}(\overline{\mathbbst{D}})\cap f_{j_r}(\overline{\mathbbst{D}})=\diameter$ for any pair of distinct indices $1_r\leq i_r, j_r\leq m_r$ such that $\upalpha(i_r)=\upalpha(j_r)$, i.e.\ $\upalpha_k\cdots \upalpha_{r+1}(i_r)=\upalpha_k\cdots \upalpha_{r+1}(j_r)$ in $\langle m_r\rangle^{\circ}$. 

In the general case, i.e.\ $\upalpha\in \Fun([k],\Fin_*)$ is arbitrary, we have to also modify the amount of discs conforming the configuration, since the functions $\upalpha_r\colon \langle m_{r-1}\rangle \to \langle m_r \rangle$ might throw away some of them.
\begin{defn}\label{defn_NestedConfigurations} Let $\upalpha\in \Fun([k],\Fin_*)$ be an arbitrary $k$-chain of pointed functions. We define the space of $\upalpha$\emph{-nested configurations of $R$-fattened points in} 
$\mathbbst{R}^n$ to be the subspace
	\[
	\Drect^R\Conf_{\upalpha}(\mathbbst{R}^n)\subseteq\prod_{1\leq i_r\leq m}\Emb^{\mathsf{rect}}(\overline{\mathbbst{D}};\mathbbst{R}^n),
	\]
with $m=m_0-\vert\upalpha^{-1}_{1}(*)\vert+\dots +m_{k-1}-\vert \upalpha^{-1}_{k}(*)\vert$ and where we use labels $i_r$ for the factors corresponding to $\langle m_r\rangle \backslash \upalpha_{r+1}^{-1}(*)$ in the product, spanned by collections of rectilinear embeddings $\big(f_{i_r}\colon\overline{\mathbbst{D}}\hookrightarrow \mathbbst{R}^n\big)_{i_r}$ 
such that:
\begin{itemize}
    \item $f_{i_r}(\overline{\mathbbst{D}})$ is a closed disc with radius strictly bigger than $R$ for any $i_r$.
    \item $f_{i_r}(\overline{\mathbbst{D}})\cap f_{j_r}(\overline{\mathbbst{D}})=\diameter$ for any pair of indices $1_r\leq i_r, j_r\leq m_r$ such that $\upalpha(i_r)=\upalpha(j_r)$, i.e.\ $\upalpha_{t}\cdots \upalpha_{r+1}(i_r)=\upalpha_{t}\cdots \upalpha_{r+1}(j_r)\neq *$ for some $t\leq k$.
    \item $f_{i_{r-1}}(\overline{\mathbbst{D}})\subseteq f_{j_r}(\overline{\mathbbst{D}})$ for any $i_{r-1}\in \upalpha_r^{-1}(j_r)$. Moreover, either the equality is met, $f_{i_{r-1}}(\overline{\mathbbst{D}})=f_{j_r}(\overline{\mathbbst{D}})$, in which case $\upalpha_r^{-1}(j_r)=\{i_{r-1}\}$, or $f_{i_{r-1}}(\overline{\mathbbst{D}})\subset f_{j_r}(\mathbbst{D})$ for any $i_{r-1}\in\upalpha_r^{-1}(j_r)$.
\end{itemize}
\end{defn}

\begin{lem}\label{lem:InflationInducesHomotopyEquivalenceNestedVersion} Let $\upalpha\in \Fun([k],\Fin_*)$ be any $k$-chain of pointed functions. Then, the canonical inclusion of spaces of $\upalpha$-nested configurations 
	\[
        \Drect^R\Conf_{\upalpha}(\mathbbst{R}^n) \xhookrightarrow{\quad\quad} \Drect^0\Conf_{\upalpha}(\mathbbst{R}^n)
    \]
is a homotopy equivalence.
\end{lem}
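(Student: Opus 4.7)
The plan is to adapt almost verbatim the argument of Lemma \ref{lem:InflationInducesHomotopyEquivalence}, replacing the single-layer dilation by a common dilation of the whole nested configuration. Fix an auxiliary constant $\upepsilon>R$. I would define an inflation map
\[
\mathsf{infl}\colon\Drect^0\Conf_{\upalpha}(\mathbbst{R}^n)\longrightarrow \Drect^R\Conf_{\upalpha}(\mathbbst{R}^n)
\]
by applying to every disc $f_{i_r}(\overline{\mathbbst{D}})$ of the nested configuration the same dilation of $\mathbbst{R}^n$ centered at the origin, with factor
\[
\uplambda(f)=\frac{\upepsilon}{\min\bigl(\upepsilon,\,r_{\min}(f)\bigr)}\,\geq\,1,
\]
where $r_{\min}(f)$ denotes the smallest radius appearing among the finitely many discs of $f$. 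Since $\uplambda(f)\geq 1$, after dilation every radius is multiplied by $\uplambda(f)$, so the smallest one becomes at least $\upepsilon>R$.

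The crucial observation is that a single dilation of $\mathbbst{R}^n$ preserves the whole data of Definition \ref{defn_NestedConfigurations}. Indeed, by Remark \ref{rem:dilations} a dilation is a bijection sending closed (resp.\ open) discs to closed (resp.\ open) discs, hence it preserves disjointness, containment $\subseteq$, strict interior containment $\subset$, and equality of discs. Thus each bullet point of Definition \ref{defn_NestedConfigurations} (disjointness of siblings, nesting of children into parents, and the dichotomy between equality in the singleton-preimage case and strict interior inclusion otherwise) is preserved by applying the same dilation to all discs at once. In particular $\mathsf{infl}$ is well-defined and continuous.

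To see that $\mathsf{infl}$ is a homotopy inverse of the inclusion $\mathsf{inc}$, I would use the straight-line homotopy from the proof of Lemma \ref{lem:InflationInducesHomotopyEquivalence}: set $\uplambda(f,t)=1-t+t\uplambda(f)\geq 1$ and dilate by $\uplambda(f,t)$. This yields a homotopy $h\colon \Drect^0\Conf_{\upalpha}(\mathbbst{R}^n)\times[0,1]\to\Drect^0\Conf_{\upalpha}(\mathbbst{R}^n)$ from $\id$ at $t=0$ to $\mathsf{inc}\circ\mathsf{infl}$ at $t=1$, which stays inside $\Drect^0\Conf_{\upalpha}(\mathbbst{R}^n)$ by the same preservation argument. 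The same formula, restricted to $\Drect^R\Conf_{\upalpha}(\mathbbst{R}^n)$, stays inside $\Drect^R\Conf_{\upalpha}(\mathbbst{R}^n)$ because $\uplambda(f,t)\geq 1$ only enlarges radii, and it provides a homotopy between $\id$ and $\mathsf{infl}\circ\mathsf{inc}$.

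The only potentially delicate point is the third bullet of Definition \ref{defn_NestedConfigurations}, since it distinguishes between strict inclusion and equality of discs depending on whether the preimage is a singleton. But dilations centered at a fixed point carry equalities to equalities and strict interior inclusions to strict interior inclusions, so both cases are preserved separately and no tangential adjustment to $\uplambda(f,t)$ is needed. All in all, the proof reduces to the formal consequences of Remark \ref{rem:dilations} together with the linear-interpolation trick already used for $m$-point configurations.
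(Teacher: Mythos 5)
Your proposal is correct and matches the paper's own proof essentially word-for-word: both fix $\upepsilon>R$, define $\mathsf{infl}$ by a single dilation with factor $\uplambda(\underline f)=\upepsilon/\min(\upepsilon,r_1,\dots,r_m)$, and use the straight-line homotopy $\uplambda(\underline f,t)=1-t+t\uplambda(\underline f)$, with Remark \ref{rem:dilations} ensuring the nesting conditions are preserved.
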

\begin{proof} As in the proof of Lemma \ref{lem:InflationInducesHomotopyEquivalence}, we construct a homotopy inverse to the inclusion map
\[
\begin{tikzcd}[ampersand replacement=\&]
     \Drect^R\Conf_{\upalpha}(\mathbbst{R}^n) \ar[r,hookrightarrow,"\mathsf{inc}", shift left=1] \ar[r,"\mathsf{infl}"', leftarrow, dashed, shift right=1] \& \Drect^0\Conf_{\upalpha}(\mathbbst{R}^n)
\end{tikzcd}
\]
using appropriate dilations. 
In this case, fix a number $\upepsilon>R$ and define $\mathsf{infl}$ as follows:
\[
\big(f_{i_r}\colon \overline{\mathbbst{D}}\hookrightarrow \mathbbst{R}^n\big)_{i_r}\;\longmapsto\; \big(\,\overline{\mathbbst{D}}\xrightarrow{\;\;\;f_{i_r}\;\;\;} \mathbbst{R} ^n\xrightarrow{\;\;\;\uplambda(\underline{f})\cdot\;\;\;}\mathbbst{R}^n\big)_{i_r},
\]
where $\uplambda(\underline{f})\cdot\colon \mathbbst{R}^n\to \mathbbst{R}^n$ is the dilation by $\uplambda(\underline{f}):=\frac{\upepsilon}{\mathsf{min}(\upepsilon,r_1,\dots,r_m)}$ with $r_i$ the radius of the $i^{\text{th}}$-disc $f_i(\overline{\mathbbst{D}})$ and $m=m_0-\vert\upalpha^{-1}_{1}(*)\vert+\dots +m_{k-1}-\vert \upalpha^{-1}_{k}(*)\vert$. This map is well-defined due to Remark \ref{rem:dilations}.

Let us check that $\mathsf{infl}$ is a homotopy inverse of $\mathsf{inc}$. The  composition $\mathsf{inc}\circ\mathsf{infl}\simeq \id$ is homotopic to the identity via the following $h\colon \Drect^0\Conf_{\upalpha}(\mathbbst{R}^n)\times [0,1]\to \Drect^0\Conf_{\upalpha}(\mathbbst{R}^n)$:
\[
\big(f_{i_r}\colon \overline{\mathbbst{D}}\hookrightarrow \mathbbst{R}^n,t\big)_{i_r}\;\longmapsto\; \big(\,\overline{\mathbbst{D}}\xrightarrow{\;\;\;f_{i_r}\;\;\;} \mathbbst{R} ^n\xrightarrow{\;\;\;\uplambda(\underline{f},t)\cdot\;\;\;}\mathbbst{R}^n\big)_{i_r},
\]
with $\uplambda(\underline{f},t)=1-t+\uplambda(\underline{f})t$. The other composition $\mathsf{infl}\circ\mathsf{inc}\simeq \id $ is analogous.
\end{proof}

We also need to compare $\upalpha$-nested configurations in the open unit $n$-disc $\Drect^0\Conf_{\upalpha}(\mathbbst{D})$\footnote{The definition of this space is an obvious modification of Definition \ref{defn_NestedConfigurations}.} with nested configurations in euclidean space $\mathbbst{R}^n$.

\begin{lem}\label{lem:NestedD0ConfigurationsInDisc} Postcomposition with the canonical inclusion $\mathbbst{D}\hookrightarrow \mathbbst{R}^n$ induces a homotopy equivalence
\[
\Drect^0\Conf_{\upalpha}(\mathbbst{D})\xhookrightarrow{\quad\sim \quad}\Drect^0\Conf_{\upalpha}(\mathbbst{R}^n).
\]
\end{lem}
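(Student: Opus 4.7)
The plan is to mimic the inflation/dilation strategy of Lemmas~\ref{lem:InflationInducesHomotopyEquivalence} and~\ref{lem:InflationInducesHomotopyEquivalenceNestedVersion}, but now going the opposite direction: construct an explicit \emph{shrinking} map
\[
\mathsf{shrink}\colon \Drect^0\Conf_{\upalpha}(\mathbbst{R}^n) \longrightarrow \Drect^0\Conf_{\upalpha}(\mathbbst{D})
\]
that rescales any configuration of closed discs in $\mathbbst{R}^n$ uniformly down to fit strictly inside the open unit disc, and then homotope both compositions with $\mathsf{inc}$ to the identity via straight-line homotopies of the scaling factor.

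Concretely, for a configuration $\underline{f}=(f_{i_r})_{i_r}$ with $f_{i_r}(\overline{\mathbbst{D}})=\overline{\mathbbst{D}}(x_{i_r},r_{i_r})$, I would set $M(\underline{f})=1+\max_{i_r}(\|x_{i_r}\|+r_{i_r})$ and let $\mathsf{shrink}(\underline{f})$ be the configuration obtained by postcomposing each $f_{i_r}$ with the dilation $x\mapsto x/M(\underline{f})$. Since $\tfrac{1}{M(\underline{f})}(\|x_{i_r}\|+r_{i_r})<1$, the rescaled closed discs all land inside the open unit disc $\mathbbst{D}$; by Remark~\ref{rem:dilations}, dilations are bijections preserving disjointness, strict inclusions and equalities of discs, so the three nesting conditions in Definition~\ref{defn_NestedConfigurations} are automatically respected. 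Continuity of $M$ in $\underline{f}$ is clear from the formula, hence $\mathsf{shrink}$ is well-defined and continuous.

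To verify that $\mathsf{shrink}$ is a two-sided homotopy inverse to $\mathsf{inc}$, I would use the scalar homotopy
\[
\uplambda(\underline{f},t)=(1-t)/M(\underline{f})+t,
\]
applying the dilation by $\uplambda(\underline{f},t)$ to each disc of the configuration, exactly in the spirit of the proof of Lemma~\ref{lem:InflationInducesHomotopyEquivalenceNestedVersion}. Viewed as a self-map of $\Drect^0\Conf_{\upalpha}(\mathbbst{R}^n)$, this interpolates from $\mathsf{inc}\circ\mathsf{shrink}$ at $t=0$ to $\id$ at $t=1$, and no constraint on the image side needs to be checked. Viewed as a self-map of $\Drect^0\Conf_{\upalpha}(\mathbbst{D})$, the key point is that $M(\underline{f})\geq 1$ whenever $\underline{f}$ already sits inside $\mathbbst{D}$, so $\uplambda(\underline{f},t)\in[1/M(\underline{f}),1]\subseteq(0,1]$; combining with $\|x_{i_r}\|+r_{i_r}<1$ yields $\uplambda(\underline{f},t)(\|x_{i_r}\|+r_{i_r})<1$, so every intermediate configuration stays inside $\mathbbst{D}$, and Remark~\ref{rem:dilations} again ensures that the nesting structure is preserved throughout.

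There is no real obstacle: the argument is bookkeeping with explicit scalings. The only delicate point is making sure the rescaled closed discs land in the \emph{open} unit disc---hence the $+1$ in the definition of $M(\underline{f})$ rather than just the maximum---and that the straight-line homotopy of scalars does not accidentally exit $\mathbbst{D}$ in the second composition, both handled by the uniform bound $\uplambda(\underline{f},t)\leq 1$.
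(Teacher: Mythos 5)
Your proposal is correct and takes essentially the same approach as the paper: the paper's homotopy inverse divides by $\mathsf{max}(\upd_1,\dots,\upd_m)+\upepsilon$ (with $\upd_i$ the distance from the origin to the farthest point of the $i^{\text{th}}$ disc), which is exactly your $M(\underline{f})$ with $\upepsilon=1$, and the paper likewise invokes the convex scalar homotopy of Lemma~\ref{lem:InflationInducesHomotopyEquivalenceNestedVersion} together with Remark~\ref{rem:dilations} to see that nesting is preserved. Your write-up is actually a bit more explicit than the paper's about why the homotopy stays inside $\mathbbst{D}$ (namely $\uplambda(\underline{f},t)\leq 1$), which the paper leaves to the reader.
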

\begin{proof} Let us construct a homotopy inverse for this map between nested configurations. Fix an arbitrary number $\upepsilon>0$ and 
let $(f_i\colon \overline{\mathbbst{D}}\hookrightarrow \mathbbst{R}^n)_i$ be a point in $\Drect^0\Conf_{\upalpha}(\mathbbst{R}^n)$. Define 
\[
\upd_i=\vert\vert z(f_i(\overline{\mathbbst{D}}))\vert\vert + r_i=\vert\vert f_i(0)\vert\vert + \vert\vert f_i(1,0,\dots,0)-f_i(0)\vert\vert 
\]
as the sum of the distance of the center of $f_i(\overline{\mathbbst{D}})$ to the origin and the radius of  $f_i(\overline{\mathbbst{D}})$. This measures the maximum distance from $f_i(\overline{\mathbbst{D}})$ to the origin. Using these numbers, we can set
\[
\uplambda(\underline{f})=\frac{1}{\mathsf{max}(\upd_1,\dots,\upd_m)+\upepsilon}
\]
and consider the composition $\overline{\mathbbst{D}}\xrightarrow{\;\;\;f_{i}\;\;\;} \mathbbst{R} ^n\xrightarrow{\;\;\;\uplambda(\underline{f})\cdot\;\;\;}\mathbbst{R}^n$. By our choices, this composite factors through the canonical inclusion $\mathbbst{D}\hookrightarrow \mathbbst{R}^n$. Thanks to Remark \ref{rem:dilations}, we obtain a (dashed) continuous map
\[
\begin{tikzcd}[ampersand replacement=\&]
     \Drect^0\Conf_{\upalpha}(\mathbbst{D}) \ar[r,hookrightarrow, shift left=1] \ar[r, leftarrow, dashed, shift right=1] \& \Drect^0\Conf_{\upalpha}(\mathbbst{R}^n)
\end{tikzcd}
\]
Using homotopies similar to those of Lemma \ref{lem:InflationInducesHomotopyEquivalenceNestedVersion}, one checks that these two maps are homotopy inverses.
\end{proof}

\begin{rem} Notice that, since there is no rectilinear isomorphism $\mathbbst{D}\cong \mathbbst{R}^n$, we cannot directly find a homeomorphism $\Drect^0\Conf_{\upalpha}(\mathbbst{D})\cong \Drect^0\Conf_{\upalpha}(\mathbbst{R}^n)$. In the same spirit, one cannot use the diffeomorphism
\[
\begin{tikzcd}[ampersand replacement=\&]
		\upvarphi\colon \&[-1.3cm] \mathbbst{D}\ar[rr,"\simeq"] \&\& \mathbbst{R}^n\\[-20pt]
		 \& x\ar[rr,mapsto] \&\& \frac{1}{\sqrt{1-\vert\vert x\vert\vert^2}}x
	\end{tikzcd}
\]
in an obvious way to produce a homeomorphism $\Drect^0\Conf_{\upalpha}(\mathbbst{D})\cong \Drect^0\Conf_{\upalpha}(\mathbbst{R}^n)$ for a general $k$-chain $\upalpha$. For instance, postcomposition with $\upvarphi$ does not preserve euclidean discs. On the other hand, it is possible to produce a homeomorphism as above for $1$-chains using $\upvarphi$, but the construction fails for general $k$-chains.

%For $1$-chains $\upalpha\in \Fun([1],\Fin_*)$, one can use the diffeomorphism
%\[
%\begin{tikzcd}[ampersand replacement=\&]
%		\mathbbst{D}\ar[rr,"\simeq"] \&\& \mathbbst{R}^n\\[-20pt]
%		 x\ar[rr,mapsto] \&\& \frac{1}{\sqrt{1-\vert\vert x\vert\vert^2}}x
%	\end{tikzcd}
%\]
%to show that there is a homeomorphism $\Drect^0\Conf_{\upalpha}(\mathbbst{D})\cong\Drect^0\Conf_{\upalpha}(\mathbbst{R}^n)$; essentially by moving centers using such a diffeomorphism and adjusting radii to preserve disjoint-impositions. This is related to the invariance of configuration spaces under homeomorphisms. Unfortunately, when $\upalpha$ codifies simultaneously nesting conditions and disjointness, there are pathological examples that forbid us to use such a diffeomorphism in the same way to produce a homeomorphism $\Drect^0\Conf_{\upalpha}(\mathbbst{D})\cong\Drect^0\Conf_{\upalpha}(\mathbbst{R}^n)$.
\end{rem}

%\begin{figure}[htp]
%	\centering
%	\includegraphics[width=16cm]
%	{Homotopies Inflation.pdf}
%	\caption{An illustration of the maps in the proof of Lemma \ref{lem:InflationInducesHomotopyEquivalence}.}\label{fig:InflationInducesHomotopyEquivalence}
%\end{figure}

%%%%% Subsection 2.2 starts here %%%%%

\subsection{Operads of closed discs}\label{subsect:ClosedLittleDiscs}
Due to point-set considerations we need to address later on, it will be more convenient for us to work with closed variants of $\Drect^R_n$ and $\mathbbst{E}_n$.

\begin{defn}\label{defn:ClosedTruncatedDisj} Let $R>0$. The $R$\emph{-truncated closed $n$-discs operad}, denoted  $\overline{\Drect}\!\,^{R}_n$, is the operad with colors closed euclidean bounded discs in $\mathbbst{R}^n$ of radii strictly bigger than $R$, and multimorphisms:
\[
\overline{\Drect}\!\,^R_n\brbinom{\;\overline{D}_1,\dots,\overline{D}_m\;}{\overline{D}}=\left\{\begin{array}{cr}
	\mathsf{pt} & \qquad\text{ if }\bigsqcup_i\overline{D}_i\subset \mathsf{int}(\overline{D}),\\\\ 
    \lbrace\id\rbrace & \qquad \text{ if }m=1\text{ and }\overline{D}_1=\overline{D},
    \\\\
	\diameter & \qquad\text{ otherwise.}
\end{array}\right.
\]
\end{defn}

\begin{defn}\label{defn:ClosedLittleDiscs} The \textit{closed little} $n$\textit{-discs operad} $\overline{\mathbbst{E}}_n$ is the uncolored operad whose space of $m$-ary operations is the subspace 
$$
\overline{\mathbbst{E}}_n(m)\subseteq\Emb^{\mathsf{rect}}\big(\overline{\mathbbst{D}}\,^{\sqcup m};\overline{\mathbbst{D}}\big)
$$
spanned by rectilinear embeddings $f\colon \overline{\mathbbst{D}}\,^{\sqcup m}\hookrightarrow \overline{\mathbbst{D}}$ that do not touch the boundary of the target disc $\partial \overline{\mathbbst{D}}$, except for the identity embedding $\id\colon \overline{\mathbbst{D}}\hookrightarrow \overline{\mathbbst{D}}$ for $m=1$. Composition is given by composition of rectilinear embeddings.
\end{defn}

\begin{rem}
    The closed variant of the little $n$-disc operad that we use is slightly different from the classical one, since we do not allow for intersections of boundaries (except for identity operations). This stems from a point-set issue we have to handle in the proof of Theorem \ref{thm_LocalizationForTruncatedDisj}.
\end{rem}

There are natural maps of operads $ \overline{\Drect}\!\,^R_n\to \Drect^R_n$  and $\overline{\mathbbst{E}}_n\to\mathbbst{E}_n$ determined respectively by
\[
\overline{D}\longmapsto \mathsf{int}(\overline{D}) \qquad \text{and} \qquad (f\colon \overline{\mathbbst{D}}\,^{\sqcup m}\hookrightarrow \overline{\mathbbst{D}})\longmapsto (f\vert_{\mathsf{int}}\colon \mathbbst{D}^{\sqcup m}\hookrightarrow \mathbbst{D}).
\]
There is also a morphism of operads $\overline{\upgamma}\colon \overline{\Drect}\!\,^R_n\to \overline{\mathbbst{E}}_n$ constructed in the same way as its \textit{open} analogue $\upgamma \colon \Drect^R_n\to \mathbbst{E}_n$. All these morphisms together fit into a commutative square
\begin{equation}\label{eqt:CommutativeSquareOfOperads}
    \begin{tikzcd}[ampersand replacement=\&]
    \Drect^R_n\ar[r,"\upgamma"] \& \mathbbst{E}_n\\
    \overline{\Drect}\!\,^R_n \ar[u] \ar[r,"\overline{\upgamma}"'] \& \overline{\mathbbst{E}}_n \ar[u]
\end{tikzcd}.
\end{equation}

Before further exploration of these morphisms, let us introduce a bit of notation that will be used later on. First, recall that given a (simplicial) operad $\Op$, we can consider its fibrational presentation $\Op^{\otimes}\to \Fin_*$, also called (simplicial) operator category (see discussion above \cite[Proposition 4.1.13]{harpaz_little_nodate}). Additionally, we will use the following notation taken from \cite[\textsection 3]{low_fractions_2015}.
\begin{notat} Let $\mathdutchcal{I}$ and $\mathdutchcal{C}=(\Ctt,\Wtt)$ be (small) relative categories. 
%\begin{itemize}
%     \item
By $\left[\mathdutchcal{I}, \mathdutchcal{C}\right]$ we will refer to the category of relative functors and by $\mathsf{weq}\left[\mathdutchcal{I}, \mathdutchcal{C}\right]$ to its wide subcategory whose morphisms are the $\Wtt$-natural transformations. Equipping the $k^{\text{th}}$-simplex category $[k]=\{0\leq 1\leq\dots\leq k\}$ with its minimal relative structure, we have in particular $\mathsf{weq}\left[[k],\mathdutchcal{C}\right]$ for any $k\geq 0$.
%     \item The walking zigzag generated by a sequence of non-zero integers $(k_0,k_1,\dots, k_n)$ with opposite consecutive signs will be denoted $[k_0;\dots;k_n]$. This is a relative category whose definition might be found in \cite[Definition 3.3]{low_fractions_2015}. In particular, $[-1;1;\dots;-1;1;-1]$ denotes the relative category given by the zigzag category $\bullet \leftarrow\bullet \to \cdots \leftarrow \bullet \to \bullet \leftarrow \bullet$ equipped with its subcategory generated by left pointing arrows.  

%    \item For a walking zigzag $[k_0;\dots;k_n]$ and a pair of objects $X,Z\in \mathdutchcal{C}$, we will denote by $\mathsf{weq}\left[[k_0;\dots;k_n], \mathdutchcal{C}\right](X,Z)$ the category obtained as the following pullback
%     $$
%     \begin{tikzcd}[ampersand replacement=\&]
%         \mathsf{weq}\left[[k_0;\dots;k_n], \mathdutchcal{C}\right](X,Z) \ar[r]\ar[d]\ar[rd, phantom, "\lrcorner" near end] \& \mathsf{weq}\left[[k_0;\dots;k_n], \mathdutchcal{C}\right]\ar[d,"(\mathsf{source}\text{,}\mathsf{target})"]\\
%         \ast \ar[r,"(X\text{,}Z)"'] \& \Wtt\times \Wtt
%     \end{tikzcd}.
%     $$

%\item It will be convenient in the sequel to introduce the notation $\block=-1;1$, for instance to write  $[1;\block^3]$ instead of $[1;-1;1;-1;1;-1;1]$.
%\end{itemize}    
\end{notat}

We are ready to come back to our square of operads (\ref{eqt:CommutativeSquareOfOperads}).

\begin{lem}\label{lem:EnvsClosedEn} The map $\overline{\mathbbst{E}}_n\to \mathbbst{E}_n$ is an equivalence of operads.
\end{lem}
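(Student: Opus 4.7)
Since both $\overline{\mathbbst{E}}_n$ and $\mathbbst{E}_n$ are uncolored topological operads, it suffices to show the induced map on spaces of $m$-ary operations $\overline{\mathbbst{E}}_n(m)\to\mathbbst{E}_n(m)$ is a weak homotopy equivalence for every $m\geq 0$. The case $m=0$ is trivial, both spaces being a point. For $m=1$, I would parametrize a rectilinear embedding by its center and radius $(x,r)$ and consider the straight-line homotopy $((x,r),t)\mapsto((1-t)x,(1-t)r+t)$. One checks that this preserves $\mathbbst{E}_n(1)$, and that it preserves $\overline{\mathbbst{E}}_n(1)$ by observing that for $t<1$ the image sits in the strict-interior region $\{\|x\|+r<1\}$, while at $t=1$ it lands at the distinguished identity point $(0,1)$, which is a limit point of that region in $\Emb^{\mathsf{rect}}(\overline{\mathbbst{D}};\overline{\mathbbst{D}})$. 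This shows both arity-$1$ spaces are contractible, so the map between them is automatically a weak equivalence.

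For $m\geq 2$ I would identify $\overline{\mathbbst{E}}_n(m)$, via the obvious matching of centers and radii, with the subspace of $\mathbbst{E}_n(m)$ carved out by strict versions of the pairwise disjointness and containment-in-$\overline{\mathbbst{D}}$ inequalities. A shrinking homotopy $H_t$ that multiplies every radius of a configuration by a common factor $1-t(1-\lambda)$ for a chosen $\lambda\in(0,1)$ then deformation retracts $\mathbbst{E}_n(m)$ onto its dense subspace $\overline{\mathbbst{E}}_n(m)$: the strict inequalities are opened up at $t=1$ without disturbing the already-strict input. Alternatively, and perhaps more in the spirit of the surrounding section, one may adapt Lemma \ref{lem:InflationInducesHomotopyEquivalence} to the unit disc: both center-evaluation maps $\overline{\mathbbst{E}}_n(m)\to\Conf_m(\mathbbst{D})$ and $\mathbbst{E}_n(m)\to\Conf_m(\mathbbst{D})$ are homotopy equivalences by a classical shrinking argument (in the closed case one thins the radii uniformly, which preserves strict disjointness and strict interiority), and the comparison map commutes with them, so 2-out-of-3 concludes.

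I expect the main subtlety to be the case $m=1$, where the ad hoc inclusion of only the identity among boundary-touching operations (required to give $\overline{\mathbbst{E}}_n$ a strict unit) produces an awkward point-set situation: the identity is an isolated-looking exception inside an otherwise open locus, and one must carefully verify the continuity of the contracting homotopy at its endpoint $t=1$ with respect to the subspace topology coming from $\Emb^{\mathsf{rect}}(\overline{\mathbbst{D}};\overline{\mathbbst{D}})$. Once that is granted, the rest of the argument is essentially a controlled variation of the fattened-configuration homotopy technology already developed in Subsection \ref{subsect:DConfAndEquivalences}.
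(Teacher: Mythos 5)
Your proposal is correct, and in fact your ``alternative'' route is precisely the paper's (one-line) proof: evaluation at centers gives homotopy equivalences $\overline{\mathbbst{E}}_n(m)\to\Conf_m(\mathbbst{D})\leftarrow\mathbbst{E}_n(m)$ commuting with the comparison map, and 2-out-of-3 concludes. Your primary route --- a direct homotopy inverse by uniformly shrinking all radii by a factor $\lambda<1$ --- is an equally valid, slightly more self-contained variant that avoids the intermediate $\Conf_m(\mathbbst{D})$ (and hence avoids having to construct a continuous section of the center-evaluation map); it is worth noting that what you describe is not literally a deformation retraction (the radius-scaling does not fix $\overline{\mathbbst{E}}_n(m)$ pointwise), but it does restrict to a self-homotopy of $\overline{\mathbbst{E}}_n(m)$, which is all that is needed to certify the inclusion as a homotopy equivalence. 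Finally, the $m=1$ case is less delicate than you fear: the identity $(0,1)$ is not isolated but a genuine limit point of the open region $\{\|x\|+r<1\}$, so the subspace topology is benign and the very same radius-scaling homotopy $(x,r)\mapsto(x,\mu r)$ used for $m\geq 2$ works verbatim at $m=1$ without any need to specially contract toward or fix the identity point; continuity at $t=1$ is automatic since the homotopy is given by a globally continuous formula whose restriction to the subspace lands in the subspace.
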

\begin{proof} By a simple shrinking argument, one observes that evaluation at the center of discs induces homotopy equivalences
$
\overline{\mathbbst{E}}_n(m)\tilde\longrightarrow \Conf_m(\mathbbst{D})\tilde\longleftarrow \mathbbst{E}_n(m)
$ 
making the obvious triangle commute for any $m\geq 0$.
\end{proof}

\begin{lem}\label{lem:DRvsClosedDR} The map $\overline{\Drect}\!\,^R_n\to \Drect^R_n$ induces an equivalence of $\infty$-operads between the $\infty$-localizations at all unary operations of both operads. In particular, the forgetful functor $
\EuScript{A}\mathsf{lg}^{\mathsf{lc}}_{\Drect^R_n}(\EuScript{V})\longrightarrow\EuScript{A}\mathsf{lg}^{\mathsf{lc}}_{\overline{\Drect}\!\,^R_n}(\EuScript{V})$
is an equivalence of $\infty$-categories for any symmetric monoidal $\infty$-category $\EuScript{V}$.
\end{lem}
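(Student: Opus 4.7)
The plan is to reduce both statements to a purely combinatorial comparison, exploiting the fact that $\overline{\Drect}\!\,^R_n$ and $\Drect^R_n$ are set-valued (each multi-operation space is a point or empty). First, identify colors of the two operads via $\overline{D} \leftrightarrow \mathsf{int}(\overline{D})$, so that the map $\pi\colon \overline{\Drect}\!\,^R_n\to \Drect^R_n$ is bijective on colors and is an inclusion on sets of multi-operations. The only operations present in $\Drect^R_n$ but not in (the image of) $\overline{\Drect}\!\,^R_n$ are the ``borderline'' ones: multi-operations $(U_1,\dots,U_m)\to U_0$ where the closures $\overline{U_i}$ touch $\partial U_0$ or meet each other on the boundary. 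The goal is to show these borderline operations are redundant after inverting unary operations.

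The key geometric input is a shrinking trick. Given such a borderline operation in $\Drect^R_n$, pick $\upepsilon>0$ smaller than $\min_i(\text{rad}(U_i)-R)$ and than half the distance from each $\overline{U_i}$ to the ``obstructions'' (other $\overline{U_j}$'s and $\partial U_0$), and consider the concentric shrinkings $U_i'\subset U_i$ of radius $\text{rad}(U_i)-\upepsilon$. By construction, the closed discs $\overline{U_i'}$ are pairwise disjoint and strictly inside $U_0$, so $(U_1',\dots,U_m')\to U_0$ is (the image of) a genuine operation in $\overline{\Drect}\!\,^R_n$. The unary inclusions $U_i'\hookrightarrow U_i$ become invertible after localization, connecting the shrunk operation to the original one. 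Moreover, the set of admissible $\upepsilon$'s is a (convex) open interval, so the space of realizations of a given borderline operation as a shrinking of a strict one is contractible.

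To promote the pointwise picture to an equivalence of $\infty$-operads, apply the general criteria developed in Appendix \ref{sect:LocalizationQuasioperads}: working with the fibrational presentations $\overline{\Drect}\!\,^R_n{}^{\otimes}\to\Fin_*$ and $\Drect^R_n{}^{\otimes}\to\Fin_*$, the task reduces to showing that the inclusion of operator categories, after localizing unary operations in each fiber, is a fiberwise $\infty$-equivalence over $\Fin_*$. The shrinking construction provides, for each object of $\Drect^R_n{}^{\otimes}$, a cofinal family of objects in $\overline{\Drect}\!\,^R_n{}^{\otimes}$ connected by unary inclusions, so a Quillen-A style cofinality argument yields the desired equivalence on localizations. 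This proves the first statement.

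The second statement then follows formally: by the universal property of the $\infty$-localization of an operad at a class of unary operations (pullback along $\Op \to \Op[\mathsf{un}^{-1}]$ identifies $\EuScript{A}\mathsf{lg}_{\Op[\mathsf{un}^{-1}]}(\EuScript{V})$ with $\EuScript{A}\mathsf{lg}^{\mathsf{lc}}_{\Op}(\EuScript{V})$), the equivalence of localizations transfers to the equivalence of locally constant algebra categories. The main obstacle is the $\infty$-categorical bookkeeping needed to upgrade the elementary shrinking picture to an honest equivalence of $\infty$-operads; the naive combinatorics make the result intuitively clear, but its rigorous justification is exactly what the appendix on localizations of quasi-operads is designed to supply.
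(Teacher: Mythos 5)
Your shrinking construction is exactly the right geometric input, and the reduction via Appendix \ref{sect:LocalizationQuasioperads} (Corollary \ref{cor:InfinityLocalizationOfOperatorCats} plus Proposition \ref{prop_HinichCriteria}) is also the correct structural framework — this matches the paper. However, there is a genuine gap where you invoke a ``Quillen-A style cofinality argument'' to finish.

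Proposition \ref{prop_HinichCriteria} reduces the claim to weak contractibility of the slices $\mathsf{weq}\bigl[[k],\overline{\mathdutchcal{D}}\bigr]_{/\upsigma}$ for \emph{all} $k\geq 0$, not just $k=0$. For $k=0$ your argument works: the slice is a filtered poset of tuples of closed discs, and cofinality/contractibility is immediate. But for $k\geq 1$ the relevant slice is a poset of $k$-chains of nested tuples of discs, and — as the paper points out explicitly in the remark and Figure \ref{fig:PathologicalCofilteredness} right after Lemma \ref{lem:DRvsClosedDR} — this poset is in general \emph{neither filtered nor cofiltered}, and the coslice $\mathsf{weq}\bigl[[k],\overline{\mathdutchcal{D}}\bigr]_{\upsigma/}$ may even be empty. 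So the ``cofinal family connected by unary inclusions'' you describe does not exist at the level of $k$-chains, and a naive cofinality argument does not close the proof. (The concrete obstruction: in a chain, a disc at level $r-1$ sits strictly inside a disc at level $r$; after you shrink the level-$r$ disc you may break the containment of an already-chosen shrinking at level $r-1$, and you cannot fix both by passing to a common refinement because the poset is not directed.) The paper handles this by a double induction over levels of the Hasse diagram of a finite test poset $\Krect$ and over the chain index $r$, producing a zigzag of natural transformations $\upphi \Leftarrow \upphi' \Rightarrow \mathsf{ct}$ contracting an arbitrary finite diagram — a much more delicate argument than cofinality. Your single-$\upepsilon$ contractibility observation (``the set of admissible $\upepsilon$'s is a convex open interval'') is the seed of the right idea, but it only treats one operation at a time and does not by itself yield the coherent family of shrinkings across a $k$-chain and across a test diagram.

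The second paragraph of your proposal (deducing the statement about locally constant algebras from the equivalence of operad localizations) is correct and matches the paper.
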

\begin{proof} Let us consider the associated map between operator categories $\overline{\Drect}\!\,^{R,\otimes}_n\to \Drect^{R,\otimes}_n$. Also, let us denote $\mathdutchcal{D}$ (resp.\ $\overline{\mathdutchcal{D}}$) the relative category $(\Drect^{R,\otimes}_n,\Wrect_{\bullet})$ (resp.\ $\overline{\Drect}\!\,^{R,\otimes}_n$) obtained by considering the wide subcategory $\Wrect_{\bullet}$ of $\Drect^{R,\otimes}_n$ on the morphisms
\[
\left\{(\upvarphi,\underline{w})\colon \big(\langle m\rangle, \underline{U}\big) \to \big(\langle m\rangle,\underline{U}'\big):\quad \upvarphi\colon \langle m\rangle\xrightarrow{\;\simeq\;} \langle m\rangle\quad \text{and}\quad w_{i}\colon U_{\upvarphi^{-1}(i)}\hookrightarrow U'_i\;\text{ in }\Drect^R_n
\right\}.
\]
Working at the level of quasioperads (or operator categories) allows us to combine Corollary \ref{cor:InfinityLocalizationOfOperatorCats} and Proposition \ref{prop_HinichCriteria} to reduce the claim to: for any $k\geq 0$ and any $\upsigma\in \mathsf{weq}\left[[k],\mathdutchcal{D}\right]$, the slice category $\mathsf{weq}\left[[k],\overline{\mathdutchcal{D}}\right]_{/\upsigma}$ is weakly contractible.

Let us start with the case $k=0$. Then, $\upsigma\in \mathsf{weq}\left[[0],\mathdutchcal{D}\right]$ is the same as choosing an object $(\langle m\rangle,\underline{U})\in \Drect^{R,\otimes}_n$ and the associated slice category $\mathsf{weq}\left[[k],\overline{\mathdutchcal{D}}\right]_{/\upsigma}$ can be identified with the poset of $m$-tuples $(\overline{D}_1,\dots,\overline{D}_m)$ of discs in $\overline{\Drect}\!\,^{R}_n$ such that $\mathsf{int}(\overline{D}_i)\subseteq U_i$ for any $i$. Thus, it is immediate to see that this poset is weakly contractible. 

For $k\geq 1$, the situation is similar, but one has to be more careful since the operad $\Drect^R_n$ contains a multimorphism $(U_1,\dots, U_m)\to V$ even if $\partial U_i\cap \partial U_j\neq \diameter$ for a pair of distinct indices or if $\partial U_i\cap \partial V\neq \diameter$ for some $i$ and $U_i\subsetneq V$, in contrast to $\overline{\Drect}\!\,^{R}_n$. Now, $\upsigma\in \mathsf{weq}\left[[k],\mathdutchcal{D}\right]$ corresponds to a $k$-chain of maps in $\Drect^{R,\otimes}_n$  
\[
\upsigma\colon \left[\big(\langle m_0\rangle,\underline{U}^0\big) \xrightarrow{\;\;(\upalpha_1,\underline{\upmu}^1)\;\;}\big(\langle m_1\rangle,\underline{U}^1\big)  \xrightarrow{\;\;(\upalpha_2,\underline{\upmu}^2)\;\;} \cdots  \xrightarrow{\;\;(\upalpha_k,\underline{\upmu}^k)\;\;}\big(\langle m_k\rangle,\underline{U}^k\big) \right].
\]
Hence, the additional datum with respect to the case $k=0$ is that the families of discs come with inclusions $\bigsqcup_{i\in\upalpha_r^{-1}(j)}U^{r-1}_{i}\subseteq U^r_j$ whenever $j\in \langle m_r\rangle^{\circ}$ and $1\leq r\leq k$. In this case, the slice category $\mathsf{weq}\left[[k],\overline{\mathdutchcal{D}}\right]_{/\upsigma}$ is equivalent to the poset of tuples $(\overline{D}\!\,^0_1,\dots,\overline{D}\!\,^{0}_{m_0},\dots,\overline{D}\!\,^{k}_{1},\dots,\overline{D}\!\,^{k}_{m_k})$ of discs in $\overline{\Drect}\!\,^R_n$, denoted $\overline{D}\!\,^{*}_{\bullet}$, subject to:
\begin{itemize}
    \item $\mathsf{int}(\overline{D}\!\,^r_i)\subseteq  U^r_i$ for any $i\in \langle m_r\rangle^{\circ}$ and $0\leq r\leq k$,
    \item $\bigsqcup_{i\in\upalpha_r^{-1}(j)}\overline{D}\!\,^{r-1}_{i}\subset \mathsf{int}(\overline{D}\!\,^r_j)$ whenever $j\in \langle m_r\rangle^{\circ}$ or $\overline{D}\!\,^{r-1}_i=\overline{D}\!\,^{r}_j$ and $\upalpha_r^{-1}(j)=\{i\}$.
\end{itemize}
We argue that this poset is weakly contractible by showing that for any finite poset $\Krect$ and any functor $\Krect\to  \mathsf{weq}\left[[k],\overline{\mathdutchcal{D}}\right]_{/\upsigma}$, there is a diagonal filler
    \[
    \begin{tikzcd}[ampersand replacement=\&]
		\Krect\ar[rr] \ar[d]\&\&\mathsf{weq}\left[[k],\overline{\mathdutchcal{D}}\right]_{/\upsigma}\\ 
		 \mathsf{pt} \ar[rru, dashed] 
	\end{tikzcd}
    \]
 up to a zigzag of natural transformations. The idea is to gently shrink the discs $U^r_{i}$ so that the additional impositions for an object in $\mathsf{weq}\left[[k],\overline{\mathdutchcal{D}}\right]_{/\upsigma}$ are met; for instance, the shrunken discs should have radii strictly bigger than $R $ and they should satisfy $\partial V^{r}_{i}\cap\partial V^{r}_{j}=\diameter$ whenever $i,j$ are distinct elements of $\langle m_r\rangle^{\circ}$ such that $\upalpha_r(i)=\upalpha_{r}(j)$.

Let us set some notation by observing that the functor $\upphi\colon \Krect\to  \mathsf{weq}\left[[k],\overline{\mathdutchcal{D}}\right]_{/\upsigma}$ assigns tuples of discs $\overline{D}\!\,^*_{\bullet}(p)$ to elements $p\in \Krect$ and, on morphisms, it sends
\[
q\leq p \;\xmapsto{\quad\quad} \; \upphi_{qp}\colon \overline{D}\!\,^{*}_{\bullet}(q)\to \overline{D}\!\,^{*}_{\bullet}(p),
\]
where $\upphi_{qp}$ is comprised of unary morphisms $\overline{D}\!\,^r_i(q)\to \overline{D}\!\,^{r}_{i}(p)$ in $\overline{\Drect}\!\,^{R}_{n}$. We begin by considering a natural transformation $\upphi'\Rightarrow \upphi$ obtained by shrinking the discs conforming $\upphi$ (i.e.\ by considering discs with the same center, but smaller or equal radii) such that the resulting discs satisfy $\overline{D}\!\,^{\prime,r}_i(p)\subset U^r_{i}$ for any triple $(p,r,i)$, instead of the open condition $\mathsf{int}\big(\overline{D}\!\,^{\prime,r}_i(p)\big)\subseteq U^r_{i}$. For each triple $(p,r,i)$, we have to ensure:
\begin{itemize}
    \item $\displaystyle{\bigsqcup_{i\in \upalpha_r^{-1}(j)}}\overline{D}\!\,^{\prime,r-1}_i(p)\subset D^{\prime,r}_j(p)$ when $j\in \langle m_r\rangle^{\circ}$ or $\overline{D}\!\,^{\prime,r-1}_{i}(p)=\overline{D}\!\,^{\prime, r}_{j}(p)
    $ where $\upalpha_r^{-1}(j)=\{i\}$,
    \item $\overline{D}\!\,^{\prime,r}_i(q)\subset D^{\prime,r}_i(p)$ or $\overline{D}\!\,^{\prime,r}_i(q)=\overline{D}\!\,^{\prime,r}_i(p)$ whenever $q\leq p$,
\end{itemize}
and these conditions will be satisfied exactly when they hold for the components of $\upphi$. For example, $\overline{D}\!\,^{\prime,r}_i(q)=\overline{D}\!\,^{\prime,r}_i(p)$ when $\overline{D}\!\,^{r}_i(q)=\overline{D}\!\,^{r}_i(p)$.

To justify that such a natural transformation exists, we use a double induction over $(p,r)$. Being pedantic, the induction over $p\in \Krect$ requires choosing a well-ordering on $\Krect$ extending its partial ordering $\leq$ or, as we prefer, noting that we can work inductively over the levels of the Hasse diagram of $\Krect$; the elements in level $0$ are the minimal elements in $\Krect$, level $1$ is comprised of elements $p\in\Krect$ not in level $0$ such that $\nexists\ q\in \Krect:$ $p_0\lneq q\lneq p$ for any $p_0$ in level $0$, and so on.    

Start the induction at $p\in \Krect$ in level $0$ (i.e.\ $p$ is minimal in $K$) and $r=0$. This case is easy because most of the conditions are void, but it uses the fact that discs $\overline{D}\!\,^0_i(p)$ have radii strictly bigger than $R$. We just need to shrink the discs whose boundary touches the boundary of the relevant components of $\upsigma$. We continue by keeping $p$ fixed and running an induction over $r$. Having constructed the shrunken discs $\overline{D}\!\,^{\prime,t}_i(p)$ for $i\in \langle m_t\rangle^{\circ}$ and $t<r$, one notices that it is possible to shrink $\overline{D}\!\,^{r}_{j}(p)$ to meet the previous conditions. There are two cases: $(a)$ we can set $\overline{D}\!\,^{\prime, r-1}_i(p)=\overline{D}\!\,^{\prime,r}_j(p)$ when $\overline{D}\!\,^{r-1}_i(p)=\overline{D}\!\,^{r}_j(p)$, or $(b)$ we use that radii are strictly bigger than $R$ and that at most we have to make sure that a finite number of disjoint closed balls are contained in $D^{\prime,r}_j(p)\subseteq D^r_j(p)$ (knowing that they sit inside $D^r_{j}(p)$). The remaining steps in the induction over $p$ follow the same lines, since the worst case requires to shrink $\overline{D}\!\,^{r}_i(p)$ so that $\overline{D}\!\,^{\prime,r}_i(q)\subset D^{\prime,r}_i(p)$ for $q<p$ in lower levels.   

Using the new functor $\upphi'$ just constructed, we build a natural transformation $\upphi'\Rightarrow \mathsf{ct}_{\overline{V}\!\,^{*}_{\bullet}}$, where $\mathsf{ct}_{\overline{V}\!\,^{*}_{\bullet}}$ is the constant functor $\Krect\to \mathsf{weq}\left[[k],\overline{\mathdutchcal{D}}\right]_{/\upsigma}$ associated to a tuple $\overline{V}\!\,^*_{\bullet}$. The components $V^r_i$ of such a tuple are constructed by shrinking $U^r_i$. This is now possible since $\overline{D}\!\,^{\prime,r}_i(p)\subset U^r_{i}$ for any triple $(p,r,i)$. The formal construction applies an induction over $r$ similar to the one discussed above.\footnote{The case $\Krect=\diameter$ is also covered by the argument.}

Recapping, for any functor $\upphi\colon\Krect\to  \mathsf{weq}\left[[k],\overline{\mathdutchcal{D}}\right]_{/\upsigma}$, we have constructed a diagram
   \[
    \begin{tikzcd}[ampersand replacement=\&]
		\Krect\ar[rrrr,"\upphi"]\ar[rrrr,phantom, ""{name=A, below}]\ar[rrrr,"\upphi'"{name=B, description}, bend right=21] \ar[ddrr,"!"'] \&\&\&\& \mathsf{weq}\left[[k],\overline{\mathdutchcal{D}}\right]_{/\upsigma}\\\\ 
		\&\& \mathsf{pt} \ar[rruu, dashed,"\overline{V}\!\,^*_{\bullet}"']  \ar[Rightarrow, from=B] \ar[Rightarrow, from=B, to=A]
	\end{tikzcd}.
    \]
This shows that $\mathsf{weq}\left[[k],\overline{\mathdutchcal{D}}\right]_{/\upsigma}$ is weakly contractible, concluding the proof.
\end{proof}

\begin{rem} Using the notation from the proof of Lemma \ref{lem:DRvsClosedDR}, notice that the slice category $\mathsf{weq}\left[[k],\overline{\mathdutchcal{D}}\right]_{\upsigma/}$ might be empty and that $\mathsf{weq}\left[[k],\overline{\mathdutchcal{D}}\right]_{/\upsigma}$ is not filtered nor cofiltered for some $\upsigma\in \mathsf{weq}\left[[k],\mathdutchcal{D}\right]$. See Figure  \ref{fig:PathologicalCofilteredness}. Therefore, our strategy to show weak contractibility of the slices in the cited proof cannot be avoided easily.
\end{rem}

\begin{figure}[htp]
	\centering
	\includegraphics%[height=5cm]%[width=16cm]
	{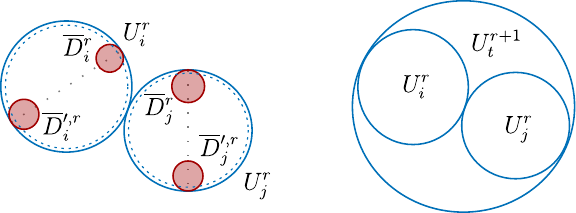}
	\caption{On the left, components of two elements $\overline{D}\!\,^{*}_{\bullet},\, \overline{D}\!\,^{\prime,*}_{\bullet}\in \mathsf{weq}\left[[k],\overline{\mathdutchcal{D}}\right]_{/\upsigma}$ which admit no upper nor lower bound in that poset, i.e.\ the slice is not filtered nor cofiltered. On the right, components of an element $\upsigma\in \mathsf{weq}\left[[k],\mathdutchcal{D}\right]$ such that $\mathsf{weq}\left[[k],\overline{\mathdutchcal{D}}\right]_{\upsigma/}$ is empty.}\label{fig:PathologicalCofilteredness}
\end{figure}

%%%%% Subsection 2.3 starts here %%%%%

\subsection{Proof of Theorem \ref{thm_LocalizationForTruncatedDisj}}\label{subsect:ProofLocalizationTHM}

 First notice that the commutative square (\ref{eqt:CommutativeSquareOfOperads}) together with Lemmas \ref{lem:EnvsClosedEn} and \ref{lem:DRvsClosedDR} reduce the question about $\upgamma\colon \Drect^R_n\to \mathbbst{E}_n$ to its analogue for $\overline{\upgamma}\colon\overline{\Drect}\!\,_n^{R}\to \overline{\mathbbst{E}}_n$. To prove this claim, we work with quasioperads and we show the following equivalent statement (see Corollary \ref{cor:InfinityLocalizationOfOperatorCats}): $\Nrect(\overline{\Drect}\!\,_n^{R,\otimes})\to \Nrect_{\infty}(\overline{\mathbbst{E}}\!\,_n^{\otimes})$ exhibits $\Nrect_{\infty}(\overline{\mathbbst{E}}\!\,_n^{\otimes})$ as the $\infty$-localization of $\Nrect(\overline{\Drect}\!\,_n^{R,\otimes})$ at $\Nrect(\Wrect_{\bullet})$ as an $\infty$\emph{-category}. We apply Proposition \ref{prop_HinichCriteria} and Remark \ref{rem:HinichCriteriaPLUS} to see that this is the case. 

 Recycling the notation $\overline{\mathdutchcal{D}}=(\overline{\Drect}\!\,^{R,\otimes}_n,\Wrect_{\bullet})$ from the proof of Lemma \ref{lem:DRvsClosedDR}, we must show that for any $k\geq 0$
 \[
\overline{\upgamma}_*\colon \mathsf{weq}\left[[k],\overline{\mathdutchcal{D}}\right]\xrightarrow{\qquad\qquad} \mathsf{weq}\left[[k],\overline{\mathbbst{E}}\!\,_n^{\otimes,\sharp}\right]\equiv \Fun([k],\overline{\mathbbst{E}}\!\,_n^{\otimes})^{\simeq}
 \]
is a weak homotopy equivalence. Making use of the commutative triangle
\[
		\begin{tikzcd}[ampersand replacement=\&]
		\mathsf{weq}\left[[k],\overline{\mathdutchcal{D}}\right]\ar[rr]\ar[rd] \&\& \mathsf{weq}\left[[k],\overline{\mathbbst{E}}\!\,_n^{\otimes,\sharp}\right]\ar[ld]\\
			\& \mathsf{weq}\left[[k],\Fin_*^{\sharp}\right]
		\end{tikzcd}\quad,
\]	
we can further reduce the claim to show that the horizontal map induces weak homotopy equivalences between homotopy fibers of the diagonal arrows over $k$-chains $\upalpha\in \Fun([k],\Fin_{*})$. Hence, let us fix $\upalpha$ and look at the mentioned homotopy fibers, denoted $\mathsf{weq}\left[[k],\overline{\mathdutchcal{D}}\right]_{\upalpha}$ and $\mathsf{weq}\left[[k],\overline{\mathbbst{E}}\!\,_n^{\otimes,\sharp}\right]_{\upalpha}$ respectively. 

For $k=0$, the right diagonal map becomes an equivalence and hence, we must check that for any $\langle m\rangle \in \Fin_*$, the category
$
\mathsf{weq}\left[[0],\overline{\mathdutchcal{D}}\right]_{\langle m\rangle}
$
is weakly contractible. It is clearly so because it is filtered.

For $k\geq 1$, since $\mathsf{weq}\left[[k],\overline{\mathbbst{E}}\!\,_n^{\otimes,\sharp}\right]$ is the $k^{\text{th}}$-level of the complete Segal space $\Nrect^{\Rrect}_{\infty}(\overline{\mathbbst{E}}\!\,^{\otimes,\sharp}_n)_{\bullet}$,
\[
\mathsf{weq}\left[[k],\overline{\mathbbst{E}}\!\,_n^{\otimes,\sharp}\right]\xrightarrow{\quad \simeq\quad} \coprod_{\langle m_0\rangle,\dots,\langle m_k\rangle} \Map_{\overline{\mathbbst{E}}\!\,_n^{\otimes}}(\langle m_{k-1}\rangle,\langle m_{k}\rangle) \times \cdots \times\Map_{\overline{\mathbbst{E}}\!\,_n^{\otimes}}(\langle m_0\rangle,\langle m_1\rangle).
%\mathsf{weq}\left[[1],\overline{\mathbbst{E}}\!\,_n^{\otimes,\sharp}\right] \underset{\mathsf{weq}\left[[0],\overline{\mathbbst{E}}\!\,_n^{\otimes,\sharp}\right]}{\times}\cdots  \underset{\mathsf{weq}\left[[0],\overline{\mathbbst{E}}\!\,_n^{\otimes,\sharp}\right]}{\times}\mathsf{weq}\left[[1],\overline{\mathbbst{E}}\!\,_n^{\otimes,\sharp}\right]
\]
Taking the homotopy fiber over $\upalpha$, and using the definition of $\overline{\mathbbst{E}}_n$ (where we only allow for rectilinear embeddings whose image do not intersect the boundary, except for identity embeddings), we get an identification $\mathsf{weq}\left[[k],\overline{\mathbbst{E}}\!\,_n^{\otimes,\sharp}\right]_{\upalpha}\cong \Drect^0\Conf_{\upalpha}(\mathbbst{D})$. Notice that we use the obvious variant of Definition \ref{defn_NestedConfigurations} where the $\upalpha$-nested configurations live in the open $n$-dimensional unit disc $\mathbbst{D}$. On the other hand, observe that $\mathsf{weq}\left[[k],\overline{\mathdutchcal{D}}\right]_{\upalpha}$ can be identified with the poset of tuples $\underline{U}^{*}=\big(\underline{U}^{0},\dots, \underline{U}^{k}\big)$, where each $\underline{U}^{r}=\big(U^{r}_{i_r}\big)_{i_r\in \langle m_r\rangle^{\circ}}$ is again a tuple of euclidean open discs in $\mathbbst{R}^n$ with radii strictly bigger than $R$, subject to:
\begin{itemize}
    \item $\overline{U}\!\,_{i_r}^{r}\cap\overline{U}\!\,_{j_r}^{r}=\diameter$ if $\upalpha_{t}\cdots \upalpha_{r+1}(i_r)=\upalpha_{t}\cdots \upalpha_{r+1}(j_r)\neq *$ for some $t\leq k$,
    \item either $\overline{U}\!\,_{i_{r-1}}^{r-1}\subset U_{j_{r}}^{r}$ or $U^{r-1}_{i_{r-1}}=U^{r}_{j_r}$ if $i_{r-1}\in \upalpha_{r}^{-1}(j_r)$.
\end{itemize}

The rest of the proof exploits the following (homotopy) commutative diagram
\[
\begin{tikzcd}[ampersand replacement=\&]
\mathsf{weq}\left[[k],\overline{\mathdutchcal{D}}\right]_{\upalpha}\ar[d,"(5)"']\ar[rr] \&\& \mathsf{weq}\left[[k],\overline{\mathbbst{E}}\!\,_n^{\otimes,\sharp}\right]_\upalpha\ar[d,"(1)"]\\
\widetilde{\mathsf{weq}}\left[[k],\overline{\mathdutchcal{D}}\right]_{\upalpha}\ar[d,"(4)"'] \&\& \Drect^0\Conf_{\upalpha}(\mathbbst{D})\ar[d,"(2)"]\\
\Drect^R\Conf_{\upalpha}(\mathbbst{R}^n) \ar[rr,"(3)"'] \&\& \Drect^0\Conf_{\upalpha}(\mathbbst{R}^n)
\end{tikzcd},
\]
where $\widetilde{\mathsf{weq}}\left[[k],\overline{\mathdutchcal{D}}\right]_{\upalpha}$ is the poset of tuples $\underline{U}^*$ defined as above, with the only difference that the components $\underline{U}^r=\big(U^r_{i_r}\big)$ are indexed by $i_r\in \langle m_r\rangle \backslash \upalpha_{r+1}^{-1}(*)$ and $r$ runs through $0\leq r< k$. By 2-out of-3, we will be done if we show that $(1)-(5)$ are weak homotopy equivalences.

The map $(1)$ is the previous identification and $(2)-(3)$ were shown to be homotopy equivalences in Lemmas \ref{lem:InflationInducesHomotopyEquivalenceNestedVersion} and \ref{lem:NestedD0ConfigurationsInDisc}.  An application of Quillen's Theorem A shows that the projection functor $(5)$ is a weak homotopy equivalence. More concretely, one can observe that for any $\underline{U}^*\in \widetilde{\mathsf{weq}}\left[[k],\overline{\mathdutchcal{D}}\right]_{\upalpha}$, the slice $\big(\mathsf{weq}\left[[k],\overline{\mathdutchcal{D}}\right]_{\upalpha}\big)_{/\underline{U}^*}$ is filtered, essentially because the components indexed by $i_r\in \upalpha_{r+1}^{-1}(*)\cap\langle m_r\rangle^{\circ}$ can be taken as big as needed.

It remains to see that $(4)$ is a weak homotopy equivalence to conclude the proof. The idea is to apply Lurie--Seifert--van Kampen's Theorem \cite[A.3.1]{lurie_higher_nodate} and for that purpose, we must interpret elements in $\Prect:=\widetilde{\mathsf{weq}}\left[[k],\overline{\mathdutchcal{D}}\right]_{\upalpha}$ as open subsets of $\Drect^R\Conf_{\upalpha}(\mathbbst{R}^n)$. Let us define a functor
$\upzeta\colon\Prect\to\mathsf{Open}\big(\Drect^R\Conf_{\upalpha}(\mathbbst{R}^n)\big)$ by sending each $\underline{ U}^{*}$ to the subspace $\upzeta(\underline{ U}^{*})$ of $\Drect^R\Conf_{\upalpha}(\mathbbst{R}^n)$ spanned by embeddings whose image is contained in $\underline{ U}^{*}$, i.e.
		\[
		\upzeta(\underline{ U}^{*})=\Drect^R\Conf_{\upalpha}(\mathbbst{R}^n)\cap\prod_{1\leq i_r\leq m}\Emb^{\mathsf{rect}}\big(\overline{\mathbbst{D}}, U_{i_r}^{r}\big),
		\] 
with $m=m_0-\vert\upalpha^{-1}_{1}(*)\vert+\dots +m_{k-1}-\vert \upalpha^{-1}_{k}(*)\vert$ and where we use labels $i_r$ for the factors corresponding to $\langle m_r\rangle \backslash \upalpha_{r+1}^{-1}(*)$ in the product. With this functor at hand, the map $(4)$ can be seen as 
\[
		\widetilde{\mathsf{weq}}\left[[k],\overline{\mathdutchcal{D}}\right]_{\upalpha}\simeq \hocolim\big(\Prect\overset{\upzeta}{\longrightarrow}\EuScript{S}\mathsf{pc}\big)\longrightarrow \Drect^R\Conf_{\upalpha}(\mathbbst{R}^n),
\]
		where the map on the left is an equivalence since $\upzeta(\underline{ U}^{*})$ is contractible for any $ \underline{ U}^{*}\in \Prect$. To see that the right map is an equivalence we apply Lurie--Seifert--vanKampen's Theorem. It suffices to check that for every $(f_{i_r})_{i_r}\in\Drect^R\Conf_{\upalpha}(\mathbbst{R}^n)$ the subposet 
		$
		\left\lbrace \underline{ U}^{*}\in\Prect:\; (f_{i_r})_{i_r}\in\upzeta(\underline{ U}^{*})\right\rbrace\subseteq \Prect
		  $
		is cofiltered. It is non-empty since we assumed that embeddings in $\Drect^R\Conf_{\upalpha}(\mathbbst{R}^n)$ do not allow intersections of boundaries (see Figure \ref{fig:NonEmptynessGeneralSlice}). It remains to see that for any pair of distinct elements of the poset $\upzeta(\underline{ U}^*)\ni (f_{i_r})_{i_r}\in\upzeta(\underline{ V}^*)$, one can find a lower bound $(f_{i_r})_{i_r}\in\upzeta(\underline{ W}^*)$, i.e. $\underline{ U}^*\leftarrow \underline{ W}^*\rightarrow \underline{ V}^*$.

        Constructing such a $\underline{W}^*$ amounts to build tuples of discs $\underline{W}^r=(W^r_{i_r})_{i_r\in \langle m_r\rangle\backslash\upalpha_{r+1}^{-1}(*)}$ for $0\leq r<k$ subject to the following conditions:
        \begin{itemize}
            \item[$(a)$] $f_{i_r}(\overline{\mathbbst{D}})\subset W^r_{i_r}$ for all $i_r\in \langle m_r\rangle \backslash\upalpha_{r+1}^{-1}(*)$ and $0\leq r<k$,
           
            \item[$(b)$] $\overline{W}\!\,^r_{i_r}\subset U^r_{i_r}\cap V^r_{i_r}$ for all pairs $(r,i_r)$,
            
            %\item[$(c)$] $\bigsqcup_{t\in \upalpha_r^{-1}(i_r)}\overline{W}\!\,^{r-1}_{t}\subseteq \overline{W}\!\,^r_{i_r}$ with two subcases: either the disjoint union sits inside the interior $W^r_{i_r}\subset \overline{W}\!\,^r_{i_r}$, or there is just one component in the disjoint union and the inclusion is an identity $\overline{W}\!\,^{r-1}_{i_{r-1}}=\overline{W}\!\,^r_{i_r}$.

            \item[$(c)$] either $\bigsqcup_{t\in\upalpha_r^{-1}(i_r)}\overline{W}\!\,^{r-1}_{t}\subset W^r_{i_r}$ or $\overline{W}\!\,^{r-1}_{i_{r-1}}=\overline{W}\!\,^{r}_{i_r}$ with $\upalpha_r^{-1}(i_r)=\{i_{r-1}\}$ for all pairs $(r,i_r)$.
        \end{itemize}
        Notice that condition $(a)$ implies that the radii of the discs are always strictly bigger than $R$. Also, observe that $U^r_{i_r}\cap V^r_{i_r}$ is a convex open subset of $\mathbbst{R}^n$ containing the closed disc $f_{i_r}(\overline{\mathbbst{D}})$. We argue that this family can be built by working inductively over $0\leq r<k$.
        
        Starting at $r=0$, we find the disc $W^0_{i_0}$ for each $i_0\in \langle m_0\rangle \backslash \upalpha_1^{-1}(*)$ by ``expanding" $f_{i_0}(\mathbbst{D})$ inside $U^0_{i_0}\cap V^0_{i_0}$. This means that $W^0_{i_0}$ shares its center with $f_{i_0}(\mathbbst{D})$, i.e.\ the point $f_{i_0}(0)$, and its radius is strictly bigger than that of $f_{i_0}(\mathbbst{D})$. At this level, this seems sufficient since condition $(c)$ is void for $r=0$. However, to ensure that next steps in the construction are doable, we additionally impose: $\overline{W}\!\,^0_{i_0}\subset f_{i_t}(\mathbbst{D})$ if $f_{i_0}(\overline{\mathbbst{D}})\subset f_{i_t}(\mathbbst{D})$, where $i_t$ is the first index of the form $i_t=\upalpha_t\cdots\upalpha_1(i_0)\in \langle m_t\rangle\backslash\upalpha_{t+1}^{-1}(*)$ for which such an inclusion occurs. If there is no such index, i.e.
        \[
        f_{i_0}(\overline{\mathbbst{D}})=f_{i_1}(\overline{\mathbbst{D}})=\cdots =f_{i_l}(\overline{\mathbbst{D}})
        \]
        with $i_s=\upalpha_s\cdots\upalpha_1(i_0)$ for any $1\leq s\leq l$ and either $l=k-1$ or $\upalpha_{l+2}\upalpha_{l+1}(i_l)=*$, we impose no further condition. The previous expansion can be performed because $f_{i_0}(\overline{\mathbbst{D}})$ is a closed disc sitting inside the convex open subset $U^0_{i_0}\cap V^0_{i_0}$ and the open disc $f_{i_t}(\mathbbst{D})$.

        For $r=1$ and any $i_1\in \langle m_1\rangle\backslash\upalpha_2^{-1}(*)$, we want to do a similar process: find $W^1_{i_1}$ by expanding $f_{i_1}(\mathbbst{D})$ inside $U^1_{i_1}\cap V^1_{i_1}$ and $f_{i_t}(\mathbbst{D})$, where $i_t$ is the first index obtained by applying components of $\upalpha$ to $i_0$ such that $f_{i_1}(\overline{\mathbbst{D}})\subset f_{i_t}(\mathbbst{D})$.  Now, we need to make sure that condition $(c)$ holds (which depends on the previous step $r=0$). There are two cases: either $\bigsqcup_{t\in \upalpha_{1}^{-1}(i_1)}f_{t}(\overline{\mathbbst{D}})\subset f_{i_1}(\mathbbst{D})$ or $f_{i_0}(\overline{\mathbbst{D}})=f_{i_1}(\overline{\mathbbst{D}})$ for the unique $i_0\in \upalpha_1^{-1}(i_1)$. In the second case, it suffices to set $W^1_{i_1}:=W^0_{i_0}$. Otherwise, we are allowed to perform the expansion of $f_{i_1}(\mathbbst{D})$ because $\overline{W}\!\,^0_{t} $ with $t\in \upalpha_1^{-1}(i_1)$ was selected so that it is contained in $f_{i_1}(\mathbbst{D})$ and because of the inclusion $f_{i_1}(\overline{\mathbbst{D}})\subset U^1_{i_1}\cap V^1_{i_1}\cap f_{i_t}(\mathbbst{D})$. 

        The general case follows by the same argument given for $r=1$; just make the appropriate index-modifications. Recapping, we have constructed a lower bound $\underline{W}^*$ in the subposet of $\Prect$ for the pair of elements $\underline{U}^*,\underline{V}^*$, showing that it is cofiltered, therefore weakly contractible. 
        
        \hfill$\Box$

\begin{figure}[htp]
	\centering
	\includegraphics[width=16cm]
	{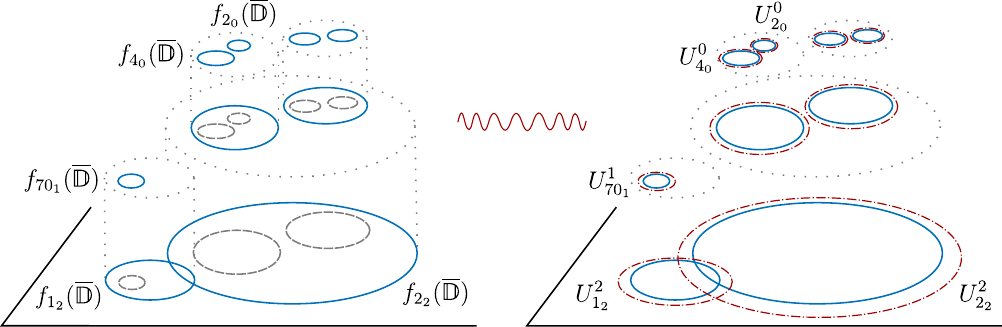}
	\caption{A point $(f_{i_r})_{i_r}\in \Drect^R\Conf_{\upalpha} (\mathbbst{R}^n)$ and $\underline{U}^*\in \Prect$ such that $(f_{i_r})_{i_r}\in \upzeta(\underline{U}^*) $.}\label{fig:NonEmptynessGeneralSlice}
\end{figure}

\begin{rem} The introduction of the modified poset $\widetilde{\mathsf{weq}}\left[[k],\overline{\mathdutchcal{D}}\right]_{\upalpha}$ in the previous proof is fundamental. If one just tries to apply Lurie--Seifert--van Kampen's Theorem to the obvious variation $\upzeta'\colon \mathsf{weq}\left[[k],\overline{\mathdutchcal{D}}\right]_{\upalpha}\longrightarrow\mathsf{Open}\big(\Drect^R\Conf_{\upalpha}(\mathbbst{R}^n)\big)$ of the functor $\upzeta$, one faces the problem that the subposets
\[
\left\lbrace \underline{ U}^{*}\in\mathsf{weq}\left[[k],\overline{\mathdutchcal{D}}\right]_{\upalpha}:\; (f_{i_r})_{i_r}\in\upzeta'(\underline{ U}^{*})\right\rbrace\subseteq \mathsf{weq}\left[[k],\overline{\mathdutchcal{D}}\right]_{\upalpha}
\]
are not always cofiltered. The issue arises when trying to find the $i_r^{\text{th}}$-component of a ``lower bound'' for a pair of distinct elements $\underline{U}^*$, $\underline{V}^*$ where the index belongs to $i_r\in \upalpha_{r+1}^{-1}(*)\cap \langle m_r\rangle^{\circ}$. It can happen that $U^r_{i_r}\cap V^r_{i_r}$ does not contain any disc of radius strictly bigger than $R$.
\end{rem}

%%%%% Subsection 2.4 starts here %%%%%

\subsection{A cubical version}\label{subsect:LargeScaleCubes}

So far, we have chosen to work with euclidean discs in $\mathbbst{R}^n$, but the arguments do not really use anything special from the euclidean norm (except for convexity of intersections of balls). An inspection of our proofs reveal that other norms on $\mathbbst{R}^n$ yield operads for which our main result hold. We will focus on the $\infty$-norm
$
\vert\vert x\vert\vert_{\infty}=\mathsf{max} \{\vert x_i\vert:\ 1\leq i\leq n\}.
$
The practical difference is that balls become cubes for $\vert\vert\,\text{-}\,\vert\vert_{\infty}$, e.g.\ 
\[
\square:=(-1,1)^n=\{y\in \mathbbst{R}^n:\ \vert\vert y \vert\vert_{\infty}< 1\}=:\mathsf{B}_{\infty}(0,1),
\]
and sometimes this has some advantages over working with euclidean discs.

It is fairly simple to modify our definitions replacing euclidean discs by cubes to get $R$-\emph{truncated/little} $n$-\emph{cubes operads}:
\[
\begin{tikzcd}
 \Drect^R_n \ar[d,rightsquigarrow]& \overline{\Drect}\!\,^R_n \ar[d,rightsquigarrow] & \mathbbst{E}_n \ar[d,rightsquigarrow] & \overline{\mathbbst{E}}_n \ar[d,rightsquigarrow]  \\
 \Crect^R_n & \overline{\Crect}\!\,^{R}_n & \mathbbst{E}^{\square}_{n} & \overline{\mathbbst{E}}\!\,^{\square}_n
\end{tikzcd}\quad.
\]

Notice that all $\mathbbst{E}_n$, $\mathbbst{E}^{\square}_n$, $\overline{\mathbbst{E}}_n$ and $\overline{\mathbbst{E}}\,\!^{\square}_n$ are equivalent operads. Moreover, locally constant algebras over $\Crect^R_n$ and $\overline{\Crect}\!\,^R_n$ can be defined in an obvious way (see Definition \ref{defn:PrefactAtUnitaryScale}).

Following the proof strategy we wrote for Theorem \ref{thm_LocalizationForTruncatedDisjStratifiedI}, which summarizes our proof of Theorem \ref{thm_LocalizationForTruncatedDisj}, one can check that the arguments we applied for euclidean discs work with cubes. The final output is:
\begin{thm} For any symmmetric monoidal $\infty$-category $\EuScript{V}$, restriction along the operad maps $\overline{\Crect}\!\,^R_n\to \Crect^R_n\to \mathbbst{E}^{\square}_n$ establish equivalences of $\infty$-categories
\[
\EuScript{A}\mathsf{lg}_{\mathbbst{E}_n}(\EuScript{V})\simeq\EuScript{A}\mathsf{lg}_{\mathbbst{E}_n^{\square}}(\EuScript{V})\xrightarrow{\;\;\sim\;\;}\EuScript{A}\mathsf{lg}^{\mathsf{lc}}_{\Crect_n^R}(\EuScript{V}) \xrightarrow{\;\;\sim\;\;} \EuScript{A}\mathsf{lg}^{\mathsf{lc}}_{\overline{\Crect}\!\,_n^R}(\EuScript{V})\;. 
\]
\end{thm}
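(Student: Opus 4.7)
The strategy is to decompose the stated chain of equivalences into three independent steps and to verify that each of the auxiliary lemmas from Subsections \ref{subsect:DConfAndEquivalences}--\ref{subsect:ProofLocalizationTHM} transports to the cubical setting. The key observation is that all the geometric inputs in those proofs only used two features of euclidean discs: (a) the class of shapes is stable under dilations (Remark \ref{rem:dilations}) and (b) pairwise intersections of such shapes are convex. Axis-aligned cubes satisfy both properties (in fact their intersections are themselves axis-aligned cubes when non-empty, which is even more convenient), so the proofs should go through with only notational changes.

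\textbf{Step 1: the equivalence $\EuScript{A}\mathsf{lg}_{\mathbbst{E}_n}(\EuScript{V})\simeq \EuScript{A}\mathsf{lg}_{\mathbbst{E}_n^{\square}}(\EuScript{V})$.} I would deduce this from the well-known equivalence of the operads $\mathbbst{E}_n$ and $\mathbbst{E}_n^{\square}$. As in Lemma \ref{lem:EnvsClosedEn}, evaluation at centers yields a commutative diagram of weak equivalences $\mathbbst{E}_n(m)\to \Conf_m(\square)\leftarrow \mathbbst{E}_n^{\square}(m)$ for every $m$, using that $\square$ and $\mathbbst{D}$ are both contractible. The same argument (applied to closed cubes whose boundaries do not touch the boundary of the target cube) shows $\overline{\mathbbst{E}}\!\,_n^{\square}\simeq \mathbbst{E}_n^{\square}$, which is the cubical analogue of Lemma \ref{lem:EnvsClosedEn}.

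\textbf{Step 2: the equivalence $\EuScript{A}\mathsf{lg}^{\mathsf{lc}}_{\Crect_n^R}(\EuScript{V})\simeq \EuScript{A}\mathsf{lg}^{\mathsf{lc}}_{\overline{\Crect}\!\,_n^R}(\EuScript{V})$.} I would follow the proof of Lemma \ref{lem:DRvsClosedDR} verbatim, after replacing discs by cubes throughout. The reduction via Corollary \ref{cor:InfinityLocalizationOfOperatorCats} and Proposition \ref{prop_HinichCriteria} is formal, so one only has to revisit the weak contractibility of the slice categories $\mathsf{weq}[[k],\overline{\mathdutchcal{C}}]_{/\upsigma}$, where $\overline{\mathdutchcal{C}}=(\overline{\Crect}\!\,_n^{R,\otimes},\Wrect_{\bullet})$. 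The inductive shrinking argument used for discs (over levels of the Hasse diagram of $\Krect$ and over $r$) only requires that one can always make axis-aligned closed cubes slightly smaller while keeping them inside a prescribed family of open cubes with axis-aligned boundaries, which is clear.

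\textbf{Step 3: the equivalence $\EuScript{A}\mathsf{lg}_{\mathbbst{E}_n^{\square}}(\EuScript{V})\simeq \EuScript{A}\mathsf{lg}^{\mathsf{lc}}_{\overline{\Crect}\!\,_n^R}(\EuScript{V})$.} This is the cubical analogue of Theorem \ref{thm_LocalizationForTruncatedDisj}, and is the substantive part. I would introduce cubical versions of the fattened and $\upalpha$-nested configuration spaces of Subsection \ref{subsect:DConfAndEquivalences}, and replay the arguments of Lemmas \ref{lem:InflationInducesHomotopyEquivalence}, \ref{lem:InflationInducesHomotopyEquivalenceNestedVersion} and \ref{lem:NestedD0ConfigurationsInDisc}: since dilations in $\mathbbst{R}^n$ send cubes (with respect to $\|\,\cdot\,\|_\infty$) to cubes, the inflation maps are well-defined and admit the same straight-line homotopies back to the identity. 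The comparison with configurations inside the open cube $\square$ replaces the role of $\mathbbst{D}$. With these lemmas in hand, the proof of Subsection \ref{subsect:ProofLocalizationTHM} applies verbatim.

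\textbf{Main obstacle.} The only place that requires a genuine verification is the final cofilteredness step, where for a point $(f_{i_r})_{i_r}$ of a cubical fattened configuration and two elements $\underline{U}^*,\underline{V}^*$ of the relevant poset one must construct a lower bound $\underline{W}^*$ by expanding each $f_{i_r}(\overline{\square})$ inside $U^r_{i_r}\cap V^r_{i_r}$. For discs the argument used that $U^r_{i_r}\cap V^r_{i_r}$ is a convex open set containing the closed disc, and one expanded to a concentric disc. For axis-aligned cubes, this intersection is either empty or again an open axis-aligned cube, and any closed cube strictly inside it can be expanded to a slightly larger concentric (axis-aligned) cube still contained in it; the inductive construction over $r$ (handling the two cases of condition $(c)$ exactly as before) is then identical. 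The very last subtlety -- that the non-empty condition for the subposet $\{\underline{U}^*:\, (f_{i_r})_{i_r}\in \upzeta(\underline{U}^*)\}$ holds -- uses that cubes in $\overline{\Crect}\!\,_n^{R}$ cannot share boundary points (mirroring Definition \ref{defn:ClosedTruncatedDisj}), and this is where one must make sure the cubical closed operad is defined so that this point-set condition is built in.
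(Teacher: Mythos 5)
Your proposal is correct and takes essentially the same approach as the paper, which for this theorem simply records that the arguments for euclidean discs carry over to cubes; your three-step decomposition ($\mathbbst{E}_n\simeq\mathbbst{E}_n^{\square}$, the closed/open comparison following Lemma \ref{lem:DRvsClosedDR}, and the cubical analogue of the configuration-space and Lurie--Seifert--van Kampen arguments from Subsections \ref{subsect:DConfAndEquivalences}--\ref{subsect:ProofLocalizationTHM}) tracks the paper's structure faithfully. One small inaccuracy worth flagging: the intersection of two axis-aligned cubes (balls for $\Vert\cdot\Vert_\infty$) is in general an axis-aligned \emph{box} with unequal side lengths rather than another cube, so cubes are not quite as convenient as you suggest on this point; the argument is unaffected, since all one needs in the cofilteredness step is that this intersection is a convex open set compactly containing the relevant closed cube, so that a small concentric expansion stays inside it.
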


To illustrate the potential of working with cubes, we refer to the recent work of Benini-Fern\'andez-Schenkel \cite{benini_derived_2024} where they have constructed a ``rectangular prefactorization algebra on $\mathbbst{Z}^2$'' modeling a version of lattice $2d$ Yang--Mills theory.  

Let us reproduce their definition of rectangular prefactorization algebra. \begin{defn} The \emph{rectangular prefactorization operad} $\mathcalpxtx{P}_{\mathbbst{Z}^n}$ on the lattice $\mathbbst{Z}^n$ is the colored operad with:
\begin{itemize}
        \item[\textbf{Obj:}] its colors are rectangular subsets $\overline{V}=\prod_i[a_i,b_i]\subseteq \mathbbst{Z}^n$ where each side has length $\geq 2$;\footnote{Notice that their scale-restriction is closed, while our operads impose crucially a restriction of the form $>R$. Of course, their choice of size limit is not fundamental.} 
        \item[\textbf{Mor:}] it has a (unique) multimorphism $(\,\overline{V}_1,\dots,\overline{V}_m)\to \overline{V}$ if and only if $\overline{V}_1\sqcup \dots\sqcup \overline{V}_k\subseteq \overline{V}$. 
    \end{itemize}

 A $\mathcalpxtx{P}_{\mathbbst{Z}^n}$-algebra $\mathcalpxtx{F}$ is \emph{locally constant} if $\mathcalpxtx{F}\big(\overline{V}\big)\to \mathcalpxtx{F}\big(\overline{V}\!\,'\big) $ is an equivalence for every inclusion of rectangular subsets $\overline{V}\hookrightarrow \overline{V}\!\,'$.
\end{defn}

Let us fix $R\geq\frac32$. A ball of radius $R\geq\frac32$ for the $\infty$-norm is a cube with sides having length bigger than $2R\geq3$; hence each side contains an integer interval of length $\geq 2$. 
\begin{prop}\label{prop P vs C}
Any $\mathcalpxtx{P}_{\mathbbst{Z}^n}$-algebra determines a $\overline{\Crect}\!\,^R_n$-algebra. Furthermore, the last one is locally constant if so is the original.
\end{prop}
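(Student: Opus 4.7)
The approach is to construct a map of (colored) operads $\Phi : \overline{\Crect}\!\,^R_n \to \mathcalpxtx{P}_{\mathbbst{Z}^n}$ and then pull back algebras along it. On colors, I would send a closed cube $\overline{C} = \prod_{i=1}^n [x_i - r, x_i + r]$ of radius $r > R$ to the integer rectangular subset of its integer points
\[
\Phi(\overline{C}) := \prod_{i=1}^n \big[\lceil x_i - r \rceil,\,\lfloor x_i + r\rfloor\big] \subseteq \mathbbst{Z}^n.
\]
The first thing to check is that this is a valid object of $\mathcalpxtx{P}_{\mathbbst{Z}^n}$, i.e.\ each integer side has length $\geq 2$. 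This is where the hypothesis $R > \frac{3}{2}$ plays its role: it forces $2r > 3$, and any closed real interval of length strictly greater than $3$ contains an integer sub-interval $[\lceil a\rceil,\lfloor b\rfloor]$ with $\lfloor b\rfloor - \lceil a\rceil \geq 2$ (indeed $\lfloor b \rfloor - \lceil a \rceil > b - a - 2 > 1$, and this difference is an integer).

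Next I would check that $\Phi$ is operadic. Since both $\overline{\Crect}\!\,^R_n$ and $\mathcalpxtx{P}_{\mathbbst{Z}^n}$ have at most one multimorphism between any given tuple of objects, the verification reduces to: if $(\overline{C}_1,\dots,\overline{C}_k)\to \overline{C}$ is a non-identity multimorphism in $\overline{\Crect}\!\,^R_n$, does $(\Phi(\overline{C}_1),\dots,\Phi(\overline{C}_k))\to \Phi(\overline{C})$ exist in $\mathcalpxtx{P}_{\mathbbst{Z}^n}$? Since $\Phi(\overline{C}_j) \subseteq \overline{C}_j$ inside $\mathbbst{R}^n$, pairwise disjointness of the $\overline{C}_j$ immediately forces disjointness of the $\Phi(\overline{C}_j)$, and the inclusion $\overline{C}_j \subseteq \mathsf{int}(\overline{C}) \subseteq \overline{C}$ gives $\Phi(\overline{C}_j) \subseteq \Phi(\overline{C})$. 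Identities go to identities, and operadic composition is automatic from uniqueness. Pulling back along $\Phi$ then transports any $\mathcalpxtx{P}_{\mathbbst{Z}^n}$-algebra to a $\overline{\Crect}\!\,^R_n$-algebra.

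For the second assertion, a unary operation in $\overline{\Crect}\!\,^R_n$ is either the identity on $\overline{C}$ or an inclusion $\overline{C} \subset \mathsf{int}(\overline{C}')$, and in both cases one obtains an inclusion of rectangular subsets $\Phi(\overline{C}) \subseteq \Phi(\overline{C}')$ in $\mathcalpxtx{P}_{\mathbbst{Z}^n}$. Local constancy of $\mathcalpxtx{F}$ then sends this inclusion to an equivalence, so $\Phi^*\mathcalpxtx{F}$ is locally constant. The only delicate ingredient in the whole argument is the numerical estimate behind $R > \frac{3}{2}$, which is exactly tight enough to guarantee that every closed cube of radius $>R$ carries an integer sub-rectangle with each side of length $\geq 2$; once this is in place, the rest of the proof is purely formal.
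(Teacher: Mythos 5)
Your proof is correct and follows exactly the paper's approach: the map $\Phi(\overline{C}) = \prod_i[\lceil x_i-r\rceil,\lfloor x_i+r\rfloor]$ you construct is precisely $\overline{C}\cap\mathbbst{Z}^n$, the ``obvious morphism'' the paper pulls back along. You merely spell out the details the paper leaves implicit, in particular the numerical check that $R>\tfrac32$ forces each side of $\overline{C}\cap\mathbbst{Z}^n$ to have integer length $\geq 2$ (which the paper records in the sentence preceding the proposition), and that disjointness and inclusion of cubes descend to the lattice.
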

\begin{proof}
    There is an obvious morphism of operads $\overline{\Crect}\!\,^R_n\to \mathcalpxtx{P}_{\mathbbst{Z}^n}$ sending $\overline{U}$ to $\overline{U}\cap \mathbbst{Z}^n$. Pulling back along this morphism provides the desired functor 
    $\EuScript{A}\mathsf{lg}_{\mathcalpxtx{P}_{\mathbbst{Z}^n}}(\EuScript{V})\to \EuScript{A}\mathsf{lg}_{\overline{\Crect}\!\,^R_n}(\EuScript{V})$, and clearly preserves local constancy. 
\end{proof}

%For concreteness, let us work at the strict level, for example by assuming that $\EuScript{V}$ is the $\infty$-category associated to a symmetric monoidal model category $\mathcalpxtx{V}$. Then, given $\mathcalpxtx{F}\colon \mathcalpxtx{P}_{\mathbbst{Z}^n}\to \mathcalpxtx{V}$, we can define $\widehat{\mathcalpxtx{F}}\colon \overline{\Crect}\!\,^R_n\to \mathcalpxtx{V}$ by setting:
%\[
%\begin{tikzcd}
%   (\,\overline{U}_1,\dots,\overline{U}_m)  && \bigotimes_i \mathcalpxtx{F}\big(\,\overline{U}_i\cap \mathbbst{Z}^n\big)\\[1mm]
%    \overline{U} \ar[u,leftarrow,"!", "\quad"'{name=S}] && \mathcalpxtx{F}\big(\,\overline{U}\cap \mathbbst{Z}^n\big) \ar[u, leftarrow, "\mathcalpxtx{F}(!)"', "\quad"{name=T}] \ar[mapsto, from=S, to=T]
%\end{tikzcd}.
%\]
%Such $\widehat{\mathcalpxtx{F}}$ is well-defined since $\overline{U}\cap\mathbbst{Z}^n$ is always a rectangular subset of $\mathbbst{Z}^n$ with side-lengths bigger or equal than $2$ and because $\bigsqcup_i \overline{U}_i\cap\mathbbst{Z}^n\subseteq \overline{U}\cap \mathbbst{Z}^n$. Notice that this last condition is not always true for the open version $\Crect^R_n$ of the $R$-truncated $n$-cubes operad if we send $U\mapsto \overline{U}\cap \mathbbst{Z}^n$.

The consequence of this result is that the \emph{classical lattice Yang--Mills model on} $\mathbbst{Z}^2$ from \cite[Theorem 5.2]{benini_derived_2024} yields an $\mathbbst{E}_2$-monoidal dg-category. Thus, we obtain an explicit $2d$ example to complement the $1d$ examples from \textsection\ref{sect:Quantization} below and \cite{calaque_not_2024}.

\section{Defects}\label{sect:Defects}
The discussion in Section \ref{sect:Renormalization} aims to identify locally constant prefactorization algebras at a fixed scale over $\mathbbst{R}^n$, corresponding to plain topological field theories (TFTs), but the methods are general enough to cover \emph{theories with defects}. For such a purpose, we are going to introduce a family of constructible prefactorization algebras at a fixed scale, controlled by suitable stratified spaces, which encode TFTs with defects. We are not pursuing the maximum level of generality here. Rather, we want to discuss two scenarios (working on $\mathbbst{R}^n$ and on $\mathbbst{R}^p\times \mathbbst{R}_{\geq 0}^{q}$) which hopefully illustrate the ideas. 

\subsection{Linear defects}\label{subsect:LinearDefects} 
Let us first focus on euclidean spaces and ``linear defects'' on them. 

\begin{defn}\label{defn:LinearStratificationsRn} A \emph{linear stratification} on $\mathbbst{R}^n$ consists of a finite filtration 
	$$
	 X_{0}\subsetneq\dots\subsetneq X_{d}=\mathbbst{R}^n
	 $$
	  such that $ X_{j}$ is a linear subspace of $\mathbbst{R}^n$ for all $0\leq j\leq d$. We will denote by  $\mathbbst{R}^n_{\upchi}$ or simply $\upchi$ the data of such a linear stratification on $\mathbbst{R}^n$. By convention we will consider $ X_{-1}=\diameter$.
\end{defn}

It will be useful in the sequel to denote $z( U)$ the center of a disc $ U$.

\begin{exs} The four linear stratifications on $\mathbbst{R}^2$:
$$
\begin{matrix}
    \mathbbst{R}^2_{\diameter}:\big(\mathbbst{R}^2\big), && 
    \mathbbst{R}^2_{\boxdot}:\big(\left\{0\right\}\subsetneq\mathbbst{R}^2\big), && 
    \mathbbst{R}^2_{\boxminus}:\big( L\subsetneq \mathbbst{R}^2\big),  &&
    \mathbbst{R}^2_{\boxdottedline}:\big(\left\{0\right\}\subsetneq  L\subsetneq \mathbbst{R}^2\big),
\end{matrix}
$$
where $ L$ is a line in $\mathbbst{R}^2$ passing through the origin.
\end{exs}

Given a linear stratification $\mathbbst{R}^n_{\upchi}$, we can modify the operad $\Drect^R_{n}$ to encode defects:
\begin{defn}\label{defn:TruncatedDiscsWithLinearStratifications}
The $R$-truncated $\upchi$-discs operad, denoted $\Drect^{R}_{\upchi}$, is the full suboperad of $\Drect^{R}_{n}$ spanned by open (euclidean) discs $ U\subseteq \mathbbst{R}^n$ satisfying:
$$
    \text{if }z( U)\text{ belongs to the }j^{\text{th}}\text{-stratum, i.e.\ }z( U)\in X_{j}\backslash X_{j-1}\text{, then } U\cap X_{j-1}= \diameter.
$$
\end{defn}

Using the $R$-truncated closed $n$-discs operad $\overline{\Drect}\!\,^R_n$ (see Definition \ref{defn:ClosedTruncatedDisj}) instead of $\Drect^R_n$ in the previous definition,
we find the $R$\emph{-truncated closed} $\upchi$\emph{-discs operad} $\overline{\Drect}\!\,^R_{\upchi}$.

\begin{defn}\label{defn:ConstructiblePrefactRn} Let  $\mathbbst{R}^{n}_{\upchi}$ be a linear stratification of $\mathbbst{R}^n$ and $ \EuScript{V}$ a sm-$\infty$-category. 
\begin{itemize}
    \item A \emph{prefactorization algebra at scale} $R$ over $\mathbbst{R}^n_{\upchi}$ is a $\Drect_{\upchi}^{R}$-algebra (in $\EuScript{V}$). 
    
    \item A prefactorization algebra $\mathdutchcal{F}$ at scale $R$ over $\mathbbst{R}^n_{\upchi}$ is \emph{constructible} if for any inclusion of discs $ U\subseteq V$ such that  $z( U)$, $z( V)$ lie in the same stratum $ X_{j}\backslash  X_{j-1}$, the induced morphism $\mathdutchcal{F}( U)\to \mathdutchcal{F}( V)$ is an equivalence in $ \EuScript{V}$.
\end{itemize}
\end{defn}

\begin{rem} If one forgets about scales, it is possible to adapt the former definitions to produce \emph{constructible prefactorization algebras} over $\mathbbst{R}^n_{\upchi}$ (for simplicity only defined on open discs of $\mathbbst{R}^n$): they are algebras over the operad $\Disc(\mathbbst{R}^n_{\upchi})\coloneqq \Drect_{\upchi}^{0}$. 
\end{rem}

In order to introduce the analog of $\mathbbst{E}_n$-algebras when defects are allowed, we have to identify the objects/colors that the corresponding operad should have. For this purpose, consider the minimal equivalence relation on $\mathsf{ob}(\Drect^{R}_{\upchi})$ generated by
$$
 U\underset{\text{pre}}{\sim} V \text{ if } U\subseteq  V \text{ and both }z( U)\text{ and }z( V)\text{ lie in the same stratum } X_{j}\backslash X_{j-1}.
$$

One readily deduces:
\begin{lem}\label{lem:ObjectsOfLittleUpchiDiscsOperad} There is a one-to-one correspondence between $\mathsf{ob}(\Drect^{R}_{\upchi})/\!\!\sim$ and the set of pairs  $\upchi'_{\upvarrho}\equiv(\upchi',\upvarrho)$ where:
\begin{itemize}
    \item  $\upchi'$ is a linear stratification on $\mathbbst{R}^n$ obtained from $\mathbbst{R}^n_{\upchi}$ by forgetting some interval of steps $[0, t]$ for $t\in\left\{-1,\dots,d\right\}$ in the filtration, and
    \item $\upvarrho$ represents a connected component of $ X_{t+1}\backslash X_{t}$ for the previous $t$.
\end{itemize} 
\end{lem}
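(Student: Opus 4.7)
The plan is to construct an explicit bijection $\Phi$ and check it descends to equivalence classes. Given a disc $U\in\mathsf{ob}(\Drect^R_\upchi)$ with $z(U)\in X_j\setminus X_{j-1}$, send $U$ to the pair $(\upchi'_U,\upvarrho_U)$ where $\upchi'_U$ is the stratification obtained from $\upchi$ by forgetting the initial steps $X_0,\dots,X_{j-1}$ (so $t=j-1$) and $\upvarrho_U$ is the connected component of $X_j\setminus X_{j-1}$ containing $z(U)$. Well-definedness with respect to the generator $U\underset{\text{pre}}{\sim}V$ is an easy convexity observation: if $U\subseteq V$ with both centers in $X_j\setminus X_{j-1}$, then the open ball $V$ is convex and misses $X_{j-1}$, so $V\cap X_j$ is a non-empty convex subset of $X_j\setminus X_{j-1}$ contained in a single connected component; since $z(U),z(V)\in V\cap X_j$ they share the same $\upvarrho$.

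For surjectivity, fix a pair $(\upchi',\upvarrho)$ with $\upvarrho\subseteq X_{t+1}\setminus X_t$. Choose a point $p$ of $\upvarrho$ at distance strictly greater than $R$ from $X_t$: this is possible because $X_t\subsetneq X_{t+1}$ are linear subspaces, so the orthogonal complement of $X_t$ inside $X_{t+1}$ is non-trivial, and one can scale a unit vector in the appropriate component (for the codimension-one case one picks the correct sign). Any open ball $D(p,r)$ with $R<r\le d(p,X_t)$ then yields a valid color mapping to $(\upchi',\upvarrho)$.

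Injectivity is the technical core. Assume $\Phi(U)=\Phi(V)=(\upchi',\upvarrho)$ with both centers in the same component $\upvarrho$ of $X_j\setminus X_{j-1}$. First I shrink $U$ and $V$, concentrically, to small discs $U',V'$ of radius just above $R$; this gives $U'\subset U$ and $V'\subset V$ with identical centers, so $U\sim U'$ and $V\sim V'$. Next I observe that the locus of valid centers $\mathcal V:=\{p\in\upvarrho:d(p,X_{j-1})>R\}$ is path-connected, by inspecting the orthogonal splitting $X_j=X_{j-1}\oplus (X_{j-1})^{\perp_{X_j}}$: in codimension at least $2$ the complement of a ball in $\mathbbst R^k$ is connected, while in codimension one $\upvarrho$ is itself a half-space and $\mathcal V$ is the shifted half-space $\{v>R\}$. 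Choose a continuous path $\gamma$ in $\mathcal V$ from $z(U)$ to $z(V)$, and subdivide finely, $p_0=z(U),p_1,\dots,p_N=z(V)$, so that the step size is much smaller than $\min_s\bigl(d(\gamma(s),X_{j-1})-R\bigr)$. Putting small discs $U'_i$ of radius slightly above $R$ at each $p_i$ and a ``bridge'' disc $W_i$ centered at $p_i$ with radius slightly above $R+|p_i-p_{i+1}|$ yields a zig-zag $U'=U'_0\subset W_0\supset U'_1\subset W_1\supset\cdots\supset U'_N=V'$, in which every step preserves both the stratum of the center and the constraint that the disc avoids $X_{j-1}$. Concatenating with $U\sim U'$ and $V'\sim V$ produces the required chain.

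The main obstacle is precisely this bridging step, and it is genuine: one cannot hope in general to find a \emph{single} disc $W$ containing both $U$ and $V$ and disjoint from $X_{j-1}$, because the convexity of $W$ would force it to contain midpoints of segments between its points, which may land on $X_{j-1}$ (for instance $X_{j-1}=\{0\}\subset\mathbbst R^2$ with $U,V$ on opposite sides of the origin). The path-connectedness of $\mathcal V$ inside $\upvarrho$, together with a finite-step zig-zag, is what saves the argument, and verifying that the bridge radii can be chosen simultaneously to envelop consecutive $U'_i$ and to stay at distance $>0$ from $X_{j-1}$ is a routine compactness estimate along $\gamma$.
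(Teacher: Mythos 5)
Your proof is correct, and it supplies detail the paper omits (the lemma is prefaced only by ``One readily deduces:'' with no written proof). The map $\Phi$, well-definedness via convexity of $V\cap X_j$, and surjectivity by scaling a unit vector in $(X_t)^{\perp_{X_{t+1}}}$ are all sound; the case $t=-1$ (where $X_{-1}=\diameter$ so the distance condition is vacuous) is trivial but worth a word since your orthogonal-complement phrasing doesn't literally cover it. The substantive content is injectivity, and you are right that the na\"ive idea of enveloping $U,V$ in one bigger disc $W$ fails (convexity of $W$ may force it across $X_{j-1}$); the fix via path-connectedness of $\mathcal V=\{p\in\upvarrho:d(p,X_{j-1})>R\}$ together with a finite zig-zag of small bridging discs is exactly the right argument, and your case split (codimension $\geq 2$ gives the exterior of a ball in $\mathbbst{R}^k$, codimension $1$ gives a shifted half-space) correctly establishes connectedness of $\mathcal V$. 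Two minor remarks: first, the lemma's range $t\in\{-1,\dots,d\}$ appears to be an off-by-one slip (forgetting $[0,d]$ leaves no filtration, and $X_{d+1}$ is undefined), and your construction with $t=j-1$, $j\in\{0,\dots,d\}$ implicitly uses the intended range $\{-1,\dots,d-1\}$; second, when you verify $z(U)\in\mathcal V$ it is worth saying explicitly that admissibility of $U$ forces $d(z(U),X_{j-1})\geq r_U>R$, which you use but leave tacit.
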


\begin{ex} Consider the linear stratification  $\mathbbst{R}^2_{\boxdottedline}:\big(\left\{0\right\}\subsetneq  L\subsetneq \mathbbst{R}^2\big)$. Since $ L\backslash\left\{0\right\}$ has two connected components, say $l$ and $r$, as well as $\mathbbst{R}^2\backslash L$, namely $u$ and $d$,
the previous lemma yields $\mathsf{ob}(\Drect^{R}_{\boxdottedline})\slash\!\!\sim\;=\left\{\square_{u},\square_{d},\boxminus_{l}, \boxminus_{r}, \boxdottedline \right\}$, 
where
$$
\begin{matrix}
	\square_u=\left[\big(\mathbbst{R}^2\big)_{u}\right], && && \square_d=\left[\big(\mathbbst{R}^2\big)_{d}\right], \\[4mm]
    \boxminus_{l}=\left[\big( L\subsetneq \mathbbst{R}^2\big)_{l}\right], && &&
    \boxminus_{r}=\left[\big( L\subsetneq \mathbbst{R}^2\big)_{r}\right],
     \\[4mm]
     &&
    \boxdottedline=\left[\big(\left\{0\right\}\subsetneq  L\subsetneq \mathbbst{R}^2\big)\right].
\end{matrix}
$$
Note that the linear filtration $\mathbbst{R}^2_{\boxdot}:\big(\left\{0\right\}\subsetneq \mathbbst{R}^2\big)$ does not appear in this quotient set. The reason is that it corresponds to forgetting the middle step in the linear filtration $\mathbbst{R}^2_{\boxdottedline}$ and not an interval of steps of the form $[0,t]$. 
\end{ex}

Now, we must define the spaces of admissible embeddings when defects are allowed. For that purpose, we use the following partial order on $\mathsf{ob}(\Drect^{R}_{\upchi})/\!\!\sim$ to control the admissibility of embeddings: $\upchi_{\upvarrho}'\leq \upchi_{\uppsi}''$ if $(i)$ $\upchi'$ can be obtained from $\upchi''$ by forgetting some interval of steps $[0,t]$ in the filtration and $(ii)$ $\uppsi$ is contained in the closure of $\upvarrho$ in $\mathbbst{R}^n$. 

\begin{defn}\label{defn:LittleStratifiedDiscsOperad} Let $\mathbbst{R}^n_{\upchi}:\big( X_{0}\subsetneq \dots\subsetneq  X_{d}=\mathbbst{R}^n\big)$ be a linear stratification and $\upchi'_{\upvarrho}$, $\upchi''_{\uppsi}$ be two equivalence classes in $\mathsf{ob}(\Drect^{R}_{\upchi})\slash\!\!\sim$.
\begin{itemize}
    \item If $\upchi'_{\upvarrho}\leq \upchi''_{\uppsi}$, we define a $\upchi$-\emph{admissible rectilinear embedding} $\upchi'_{\upvarrho}\hookrightarrow \upchi''_{\uppsi}$ to be a rectilinear embedding $\mathbbst{D}\hookrightarrow \mathbbst{D}$ which preserves the induced stratifications. We denote by  $\Emb_{\upchi}^{\mathsf{rect}}(\upchi'_{\upvarrho},\upchi''_{\uppsi})\subseteq \Emb^{\mathsf{rect}}(\mathbbst{D},\mathbbst{D})$ the subspace of $\upchi$-admissible rectilinear embeddings. By convention, $\Emb_{\upchi}^{\mathsf{rect}}(\upchi'_{\upvarrho},\upchi''_{\uppsi})=\diameter$ if $\upchi'_{\upvarrho}\leq \upchi''_{\uppsi}$ does not hold, i.e.\ if $\upchi'$ cannot be obtained from $\upchi''$ by forgetting some interval of steps $[0,t]$ in the filtration or $\uppsi\nsubseteq\overline{\upvarrho}$.
    Admissible rectilinear embeddings can be composed and contain the identity map.
    
    \item The \emph{little $(\upchi,n)$-discs operad}, denoted $\mathbbst{E}_{\upchi}$, is given by:
    \begin{itemize}
        \item[\textbf{Obj:}] its set of objects/colors is $\mathsf{ob}(\mathbbst{E}_{\upchi})=\mathsf{ob}(\Drect^{R}_{\upchi})\slash\!\!\sim$; 
        \item[\textbf{Mor:}] its spaces of multimorphisms are defined by 
        $$
        \mathbbst{E}_{\upchi}\brbinom{\underline{\upchi_{\upvarrho}'}}{\upchi''_{\uppsi}}=%\Emb^{\mathsf{rect}}_{\upchi}\Big(\bigsqcup_i\upchi'_{i,\upalpha(i)},\upchi''_{\upbeta}\Big)= 
        \prod_i \Emb_{\upchi}^{\mathsf{rect}}\big(\upchi'_{i,\upvarrho(i)},\upchi''_{\uppsi}\big)\,\cap \,\Emb\Big(\bigsqcup_i\mathbbst{D}_i,\mathbbst{D}\Big)
        $$
        with the obvious composition of rectilinear embeddings and identities. See Fig.\ \ref{fig:OperationInDefects}.
    \end{itemize}
\end{itemize}
\end{defn}

\begin{figure}[htp]
	\centering
	\includegraphics[height=5cm]%[width=16cm]
	{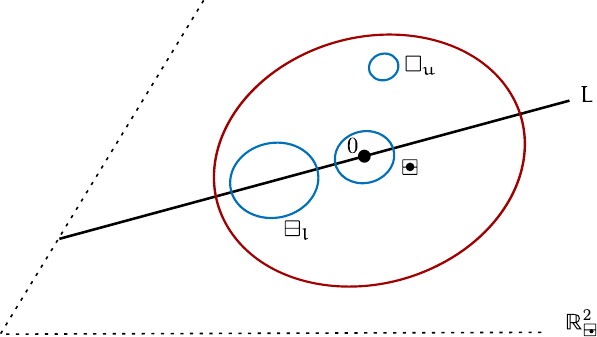}
	\caption{An operation in $\mathbbst{E}_{\boxdottedline}\brbinom{\{\boxdottedline,\square_{u},\boxminus_{l}\}}{\boxdottedline}$, where $\mathbbst{E}_{\boxdottedline}$ represents the little $(\upchi,2)$-discs operad  associated to the linear stratification $\mathbbst{R}^2_{\boxdottedline}$.}\label{fig:OperationInDefects}
\end{figure}
\begin{rem} One can modify the definition of $\mathbbst{E}_{\upchi}$ to get the \emph{closed little} $(\upchi,n)$\emph{-discs operad} $\overline{\mathbbst{E}}\!\,_{\upchi}$ as done in Definition \ref{defn:ClosedLittleDiscs} for the little $n$-discs operad $\mathbbst{E}_n$. For instance, the operation depicted in Figure \ref{fig:OperationInDefects} also belongs to $\overline{\mathbbst{E}}\!\,_{\boxdottedline}$.
\end{rem}

There is an obvious map $\upgamma\colon\Drect_{\upchi}^{R}\to \mathbbst{E}_{\upchi}$ from the $R$-truncated $\upchi$-discs operad into the little $(\upchi,n)$-discs operad, sending  $\mathbbst{E}_{\upchi}$-algebras to constructible prefactorization algebras at scale $R$ over $\mathbbst{R}^n_{\upchi}$. The triviality of renormalization for TFTs with defects in this case corresponds to:
\begin{thm}\label{thm:ConstructiblePrefactAlgsScaleRonRn} 
   Constructible prefactorization algebras at scale $R$ over $\mathbbst{R}^n_{\upchi}$ are the same as $\mathbbst{E}_{\upchi}$-algebras. More concretely, the functor 
	$$
	\upgamma^*\colon\EuScript{A}\mathsf{lg}_{\mathbbst{E}_{\upchi}}( \EuScript{V})\to  
	\begin{Bmatrix}
\textup{constructible }\EuScript{V}\text{-prefactorization}\\
\textup{algebras at scale }R\textup{ over }\mathbbst{R}^n_{\upchi}
\end{Bmatrix}
%\Alg_{\Drect_{\upchi}^R}^{\textup{cbl}}( \EuScript{V})
	$$
	is an equivalence of $\infty$-categories for any sm-$\infty$-category $ \EuScript{V}$.
\end{thm}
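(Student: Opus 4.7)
The plan is to mimic the strategy used to establish Theorem \ref{thm:RenormalizationTFTs}, adapting each step to the stratified setting. First, I would reduce the statement to its $\infty$-operadic counterpart: namely, show that $\upgamma \colon \Drect^R_\upchi \to \mathbbst{E}_\upchi$ exhibits $\mathbbst{E}_\upchi$ as the $\infty$-localization of $\Drect^R_\upchi$ at the class of unary operations $U \hookrightarrow V$ satisfying $U \underset{\text{pre}}{\sim} V$, i.e.\ inclusions of discs whose centers lie in the same open stratum. By the same reasoning as in Theorem \ref{thm_LocalizationForTruncatedDisj}, this would imply the equivalence on algebras. To handle point-set issues I would first introduce closed variants $\overline{\Drect}\!\,^R_\upchi$ and $\overline{\mathbbst{E}}\!\,_\upchi$ (defined exactly as in Subsection \ref{subsect:ClosedLittleDiscs} but imposing the stratification constraint of Definition \ref{defn:TruncatedDiscsWithLinearStratifications}) and prove the analogues of Lemmas \ref{lem:EnvsClosedEn} and \ref{lem:DRvsClosedDR}. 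The argument for the former is unchanged up to keeping track of strata; for the latter, the shrinking induction in the proof of Lemma \ref{lem:DRvsClosedDR} extends verbatim once one observes that the conditions imposed by $\upchi$ (namely $U \cap X_{j-1} = \diameter$ when $z(U) \in X_j \setminus X_{j-1}$) are stable under reducing radii while keeping centers fixed.

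Second, I would develop the stratified analogue of Subsection \ref{subsect:DConfAndEquivalences}. For a $k$-chain $\upalpha \in \Fun([k],\Fin_*)$ together with a compatible labelling of the colors by the quotient set $\ob(\Drect^R_\upchi)/\!\sim$, define $\Drect^R\Conf_{\upalpha,\upchi}(\mathbbst{R}^n)$ as the subspace of $\Drect^R\Conf_\upalpha(\mathbbst{R}^n)$ cut out by the condition that each disc in the configuration respects its prescribed stratum type. The key observation is that, since every $X_j$ is a linear subspace of $\mathbbst{R}^n$, dilations centered at the origin preserve the stratification. Consequently, the inflation maps constructed in Lemmas \ref{lem:InflationInducesHomotopyEquivalence} and \ref{lem:InflationInducesHomotopyEquivalenceNestedVersion} restrict to the stratified spaces and yield homotopy equivalences $\Drect^R\Conf_{\upalpha,\upchi}(\mathbbst{R}^n) \simeq \Drect^0\Conf_{\upalpha,\upchi}(\mathbbst{R}^n)$. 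The analogue of Lemma \ref{lem:NestedD0ConfigurationsInDisc} goes through similarly, since postcomposition with a dilation $\mathbbst{R}^n \to \mathbbst{D}$ centered at the origin still preserves the stratification.

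Third, I would run the argument of Subsection \ref{subsect:ProofLocalizationTHM}. Using the quasi-operadic criterion from Corollary \ref{cor:InfinityLocalizationOfOperatorCats} together with Proposition \ref{prop_HinichCriteria}, the task reduces to showing that the homotopy fiber of
\[
\overline{\upgamma}_* \colon \mathsf{weq}\!\left[[k],\overline{\mathdutchcal{D}}\!\,_\upchi\right] \longrightarrow \mathsf{weq}\!\left[[k],\overline{\mathbbst{E}}\!\,_\upchi^{\otimes,\sharp}\right]
\]
over a fixed chain $\upalpha$ (now decorated with colors in $\ob(\mathbbst{E}_\upchi)$) is weakly contractible. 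Exactly as before, the target homotopy fiber can be identified with $\Drect^0\Conf_{\upalpha,\upchi}(\mathbbst{D})$, and the source decomposes via a modified poset $\widetilde{\mathsf{weq}}\!\left[[k],\overline{\mathdutchcal{D}}\!\,_\upchi\right]_\upalpha$ of tuples of admissible open discs respecting the nesting and stratum data. Combining the stratified inflation equivalence, the stratified disc-vs-euclidean-space equivalence, and a Quillen Theorem A argument for the forgetting of the $\upalpha_{r+1}^{-1}(*)$-indexed discs reduces everything to applying Lurie--Seifert--van Kampen to the evident functor $\upzeta_\upchi$ from this poset to open subsets of $\Drect^R\Conf_{\upalpha,\upchi}(\mathbbst{R}^n)$.

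The main obstacle I anticipate lies in verifying cofilteredness of the slice posets $\{\underline{U}^* : (f_{i_r}) \in \upzeta_\upchi(\underline{U}^*)\}$ that appears in the Seifert--van Kampen step. In the unstratified case, the lower bound $\underline{W}^*$ is built inductively by expanding each $f_{i_r}(\mathbbst{D})$ inside $U^r_{i_r} \cap V^r_{i_r}$ and, when necessary, inside a larger disc of the same nesting component. In the stratified setting the expansion must additionally respect the condition that $W^r_{i_r} \cap X_{j-1} = \diameter$ when the corresponding color forces the center to lie in $X_j \setminus X_{j-1}$. Since $U^r_{i_r}$ and $V^r_{i_r}$ already satisfy this constraint and the intersection of two open convex sets disjoint from a linear subspace is again convex and disjoint from it, the expansion can be performed within $U^r_{i_r} \cap V^r_{i_r}$ while preserving the stratification condition, so the induction carries over. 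Once this is done, the subposet is cofiltered and weakly contractible, completing the proof.
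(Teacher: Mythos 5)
Your overall strategy correctly mirrors the paper's: reduce to the operadic localization statement, pass to closed variants, and run the commutative-diagram argument from Subsection \ref{subsect:ProofLocalizationTHM} with a Lurie--Seifert--van Kampen finish. But the second paragraph hides a genuine gap in the stratified analogue of Lemma \ref{lem:NestedD0ConfigurationsInDisc}, and that gap propagates to your identification of the target homotopy fiber.

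You claim that ``the analogue of Lemma \ref{lem:NestedD0ConfigurationsInDisc} goes through similarly, since postcomposition with a dilation $\mathbbst{R}^n\to\mathbbst{D}$ centered at the origin still preserves the stratification.'' It is true that dilations centered at the origin preserve linear strata. However, the operad $\mathbbst{E}_\upchi$ (and its closed cousin $\overline{\mathbbst{E}}\!\,_\upchi$) has a distinct stratified disc $\mathbbst{D}_{\upchi'}$ for each color $\upchi'_\uppsi$: when $\upchi'$ is obtained from $\upchi$ by forgetting the interval $[0,t]$, the corresponding model disc must avoid $X_t$ entirely. A unit disc centered at the origin cannot serve as a model for such a color, because the origin lies in the deepest stratum $X_0\subseteq X_t$ for every $t\geq 0$; any disc around $0$ meets $X_t$. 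Shrinking a configuration of target color $\upchi'_\uppsi$ by a dilation therefore lands you inside a disc of the wrong color. Concretely, for $\upchi=(\{0\}\subsetneq\mathbbst{R})$ and target color $\square_+$, a dilation pushes the configuration into $(0,1)\subset\mathbbst{D}$, but $\mathbbst{D}_{\square_+}$ is the unit interval around $z(\square_+)=2$ with no point stratum --- these are different stratified spaces, and the identification with $\mathsf{weq}\big[[k],\overline{\mathbbst{E}}\!\,^{\otimes,\sharp}_\upchi\big]_\upalpha$ requires an explicit comparison that a dilation alone does not produce.

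The paper handles this by three interlocking devices you do not mention: $(i)$ the auxiliary embedding $e$ recording the outer ``virtual'' disc in $\Drect^R\widetilde{\Conf}_\upalpha(\mathbbst{R}^n_\upchi;\upchi'_\uppsi)$, which supplies a consistent anchor for the whole configuration; $(ii)$ the disjoint-union-of-products structure of $\Drect^R\Conf_\upalpha(\mathbbst{R}^n,\upchi)$ over target color tuples $\underline{\upchi^k_\uppsi}$, reflecting that $\overline{\mathbbst{E}}\!\,_\upchi$ is genuinely colored (unlike $\overline{\mathbbst{E}}_n$); and $(iii)$ the canonical base points $z(\upchi'_\uppsi)$ at distance $2$ from $X_t$, used in Lemma \ref{lem:NestedD0ConfigurationsInDiscStratifiedI} to build a \emph{translation} $v(e,\underline f)=z(\upchi'_\uppsi)-\uplambda(e,\underline f)\cdot e(0)$ alongside the dilation. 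That translation lies in the linear subspace $X_{t+1}$, which is precisely what makes it compatible with the strata of depth $\geq t+1$, but it is emphatically not a dilation: it is needed to re-center the configuration at a point whose color matches the target. Without $e$ there is no natural choice of translation vector, and without the translation the disc-vs-$\mathbbst{R}^n$ comparison cannot be made. Your fourth paragraph's cofilteredness argument inherits the same omission, since the lower bound $\underline W^*$ must also be constructed compatibly with the $e$-components and the fixed target colors (the paper's Proposition \ref{prop:CofilteredSubposetLinearDefects} fixes the component of the disjoint union first, before running the inductive expansion). The fix is to adopt the paper's definitions of $\Drect^R\widetilde{\Conf}_\upalpha$ and $\Drect^R\Conf_\upalpha(\mathbbst{R}^n,\upchi)$ as stated, rather than the naive ``cut out by stratum types'' subspace.
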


\begin{rem} If one defines constructible factorization algebras over $\mathbbst{R}^n_{\upchi}$ in the obvious way, Theorem \ref{thm:ConstructiblePrefactAlgsScaleRonRn} may be used to provide an equivalence of $\infty$-categories
$$
\begin{Bmatrix}
\textup{constructible }\EuScript{V}\text{-factorization}\\ 
\textup{algebras over }\mathbbst{R}^n_{\upchi}
\end{Bmatrix} \simeq  
\begin{Bmatrix}
\textup{constructible }\EuScript{V}\text{-prefactorization}\\
\textup{algebras at scale }R\textup{ over }\mathbbst{R}^n_{\upchi}
\end{Bmatrix}
$$
as in the locally constant case (no stratification on $\mathbbst{R}^n$).
\end{rem}

As expected, Theorem \ref{thm:ConstructiblePrefactAlgsScaleRonRn} follows from the analog of Theorem \ref{thm_LocalizationForTruncatedDisj}.

\begin{thm}\label{thm_LocalizationForTruncatedDisjStratifiedI}
	The morphism of operads $\upgamma\colon \Drect_{\upchi}^{R}\to\mathbbst{E}_{\upchi}$ exhibits $\mathbbst{E}_{\upchi}$ as the $\infty$-localization (as an $\infty$-operad) of $\Drect_{\upchi}^R$ at the set of unary maps 
	$$
	\Wrect=\left\{
 U\subseteq  V \text{ where both }z( U)\text{ and }z( V)\text{ lie in the same stratum } X_j\backslash X_{j-1} \text{ for some }j
	\right\}.
	$$
	 %In particular, the functor 
%	$\Alg_{\mathbbst{E}_n}( \EuScript{V})\to \Alg_{\Drect_n^R}^{\textup{lc}}( \EuScript{V})$ induced by $\upgamma$ is an equivalence of $\infty$-categories for any sm-$\infty$-category $ \EuScript{V}$. In other words, locally constant prefactorization algebras at scale $R$ over $\mathbbst{R}^n$ are the same as $\mathbbst{E}_n$-algebras.
\end{thm}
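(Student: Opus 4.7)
The plan is to closely follow the strategy of the proof of Theorem \ref{thm_LocalizationForTruncatedDisj}, upgrading the various steps so that they track the data of the linear stratification $\upchi$. First, I would introduce the closed variant $\overline{\Drect}\!\,^R_\upchi$ as defined in the paragraph following Definition \ref{defn:TruncatedDiscsWithLinearStratifications}, together with the map $\overline{\upgamma}\colon \overline{\Drect}\!\,^R_\upchi\to \overline{\mathbbst{E}}\!\,_\upchi$ and the square analogous to \eqref{eqt:CommutativeSquareOfOperads}. The two stratified-analogs of Lemmas \ref{lem:EnvsClosedEn} and \ref{lem:DRvsClosedDR} go through verbatim: for the $\overline{\mathbbst{E}}\!\,_\upchi\to\mathbbst{E}_\upchi$ side, evaluation at centers of discs is still a homotopy equivalence (now on each connected component indexed by $\upchi$-colors), and for the $\overline{\Drect}\!\,^R_\upchi\to\Drect^R_\upchi$ side, the shrinking argument with the poset of tuples of discs still produces the desired weak contractibility of slices $\mathsf{weq}[[k],\overline{\mathdutchcal{D}}_\upchi]_{/\upsigma}$, because shrinking a disc preserves the property that its center and its intersection pattern with strata be unchanged.

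Next, I would move to quasioperads via Corollary \ref{cor:InfinityLocalizationOfOperatorCats} and reduce, using Proposition \ref{prop_HinichCriteria} and Remark \ref{rem:HinichCriteriaPLUS}, to showing that for every $k\geq 0$ and every $k$-chain $\upalpha\in\Fun([k],\Fin_*)$ together with a refinement by the $\upchi$-colors, the map
\[
\mathsf{weq}[[k],\overline{\mathdutchcal{D}}_\upchi]_{\upalpha} \longrightarrow \mathsf{weq}[[k],\overline{\mathbbst{E}}\!\,_\upchi^{\otimes,\sharp}]_{\upalpha}
\]
is a weak homotopy equivalence. As in the non-stratified case, the $k=0$ fiber reduces to showing that a certain filtered category is weakly contractible. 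For $k\geq 1$, I would define a stratified version $\Drect^R\Conf_\upalpha(\mathbbst{R}^n_\upchi)$ of $\upalpha$-nested fattened configurations by adding the constraint inherited from Definition \ref{defn:TruncatedDiscsWithLinearStratifications}: for each labelled disc, its center must lie in some stratum $X_j\setminus X_{j-1}$ (determined by the color refinement of $\upalpha$) and the disc must avoid $X_{j-1}$.

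The crucial geometric step is to adapt Lemmas \ref{lem:InflationInducesHomotopyEquivalenceNestedVersion} and \ref{lem:NestedD0ConfigurationsInDisc} to this stratified setting. Here the point is that all strata $X_j$ are linear subspaces of $\mathbbst{R}^n$ passing through the origin, hence each $X_j$ is invariant under the dilations $\uplambda(\underline f)\cdot$ used in the inflation map; in particular, the inflation and the contracting homotopies preserve both the stratum in which each disc-center sits and the condition of avoiding lower-dimensional strata. The analog of Lemma \ref{lem:NestedD0ConfigurationsInDisc} goes through by the same linear dilation argument. After these adaptations, I would construct exactly the same commutative diagram as in Subsection \ref{subsect:ProofLocalizationTHM}, with $\Drect^R\Conf_\upalpha(\mathbbst{R}^n)$ replaced throughout by its stratified version $\Drect^R\Conf_\upalpha(\mathbbst{R}^n_\upchi)$, and reduce the theorem to verifying that the analog of map $(4)$, from $\widetilde{\mathsf{weq}}[[k],\overline{\mathdutchcal{D}}_\upchi]_\upalpha$ to $\Drect^R\Conf_\upalpha(\mathbbst{R}^n_\upchi)$, is a weak equivalence.

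The main obstacle, as in the unstratified proof, is the application of Lurie--Seifert--van Kampen to this last map. One defines the cover functor $\upzeta$ by sending a tuple $\underline U^*$ to the subspace of configurations whose discs land inside the $U^r_{i_r}$'s, and one has to check that for a given configuration $(f_{i_r})_{i_r}$ the subposet of $\underline U^*$'s that contain it is cofiltered. The delicate point is that when constructing a lower bound $\underline W^*$ for two tuples $\underline U^*,\underline V^*$ one must keep the center of each disc in the prescribed stratum and keep the disc disjoint from lower-dimensional strata. This is automatic provided one performs the ``expansion'' of each $f_{i_r}(\mathbbst{D})$ inside the convex open set $U^r_{i_r}\cap V^r_{i_r}$ while staying inside the parent disc $f_{i_t}(\mathbbst{D})$, exactly as in Subsection \ref{subsect:ProofLocalizationTHM}: convexity of $U^r_{i_r}\cap V^r_{i_r}$ together with the fact that its intersection with the relevant stratum is still a convex open neighborhood of the center guarantees that the expanded disc can be chosen inside the stratification-constraint. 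The induction on $r$ from the cited proof then produces the desired $\underline W^*$, and hence the subposet is cofiltered and weakly contractible. Combining the five weak equivalences in the diagram concludes the proof.
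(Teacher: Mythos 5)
Your overall plan matches the paper's strategy (reduce to closed variants, pass to quasioperads, and run the $(1)$--$(5)$ diagram via inflation, comparison $\mathbbst{D}\to\mathbbst{R}^n$, and Lurie--Seifert--van Kampen), and your observations about shrinking and about convexity of $U^r_{i_r}\cap V^r_{i_r}$ together with the strata being linear are the correct inputs. But there is a genuine gap in how you define the stratified configuration spaces and in how you justify the analog of Lemma~\ref{lem:NestedD0ConfigurationsInDisc}.

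The operad $\overline{\mathbbst{E}}\!\,_\upchi$ is a genuinely \emph{colored} operad, so the homotopy fiber $\mathsf{weq}\left[[k],\overline{\mathbbst{E}}\!\,^{\otimes,\sharp}_{\upchi}\right]_{\upalpha}$ decomposes, via the Segal condition, as a \emph{coproduct over tuples of target $\upchi$-colors} $\underline{\upchi^k_{\uppsi}}\in\ob(\mathbbst{E}_\upchi)^{\times m_k}$, and within each summand one must record the connected component $\uppsi\subseteq X_{t+1}\setminus X_t$, not merely the stratum in which disc-centers sit. Your definition of the stratified $\Drect^R\Conf_\upalpha(\mathbbst{R}^n_\upchi)$ only constrains each disc-center to lie in a stratum and the disc to avoid the lower stratum; that determines the filtration $\upchi'$ but not the component $\uppsi$. (E.g.\ in $\mathbbst{R}^2_{\boxminus}$ both $\square_u$ and $\square_d$ have centers in the same open stratum.) The paper's Definition of $\Drect^R\Conf_\upalpha(\mathbbst{R}^n,\upchi)$ therefore introduces an auxiliary envelope embedding $e$ for each factor and takes a coproduct over all $\underline{\upchi^k_\uppsi}$, precisely to record this connected-component data. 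Without that, the identification $(1)$ of $\mathsf{weq}\left[[k],\overline{\mathbbst{E}}\!\,^{\otimes,\sharp}_{\upchi}\right]_{\upalpha}$ with a configuration space fails.

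Relatedly, you claim the analog of Lemma~\ref{lem:NestedD0ConfigurationsInDisc} (map $(2)$) ``goes through by the same linear dilation argument.'' This is not quite right: a pure dilation centered at the origin cannot send a configuration that is constrained to sit in a nontrivial component $\uppsi$ of $X_{t+1}\setminus X_t$ into a model disc $\mathbbst{D}_{\upchi'}\hookrightarrow\mathbbst{R}^n_\upchi$ whose center $z(\upchi'_\uppsi)$ is chosen away from the origin (the paper picks it at distance $2$ from $X_t$). The homotopy inverse in Lemma~\ref{lem:NestedD0ConfigurationsInDiscStratifiedI} is a dilation \emph{followed by a translation} $v(e,\underline f)+\text{-}$, with the translation determined using the envelope $e$; the homotopies are then convex interpolations of both parameters. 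If you only dilate, you will in general land in the wrong component, or even outside the target disc. So the theorem's proof requires both the coproduct-over-colors definition and the dilation-plus-translation argument, neither of which your proposal supplies.

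The cofilteredness part of your argument (Step~4) is fine in spirit and matches Proposition~\ref{prop:CofilteredSubposetLinearDefects}: once the component $[\underline U^k]$ is fixed by the chosen summand, the expansion inside $U^r_{i_r}\cap V^r_{i_r}$ and inside the parent disc, as in \textsection\ref{subsect:ProofLocalizationTHM}, works, and convexity of intersections with linear strata ensures the centers stay put. But this part only works once the preceding identifications have been set up correctly.
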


The strategy to show Theorem \ref{thm_LocalizationForTruncatedDisjStratifiedI} parallels that of Theorem \ref{thm_LocalizationForTruncatedDisj}. In fact:
\begin{enumerate}
    \item Reduce the problem about the operad map $\Drect^R_{\upchi}\rightarrow\mathbbst{E}_{\upchi}$ to its closed cousin $\overline{\Drect}\!\,^{R}_{\upchi}\rightarrow \overline{\mathbbst{E}}\!\,_{\upchi}$, which is technically more convenient. Observe that the relevant results in \textsection\ref{subsect:ClosedLittleDiscs}, i.e.\ Lemmas \ref{lem:EnvsClosedEn} and \ref{lem:DRvsClosedDR}, generalize to this more general situation with a similar proof. In essence, the shrinking arguments we use only require inclusions of discs $U\subseteq V$ where both $z(U)$ and $z(V)$ belong to the same stratum in the present case.

    \item The formal arguments in \textsection\ref{subsect:ProofLocalizationTHM} apply almost verbatim to our constructible case (note that now $\overline{\mathbbst{E}}\!\,_{\upchi}$ is a honest colored operad, unlike $\overline{\mathbbst{E}}\!\,_n$). The output is a (homotopy) commutative diagram
    \[
\begin{tikzcd}[ampersand replacement=\&]
\mathsf{weq}\left[[k],\overline{\mathdutchcal{D}}\!\,_{\upchi}\right]_{\upalpha}\ar[d,"(5)"']\ar[rr] \&\& \mathsf{weq}\left[[k],\overline{\mathbbst{E}}\!\,_{\upchi}^{\otimes,\sharp}\right]_\upalpha\ar[d,"(1)"]\\
\widetilde{\mathsf{weq}}\left[[k],\overline{\mathdutchcal{D}}\!\,_{\upchi}\right]_{\upalpha}\ar[d,"(4)"'] \&\& \Drect^0\Conf_{\upalpha}(\mathbbst{D},{\upchi})\ar[d,"(2)"]\\
\Drect^R\Conf_{\upalpha}(\mathbbst{R}^n,{\upchi}) \ar[rr,"(3)"'] \&\& \Drect^0\Conf_{\upalpha}(\mathbbst{R}^n,{\upchi})
\end{tikzcd},
\]
for which we should prove that $(1)-(5)$ are weak homotopy equivalences. Again, $(1)$ and $(5)$ are easy once we define the poset $\widetilde{\mathsf{weq}}\left[[k],\overline{\mathdutchcal{D}}\!\,_{\upchi}\right]_{\upalpha}$ and $\upalpha$-nested configurations for this situation. We will do this right after concluding this sketch of strategy. 

\item The maps $(2)$ and $(3)$ are shown to be homotopy equivalences as in \textsection\ref{subsect:DConfAndEquivalences}. See Lemmas \ref{lem:InflationInducesHomotopyEquivalenceNestedVersionStratifiedI} and \ref{lem:NestedD0ConfigurationsInDiscStratifiedI}. 

\item For the remaining map, $(4)$, we want to apply Lurie--Seifert--van Kampen's Theorem \cite[A.3.1]{lurie_higher_nodate} as in the proof of Theorem \ref{thm_LocalizationForTruncatedDisj}. For that purpose, denote by $\Prect_{\upchi}$ the poset $\widetilde{\mathsf{weq}}\left[[k],\overline{\mathdutchcal{D}}\!\,_{\upchi}\right]_{\upalpha}$ and define the functor $\upzeta\colon \Prect_{\upchi}\to\mathsf{Open}\big(\Drect^R\Conf_{\upalpha}(\mathbbst{R}^n,{\upchi})\big)$ by
\[
\underline{U}^*\longmapsto \upzeta(\underline{ U}^{*})=\Drect^R\Conf_{\upalpha}(\mathbbst{R}^n,{\upchi})\cap\prod_{(r,i_r)}\Emb^{\mathsf{rect}}\big(\overline{\mathbbst{D}}, U_{i_r}^{r}\big)\;.\footnote{This expression of $\upzeta(\underline{U}^*)$ is ambiguous, it is written in this form to draw the analogy with the original proof. For the precise definition of the functor $\upzeta$, see the discussion after Definition \ref{defn:PosetForLSVKwithDefects}.}
\]
We are reduced to check: $(a)$ the space $\upzeta(\underline{U}^*)$ is weakly contractible for any $\underline{U}^*\in \Prect_{\upchi}$, and $(b)$ the subposet $\left\lbrace \underline{ U}^{*}\in\Prect_{\upchi}:\; (f_{i_r})_{i_r}\in\upzeta(\underline{ U}^{*})\right\rbrace\subseteq\Prect_{\upchi}$ is weakly contractible for any $(f_{i_r})_{i_r}\in \Drect^R\Conf_{\upalpha}(\mathbbst{R}^n_{\upchi})$. The one which is not automatic is $(b)$ and it is tackled in Proposition \ref{prop:CofilteredSubposetLinearDefects}. This concludes the proof of Theorem \ref{thm_LocalizationForTruncatedDisjStratifiedI}.
\end{enumerate}

\paragraph{Fattened configurations in the presence of linear defects.} Let us first define ordinary configuration spaces of points in $\mathbbst{R}^n_{\upchi}:\big( X_{0}\subsetneq\dots\subsetneq X_{d}=\mathbbst{R}^n\big)$ and later their fattened variants.

\begin{defn}\label{defn:ConfWithStratif}
Let $\upupsilon\colon I\to \left\{0,\dots,d\right\}$ be an ordered finite set with labels in the set of levels of the linear stratification $\mathbbst{R}^n_{\upchi}$. Define the space of \emph{configurations of $\upupsilon$-points in} $\mathbbst{R}^n_{\upchi}$ as
$$
\Conf_{\upupsilon}(\mathbbst{R}^n_{\upchi})=\prod_{0\leq j\leq d}\Conf_{\vert\upupsilon^{-1}(j)\vert}( X_{j}\backslash X_{j-1}).
$$
That is, the space of configurations of $\vert I\vert$-points in $\mathbbst{R}^n$ where $i\in I$ belongs to the  $\upupsilon(i)^{\text{th}}$-stratum. Adding a decoration $\upvarrho$ to $\upchi$, accounting for a connected component as in Lemma \ref{lem:ObjectsOfLittleUpchiDiscsOperad}, forces configurations of points to lie in that connected component.
\end{defn}

\begin{defn}\label{defn:FattConfWithStratif}
Let $\upupsilon\colon I\to \left\{0,\dots,d\right\}$ be an ordered finite set with labels in the set of levels of the linear stratification $\mathbbst{R}^n_{\upchi}$. We define the space of \emph{configurations of $R$-fattened $\upupsilon$-points in }$\mathbbst{R}^n_{\upchi}$ to be the subspace of rectilinear embeddings of closed discs
	$$
	\Drect^R\Conf_{\upupsilon}(\mathbbst{R}^n_{\upchi})\subseteq\Emb^{\mathsf{rect}}\Big(\bigsqcup_{i\in I}\overline{\mathbbst{D}}_i;\mathbbst{R}^n\Big)
	$$
	spanned by rectilinear embeddings $f\colon\bigsqcup_i\overline{\mathbbst{D}}_i\hookrightarrow \mathbbst{R}^n$ such that: 
	\begin{itemize}
	    \item  $f(\overline{\mathbbst{D}}_i)$ is a closed disc with radius stricly  bigger than $R$ for any $i\in I$, and 
	    \item $z(f(\mathbbst{D}_i))=f(z(\mathbbst{D}_i))$ belongs to $ X_{\upupsilon(i)}\backslash X_{\upupsilon(i)-1}$ and $f(\overline{\mathbbst{D}}_i)\cap X_{\upupsilon(i)-1}=\diameter$.
	\end{itemize}
\end{defn}

As expected, both configuration spaces are homotopy equivalent since we are working on the euclidean space, and this fact is again key to prove Theorem \ref{thm_LocalizationForTruncatedDisjStratifiedI}.
\begin{lem}\label{lem:InflationStratificationInducesHomotopyEquivalence} Let $\upupsilon\colon I\to \left\{0,\dots,d\right\}$ be an ordered finite set with labels in the set of levels of the linear stratification $\mathbbst{R}^n_{\upchi}$. Then, evaluation at centers of discs determines homotopy equivalences 
	\[
        \Drect^R\Conf_{\upupsilon}(\mathbbst{R}^n_{\upchi}) \simeq \Drect^0\Conf_{\upupsilon}(\mathbbst{R}^n_{\upchi}) \simeq \Conf_{\upupsilon}(\mathbbst{R}^n_{\upchi})\,,
    \]
\end{lem}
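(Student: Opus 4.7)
The plan is to follow the same strategy as in Lemma \ref{lem:InflationInducesHomotopyEquivalence}, adapting the dilation and shrinking arguments to the stratified setting. The key observation, which makes the stratified case essentially a free corollary of the unstratified one, is that every dilation $\lambda \cdot : \mathbbst{R}^n \to \mathbbst{R}^n$ (centered at the origin, with $\lambda>0$) preserves linear subspaces, hence the stratification $\mathbbst{R}^n_{\upchi}$. Combined with Remark \ref{rem:dilations}, this means that a dilation automatically takes configurations in $\Drect^R\Conf_\upupsilon(\mathbbst{R}^n_\upchi)$ to configurations in $\Drect^{R'}\Conf_\upupsilon(\mathbbst{R}^n_\upchi)$ (for the appropriate $R'$), since the conditions $z(f(\mathbbst{D}_i)) \in X_{\upupsilon(i)}\setminus X_{\upupsilon(i)-1}$ and $f(\overline{\mathbbst{D}}_i) \cap X_{\upupsilon(i)-1} = \diameter$ are preserved.

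For the first equivalence $\Drect^R\Conf_\upupsilon(\mathbbst{R}^n_\upchi) \simeq \Drect^0\Conf_\upupsilon(\mathbbst{R}^n_\upchi)$, I would fix $\upepsilon > R$ and transport the diagram
\[
\begin{tikzcd}[ampersand replacement=\&]
     \Drect^R\Conf_{\upupsilon}(\mathbbst{R}^n_\upchi) \ar[r,hookrightarrow,"\mathsf{inc}", shift left=1] \ar[r,"\mathsf{infl}"', leftarrow, dashed, shift right=1] \& \Drect^0\Conf_{\upupsilon}(\mathbbst{R}^n_\upchi)
\end{tikzcd}
\]
from the proof of Lemma \ref{lem:InflationInducesHomotopyEquivalence} verbatim, using the dilation by $\uplambda(f) = \upepsilon / \min(\upepsilon, r_1,\dots, r_m)$. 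By the observation above, $\mathsf{infl}$ lands in $\Drect^R\Conf_\upupsilon(\mathbbst{R}^n_\upchi)$. The two homotopies $\lambda(f,t) = 1 - t + \lambda(f) t$ used in loc.cit. are, at each time, positive dilations, so they preserve the stratification and yield homotopies inside $\Drect^0\Conf_\upupsilon(\mathbbst{R}^n_\upchi)$.

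For the second equivalence $\Drect^0\Conf_\upupsilon(\mathbbst{R}^n_\upchi) \simeq \Conf_\upupsilon(\mathbbst{R}^n_\upchi)$, the evaluation-at-centers map is well-defined by Definition \ref{defn:FattConfWithStratif} (the centers land in the right strata). Its homotopy inverse is again the standard shrinking map: for a configuration $(x_i)_{i\in I}$, assign the family of closed discs centered at $x_i$ of radius
\[
\rho(x) = \tfrac{1}{2}\min\bigl(\,d(x_i, X_{\upupsilon(i)-1}),\, \min_{j\neq i} \tfrac{1}{2}\|x_i - x_j\|\,\bigr),
\]
which is positive and continuous in the configuration, and whose resulting discs are disjoint and disjoint from the relevant lower strata. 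The usual straight-line homotopy between $\id$ and the composition $\mathsf{shrink}\circ\mathsf{ev}$ (interpolating radii linearly) stays in $\Drect^0\Conf_\upupsilon(\mathbbst{R}^n_\upchi)$ because the stratification conditions are preserved under shrinking radii toward a valid center.

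I do not anticipate a serious obstacle here: the only thing one has to be careful about is that every map and every homotopy respects the two stratified conditions from Definition \ref{defn:FattConfWithStratif}, and both are preserved by positive dilations and by reducing radii while keeping centers fixed. The argument is essentially ``dilate and shrink, everything being centered in a way compatible with linearity of the strata''.
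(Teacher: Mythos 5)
Your proposal is correct and matches the paper's argument: the paper's proof is a one‑liner invoking Lemma \ref{lem:InflationInducesHomotopyEquivalence} together with the observation that positive dilations preserve the strata of a linear stratification, which is exactly your key observation. The extra detail you supply (explicit inflation and shrinking formulas) is consistent with what the cited lemma does.
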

\begin{proof} 
Analogous to the proof of Lemma \ref{lem:InflationInducesHomotopyEquivalence}. Note that dilations by positive scalars preserve the strata of any linear stratification (see Definition \ref{defn:LinearStratificationsRn}).
\end{proof}

\paragraph{Nested configurations with linear defects.}
To define the space of $\upalpha$\emph{-nested configurations of $R$-fattened points}, denoted $\Drect^R\Conf_{\upalpha}(\mathbbst{R}^n,\upchi)$, we first consider a preliminary notion. Given $\upchi'_{\uppsi}\in \ob(\mathbbst{E}_{\upchi})$, the space $\Drect^R\widetilde{\Conf}_{\upalpha}\big(\mathbbst{R}^{n}_{\upchi};\upchi'_{\uppsi}\big)$ is obtained by slightly modifying Definition \ref{defn_NestedConfigurations}; $\Drect^R\widetilde{\Conf}_{\upalpha}\big(\mathbbst{R}^{n}_{\upchi};\upchi'_{\uppsi}\big)$ is now a subspace of $\Emb^{\mathsf{rect}}(\overline{\mathbbst{D}};\mathbbst{R}^n)\times\prod_{i_r}\Emb^{\mathsf{rect}}(\overline{\mathbbst{D}};\mathbbst{R}^n)$ given by collections of rectilinear embeddings $(e,(f_{i_r})_{i_r})$ subject to the restrictions in Definition \ref{defn_NestedConfigurations} over the components $(f_{i_r})_{i_r}$ plus: 
\begin{itemize}
    \item $f_{i_r}(\overline{\mathbbst{D}})\cap X_{s-1}=\diameter$ if $z(f_{i_r}(\mathbbst{D}))$ belongs to $ X_{s}\backslash X_{s-1}$, and 
    \item $z(e(\mathbbst{D}))\in \uppsi\subseteq X_{t+1}\backslash X_{t}$ and $e(\overline{\mathbbst{D}})\cap X_t=\diameter$, where $t$ accounts for the interval of steps $[0,t]$ to pass from $\upchi$ to $\upchi'$, together with $\bigcup_{i_r}f_{i_r}(\overline{\mathbbst{D}})\subseteq e(\overline{\mathbbst{D}})$.
\end{itemize}
%Note that the second condition is equivalent to the existence of a point $x'\in \uppsi\subset X_{t+1}\backslash X_t$ and a number $T'>0$ such that $\bigcup_{i_r}f_{i_r}(\overline{\mathbbst{D}})\subseteq \overline{\mathbbst{D}}(x',T')$ and $\overline{\mathbbst{D}}(x,T)\cap X_t=\diameter$.

Then, we define $\Drect^R\Conf_{\upalpha}(\mathbbst{R}^n,\upchi)$
by taking together all possible choices as follows
\[
\Drect^R\Conf_{\upalpha}(\mathbbst{R}^n,\upchi):= \coprod_{\underline{\upchi^k_{\uppsi}}}\!\!\prod_{\quad i\in \langle m_k\rangle^{\circ}}\Drect^{R}\widetilde{\Conf}_{\upalpha\vert_i}\big(\mathbbst{R}^n_{\upchi};\upchi^k_{i,\uppsi(i)}\big)\;,
\]
with $\underline{\upchi^k_{\uppsi}}=(\upchi^k_{1,\uppsi(1)},\dots ,\upchi^k_{m_k,\uppsi(m_k)})$ running over $\mathsf{ob}(\mathbbst{E}_{\upchi})^{\times m_k}$ and $\upalpha\vert_i$ denoting the $k$-chain obtained from $\upalpha$ by taking successive preimages of $i\in \langle m_k\rangle^{\circ}$, i.e.\
\[
\upalpha\vert_i=\left[(\upalpha_k\cdots\upalpha_1)^{-1}(\{*,i\})\xrightarrow{\;\; \upalpha_1\vert\;\;}\cdots \xrightarrow{\;\; \upalpha_{k-1}\vert\;\;}\upalpha_k^{-1}(\{*,i\})\xrightarrow{\;\; \upalpha_k\vert\;\;} \{*,i\} \right]\;.
\]

\begin{rem} The first additional restriction on $(f_{i_r})_{i_r}$ accounts for the admissibility of rectilinear embeddings with respect to the linear stratification $\upchi$, while the second one imposes that everything happens somehow in the connected component $\uppsi\subseteq X_{t+1}\backslash X_{t}$. The technical reason why we take disjoint unions and products to define $\Drect^R\Conf_{\upalpha}(\mathbbst{R}^n,\upchi)$ stems from the discussion about $\mathsf{weq}\left[[k],\overline{\mathbbst{E}}\!\,^{\otimes,\sharp}_{\upchi}\right]_{\upalpha}$ below. 
\end{rem}

Similarly to Lemma \ref{lem:InflationStratificationInducesHomotopyEquivalence}, an inspection of the proof of Lemma \ref{lem:InflationInducesHomotopyEquivalenceNestedVersion} yields:
\begin{lem}\label{lem:InflationInducesHomotopyEquivalenceNestedVersionStratifiedI} Let $\upalpha\in \Fun([k],\Fin_*)$ be a $k$-chain of pointed functions. Then, the canonical inclusion of spaces of $\upalpha$-nested configurations 
	\[
        \Drect^R\Conf_{\upalpha}(\mathbbst{R}^n,{\upchi}) \xhookrightarrow{\quad\quad} \Drect^0\Conf_{\upalpha}(\mathbbst{R}^n,{\upchi})
    \]
is a homotopy equivalence.
\end{lem}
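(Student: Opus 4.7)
The plan is to reproduce, essentially verbatim, the argument for the non-stratified case (Lemma \ref{lem:InflationInducesHomotopyEquivalenceNestedVersion}), and then verify that the extra admissibility conditions imposed by the linear stratification $\upchi$ are preserved at each step. The construction uses dilations, and the crucial observation — already exploited in Lemma \ref{lem:InflationStratificationInducesHomotopyEquivalence} — is that any dilation by a positive scalar $\uplambda\in\mathbbst{R}_{>0}$ preserves each linear subspace $X_j$, and hence preserves each stratum $X_j\setminus X_{j-1}$ and each connected component thereof.

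More concretely, I will fix $\upepsilon>R$ and define on each component of the disjoint union that makes up $\Drect^0\Conf_{\upalpha}(\mathbbst{R}^n,\upchi)$ an inflation map
\[
\mathsf{infl}\colon \Drect^0\widetilde{\Conf}_{\upalpha\vert_i}\big(\mathbbst{R}^n_{\upchi};\upchi^k_{i,\uppsi(i)}\big) \xrightarrow{\quad\quad} \Drect^R\widetilde{\Conf}_{\upalpha\vert_i}\big(\mathbbst{R}^n_{\upchi};\upchi^k_{i,\uppsi(i)}\big)
\]
by post-composing every embedding of the tuple $\big(e,(f_{i_r})_{i_r}\big)$ with the dilation of factor $\uplambda(\underline{f}):=\upepsilon/\min(\upepsilon,r_1,\dots,r_m)$, where $r_i$ is the radius of the $i^{\text{th}}$ disc. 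Since $\uplambda(\underline{f})\geq 1$, the inflated discs have radii $>R$, so the image lands in the $R$-fattened configuration space; and since the central dilation fixes the origin and preserves every $X_j$ setwise, the two stratification-admissibility conditions (namely $f_{i_r}(\overline{\mathbbst{D}})\cap X_{s-1}=\diameter$ when $z(f_{i_r}(\mathbbst{D}))\in X_s\setminus X_{s-1}$, and $z(e(\mathbbst{D}))\in\uppsi\subseteq X_{t+1}\setminus X_t$ with $e(\overline{\mathbbst{D}})\cap X_t=\diameter$) are transported to the inflated tuple.

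To see that $\mathsf{infl}$ and the inclusion $\mathsf{inc}$ are mutually inverse up to homotopy, I reuse the linear interpolation $\uplambda(\underline{f},t)=(1-t)+t\,\uplambda(\underline{f})$, giving a homotopy
\[
h\colon \Drect^0\widetilde{\Conf}_{\upalpha\vert_i}\big(\mathbbst{R}^n_{\upchi};\upchi^k_{i,\uppsi(i)}\big)\times[0,1]\longrightarrow \Drect^0\widetilde{\Conf}_{\upalpha\vert_i}\big(\mathbbst{R}^n_{\upchi};\upchi^k_{i,\uppsi(i)}\big)
\]
defined by post-composition with the dilation of factor $\uplambda(\underline{f},t)$. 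For every $t\in[0,1]$ this factor is $\geq 1$, and again because the dilation preserves every $X_j$, the homotopy stays inside the appropriate stratified $\upalpha$-nested configuration space at all times. The analogous interpolation provides the other homotopy. Assembling these homotopy equivalences over the disjoint union defining $\Drect^R\Conf_{\upalpha}(\mathbbst{R}^n,\upchi)$ concludes the proof.

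The argument has no real obstacle: the whole point is that the stratification $\upchi$ is \emph{linear}, so the family of maps (central dilations) used to contract $\Drect^0$ onto $\Drect^R$ in the unstratified case automatically respects $\upchi$. If the strata were merely affine rather than linear the same scheme would fail, since dilations centered at the origin would move off-origin affine subspaces; this is the only place where linearity of $\upchi$ is actually used.
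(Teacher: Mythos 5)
Your argument is precisely the one the paper intends: the paper gives no explicit proof here, stating only that "an inspection of the proof of Lemma \ref{lem:InflationInducesHomotopyEquivalenceNestedVersion} yields" the result, and you have carried out exactly that inspection — reusing the inflation map $\uplambda(\underline{f})\cdot(-)$ and its linear-interpolation homotopy, then checking that central dilations preserve each linear stratum $X_j$ (and hence the admissibility conditions involving both the $f_{i_r}$ and the extra embedding $e$) component by component of the defining coproduct. This matches the paper's approach.
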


Let us now move to the definition of $\Drect^0\Conf_{\upalpha}(\mathbbst{D},\upchi)$. 
The goal is to identify the space $\mathsf{weq}\left[[k],\overline{\mathbbst{E}}\!\,^{\otimes,\sharp}_{\upchi}\right]_{\upalpha}$ in a way that its connection to $\Drect^0\Conf_{\upalpha}(\mathbbst{R}^n_{\upchi})$ is clearer. For example, for $k=1$, we have an identification 
\[
\mathsf{weq}\left[[k],\overline{\mathbbst{E}}\!\,^{\otimes,\sharp}_{\upchi}\right]\cong \coprod_{(\langle m_0\rangle,\underline{\upchi'_{\uprho}}),\;(\langle m_1\rangle,\underline{\upchi''_{\uppsi}})} \Map_{\overline{\mathbbst{E}}\!\,^{\otimes}_{\upchi}} \big((\langle m_0\rangle,\underline{\upchi'_{\uprho}}),(\langle m_1\rangle,\underline{\upchi''_{\uppsi}})\big)\,.
\]
Fixing the pointed function $\upalpha\colon \langle 3\rangle \to \langle 3\rangle$ given by $\upalpha(1)=1=\upalpha(2)$, $\upalpha(3)=2$, it restricts to
\[
\mathsf{weq}\left[[k],\overline{\mathbbst{E}}\!\,^{\otimes,\sharp}_{\upchi}\right]_{\upalpha}\cong \coprod_{\underline{\upchi'_{\uprho}},\;\underline{\upchi''_{\uppsi}}} \overline{\mathbbst{E}}_{\upchi}\brbinom{\{\upchi'_{1,\uprho(1)},\upchi'_{2,\uprho(2)}\}}{\upchi''_{1,\uppsi(1)}}\times \overline{\mathbbst{E}}_{\upchi}\brbinom{\{\upchi'_{3,\uprho(3)}\}}{\upchi''_{2,\uppsi(2)}} \times \overline{\mathbbst{E}}_{\upchi}\brbinom{\diameter}{\upchi''_{3,\uppsi(3)}}\,.
\]
For an arbitrary $k$, the Segal condition yields similar descriptions. Thus, one can observe that it is mandatory to take a coproduct running over all possible targets. For this reason, given a $k$-chain $\upalpha: [\langle m_0\rangle\to \cdots\to \langle m_k\rangle]$, we define
\[
\Drect^0\Conf_{\upalpha}(\mathbbst{D},\upchi):=\coprod_{\underline{\upchi^k_{\uppsi}}}\!\! \prod_{\quad i\in \langle m_k\rangle^{\circ}} \Drect^0\widetilde{\Conf}_{\upalpha\vert_i}\big(\mathbbst{D}_{\upchi^k_{i}}\big)\;,
\]
where the space $\Drect^0\widetilde{\Conf}_{\upalpha\vert_i}\big(\mathbbst{D}_{\upchi^k_{i}}\big)$ is the obvious modification of $\Drect^0\widetilde{\Conf}_{\upalpha\vert_i}\big(\mathbbst{R}_{\upchi^k_{i}}\big)$ with rectilinear embeddings landing in the unit $n$-disc $\mathbbst{D}$ equipped with the linear stratification $\upchi^k_i$. Notice that there is no decoration with respect to connected components; this means that the additional datum of $e\colon \overline{\mathbbst{D}}\hookrightarrow \mathbbst{D}$ is not considered.

To homogenize the arguments in the sequel, let us fix a rectilinear embedding $\mathbbst{D}_{\upchi'}\hookrightarrow \mathbbst{R}^n_{\upchi}$ for any $\upchi'_{\uppsi}\in \ob(\mathbbst{E}_{\upchi})$. We will call these choices \emph{canonical} in the next result, but essentially any choice works equally well. If $\upchi'$ is obtained from $\upchi$ by forgetting the interval of steps $[0,t]$ in the filtration, we choose a point $z(\upchi'_{\uppsi})\in \uppsi\subseteq X_{t+1}\backslash X_{t}\subseteq \mathbbst{R}^n$ whose distance to the subspace $X_t$ is, let us say, $2$. Therefore, we can consider the canonical inclusion of the unit disc around $z(\upchi'_{\uppsi})$. For $\upchi'=\upchi$ (equiv.\ $t=-1$), we take $z(\upchi)=0$. 

\begin{lem}\label{lem:NestedD0ConfigurationsInDiscStratifiedI} Postcomposition with the canonical inclusions $\{\mathbbst{D}_{\upchi'}\hookrightarrow \mathbbst{R}^n_{\upchi}:\;\upchi'_{\uppsi}\in \ob(\mathbbst{E}_{\upchi})\}$ induces a homotopy equivalence
\[
\Drect^0\Conf_{\upalpha}(\mathbbst{D},{\upchi})\xhookrightarrow{\quad\sim \quad}\Drect^0\Conf_{\upalpha}(\mathbbst{R}^n,{\upchi}).
\]
\end{lem}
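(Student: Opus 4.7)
My plan is to adapt the dilation argument of Lemma \ref{lem:NestedD0ConfigurationsInDisc} to the stratified setting by constructing a homotopy inverse that pulls configurations back along the outer-disc embedding. Since both $\Drect^0\Conf_{\upalpha}(\mathbbst{D},\upchi)$ and $\Drect^0\Conf_{\upalpha}(\mathbb{R}^n,\upchi)$ are coproducts over tuples $\underline{\upchi^k_{\uppsi}}$ of products indexed by $i\in\langle m_k\rangle^{\circ}$, and the postcomposition map respects this decomposition, I would first reduce to showing that, for each fixed datum, the inclusion
\[
\Drect^0\widetilde{\Conf}_{\upalpha|_i}(\mathbbst{D}_{\upchi^k_i})\;\xhookrightarrow{\qquad}\;\Drect^0\widetilde{\Conf}_{\upalpha|_i}(\mathbbst{R}^n_{\upchi};\upchi^k_{i,\uppsi(i)})
\]
is a homotopy equivalence.

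Next I would define the candidate homotopy inverse on the right-hand side by $(e,(f_{i_r})) \mapsto (e^{-1}\circ f_{i_r})_{i_r}$. Because $e$ is a rectilinear embedding with $z(e)\in\uppsi(i)\subseteq X_{t+1}$, both $e$ and $e^{-1}$ are affine maps whose scalar parts are positive and whose translation vectors lie in the linear subspace $X_{t+1}$; hence they preserve every stratum $X_s$ with $s\geq t+1$. Consequently, the pulled-back inner discs sit inside $\overline{\mathbbst{D}}$ and satisfy exactly the $\upchi^k_i$-stratum disjointness conditions entering the definition of $\widetilde{\Conf}$, so the output lives in $\Drect^0\widetilde{\Conf}_{\upalpha|_i}(\mathbbst{D}_{\upchi^k_i})$. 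The composition of this inverse with the canonical inclusion is the identity on the nose.

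For the other composition, which sends $(e,(f_{i_r}))$ to $(c,\, c\circ e^{-1}\circ f_{i_r})$ with $c$ the canonical inclusion, I would produce a homotopy to the identity via a continuous family of affine maps $\phi_s:\mathbb{R}^n\to\mathbb{R}^n$ with $\phi_0=\id$ and $\phi_1=c\circ e^{-1}$, of the form $\phi_s(y)=\mu_s\, y+w_s$ with $\mu_s>0$ and $w_s\in X_{t+1}$, acting simultaneously on the outer disc $e$ and on each inner disc $f_{i_r}$. Strata-preservation for $\phi_s$ is immediate from its form, and disjointness/nesting of discs is preserved because $\phi_s$ is an affine bijection.

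I expect the main obstacle to be ensuring that the center $z(\phi_s\circ e)$ stays in the connected component $\uppsi(i)$ for all $s\in[0,1]$: a naive linear interpolation inside $X_{t+1}$ need not remain in $\uppsi(i)$ when $\uppsi(i)$ fails to be convex. I would circumvent this by splitting the homotopy into three stages---first shrink the outer disc (and proportionally the inner configuration) about $z(e)$ to a very small size, then translate the shrunken configuration along a continuous path in $\uppsi(i)$ from $z(e)$ to $z(\upchi^k_{i,\uppsi(i)})$ (choosing the radius smaller than the distance to $X_t$, which stays uniformly positive along the path), and finally expand back to radius one. Continuous dependence on the input $(e,(f_{i_r}))$ is handled by selecting the path in $\uppsi(i)$ via a fixed contraction of $\uppsi(i)$ onto the basepoint $z(\upchi^k_{i,\uppsi(i)})$, which is precisely the information tacitly encoded by the choice of canonical inclusion.
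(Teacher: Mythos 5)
Your homotopy inverse and the paper's are of the same shape: both postcompose $(e,(f_{i_r}))$ with an affine self-map of $\mathbbst{R}^n$ that carries $e(\overline{\mathbbst{D}})$ into the image of the canonical inclusion $\mathbbst{D}_{\upchi'}\hookrightarrow\mathbbst{R}^n_{\upchi}$. The paper's choice of scaling factor $\uplambda(e,\underline f)$ is slightly smaller than the one implicit in your $e^{-1}$, which avoids a minor point-set issue: $\widetilde{\Conf}$ only requires $\bigcup f_{i_r}(\overline{\mathbbst{D}})\subseteq e(\overline{\mathbbst{D}})$, so $e^{-1}\circ f_{i_r}$ can touch $\partial\overline{\mathbbst{D}}$ and thus fail to land in the open disc (an easy $\upepsilon$-fix). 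You also correctly isolate the heart of the argument: in the homotopy from the identity to the round trip one must keep the center $z(\phi_s\circ e)$ inside the connected component $\uppsi\subseteq X_{t+1}\backslash X_t$ for all $s$.

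Your proposed resolution of that difficulty has a genuine gap. You invoke a fixed contraction of $\uppsi$ onto the basepoint $z(\upchi^k_{i,\uppsi(i)})$ in order to choose, continuously in the input, a path from $z(e)$ to the basepoint. But $\uppsi$ is contractible only when $c:=\dim X_{t+1}-\dim X_t=1$, in which case it is a convex open half-space of $X_{t+1}$ and the straight-line interpolation already suffices. For $c\geq 2$ the set $X_{t+1}\backslash X_t$ is connected, so $\uppsi=X_{t+1}\backslash X_t\cong X_t\times(\mathbbst{R}^c\backslash\{0\})\simeq S^{c-1}$, which admits no contraction onto a point (for $\mathbbst{R}^2_{\boxdot}\colon\{0\}\subsetneq\mathbbst{R}^2$ one has $\uppsi=\mathbbst{R}^2\backslash\{0\}\simeq S^1$). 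Hence the three-stage homotopy is undefined in that regime and this step breaks. For what it is worth, the same difficulty is latent in the paper's own brief appeal to ``convex interpolations'' (the outer center then follows the Euclidean segment from $e(0)$ to $z(\upchi'_{\uppsi})$, which can meet $X_t$ when $c\geq 2$); the argument is clean exactly when every $\uppsi$ is convex, as in the corner case $\mathbbst{R}^{p,q}$. So before invoking the lemma for arbitrary linear stratifications one would need either a substitute for the contraction you use or a genuinely different argument.
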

\begin{proof} The proof is similar to that of Lemma \ref{lem:NestedD0ConfigurationsInDisc}. In essence, the only difference is that, at the component indexed by $\upchi'_{\uppsi}\equiv\upchi^k_{i, \uppsi(i)}$, we have to perform an additional translation to place the $\upalpha\vert_i$-nested configuration in the image of $\mathbbst{D}_{\upchi'}\hookrightarrow \mathbbst{R}^n_{\upchi}$ associated to $\upchi'_{\uppsi}$. More concretely, the homotopy inverse $\Drect^0\widetilde{\Conf}_{\upalpha\vert_i}\big(\mathbbst{R}^n_{\upchi};\upchi'_{\uppsi}\big)\longrightarrow \Drect^0\widetilde{\Conf}_{\upalpha\vert_i}\big(\mathbbst{D}_{\upchi'}\big)$ is given postcomposing each rectilinear embedding in $(e,\underline{f})$ with the rectilinear map 
\[
 \mathbbst{R} ^n\xrightarrow{\;\;\;\uplambda(e,\underline{f})\cdot\;\;\;}\mathbbst{R}^n\xrightarrow{\;\;\;v(e,\underline{f})+\;\;\;}\mathbbst{R}^n,
\]
where
\[
\uplambda(e,\underline{f}):=\frac{1}{\vert\vert e(0)\vert\vert+\vert\vert e(1,0,\dots,0)-e(0)\vert\vert+\upepsilon} \qquad \text{and} \qquad v(e,\underline{f}):= z(\upchi'_{\uppsi})-\uplambda(e,\underline{f})\cdot e(0)
\]
with $\upepsilon>0$ a fixed number. The relevant homotopies are just convex interpolations.
\end{proof}

To close this subsection, let us justify the last step in the proof of Theorem \ref{thm_LocalizationForTruncatedDisjStratifiedI} described above. We start by defining the poset $\Prect_{\upchi}=\widetilde{\mathsf{weq}}\left[[k],\overline{\mathdutchcal{D}}\!\,_{\upchi}\right]_{\upalpha}$. 

\begin{defn}\label{defn:PosetForLSVKwithDefects} We denote by $\Prect_{\upchi}$ the poset whose objects are  tuples $\underline{U}^{*}=\big(\underline{U}^{0},\dots, \underline{U}^{k}\big)$, where each $\underline{U}^{r}=\big(U^{r}_{i_r}\big)_{i_r}$, indexed by $i_r\in \langle m_r\rangle \backslash \upalpha_{r+1}^{-1}(*)$\footnote{By convention, $\upalpha_{k+1}$ is the unique active map $\langle m_k\rangle \to \langle 1\rangle$.} and $0\leq r\leq k$, is again a tuple of euclidean open discs in $\mathbbst{R}^n$ with radii strictly bigger than $R$, subject to:
\begin{itemize}
    \item $\overline{U}\!\,^{r}_{i_r}\cap X_{s-1}=\diameter$ if $z(U^{r}_{i_r})$ belongs to $ X_{s}\backslash X_{s-1}$, 
    \item $\overline{U}\!\,_{i_r}^{r}\cap\overline{U}\!\,_{j_r}^{r}=\diameter$ if $\upalpha_{t}\cdots \upalpha_{r+1}(i_r)=\upalpha_{t}\cdots \upalpha_{r+1}(j_r)\neq *$ for some $t\leq k$,
    \item either $\overline{U}\!\,_{i_{r-1}}^{r-1}\subset U_{j_{r}}^{r}$ or $U^{r-1}_{i_{r-1}}=U^{r}_{j_r}$ if $i_{r-1}\in \upalpha_{r}^{-1}(j_r)$.
\end{itemize}
There is a unique morphism $\underline{U}^*\to \underline{V}^*$ between elements of $\Prect_{\upchi}$ if:
\begin{itemize}
    \item $[U^k_{i_k}]=[V^k_{i_k}]$ as elements in $\ob(\mathbbst{E}_{\upchi})$ for any $i_k\in \langle m_k\rangle^{\circ}$, 
    \item $\overline{U}\!\,^{r}_{i_r}\subset V^{r}_{i_r}$ or $U^{r}_{i_r}= V^{r}_{i_r}$ for any $i_r\in \langle m_r\rangle\backslash\upalpha_{r+1}^{-1}(*) $ and $r<k$, and
    \item $z(U^r_{i_r})$, $z(V^r_{i_r})$ lie in the same stratum $X_{s}\backslash X_{s-1}$ for any pair $(r,i_r)$ with $r<k$.
\end{itemize}
\end{defn}

At this point, it remains to specify which functor $\upzeta$ is employed to apply Lurie--Seifert--van Kampen's theorem. We consider the functor $\upzeta\colon \Prect_{\upchi}\to\mathsf{Open}\big(\Drect^R\Conf_{\upalpha}(\mathbbst{R}^n,{\upchi})\big)$ which sends $\underline{U}^*$ to the subspace of
\[
\prod_{i\in \langle m_k\rangle^{\circ}}\Drect^{R}\widetilde{\Conf}_{\upalpha\vert_i}\big(\mathbbst{R}^n_{\upchi};[U^k_{i}]\big) \xhookrightarrow{\quad\quad} \Drect^R\Conf_{\upalpha}(\mathbbst{R}^n,\upchi)
\]
determined by the points whose factors $(e,\underline{f})$ satisfy: $(a)$ $f_{j_r}(\overline{\mathbbst{D}})\subset U^{r}_{j_r}$, and $(b)$ $z(f_{j_r}(\overline{\mathbbst{D}}))$, $z(U^{r}_{j_r})$ live in the same stratum $X_{s}\backslash X_{s-1}$, for any $j_r\in \langle m_r\rangle\backslash\upalpha_{r+1}^{-1}(*)$ and $0\leq r<k$. In other words, we consider the open subset $\upzeta(\underline{U}^*)$ of the component of $\Drect^R\Conf_{\upalpha}(\mathbbst{R}^n,\upchi)$ indexed by $[\underline{U}^k]\in \ob(\mathbbst{E}_{\upchi})^{\times m_k}$ given by embeddings whose image is appropriately contained in $(\underline{U}^0,\dots, \underline{U}^{k-1})$.

\begin{prop}\label{prop:CofilteredSubposetLinearDefects} Let $p\in \Drect^R\Conf_{\upalpha}(\mathbbst{R}^n,{\upchi})$ be an $\upalpha$-nested configuration. Then, the subposet $\left\lbrace \underline{ U}^{*}\in\Prect_{\upchi}:\; p\in\upzeta(\underline{ U}^{*})\right\rbrace \subseteq \Prect_{\upchi}$ is cofiltered and hence weakly contractible.
\end{prop}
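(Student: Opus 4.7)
The plan is to adapt the cofilteredness argument used at the end of the proof of Theorem \ref{thm_LocalizationForTruncatedDisj} to the stratified setting. Non-emptiness of the subposet is immediate by taking each $U^r_{j_r}$ (resp.\ $U^k_{i_k}$) to be a sufficiently small concentric enlargement of $f_{j_r}(\mathbbst{D})$ (resp.\ $e_{i_k}(\mathbbst{D})$), which respects every condition in Definition \ref{defn:PosetForLSVKwithDefects} since each center sits at positive distance from the relevant lower stratum. The real task is therefore to show that any two elements $\underline{U}^*, \underline{V}^*$ of the subposet admit a common lower bound $\underline{W}^*$, which I would build inductively on $r = 0, 1, \dots, k$ following the template of the non-stratified proof: each $W^r_{j_r}$ will be a concentric open disc with center $z(f_{j_r}(\mathbbst{D}))$ (respectively $z(e_{i_k}(\mathbbst{D}))$ at the top level) that one ``expands'' in order to simultaneously realise nesting, disjointness, and the equality-vs-strict-inclusion dichotomy dictated by the chain $\upalpha$.

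The new ingredient to justify is the stratification constraint $\overline{W}^r_{j_r}\cap X_{s-1}=\diameter$, where $s$ is the stratum containing $z(W^r_{j_r})$. The key observation is that because $p\in\upzeta(\underline{U}^*)\cap\upzeta(\underline{V}^*)$, the three centers $z(f_{j_r}(\overline{\mathbbst{D}}))$, $z(U^r_{j_r})$ and $z(V^r_{j_r})$ lie in the same stratum $X_s\setminus X_{s-1}$ for each $0\leq r<k$. Consequently $U^r_{j_r}\cap V^r_{j_r}$ is a convex open subset of $\mathbbst{R}^n$ that contains the closed disc $f_{j_r}(\overline{\mathbbst{D}})$ and is disjoint from $X_{s-1}$ (since each of $U^r_{j_r}$, $V^r_{j_r}$ has that property). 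Thus any concentric open disc centered at $z(f_{j_r}(\overline{\mathbbst{D}}))$ and contained in $U^r_{j_r}\cap V^r_{j_r}$ automatically meets the stratification requirement. The remaining radius, nesting, disjointness and equality conditions are then handled exactly as in the proof of Theorem \ref{thm_LocalizationForTruncatedDisj}, carrying along the equality case $W^{r-1}_{i_{r-1}}=W^r_{j_r}$ precisely when $U^{r-1}_{i_{r-1}}=U^r_{j_r}$ and $\upalpha_r^{-1}(j_r)=\{i_{r-1}\}$.

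At the top level $r=k$, the morphism relation in $\Prect_\upchi$ no longer imposes a containment between $W^k_{i_k}$ and $U^k_{i_k}$, $V^k_{i_k}$; only the equality of equivalence classes $[W^k_{i_k}]=[U^k_{i_k}]=[V^k_{i_k}]$ is required. Since $p$ belongs to the component of $\Drect^R\Conf_\upalpha(\mathbbst{R}^n,\upchi)$ indexed by $[\underline{U}^k]=[\underline{V}^k]$, the center $z(e_{i_k}(\mathbbst{D}))$ lies in the same connected component $\uppsi(i_k)$ that characterises $[U^k_{i_k}]$. One then takes $W^k_{i_k}$ to be a concentric expansion of $e_{i_k}(\mathbbst{D})$ that contains all the previously constructed $\overline{W}^{k-1}_t$ for $t\in\upalpha_k^{-1}(i_k)$ (with the singleton equality case handled as before), is mutually disjoint from the other top-level discs forced disjoint by $\upalpha$, and whose closure misses the stratum below $\uppsi(i_k)$; all this is realised because the $W^{k-1}_t$ sit in the correct neighbourhood by induction and we have complete freedom to enlarge at level $k$. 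The main difficulty I anticipate is the combined bookkeeping at the top level — reconciling the equivalence-class constraint, the stratification, the mutual disjointness and the nesting with the inductively built $\underline{W}^{k-1}$ — but each individual piece is tractable, and the argument ultimately mirrors the non-stratified one with the stratum of each center playing the role of a fixed ambient component.
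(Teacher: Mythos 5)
Your proof is correct and takes essentially the same approach as the paper: reduce to the cofilteredness construction from the proof of Theorem \ref{thm_LocalizationForTruncatedDisj} at levels $r<k$, note that the stratification constraints are automatically satisfied because both $U^r_{j_r}\cap V^r_{j_r}$ and the concentric expansions preserve the stratum of the center, and observe that level $k$ is unconstrained up to equivalence class since $p$ already pins down $[\underline{U}^k]$. The paper compresses all of this into the remark that ``types of discs with respect to $\upchi$ are preserved along the construction,'' so your argument is in effect a more explicit unpacking of the same idea, particularly at the top level $r=k$.
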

\begin{proof} Rearranging the factors of $p$, we can see the $\upalpha$-nested configuration as a list of rectilinear embeddings
$
\big((e_{i_k})_{i_k\in \langle m_k\rangle^{\circ}},(f_{i_r})_{1\leq i_r\leq m}\big)
$
with $m=m_0-\vert\upalpha^{-1}_{1}(*)\vert+\dots +m_{k-1}-\vert \upalpha^{-1}_{k}(*)\vert$. Since the space $\Drect^R\Conf_{\upalpha}(\mathbbst{R}^n,\upchi)$ is defined as a disjoint union, $p$ lives in one of the components, let us say indexed by $\underline{\upchi'_{\uppsi}}$. This determines $[\underline{U}^k]=\underline{\upchi'}_{\uppsi}$ and the strata where $z(e_{i_k}(\mathbbst{D}))$ lies on. From here, to check that the subposet is cofiltered, we use the same argument we used in the proof of Theorem \ref{thm_LocalizationForTruncatedDisj}. Just note that the constructive proof there handles the components of $\underline{U}^*=(\underline{U}^1,\dots,\underline{U}^k)$ indexed by $r<k$ with respect to the list $(f_{i_r})_{i_r}$ and that types of discs with respect to $\upchi$ (equiv.\ their class in $\ob(\mathbbst{E}_{\upchi})$) are preserved along the construction.
\end{proof}

%%% Subsection 3.2 starts here %%%%%%%%%%%

\subsection{Corner defects}

For field theories defined over half euclidean spaces or other local models for manifolds with corners, i.e.\ $\mathbbst{R}^{p}\times \mathbbst{R}_{\geq 0}^{q}$, we may define defects for which our techniques apply. We will be more sketchy in this part, since the ideas are simple adaptations of those in Subsection \ref{subsect:LinearDefects}. 

Let us fix once and for all in this subsection the space $\mathbbst{R}^{p,q}=\mathbbst{R}^p\times \mathbbst{R}^q_{\geq 0}$.

\begin{defn}\label{defn:CornerStratifications} The \emph{corner stratification of} $\mathbbst{R}^{p,q}$ is the finite  filtration  
$
 Z_{0}\subsetneq \cdots \subsetneq  Z_{q}=\mathbbst{R}^{p,q},  
$
given by the subspaces 
$$
Z_{k}=\left\{(x,y)\in \mathbbst{R}^{p,q}\;\text{ s.t. at least }(q-k)\text{-coordinates of }y\text{ are null}\right\}.$$
\end{defn}

Definitions \ref{defn:TruncatedDiscsWithLinearStratifications}
 and \ref{defn:ConstructiblePrefactRn} can be straightforwardly adapted to this situation, yielding the   \emph{$R$-truncated $(p,q)$-discs operad} $\Drect^{R}_{p,q}$ and the notion of \emph{constructible prefactorization algebras at scale }$R$ over $\mathbbst{R}^{p,q}$. Also, the corresponding little disc operad (see Definition \ref{defn:LittleStratifiedDiscsOperad}), called \emph{little $(p,q)$-discs operad} and denoted  $\mathbbst{E}_{p,q}$, is a generalization of the Swiss cheese operad (corresponding to the case $q=1$); see also \cite[\S3.1.1]{calaque_around_nodate}. For concreteness, let us be more specific about what $\mathbbst{E}_{p,q}$ is.
 
We obtain its set of objects $\ob(\mathbbst{E}_{p,q})$ as the quotient of $\ob(\Drect^{R}_{p,q})$ by the minimal equivalence relation  generated by
$$
 U\underset{\text{pre}}{\sim} V \text{ if } U\subseteq  V \text{ and both }z( U)\text{ and }z( V)\text{ lie in the same stratum } Z_{j}\backslash Z_{j-1}.
$$

\begin{lem} There is a bijection between $\mathsf{ob}(\Drect^{R}_{p,q})/\!\!\sim$ and the set of pairs  $t_{\upvarrho}\equiv(t,\upvarrho)$ where:
\begin{itemize}
    \item  $t$ is a  non-negative integer smaller or equal than $q$, and  
    \item $\upvarrho$ represents a connected component of $ Z_{t}\backslash Z_{t-1}$ for the previous $t$.
\end{itemize} 
\end{lem}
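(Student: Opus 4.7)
My plan is to follow the strategy sketched for the linear case in Lemma \ref{lem:ObjectsOfLittleUpchiDiscsOperad}, adapted to the corner stratification. I would define a map
\[
\Phi \colon \ob(\Drect^R_{p,q}) \longrightarrow \left\{(t,\upvarrho) : 0 \leq t \leq q,\ \upvarrho \in \pi_0(Z_t \backslash Z_{t-1})\right\}
\]
sending an admissible disc $U$ to the pair $(t,\upvarrho)$ where $z(U) \in Z_t \backslash Z_{t-1}$ lies in the connected component $\upvarrho$, and then show that $\Phi$ descends to a bijection on $\ob(\Drect^R_{p,q})/\!\!\sim$. The components admit a transparent description: a connected component of $Z_t \backslash Z_{t-1}$ is determined by choosing which $q-t$ of the $y$-coordinates must vanish (the remaining ones being strictly positive), so each such component is in fact convex and diffeomorphic to $\mathbbst{R}^p \times \mathbbst{R}_{>0}^t$.

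The key point, which has no analog in the unstratified setting, is to verify that whenever $U \subseteq V$ with $z(U), z(V) \in Z_t \backslash Z_{t-1}$, the two centers must lie in the same connected component. Let $B \subseteq \{1,\dots,q\}$ be the set of indices $i$ with $z(V)_i = 0$, so $|B|=q-t$. The admissibility condition $V \cap Z_{t-1} = \diameter$ forces $r_V \leq \min\{z(V)_i : i \notin B\}$, since each hyperplane $\{y_i = 0\}$ (for $i\notin B$) sits at euclidean distance $z(V)_i$ from $z(V)$ and meets $\mathbbst{R}^{p,q}$ inside $Z_{t-1}$. Consequently, every point $p \in V$ has its zero-pattern in $y$ contained in $B$: otherwise $p_i = 0$ for some $i \notin B$ would give $|p - z(V)| \geq z(V)_i \geq r_V$, contradicting $p \in V$. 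Applied to $p = z(U) \in U \subseteq V$, whose zero-pattern has cardinality $q-t = |B|$, this forces equality of zero-patterns, hence same component.

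Surjectivity is then immediate: given $(t,\upvarrho)$, pick any point in $\upvarrho$ whose positive $y$-coordinates all exceed $R$, and consider a euclidean disc around it of radius slightly greater than $R$. For injectivity, the obstacle is to relate two discs $U_1, U_2$ with $\Phi(U_i) = (t,\upvarrho)$ by a zigzag of $\underset{\text{pre}}{\sim}$-steps. I would first shrink both $U_i$ to radius just above $R$ with the same center (an admissible move, since a smaller concentric disc is still in $\Drect^R_{p,q}$ and strictly contained in the original). Then, using convexity of $\upvarrho$, I would connect $z(U_1)$ to $z(U_2)$ by a piecewise-linear path detouring through the ``deep'' region of $\upvarrho$ where the non-vanishing $y$-coordinates are arbitrarily large (so the distance to $Z_{t-1}$ is much larger than $R$), and cover that path by finitely many common enclosing discs centered on it, related to the $U_i$'s and to each other by successive inclusions. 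The step I expect to require the most care is controlling the radii of the enclosing discs against the distance to $Z_{t-1}$ near the endpoints of the path, where this distance only marginally exceeds $R$; small enough radii for the shrunken $U_i$'s and a sufficiently fine subdivision of the path should suffice.
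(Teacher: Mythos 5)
The paper leaves this lemma unproven (it is stated in the same spirit as Lemma \ref{lem:ObjectsOfLittleUpchiDiscsOperad} for the linear case, which is preceded by ``One readily deduces''), so your task was genuinely to supply the argument, and what you have written is essentially a correct and complete proof. The two real points of content — that the assignment $U \mapsto (t,\upvarrho)$ is constant on $\underset{\text{pre}}{\sim}$-classes, and that two discs landing on the same $(t,\upvarrho)$ can be connected by a zigzag routed through the deep part of the stratum — are both argued correctly, and you correctly identify the main new phenomenon relative to the unstratified setting, namely that a common enclosing disc may fail to be admissible so one must first move deeper into $\upvarrho$.

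One small imprecision worth fixing: the sentence ``each hyperplane $\{y_i = 0\}$ (for $i\notin B$) \dots meets $\mathbbst{R}^{p,q}$ inside $Z_{t-1}$'' is literally false when $t<q$. The intersection $\{y_i=0\}\cap\mathbbst{R}^{p,q}$ consists of points with at least one vanishing $y$-coordinate, hence lies in $Z_{q-1}$ but not in general in $Z_{t-1}$ (which requires at least $q-t+1\geq 2$ vanishing coordinates). What is true, and what your argument actually needs, is that the orthogonal projection of $z(V)$ onto $\{y_i=0\}$ — that is, $z(V)$ with its $i^{\text{th}}$ $y$-coordinate zeroed out — has $q-t+1$ vanishing coordinates and therefore does lie in $Z_{t-1}$. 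This yields $\mathrm{dist}\big(z(V),Z_{t-1}\big)=\min\{z(V)_i : i\notin B\}$ and hence the bound $r_V\leq \min\{z(V)_i : i\notin B\}$, after which the rest of your zero-pattern argument goes through unchanged.
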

\begin{rem} We should think about $t_{\upvarrho}$ as a $(r,t)$-disc with $r+t=p+q$, i.e.\  $\mathbbst{D}^{r,t}=\mathbbst{D}\cap \mathbbst{R}^{r,t}$ within $\mathbbst{R}^{r+t}$, for each index $\upvarrho$.  We use the notation $\mathbbst{D}^{r,t}_{\upvarrho}$ to refer to this situation.
\end{rem}

\begin{defn}\label{defn:LittleStratifiedDiscsOperadCorner} Let $(r,t)$ and $(s,l)$ be two pairs of non-negative integers satisfying $t,l\leq q$ and $r+t=p+q=s+l$.
\begin{itemize}
    \item A \emph{c-admissible rectilinear embedding} $\mathbbst{D}^{r,t}\hookrightarrow \mathbbst{D}^{s,l}$ is a rectilinear embedding which preserves the induced corner stratifications\footnote{In particular, there is no c-admissible rectilinear embedding if $t>l$.}. We denote by  $\Emb_{\upc}^{\mathsf{rect}}(\mathbbst{D}^{r,t},\mathbbst{D}^{s,l})$ the subspace of c-admissible rectilinear embeddings. For $\upvarrho$ connected component of $ Z_{t}\backslash Z_{t-1} $ and $\uppsi$  connected component of $ Z_{l}\backslash Z_{l-1}$, set
    $$
    \Emb_{\upc}^{\mathsf{rect}}(\mathbbst{D}^{r,t}_{\upvarrho},\mathbbst{D}^{s,l}_{\uppsi})= \left\lbrace
	\begin{array}{cr}
	\Emb_{\upc}^{\text{rect}}(\mathbbst{D}^{r,t},\mathbbst{D}^{s,l}) & \text{ if }\;\uppsi\text{ is contained in }\overline{\upvarrho},\\[5mm]
	\diameter  & \text{ otherwise,}
	\end{array}
	\right.
    $$
    where $\overline{\upvarrho}$ denotes the closure of $\upvarrho$ as a subspace of $\mathbbst{R}^{p,q}$. 

    \item The \emph{little $(p,q)$-discs operad}, denoted $\mathbbst{E}_{p,q}$, is given by:
    \begin{itemize}
        \item[\textbf{Obj:}] its set of objects/colors is $\mathsf{ob}(\mathbbst{E}_{p,q})=\mathsf{ob}(\Drect^{R}_{p,q})\slash\!\!\sim$; 
        \item[\textbf{Mor:}] its spaces of multimorphisms are defined by 
        $$
        \mathbbst{E}_{p,q}\brbinom{\underline{t_{\upvarrho}}}{l_{\uppsi}}= \prod_i \Emb_{\upc}^{\mathsf{rect}}(\mathbbst{D}^{r_i,t_i}_{\upvarrho(i)},\mathbbst{D}^{s,l}_{\uppsi})\cap \Emb\Big(\bigsqcup_i\mathbbst{D}^{r_i,t_i},\mathbbst{D}^{s,l}\Big)
        $$
        with the obvious composition of rectilinear embeddings and identities.
    \end{itemize}
\end{itemize}
\end{defn}

There is a canonical map of operads $\upgamma\colon\Drect_{p,q}^{R}\to \mathbbst{E}_{p,q}$, sending  $\mathbbst{E}_{p,q}$-algebras to constructible prefactorization algebras at scale $R$ over $\mathbbst{R}^{p,q}$. As in the previous subsection, one shows:
\begin{thm}\label{thm:ConstructiblePrefactAlgsScaleRonRpq} 
   Constructible prefactorization algebras at scale $R$ over $\mathbbst{R}^n_{p,q}$ are the same as $\mathbbst{E}_{p,q}$-algebras. More concretely, the functor 
	$$
	\upgamma^*\colon\EuScript{A}\mathsf{lg}_{\mathbbst{E}_{p,q}}( \EuScript{V})\to 
	\begin{Bmatrix}
\textup{constructible }\EuScript{V}\text{-prefactorization}\\
\textup{algebras at scale }R\textup{ over }\mathbbst{R}^{p,q}
\end{Bmatrix}
	%\Alg_{\Drect_{p,q}^R}^{\textup{cbl}}( \EuScript{V})
	$$
	is an equivalence of $\infty$-categories for any sm-$\infty$-category $ \EuScript{V}$.
\end{thm}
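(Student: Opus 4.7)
The plan is to mimic, step by step, the proof of Theorem~\ref{thm:ConstructiblePrefactAlgsScaleRonRn}, reducing it first to an operad-level localization statement. Concretely, I would formulate and prove the analog of Theorem~\ref{thm_LocalizationForTruncatedDisjStratifiedI}: the morphism $\upgamma\colon\Drect^R_{p,q}\to\mathbbst{E}_{p,q}$ exhibits $\mathbbst{E}_{p,q}$ as the $\infty$-localization of $\Drect^R_{p,q}$ at the set of unary operations $U\subseteq V$ with $z(U)$ and $z(V)$ in the same stratum $Z_j\setminus Z_{j-1}$. The target Theorem~\ref{thm:ConstructiblePrefactAlgsScaleRonRpq} follows formally from this by pullback of algebras.

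To establish the operad-level statement, I would first introduce the closed variants $\overline{\Drect}\!\,^R_{p,q}$ and $\overline{\mathbbst{E}}\!\,_{p,q}$ (defined with the same convention as in Section~\ref{subsect:ClosedLittleDiscs}: non-trivial multimorphisms do not allow boundary intersections). The proofs of Lemmas~\ref{lem:EnvsClosedEn} and~\ref{lem:DRvsClosedDR} adapt verbatim, since the shrinking arguments only require inclusions of discs whose centers lie in a common stratum, which is preserved by the corner stratification. This reduces us to showing that $\Nrect(\overline{\Drect}\!\,^{R,\otimes}_{p,q})\to \Nrect_\infty(\overline{\mathbbst{E}}\!\,^{\otimes}_{p,q})$ is an $\infty$-localization of quasi-categories, which by Proposition~\ref{prop_HinichCriteria} and Remark~\ref{rem:HinichCriteriaPLUS} boils down to the weak-equivalence claim on the functors $\mathsf{weq}[[k],\overline{\mathdutchcal{D}}\!\,_{p,q}]_\upalpha\to \mathsf{weq}[[k],\overline{\mathbbst{E}}\!\,^{\otimes,\sharp}_{p,q}]_\upalpha$ for every $k\geq 0$ and every $k$-chain $\upalpha\in\Fun([k],\Fin_*)$.

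Following the strategy summarized in Section~\ref{subsect:LinearDefects}, I would introduce corner-adapted fattened nested configuration spaces $\Drect^R\Conf_\upalpha(\mathbbst{R}^{p,q})$ and $\Drect^0\Conf_\upalpha(\mathbbst{D}^{p,q})$ (defined analogously to Definition~\ref{defn_NestedConfigurations}, but requiring each disc centered in $Z_s\setminus Z_{s-1}$ to avoid $Z_{s-1}$), and set up the five-map homotopy-commutative diagram as in the proof of Theorem~\ref{thm_LocalizationForTruncatedDisjStratifiedI}. The crucial point making the inflation arguments of Lemmas~\ref{lem:InflationInducesHomotopyEquivalenceNestedVersion} and~\ref{lem:InflationStratificationInducesHomotopyEquivalence} go through is that dilations by positive scalars preserve the corner stratification of $\mathbbst{R}^{p,q}$ (they preserve vanishing of coordinates of $y$); hence maps $(2)$ and $(3)$ of the diagram are homotopy equivalences. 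For map $(4)$, one applies Lurie--Seifert--van Kampen to the functor $\upzeta\colon\Prect_{p,q}\to\mathsf{Open}\bigl(\Drect^R\Conf_\upalpha(\mathbbst{R}^{p,q})\bigr)$ defined as in Definition~\ref{defn:PosetForLSVKwithDefects}, and adapts Proposition~\ref{prop:CofilteredSubposetLinearDefects} to show cofilteredness of the relevant subposets.

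The main obstacle I anticipate is the cofilteredness step at points of the nested configuration whose centers sit on the deepest corner stratum $Z_0$: one must build a lower bound $\underline{W}^*$ whose closed components avoid lower strata, satisfy the nesting conditions, and still have radii strictly greater than $R$. This requires a careful inductive expansion, analogous to the one in the proof of Theorem~\ref{thm_LocalizationForTruncatedDisj}, that treats small-index strata first; here the expansion is constrained on several sides simultaneously (by the surrounding convex intersections $U^r_{i_r}\cap V^r_{i_r}$, by ambient enclosing discs $f_{i_t}(\mathbbst{D})$, and by the half-space constraints defining $Z_{s-1}$), but convexity of those intersections with each stratum $Z_s\setminus Z_{s-1}$ guarantees enough room, since the given configuration point already certifies the existence of closed discs of radii $>R$ satisfying all constraints. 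Once this cofilteredness is in place, the rest of the argument is a direct transcription of Section~\ref{subsect:ProofLocalizationTHM}.
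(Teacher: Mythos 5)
Your proposal is correct and follows exactly the route the paper takes: the paper itself gives Theorem \ref{thm:ConstructiblePrefactAlgsScaleRonRpq} no explicit proof beyond saying ``as in the previous subsection, one shows,'' and its closing remark singles out precisely the feature you identify---stability of the corner stratification under dilations---as the unavoidable requirement that makes the inflation and inductive-expansion arguments from Section \ref{subsect:LinearDefects} transfer. Your fleshed-out version (closed variants, corner-adapted nested configuration spaces, the five-map diagram, Lurie--Seifert--van~Kampen on the modified poset, and the cofilteredness induction using convexity of each stratum component) is faithful to the intended argument.
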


\begin{rem}
The interested reader may note that the stratifications isolated by Definition \ref{defn:LinearStratificationsRn} and \ref{defn:CornerStratifications} are just simple choices to illustrate how our main result can be generalized. An important requirement to look for generalizations is that one should consider stratifications which are \emph{stable by inflations}, e.g.\ those filtrations of $\mathbbst{R}^n$ or $\mathbbst{R}^{p,q}$ where $p+q=n$  such that for any dilation $\uplambda\colon \mathbbst{R}^n\to \mathbbst{R}^n$ with $\uplambda>1$, one has $\uplambda( X_j\backslash  X_{j-1})\subseteq  X_j\backslash X_{j-1}$ for any index $j$ of the filtration. This assumption is unavoidable for our purposes and it already implies that the levels of the stratification must contain rays. In particular, the only point that may appear isolated in a stratum is the origin.
\end{rem}

\section{Application: quantization of constant Poisson structures}\label{sect:Quantization}
From now on, we fix a field $\mathbbst{k}$ of characteristic $0$. Let $\Vrect$ be a $\mathbbst{k}$-vector space, or more generally a cochain complex thereof, equipped with an antisymmetric pairing
$$
\langle\text{-},\text{-}\rangle\colon \Uplambda^2(\Vrect)\to\mathbbst{k}.
$$ 
This is equivalent to the data of a \emph{constant Poisson structure} 
on $\Sym(\Vrect)$ determined by 
$$
\lbrace v,w\rbrace:=\langle v,w\rangle\in\mathbbst{k}. 
$$
It is well-known that such a constant Poisson structure admits a quantization given by a kind of ``Weyl algebra''
$$
\mathsf{Weyl}_\hbar(\Vrect_{\langle\text{-},\text{-}\rangle}):= \Trect(\Vrect)\llbracket\hbar\rrbracket/(v\otimes w-(-1)^{\vert v\vert\vert w\vert}w\otimes v-\hbar\langle v,w\rangle),
$$
whose deformed product has no higher terms over linear generators and also the deformation does not modify the underlying $\mathbbst{k}\llbracket\hbar\rrbracket$-module, i.e.\ one has an isomorphism of $\mathbbst{k}\llbracket\hbar\rrbracket$-modules
$$
\mathsf{Weyl}_\hbar(\Vrect_{\langle\text{-},\text{-}\rangle})\cong \Sym(\Vrect)\llbracket\hbar\rrbracket.
$$

We are going to recover this algebra  $\mathsf{Weyl}_\hbar(\Vrect_{\langle\text{-},\text{-}\rangle})$ as global sections of a \emph{locally constant} $\Drect_1^R$-algebra with $R=\frac12$. Our approach shall be seen as a discrete (as opposed to continuous) version of Costello--Gwilliam's discussion of quantum mechanics in \cite[\textsection 4.3]{costello_factorization_2017}.

%%%%% Subsection 4.1 %%%%%

\subsection{Poisson additivity for constant Poisson structures}

Let us start by recalling Safronov's result \cite{Safronov-additivity} stating an equivalence of $\infty$-categories 
\[
\EuScript{A}\mathsf{lg}_{\mathbbst{P}_1}(\EuScript{M}\mathsf{od}_\mathbbst{k}^\otimes)\simeq \EuScript{A}\mathsf{lg}_{\mathbbst{E}_1}\big(\EuScript{A}\mathsf{lg}_{\mathbbst{P}_0}(\EuScript{M}\mathsf{od}_\mathbbst{k}^\otimes)\big)
\]
that commutes with the forgetful functors to the $\infty$-category $\EuScript{M}\mathsf{od}_\mathbbst{k}$ of cochain complexes of $\mathbbst{k}$-modules. Here $\mathbbst{P}_n$ denotes the algebraic operad encoding commutative algebras equipped with a Poisson bracket of degree $1-n$, that is a $(n-1)$-shifted biderivation satifying the Jacobi identity (also known as a $(n-1)$-shifted Poisson structure\footnote{Here we follow the terminology from \cite{melani,CPTVV,Safronov-additivity}. Beware that this terminology is unfortunately opposite to the one from \cite{costello_factorization_2021,calaque_not_2024}: an $(n-1)$-shifted Poisson structure on a commutative (differential graded) algebra $A$ in the sense of \cite{melani,CPTVV,Safronov-additivity} is a $(1-n)$-shifted Poisson bracket on $A$ in the sense of \cite{costello_factorization_2021,calaque_not_2024}. }). 

\begin{rem}\label{remark-on-units}
In \cite{Safronov-additivity} there are two versions of the above statement: for unital and non-unital Poisson algebras ($\mathbbst{E}_1$-algebras are unital by definition). All along Section \ref{sect:Quantization}, all algebras over operads will be unital by convention. 
\end{rem}

\medskip

In this subsection we aim at giving explicit models realizing Poisson additivity in the case of constant Poisson brackets. 
Recall the usual identifications $\Omega^1_{\Sym(\Vrect)}\overset{}{\cong}\Sym(\Vrect)\otimes\Vrect$ and
\[
\mathsf{BiDer}\big(\Sym(\Vrect)\big):=\Hom_{\Sym(\Vrect)}\big(\Uplambda^2_{\Sym(\Vrect)}(\Upomega^1_{\Sym(\Vrect)}),\Sym(\Vrect)\big)
\overset{}{\cong}
\Hom_{\mathbbst{k}}\big(\Uplambda^2_{\mathbbst{k}}(\Vrect),\Sym(\Vrect)\big)\,.
\]
provided by the Leibniz rule and base change. A \emph{constant} ($0$-shifted) Poisson structure on $\Sym(\Vrect)$ is thus precisely determined by an antisymmetric pairing 
$\langle\text{-},\text{-}\rangle:\Uplambda^2_{\mathbbst{k}}(\Vrect)\to \mathbbst{k}$ as above. 
We also have the $(-1)$-shifted biderivations
\begin{align*}
\mathsf{BiDer}\big(\Sym(\Vrect),-1\big)
& :=\Hom_{\Sym(\Vrect)}\big(\Uplambda^2_{\Sym(\Vrect)}(\Upomega^1_{\Sym(\Vrect)}[-1]),\Sym(\Vrect)\big)[-1] \\
& \overset{}{\cong}
\Hom_{\mathbbst{k}}\big(\Uplambda^2_{\mathbbst{k}}(\Vrect[-1]),\Sym(\Vrect)\big)[-1]
\overset{}{\cong}\Hom_{\mathbbst{k}}\big(\Sym^2(\Vrect),\Sym(\Vrect)\big)[1]
\,.
\end{align*}
A constant $(-1)$-shifted Poisson structure on $\Sym(\Vrect)$ is therefore determined by a degree $+1$ symmetric pairing $\langle\!\langle\text{-},\text{-}\rangle\!\rangle\colon\Sym^2(\Vrect)\to\mathbbst{k}[1]$. 

\medskip

Poisson additivity for constant Poisson structures therefore goes as follows: 
\begin{enumerate}
\item Using that $\Sym^2(\Vrect[1])[-2]\cong \Uplambda^2(\Vrect)$, we get that the degree $0$ antisymmetric pairing $\langle\text{-},\text{-}\rangle$ on $\Vrect$ determines a degree $+2$ symmetric pairing on $\Vrect[1]$. 
\item Now, $\Vrect\cong 0\times^{\uph}_{\Vrect[1]}0$ is an $\mathbbst{E}_1$-algebra in $\EuScript{M}\mathsf{od}_{\mathbbst{k}}^{\oplus}$, on which the pull-back of the degree $+2$ symmetric pairing vanishes up to homotopy in two different ways, leading to a degree $+1$ symmetric pairing that is compatible with the $\mathbbst{E}_1$-algebra structure. 
\end{enumerate}

%%%%% Subsubsection 4.1.1 %%%%%

\subsubsection{The Costello--Gwilliam model}\label{sssection-Costello-Gwilliam-model}

One can compute the homotopy fiber product $\Vrect\cong 0\times^{\uph}_{\Vrect[1]}0$ by considering the resolution 
\[
0\,\tilde\longrightarrow\,\Upomega_0^\bullet\big((0,1),\Vrect\!\big)[1]\,\overset{\mathsf{ev}_1}{\longrightarrow}\, \Vrect[1]\,,
\]
where ``$\Upomega^\bullet_0$'' indicates that $1$-forms are compactly supported while functions vanish near $0$ and are constant near $1$. 
The nullhomotopy for the pull-back of the degree $+2$ symmetric pairing along $\mathsf{ev}_1$ is given by 
\begin{equation}\label{homotopy-bracket}
\upalpha\cdot\upbeta\longmapsto \int_{[0,1]}\langle\upalpha,\upbeta\rangle
\end{equation}
(the homotopy property is Stokes formula). 
Taking the fiber product with zero we get the $\mathbbst{E}_1$-algebra $\Upomega^\bullet_{\upc}\big((0,1),\Vrect\!\big)[1]$, in $\EuScript{M}\mathsf{od}_{\mathbbst{k}}^{\oplus}$, of $\Vrect[1]$-valued compactly supported forms on the open interval $(0,1)$, that carries the degree $+1$ symmetric pairing given by \eqref{homotopy-bracket}. Applying $\Sym$, one thus gets an $\mathbbst{E}_1$-algebra, in $\EuScript{M}\mathsf{od}_{\mathbbst{k}}^{\otimes}$, carrying a compatible (constant) $(-1)$-shifted Poisson structure. 

\medskip

Note that if one wants a version of the above that uses the language of factorization algebras, one shall simply compose with the functor 
$\uppi^*\colon\EuScript{A}\mathsf{lg}_{\mathbbst{E}_1}(\EuScript{V})\to \EuScript{A}\mathsf{lg}_{\Disc(\mathbbst{R})}^{\mathsf{lc}}(\EuScript{V})$. The idea is then to apply some BD-quantization procedure (see \S\ref{ssec-BD} below) in a way that is compatible with the factorization algebra structure. 

\medskip

Our aim is to do this with a smaller/discrete model. 

%%%%% Subsubsection 4.1.2 %%%%%

\subsubsection{The discrete model}

From now, and until the end of Section \ref{sect:Quantization}, we assume for simplicity that $V$ is concentrated in degree $0$ (this just makes the exposition clearer, especially dealing with signs -- we let the reader check that everything still works in general). 

\medskip

Another way to compute the homotopy fiber product $0\times^{\uph}_{\Vrect[1]}0$ is to consider another resolution 
\[
0\xrightarrow{\quad \sim\quad} \mathsf{cone}(\id_{\Vrect})\xrightarrow{\quad\phantom{\sim}\quad} \Vrect[1]\,.
\]
The nullhomotopy for the pull-back of the degree $+2$ symmetric pairing along the obvious map $\mathsf{cone}(\id_{\Vrect})\,\to \, \Vrect[1]$ is given by 
\[
(a_{-1},a_0)\cdot(b_{-1},b_0)\longmapsto \frac12\big(\langle a_{-1},b_0\rangle+\langle b_{-1},a_0\rangle\big)\,.
\]

Then $0\times^{\uph}_{\Vrect[1]}0 \cong \mathsf{cone}(\mathsf{id}_{\Vrect})\times_{\Vrect[1]}\mathsf{cone}(\mathsf{id}_{\Vrect})$ can be identified with the cone of the anti-diagonal map 
\[
Q\colon \Vrect  \longrightarrow  \Vrect^{\oplus 2}\,,\,
v \longmapsto (-v,v)
\]
and the degree $+1$ symmetric pairing is given by 
\[
(a_{-1},a_0,a_0')\cdot(b_{-1},b_0,b_0')\longmapsto \frac12\big(\langle a_{-1},b_0'+b_0\rangle + \langle b_{-1},a_0'+a_0\rangle\big) \,.
\]
 
\medskip

The above can be interpreted as a $\Vrect[1]$-valued discrete de Rham complex with finite support, and thus generalized to define a $\Disc(\mathbbst{R})$-algebra with values in the symmetric monoidal category of complexes equipped with a pairing. 

\begin{defn}\label{defn-V}
For each bounded open interval $(a,b)\subseteq\mathbbst{R}$ we let $\mathbbst{V}(a,b)$ be the cone of
\[
\Map\big((a,b-1)\cap\,\mathbbst{Z},\Vrect\big)\overset{ Q}{\longrightarrow} \Map\big((a,b)\cap\,\mathbbst{Z},\Vrect\big) \]
given by the finite difference function
$(Qg)(x)=g(x-1)-g(x)$. By convention, $g(x):=0$ whenever $x\notin (a,b-1)$, and we set $\mathbbst{V}(\mathbbst{R}):=\underset{r\to \infty}{\mathsf{colim}}\, \mathbbst{V}(-r,r)$.
\end{defn}
Any cochain in $\mathbbst{V}(a,b)$ can be viewed as a function $\mathbbst{Z}\to \Vrect$ with finite support (or the suspension of such a function). 
The addition of functions with disjoint support turns $\mathbbst{V}$ into a $\Disc(\mathbbst{R})$-algebra with values in $\EuScript{M}\mathsf{od}_\mathbbst{k}^\oplus$. 
It is not locally constant whenever $\Vrect\neq0$: the map $0=\mathbbst{V}(0,1)\to\mathbbst{V}(0,2)=\Vrect$ is not a quasi-isomorphism. The  following proposition shows that $\mathbbst{V}$ becomes locally constant if one does not include intervals of size $\leq1$; in other words, $\mathbbst{V}$ defines a locally constant $\Drect_1^{\frac12}$-algebra (and thus an $\mathbbst{E}_1$-algebra) in $\EuScript{M}\mathsf{od}_\mathbbst{k}^\oplus$. 

\begin{prop}\label{Proposition-locally-constant-V}
For every $t\in(a,b)\cap\mathbbst{Z}$, the map $\Vrect\longrightarrow \mathbbst{V}(a,b)$ sending $v$ to $\delta_tv$ is a quasi-isomorphism. 
\end{prop}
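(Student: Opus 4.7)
The plan is a direct computation of the cohomology of the two-term complex $\mathbbst{V}(a,b)$, then identification of $\delta_t v$ as a cocycle representative of the generator. Write $(a,b)\cap\mathbbst{Z}=\{n,n+1,\dots,m\}$ and set $k:=m-n+1$; a quick check shows that $(a,b-1)\cap\mathbbst{Z}=\{n,\dots,m-1\}$ has exactly $k-1$ elements (going from $b$ to $b-1$ removes the rightmost integer), so $\mathbbst{V}(a,b)$ is a two-term complex in degrees $-1$ and $0$ of the form
\[
\Vrect^{\oplus(k-1)}\xrightarrow{\;Q\;}\Vrect^{\oplus k}\,,
\]
where, using the boundary conventions $g(n-1)=g(m)=0$, the map $Q$ is bidiagonal with $-1$ on the diagonal and $+1$ on the subdiagonal.

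The first step is to show $H^{-1}(\mathbbst{V}(a,b))=\ker Q=0$ by a straightforward induction: the equation $(Qg)(n)=-g(n)=0$ forces $g(n)=0$, and then $(Qg)(x)=g(x-1)-g(x)=0$ propagates vanishing to all $x$. For $H^{0}$, the key observation is that the ``summation'' map
\[
\Sigma\colon \mathbbst{V}(a,b)^{0}\longrightarrow \Vrect\,,\qquad f\longmapsto \sum_{x\in(a,b)\cap\mathbbst{Z}} f(x)\,,
\]
satisfies $\Sigma\circ Q=0$ by telescoping (the sum collapses to $g(n-1)-g(m)=0$) and is clearly surjective. Conversely, given $f\in\ker\Sigma$, setting $g(x):=-\sum_{y\leq x}f(y)$ defines an element of $\mathbbst{V}(a,b)^{-1}$ with $Qg=f$ (the required boundary vanishing $g(m)=0$ is precisely the equation $\Sigma(f)=0$). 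Hence $\mathrm{im}(Q)=\ker\Sigma$, so $\Sigma$ descends to an isomorphism $H^{0}(\mathbbst{V}(a,b))\cong \Vrect$.

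To conclude, observe that for any $t\in(a,b)\cap\mathbbst{Z}$ one has $\Sigma(\delta_t v)=v$, so the composition $\Vrect \to \mathbbst{V}(a,b)^{0} \twoheadrightarrow H^{0}(\mathbbst{V}(a,b))\xrightarrow{\;\Sigma\;}\Vrect$ is the identity. Combined with the vanishing of $H^{-1}$, this shows that $v\mapsto \delta_t v$ is a quasi-isomorphism. The argument is entirely elementary finite-dimensional linear algebra; I don't foresee any real obstacle beyond the bookkeeping of the off-by-one shift between $(a,b)\cap\mathbbst{Z}$ and $(a,b-1)\cap\mathbbst{Z}$.
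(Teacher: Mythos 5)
Your proof is correct and follows essentially the same route as the paper: the paper simply asserts exactness of the sequence $0\to\Map\big((a,b-1)\cap\mathbbst{Z},\Vrect\big)\xrightarrow{Q}\Map\big((a,b)\cap\mathbbst{Z},\Vrect\big)\xrightarrow{\int}\Vrect\to 0$ (where $\int=\Sigma$) and notes that $v\mapsto\delta_t v$ is a section of $\int$, whereas you spell out the exactness at each spot. The content is identical; you have merely written out the telescoping and the inductive vanishing that the paper leaves to the reader.
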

\begin{proof}
We just observe that the sequence 
\[
0\longrightarrow\Map\big((a,b-1)\cap\,\mathbbst{Z},\Vrect\big)\overset{ Q}{\longrightarrow} \Map\big((a,b)\cap\,\mathbbst{Z},\Vrect\big)
\overset{\int}{\longrightarrow} \Vrect \longrightarrow0\,,
\]
where $\int f :=\sum_n f(n)$, is exact whenever $(a,b)\,\cap\,\mathbb{Z}\neq\diameter$, and thus induces a quasi-isomorphism $\mathbbst{V}(a,b)\to\Vrect$. The result follows since the map $v\mapsto \delta_t v$ is a section of $\int$. 
\end{proof}
It follows from the proof that different choices of $t\in (a,b)\cap \mathbbst{Z}$ will lead to homotopic quasi-isomorphisms. 

\medskip

For clarity, we use overlined symbols $\overline{g}:\mathbbst{Z}\to V[1]$ to denote the suspension of functions $g:\mathbbst{Z}\to V$ (for which we use plain symbols). 
Using the pairing $\langle\text{-,-}\rangle$ on $\Vrect$, we can define a degree $+1$ symmetric pairing $\langle\!\langle\text{-,-}\rangle\!\rangle$ on $\mathbbst{V}(a,b)$: 
\[
\langle\!\langle\overline{f}_{-1}+f_0\,,\,\overline{g}_{-1}+g_0\rangle\!\rangle:=\frac12\sum_{x\in\mathbbst{Z}}\big(\langle f_{-1}(x-1)+f_{-1}(x),g_0(x)\rangle+\langle g_{-1}(x-1)+g_{-1}(x),f_0(x)\rangle\big)\,.
\]
Note that this bracket is completely determined by its symmetry and the formula 
\[
\langle\!\langle\overline{f}\,,\,g\rangle\!\rangle:=\frac12\sum_{x\in\mathbbst{Z}}\langle f(x-1)+f(x),g(x)\rangle\,.
\]
The compatibility of $\langle\!\langle\text{-,-}\rangle\!\rangle\colon \mathbbst{V}(a,b)^{\otimes 2}\to \mathbbst{k}[1]$ with the differential follows from a simple calculation (recall that the differential of $\overline{f}$ is $Qf$):
\begin{eqnarray*}
\langle\!\langle Qf\,,\,\overline{g}\rangle\!\rangle-\langle\!\langle \overline{f}\,,\,Qg\rangle\!\rangle
&=&
\frac12\sum_{x\in\mathbbst{Z}}\big(\langle g(x-1)+g(x),f(x-1)-f(x)\rangle \\
&&
-\langle f(x-1)+f(x),g(x-1)-g(x)\rangle\big) \\
&=& 
\frac12\sum_{x\in\mathbbst{Z}}\big(
\langle g(x),f(x-1)\rangle
- \langle g(x-1),f(x)\rangle \\
&&
+\langle f(x-1),g(x)\rangle
- \langle f(x),g(x-1)\rangle\big)=0
\,.
\end{eqnarray*}
This upgrades $\mathbbst{V}$ to a locally constant $\Drect_1^{\frac12}$-algebra (and thus an $\mathbbst{E}_1$-algebra) in cochain complexes equipped with a degree $+1$ symmetric pairing. 

Recalling that $\Sym$ defines a symmetric monoidal functor 
$$
\Sym\colon \left\{ \begin{matrix} \text{complexes }
\Wrect\in \EuScript{M}\mathsf{od}_{\mathbbst{k}}^{\oplus} \text{  with}\\
\text{degree +1 symmetric pairing}\\
\Sym^2(\Wrect)\longrightarrow \mathbbst{k}[1]
\end{matrix}\right\} \xrightarrow{\quad\quad\quad\quad} \EuScript{A}\mathsf{lg}_{\mathbbst{P}_0}(\EuScript{M}\mathsf{od}_{\mathbbst{k}}^{\otimes}),
$$
%from complexes equipped with a degree $+1$ symmetric pairing to $\mathbbst{P}_0$ algebras,
we get that $\Sym(\mathbbst{V})$ is a locally constant $\Drect_1^{\frac12}$-algebra (and thus an $\mathbbst{E}_1$-algebra) in $\mathbbst{P}_0$-algebras. This is our ``discrete'' model for the image of the $\mathbbst{P}_1$-algebra $\Sym(\Vrect)$ through the Poisson additivity equivalence. 

\begin{rem}
Notice that Definition \ref{defn-V} obviously admits a version leading to a $\mathcalpxtx{P}_{\mathbbst{Z}}$-algebra $\mathbbst{V}_{\mathbbst{Z}}$ in $\EuScript{M}\mathsf{od}_\mathbbst{k}^\oplus$, that is locally constant thanks to Proposition \ref{Proposition-locally-constant-V}. 
By definition, the pullback of $\mathbbst{V}_{\mathbbst{Z}}$ along the morphism $\overline{\Drect}\!\,_1^{\frac32}=\overline{\Crect}\!\,_1^{\frac32}\to \mathcalpxtx{P}_{\mathbbst{Z}}$ from the proof of Proposition \ref{prop P vs C} is equal to the pullback of 
$\mathbbst{V}$ along $\overline{\Drect}\!\,_1^{\frac32}\rightarrow\Drect_1^{\frac12}$. 
\end{rem}

%%%%% Subsection 4.2 %%%%%

\subsection{Deformation quantization}\label{ssec-BD}

Given a ($0$-shifted) Poisson algebra $(A_0,\cdot,\{\text{-},\text{-}\})$, recall from \cite{BFFLS} that a deformation quantization of $A$ is an (unital) associative $\mathbbst{k}\llbracket
\hbar\rrbracket$-algebra such that   
\begin{enumerate}
\item $A$ is a $\hbar$-adically complete flat formal ($1$-parameter) deformation of $(A_0,\cdot)$;  
\item For every $f,g\in A$, 
\[
\left(\frac{f\cdot g-g\cdot f}{\hbar}\right)\vline_{\underset{\hbar=0}{\phantom{T}}}=\left\{\!\!\!\!\phantom{\Big(}f\vert_{\hbar=0},g\vert_{\hbar=0}\phantom{\Big)}\!\!\!\!\right\}\,.
\]
\end{enumerate}
\begin{rem}
The above can be rephrased by saying that $A$ is an algebra over the Beilinson--Drinfeld operad $\mathbbst{BD}_1$ (within the symmetric monoidal category of complete topological $k\llbracket\hbar\rrbracket$-modules) such that (i) the underlying $\mathbbst{k}\llbracket
\hbar\rrbracket$-module of $A$ is flat, and (ii) $A\otimes_{\mathbbst{k}\llbracket
\hbar\rrbracket} \mathbbst{k}\cong A_0$ as $\mathbbst{BD}_1\otimes_{\mathbbst{k}\llbracket
\hbar
\rrbracket} \mathbbst{k}\cong\mathbbst{P}_1$-algebras. See \cite[\S.5.1]{ safronov-coisotropic}.

\end{rem}

The Weyl type algebra $\mathsf{Weyl}_\hbar(\Vrect_{\langle\text{-},\text{-}\rangle})$ introduced in the beginning of this section is a deformation quantization of the algebra $\Sym(\Vrect)$ equipped with the constant Poisson structure induced by the pairing $\langle\text{-},\text{-}\rangle$ (which is given on generators 
$v,w\in \Vrect$ by $\{v,w\}=\langle v,w\rangle$). 

%%%%% Subsubsection 4.2.1 %%%%%

\subsubsection{BD quantization of $(-1)$-shifted Poisson structures}

We now recall another Beilinson--Drinfeld operad, $\mathbbst{BD}_0$, that is relevant for the quantization of $\mathbbst{P}_0$-algebras (see \cite[\S A.3.2]{ costello_factorization_2017}, or \cite[\S5.1]{safronov-coisotropic}): 
\begin{itemize}
\item As a graded $\mathbbst{k}\llbracket
\hbar\rrbracket$-linear operad $\mathbbst{BD}_0=\mathbbst{P}_0\llbracket
\hbar\rrbracket$ (as such it is generated by a degree $0$ binary operation $\cdot$, a degree $0$ constant $\mathbbst{1}$ and a degree $1$ binary operation $\{\text{-},\text{-}\}$); 
\item The differential is given on generating operations by $d(\cdot)=\hbar\{\text{-},\text{-}\}$ and $d(\mathbbst{1})=0$.  
\end{itemize}
In more concrete terms, a (unital) $\mathbbst{BD}_0$-algebra is the following data: 
\begin{itemize}
    \item A cochain complex $(A,d)$ of $\mathbbst{k}\llbracket\hbar\rrbracket$-modules;
	\item A (degree $0$, $\mathbbst{k}\llbracket\hbar\rrbracket$-linear, unital) commutative and associative product $A\otimes A\xrightarrow{\;\;\cdot\;\;} A$;
	\item A ($\mathbbst{k}\llbracket\hbar\rrbracket$-linear) degree $1$ Poisson bracket $A\otimes A\xrightarrow{\;\,\{\text{-,-}\}\,\;} A[1]$ for the product;
	\item For every $f,g\in A$,  
$d(f\cdot g)- d f\cdot g -(-1)^{\vert f\vert} f\cdot d g= \hbar\{f,g\}$. 
\end{itemize}

\begin{defn}\label{definition-BD-quantization}
A \emph{deformation quantization} of a $\mathbbst{P}_0$-algebra $A_0$ is a $\mathbbst{BD}_0$-algebra $A$ (within the symmetric monoidal category of complete topological $k\llbracket\hbar\rrbracket$-modules) that is flat as a $\mathbbst{k}\llbracket\hbar\rrbracket$-module, and such that $A\otimes_{\mathbbst{k}\llbracket
\hbar\rrbracket} \mathbbst{k}\cong A_0$ as a $\mathbbst{BD}_0\otimes_{\mathbbst{k}\llbracket
\hbar\rrbracket} \mathbbst{k}\cong\mathbbst{P}_0$-algebra. 
\end{defn}

A strategy for the quantization of ordinary Poisson structures is as follows: 
\begin{enumerate}
    \item First use Poisson additivity to view an unshifted Poisson algebra as an $\mathbbst{E}_1$-algebra in $\mathbbst{P}_0$-algebras. 
    \item Then quantize it as a $\mathbbst{BD}_0$-algebra in a way that is compatible with the $\mathbbst{E}_1$-action. 
    \item Finally just remember the $\mathbbst{E}_1$-algebra structure: it should provide the quantization of the Poisson structure one started with.
\end{enumerate}
This strategy was successfully applied to the quantization of  symplectic manifolds in \cite{grady_batalin-vilkovisky_2017} (where the relationship with  Fedosov's approach \cite{FedosovDQ} through formal geometry was provided as well). The following conjecture explains why this strategy must always work (recall from Remark \ref{remark-on-units} that all algebras are unital by convention): 
\begin{conj}[BD additivity]
There is an equivalence of $\infty$-categories 
\[
\EuScript{A}\mathsf{lg}_{\mathbbst{BD}_1}(\EuScript{M}\mathsf{od}_{\mathbbst{k}\llbracket\hbar\rrbracket}^\otimes)\simeq \EuScript{A}\mathsf{lg}_{\mathbbst{E}_1}\big(\EuScript{A}\mathsf{lg}_{\mathbbst{BD}_0}(\EuScript{M}\mathsf{od}_{\mathbbst{k}\llbracket\hbar\rrbracket}^\otimes)\big)
\]
that gives back Safronov's Poisson additivity equivalence when tensoring with $\mathbbst{k}$ (i.e.~when taking $\hbar=0$). 
\end{conj}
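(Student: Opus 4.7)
The plan is to deduce BD additivity from Safronov's (ordinary) Poisson additivity by an $\hbar$-adic deformation argument, treating the $\mathbbst{BD}_n$ operads as $\mathbbst{k}\llbracket\hbar\rrbracket$-linear deformations of their $\mathbbst{P}_n$ fibers. Concretely, both sides of the conjectural equivalence are $\mathbbst{k}\llbracket\hbar\rrbracket$-linear $\infty$-categories that are ``complete'' in the $\hbar$-adic sense (every object is recovered from its tower of truncations modulo $\hbar^{n+1}$), and reduction modulo $\hbar$ recovers the classical Poisson picture. The strategy is therefore: construct a natural comparison functor whose mod-$\hbar$ reduction is Safronov's equivalence, then upgrade using completeness.

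The first step is to construct, at the level of $\infty$-operads over $\mathbbst{k}\llbracket\hbar\rrbracket$, a canonical map
\[
\Phi\colon \mathsf{Env}_{\mathbbst{BD}_0}(\mathbbst{E}_1)\longrightarrow \mathbbst{BD}_1\,,
\]
where the left-hand side denotes the ``enveloping $\mathbbst{E}_1$-operad'' of $\mathbbst{BD}_0$ — equivalently, the Boardman--Vogt tensor product $\mathbbst{E}_1\otimes\mathbbst{BD}_0$ over $\mathbbst{k}\llbracket\hbar\rrbracket$. The map $\Phi$ is determined by: $(i)$ the canonical morphism of operads $\mathbbst{BD}_0\to\mathbbst{BD}_1$ coming from the fact that a $\mathbbst{BD}_1$-algebra has an underlying commutative product up to a $\hbar$-correction and a $1$-shifted bracket; and $(ii)$ the fact that any $\mathbbst{BD}_1$-algebra has an $\mathbbst{E}_1$-structure (given by its associative product) which commutes with the $\mathbbst{BD}_0$-structure in the appropriate BV-sense. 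Having $\Phi$ yields by pullback a functor
\[
\Phi^*\colon \EuScript{A}\mathsf{lg}_{\mathbbst{BD}_1}\longrightarrow\EuScript{A}\mathsf{lg}_{\mathbbst{E}_1}\bigl(\EuScript{A}\mathsf{lg}_{\mathbbst{BD}_0}\bigr)
\]
and the claim is that $\Phi$ is an equivalence of $\infty$-operads.

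The second step is to check that $\Phi\otimes_{\mathbbst{k}\llbracket\hbar\rrbracket}\mathbbst{k}$ recovers Safronov's comparison map $\mathsf{Env}_{\mathbbst{P}_0}(\mathbbst{E}_1)\xrightarrow{\sim}\mathbbst{P}_1$. This is essentially formal from the identifications $\mathbbst{BD}_n\otimes_{\mathbbst{k}[\hbar]}\mathbbst{k}\cong \mathbbst{P}_n$ and the compatibility of Boardman--Vogt tensor products with base change, provided we have flatness of both operads over $\mathbbst{k}[\hbar]$ and a Künneth-type statement for enveloping operads. With this in hand, Safronov's theorem gives the equivalence mod $\hbar$, and a Nakayama/filtered spectral sequence argument propagates the equivalence to all truncations $\mathbbst{k}[\hbar]/\hbar^{n+1}$. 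Passing to the limit — which is justified because both $\EuScript{A}\mathsf{lg}_{\mathbbst{BD}_1}$ and $\EuScript{A}\mathsf{lg}_{\mathbbst{E}_1}(\EuScript{A}\mathsf{lg}_{\mathbbst{BD}_0})$ are limits over the truncation tower — yields the full equivalence, along with the promised compatibility with Safronov's equivalence at $\hbar=0$.

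The main obstacle, and the reason this remains a conjecture, is the verification that $\mathsf{Env}_{\mathbbst{BD}_0}(\mathbbst{E}_1)$ is flat over $\mathbbst{k}[\hbar]$ with the correct underlying object. The Boardman--Vogt tensor product in the $\mathbbst{k}\llbracket\hbar\rrbracket$-linear dg setting is delicate: the $\mathbbst{BD}_0$ operad has a non-trivial differential involving $\hbar$, and one must control the homotopy coherence of commutation relations between $\mathbbst{E}_1$-operations and the shifted bracket after quantization. If flatness fails, the mod-$\hbar$ argument collapses and one would need an obstruction-theoretic approach, computing the relevant Hochschild-type cohomology groups classifying deformations of Safronov's equivalence over $\mathbbst{k}[\hbar]/\hbar^{n+1}$ and showing they vanish. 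A more geometric route, perhaps tractable in low dimensions, would be to model $\mathbbst{BD}_n$-algebras as factorization algebras at a fixed scale (using Theorem \ref{thm:RenormalizationTFTs}) and use the discrete quantization machinery from Section \ref{sect:Quantization} to construct the equivalence objectwise, but this would only address free-like cases rather than the full conjecture.
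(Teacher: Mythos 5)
The statement you are addressing is stated in the paper as a \emph{conjecture}: the authors give no proof of BD additivity, and your proposal does not close that gap either. What you have written is a strategy sketch whose decisive steps are exactly the unresolved difficulties, and you acknowledge as much in your last paragraph. Concretely: $(i)$ the existence of a map $\Phi\colon \mathbbst{E}_1\otimes\mathbbst{BD}_0\to\mathbbst{BD}_1$ that is an equivalence of $\mathbbst{k}\llbracket\hbar\rrbracket$-linear ($\infty$-)operads is not something you construct, and it is strictly stronger than what is available even at $\hbar=0$. Safronov's theorem is an equivalence of $\infty$-categories of algebras, $\EuScript{A}\mathsf{lg}_{\mathbbst{P}_1}\simeq\EuScript{A}\mathsf{lg}_{\mathbbst{E}_1}(\EuScript{A}\mathsf{lg}_{\mathbbst{P}_0})$, proved via shifted Poisson/coisotropic machinery; it is not proved by exhibiting an operad-level equivalence $\mathbbst{E}_1\otimes\mathbbst{P}_0\simeq\mathbbst{P}_1$ for a Boardman--Vogt-type tensor product, so your ``reduction mod $\hbar$ gives the equivalence by Safronov'' step has nothing to reduce to at the level of $\Phi$. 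Moreover, Lurie's tensor product with its universal property $\EuScript{A}\mathsf{lg}_{\mathdutchcal{O}\otimes\mathdutchcal{O}'}\simeq\EuScript{A}\mathsf{lg}_{\mathdutchcal{O}}(\EuScript{A}\mathsf{lg}_{\mathdutchcal{O}'})$ lives in the setting of $\infty$-operads in spaces; $\mathbbst{BD}_0$ is a $\mathbbst{k}\llbracket\hbar\rrbracket$-linear dg operad with an $\hbar$-dependent differential, and the right-hand side of the conjecture is only meaningful because $\mathbbst{BD}_0$ is a Hopf operad (as the paper notes), not because a well-behaved BV tensor product is available in this linear setting.

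$(ii)$ The completion argument is also not sound as stated: $\EuScript{M}\mathsf{od}_{\mathbbst{k}\llbracket\hbar\rrbracket}$ is not the limit of the categories $\EuScript{M}\mathsf{od}_{\mathbbst{k}[\hbar]/\hbar^{n+1}}$ (only its full subcategory of $\hbar$-complete objects is), and neither side of the conjectured equivalence imposes $\hbar$-completeness or $\mathbbst{k}\llbracket\hbar\rrbracket$-flatness on objects, so ``every object is recovered from its truncation tower'' fails and the passage to the limit does not apply to the categories in the statement. Even on complete objects one would still need a functorial equivalence at each finite truncation compatible with the transition maps, which is again the content of the conjecture rather than a consequence of Nakayama-type arguments. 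In short, your proposal is a reasonable outline of how one might attack the conjecture (and your closing remarks correctly identify flatness/coherence over $\mathbbst{k}[\hbar]$ as the crux), but it is not a proof, and the paper offers none to compare it against.
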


%%%%% Subsubsection 4.2.2 %%%%%

\subsubsection{Quantization of constant $(-1)$-shifted Poisson structures}

Recall that a constant $(-1)$-shifted Poisson structure on a cochain complex $\Wrect$ is completely determined (thanks to the Leibniz rule) by $\langle\!\langle\text{-},\text{-}\rangle\!\rangle\colon\Sym^2(\Wrect)\to\mathbbst{k}[1]$ a degree $1$ symmetric pairing, which can also be seen as a degree $1$ differential operator of order $2$ with constant coefficients, on $\Sym(\Wrect)$. We call it the \emph{odd laplacian} $\Updelta$ associated with $\langle\!\langle\text{-},\text{-}\rangle\!\rangle$. It terms of formula, on monomials it is given by 
\[
\Updelta(w_1\cdots w_p)=\sum_{1\leq i<j\leq p}(-1)^\upepsilon\langle\!\langle w_i,w_j\rangle\!\rangle w_1\cdots\hat{w_i}\cdots\hat{w_j}\cdots w_p\,,
\]
with $\upepsilon=\vert w_i\vert\underset{1\leq s<i}{\sum}\vert w_s\vert+\vert w_j\vert \underset{1\leq t\neq i<j}{\sum}\vert w_t\vert$. 
\begin{rem}\label{rem:CoordinatesForBVLaplacian}
Let us unravel the previous discussion in coordinates for a finite dimensional $\Wrect$. Assume that $\Wrect$ has coordinates $\{x_i\}_i$ (this is a basis of $\Wrect^*$, whose dual basis of $\Wrect$ is denoted $\{\frac{\partial}{\partial x_i}\}_i$). Then the pairing $\langle\!\langle\text{-},\text{-}\rangle\!\rangle$ is given by the $\mathbbst{k}$-linear combination $\sum_{i\leq j}\upc_{ij}\frac{\partial}{\partial x_{i}}\cdot\frac{\partial}{\partial x_{j}}$. The induced degree $1$ Poisson bracket is 
	$$
	\displaystyle
\{f,g\}=\frac{1}{2}\sum_{i\leq j}\upc_{ij}\left((-1)^{\vert f\vert \vert x_j\vert}\frac{\partial f}{\partial x_i}\frac{\partial g}{\partial x_j}+(-1)^{\vert x_i\vert (\vert x_j\vert+\vert f\vert)}\frac{\partial f}{\partial x_j}\frac{\partial g}{\partial x_i}\right)\,,
	$$
the associated differential operator is $\Updelta=\frac{1}{2}\sum_{i\leq j}\upc_{ij}\frac{\partial^2}{\partial x_i\partial x_j}$, or in other words, 
$$
\Updelta(f)=\frac{1}{2}\sum_{i\leq  j}\upc_{ij}\frac{\partial^2f}{\partial x_i\partial x_j}\,.
$$
Note that $\upc_{ij}$ is non zero only if $|x_i|+|x_j|=-1$. 
\end{rem}
\begin{prop}\label{Prop-quantized-observables}
The tuple $\big(\Sym(\Wrect)\llbracket\hbar\rrbracket,d_{\Wrect}+\hbar\Updelta,\cdot,\{\text{-},\text{-}\}\big)$ defines a deformation quantization of the (differential graded) $\mathbbst{P}_0$-algebra $\big(\Sym(\Wrect),d_{\Wrect},\cdot,\{\text{-},\text{-}\}\big)$ in the sense of Definition \ref{definition-BD-quantization}. 
\end{prop}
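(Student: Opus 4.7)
My plan is to unravel Definition \ref{definition-BD-quantization} and verify, term by term, the axioms of a (unital) $\mathbbst{BD}_0$-algebra listed immediately after it. Flatness of $\Sym(\Wrect)\llbracket\hbar\rrbracket$ over $\mathbbst{k}\llbracket\hbar\rrbracket$ is immediate from the identification $\Sym(\Wrect)\llbracket\hbar\rrbracket\cong \Sym(\Wrect)\,\hat{\otimes}_{\mathbbst{k}}\,\mathbbst{k}\llbracket\hbar\rrbracket$, and reducing modulo $\hbar$ recovers the $\mathbbst{P}_0$-algebra $(\Sym(\Wrect),d_\Wrect,\cdot,\{\text{-},\text{-}\})$ on the nose. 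The product $\cdot$ and the $1$-shifted bracket $\{\text{-},\text{-}\}$ on $\Sym(\Wrect)\llbracket\hbar\rrbracket$ are the obvious $\mathbbst{k}\llbracket\hbar\rrbracket$-linear extensions of their $\hbar=0$ counterparts, so graded commutativity, associativity, and the Jacobi/Leibniz identities relating them require no further checking.

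The real content splits between checking that $(d_\Wrect+\hbar\Updelta)^2=0$ and verifying the Beilinson--Drinfeld relation $d(f\cdot g)-df\cdot g-(-1)^{\vert f\vert}f\cdot dg=\hbar\{f,g\}$. Expanding $d^2$ in powers of $\hbar$ produces three terms: the coefficient of $\hbar^0$ is $d_\Wrect^2$, which vanishes by hypothesis; the coefficient of $\hbar^1$ is $d_\Wrect\Updelta+\Updelta d_\Wrect$, whose vanishing translates to the compatibility of the symmetric pairing $\langle\!\langle\text{-,-}\rangle\!\rangle\colon \Sym^2(\Wrect)\to \mathbbst{k}[1]$ with the differential, which is part of the datum of a constant $1$-shifted Poisson structure on the complex $\Wrect$; the coefficient of $\hbar^2$ is $\Updelta^2$, which I will verify by a direct combinatorial computation on monomials. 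Concretely, $\Updelta^2(w_1\cdots w_n)$ is a sum indexed by ordered pairs of disjoint $2$-subsets of $\{1,\dots,n\}$ whose coefficient is a product of two evaluations of $\langle\!\langle\text{-,-}\rangle\!\rangle$; grouping these $2$-subsets into the three ways of partitioning any given $4$-subset into two pairs, the graded symmetry of $\langle\!\langle\text{-,-}\rangle\!\rangle$ (equivalently, in the coordinates of Remark \ref{rem:CoordinatesForBVLaplacian}, the odd total parity of $\upc_{ij}$) produces Koszul cancellations that collapse each such triple to zero. This is the standard BV identity.

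For the Beilinson--Drinfeld relation, I reduce by biderivation extension to the case of two monomials $f=w_1\cdots w_p$ and $g=w_{p+1}\cdots w_{p+q}$. The $d_\Wrect$-part of $d$ is a derivation for $\cdot$, so its contribution to both sides cancels, and what remains is exactly
\[
\Updelta(f\cdot g)-\Updelta(f)\cdot g-(-1)^{\vert f\vert}f\cdot \Updelta(g)=\{f,g\}.
\]
This identity is built into the formula defining $\Updelta$: splitting the sum over $1\leq i<j\leq p+q$ in $\Updelta(w_1\cdots w_{p+q})$ according to whether both indices lie in $\{1,\dots,p\}$, both lie in $\{p+1,\dots,p+q\}$, or are separated, yields respectively $\Updelta(f)\cdot g$, $(-1)^{\vert f\vert} f\cdot \Updelta(g)$, and $\{f,g\}$, with signs matching by a direct Koszul sign computation and the Leibniz definition of $\{\text{-},\text{-}\}$ from $\langle\!\langle\text{-,-}\rangle\!\rangle$.

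I expect the only real obstacle to be sign bookkeeping in $\Updelta^2=0$ and in the splitting computation for the Beilinson--Drinfeld relation; in coordinates these reduce to elementary manipulations standard in the BV literature, and everything else follows formally from flat extension of scalars and from the defining properties of $\langle\!\langle\text{-,-}\rangle\!\rangle$ recalled before the statement.
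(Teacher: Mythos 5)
Your proposal is correct and follows essentially the same route as the paper's proof, with one noteworthy difference in execution. The paper's proof streamlines both nontrivial verifications by exploiting the order filtration of differential operators: since $\Updelta$ has order $\leq 2$, the Beilinson--Drinfeld relation is checked only on the generating quadratic monomial $w_1\cdot w_2$, where it becomes the tautology $\Updelta(w_1\cdot w_2)=\langle\!\langle w_1,w_2\rangle\!\rangle=\{w_1,w_2\}$; and since $\Updelta^2$ has order $\leq 4$, its vanishing is checked once on a single degree-$4$ monomial, where the six terms are written out explicitly. You instead decompose the sums defining $\Updelta$ and $\Updelta^2$ over arbitrary monomials (splitting the index set into ``both in $f$'', ``both in $g$'', ``separated'', and grouping the double sum for $\Updelta^2$ by $4$-subsets), which is entirely valid but requires uniform sign bookkeeping for all degrees rather than just the minimal one. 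You correctly identify the mechanism of cancellation---that the degree-$(+1)$ pairing is nonzero only on inputs whose total degree is $-1$, equivalently the odd parity of the $\upc_{ij}$ from Remark~\ref{rem:CoordinatesForBVLaplacian}---which is exactly what makes the Koszul sign swapping the two applications of $\Updelta$ equal to $-1$; just be aware that the resulting cancellation is pairwise within each unordered partition of a $4$-subset, so ``collapses each such triple to zero'' should be read as ``each of the three partitions individually contributes zero'', not as a three-term cancellation. Your inclusion of the flatness and $\hbar=0$ reduction statements is fine but not where the content lies; the paper omits them as routine.
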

\begin{proof}
First of all, the pairing $\langle\!\langle\text{-},\text{-}\rangle\!\rangle$ is cochain map, hence so is $\Updelta$, i.e.\ $\Updelta d_{\Wrect}+d_{\Wrect}\Updelta=0$. 

Then we prove the following: for every $f,g\in \Sym(\Wrect)$, 
\[
\Updelta(f\cdot g)-\Updelta(f)\cdot g-(-1)^{|f|}f\cdot\Updelta(g)=\{f,g\}\,.
\]
Since $\Updelta$ is of order $2$ it is sufficient to prove this equality on generators, where this is obvious: by definition, for $w_1,w_2\in\Wrect$, 
\[
\Updelta(w_1\cdot w_2)=\langle\!\langle w_1,w_2\rangle\!\rangle=\{w_1,w_2\}\,.
\]

Finally, let us prove that $\Updelta$ squares to zero. This follows from the fact that the degree $1$ symmetric pairing is scalar valued. Indeed, since $\Delta^2$ is order four, it is sufficient to check that it vanishes on monomials of order four. For $w_1,w_2,w_3,w_4\in\Wrect$, 
\begin{eqnarray*}
\Updelta^2(w_1\cdot w_2\cdot w_3\cdot w_4)
& = & 
\langle\!\langle w_1,w_2\rangle\!\rangle\langle\!\langle w_3, w_4\rangle\!\rangle+(-1)^{\vert w_2\vert\vert w_3\vert}\langle\!\langle w_1,w_3\rangle\!\rangle\langle\!\langle w_2,w_4\rangle\!\rangle \\
&& 
+(-1)^{\vert w_4\vert(\vert w_2\vert+\vert w_3\vert)}\langle\!\langle w_1,w_4\rangle\!\rangle\langle\!\langle w_2,w_3\rangle\!\rangle
\\
&&
+(-1)^{\vert w_1\vert(\vert w_2\vert+\vert w_3\vert)}\langle\!\langle w_2,w_3\rangle\!\rangle\langle\!\langle w_1,w_4\rangle\!\rangle\\
&&
+(-1)^{\vert w_1\vert\vert w_2\vert+\vert w_4\vert(\vert w_1\vert+\vert w_3\vert)}\langle\!\langle w_2,w_4\rangle\!\rangle\langle\!\langle w_1,w_3\rangle\!\rangle\\
&&
+(-1)^{(\vert w_1\vert+\vert w_2\vert)(\vert w_3\vert+\vert w_4\vert)}\langle\!\langle w_3,w_4\rangle\!\rangle\langle\!\langle w_1,w_2\rangle\!\rangle \,,
\end{eqnarray*}
and one can see that the terms cancel pairwise. For instance, $\langle\!\langle w_1,w_2\rangle\!\rangle\langle\!\langle w_3, w_4\rangle\!\rangle$ is non-zero only if $(|w_1|+|w_2|)(|w_3|+w_4|)$ is odd, so that the first and last terms cancel (we let the reader check the two other cancellations). 
\end{proof}

We denote by 
\begin{itemize}
    \item $\mathcal O^q(\Wrect^*)$ the $\mathbbst{BD}_0$-algebra $\big(\Sym(\Wrect)\llbracket\hbar\rrbracket,d_{\Wrect}+\hbar\Updelta,\cdot,\{\text{-},\text{-}\}\big)$ appearing in Proposition \ref{Prop-quantized-observables};
    \item $\mathcal O^{c\ell}(\Wrect^*)$ the $\mathbbst{P}_0$-algebra $\mathcal O^q(\Wrect^*)\otimes_{\mathbbst{k}\llbracket\hbar\rrbracket}\mathbbst{k}=\big(\Sym(\Wrect),d_{\Wrect},\cdot,\{\text{-},\text{-}\}\big)$. 
\end{itemize} 
\begin{rem}
    Following \cite[\S4.2.6]{costello_factorization_2017}, the $\mathbbst{BD}_0$ algebra $\mathcal O^q(\Wrect^*)$ is in fact the Chevalley--Eilenberg chain complex of a certain Heisenberg Lie algebra. This Heisenberg Lie algebra is obtained as the central extension $\Wrect[-1]\oplus \mathbbst{k}[-1]$ of the abelian Lie algebra $\Wrect[-1]$ associated with the degree $-1$ anti-symmetric pairing on $\Wrect[-1]$ induced by degree $1$ symmetric pairing $\langle\!\langle\text{-},\text{-}\rangle\!\rangle$ on $\Wrect$. The (degree $0$) parameter $\hbar$ is the suspension of the (degree $1$) central generator. 
\end{rem}
We mention three crucial facts (we leave the details to the reader): 
\begin{enumerate}   
\item The assignment $\Wrect\mapsto \mathcal O^q(\Wrect^*)$ is clearly functorial (in the $1$-categorical sense), more precisely it yields a functor from the category of cochain complexes with a degree $1$ symmetric pairing to the category of $\mathbbst{BD}_0$-algebras. 
\item It is also symmetric monoidal. The symmetric monoidal structure on the category of cochain complexes with a pairing is the direct sum (of the underlying complexes, equipped with the sum of the pairings). The symmetric monoidal structure on the category of $\mathbbst{BD}_0$-algebras comes from the fact that the operad $\mathbbst{BD}_0$ is a Hopf operad: the coproduct on generating operations is given by 
\[
\mathbbst{1}\longmapsto \mathbbst{1}\otimes \mathbbst{1},\quad \cdot\longmapsto\cdot\otimes\cdot\quad\text{and}\quad \{\text{-},\text{-}\}\longmapsto\{\text{-},\text{-}\}\otimes\cdot+\cdot\otimes \{\text{-},\text{-}\}\,.
\]
\item It finally preserves quasi-isomorphisms. Indeed, this can be checked after applying $\otimes_{\mathbbst{k}\llbracket\hbar\rrbracket}\mathbbst{k}$, and thus this reduces to the fact that $\Sym$ preserves quasi-isomorphisms (because $\mathbbst{k}$ is a field of characteristic 0). 
\end{enumerate}
As a result, we get a locally constant $\Drect_1^{\frac12}$-algebra (and thus an $\mathbbst{E}_1$-algebra) in $\mathbbst{BD}_0$-algebras  $\mathcal O^q(\mathbbst{V}^*)$, because $\mathbbst{V}$ is a locally constant $\Drect_1^{\frac12}$-algebra in complexes with a pairing. 
\begin{rem}
    The $\Drect_1^{\frac12}$-algebra in $\mathbbst{BD}_0$-algebra $\mathcal O^q(\mathbbst{V}^*)$ is strict. Indeed, our quantization procedure for constant $1$-shifted Poisson structures is purely $1$-categorical. Note finally that even though the algebraic structure on $\mathcal O^q(\mathbbst{V}^*)$ is strict, it is locally constant only in the cohomological sense (unary operations are quasi-isomorphisms, not isomorphisms). 
\end{rem}

\begin{thm}\label{thm: Quantum Observables as Weyl algebra}
There is an equivalence of (locally constant) $\Drect_1^{\frac12}$-algebras 
$$
\mathcal O^q(\mathbbst{V}^*)\simeq \upgamma^*\mathsf{Weyl}_\hbar(\Vrect_{\langle\text{-},\text{-}\rangle}).
$$ 
\end{thm}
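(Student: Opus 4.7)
By Theorem \ref{thm:RenormalizationTFTs}, the $\infty$-category of locally constant $\Drect_1^{R}$-algebras (with $R=\frac{1}{2}$) in $\EuScript{M}\mathsf{od}_{\mathbbst{k}\llbracket\hbar\rrbracket}^{\otimes}$ is equivalent to that of $\mathbbst{E}_1$-algebras, so the plan is to compare $\mathcal O^q(\mathbbst{V}^*)$ (forgetting the $\mathbbst{BD}_0$-refinement) with $\mathsf{Weyl}_\hbar(\Vrect_{\langle\text{-},\text{-}\rangle})$ as $\mathbbst{E}_1$-algebras. First I would identify the underlying complex: for any bounded open interval $I$ and any $t_0\in I\cap\mathbbst{Z}$, the map $v\mapsto \delta_{t_0}v$ lands in the degree-$0$ part of $\mathbbst{V}(I)$, on which $\Updelta$ vanishes on generators for degree reasons (the pairing $\langle\!\langle\text{-},\text{-}\rangle\!\rangle$ has degree $+1$). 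Extending multiplicatively gives a chain map
\[
\iota_{t_0}\colon \Sym(\Vrect)\llbracket\hbar\rrbracket \hookrightarrow \mathcal O^q(\mathbbst{V}(I)^*)\,,
\]
which is a quasi-isomorphism by Proposition \ref{Proposition-locally-constant-V}, the exactness of $\Sym$ in characteristic zero, and a spectral-sequence argument in powers of $\hbar$ (the perturbation $\hbar\Updelta$ acts trivially on the image of $\iota_{t_0}$, while the $E_0$-page is $\Sym$ of a quasi-isomorphism). In particular $H^{\ast}(\mathcal O^q(\mathbbst{V}(I)^*))$ is concentrated in degree $0$ and isomorphic, as a $\mathbbst{k}\llbracket\hbar\rrbracket$-module, to $\Sym(\Vrect)\llbracket\hbar\rrbracket$.

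The core step is a local-to-global computation of the induced $\mathbbst{E}_1$-multiplication on cohomology. Given $v,w\in \Vrect$, I would take disjoint sub-intervals $I_1 < I_2$ of $I$ with $-1\in I_1\cap \mathbbst{Z}$ and $1\in I_2\cap \mathbbst{Z}$ and analyse the factorization product $\delta_{-1}v\cdot \delta_1 w\in \Sym^2(\mathbbst{V}(I))$. Choosing explicit degree-$(-1)$ ``propagators'' $\tau,\rho$ with $d\tau=\delta_0 v-\delta_{-1}v$ and $d\rho=\delta_1 w-\delta_0 w$, the plan is to expand $(\delta_0 v - d\tau)(\delta_0 w + d\rho)$ in $\mathcal O^q(\mathbbst{V}(I)^*)$ and repeatedly apply the BV-type identity
\[
d(fg) \;=\; df\cdot g + (-1)^{|f|} f\cdot dg + \hbar\, \langle\!\langle f, g\rangle\!\rangle
\]
from Proposition \ref{Prop-quantized-observables}. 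The explicit values of $\langle\!\langle\text{-},\text{-}\rangle\!\rangle$ on the relevant boundary/overlap pairs (each proportional to $\tfrac12\langle v,w\rangle$, while $\langle\!\langle \tau,\rho\rangle\!\rangle$ vanishes for degree reasons) should conspire to yield
\[
[\delta_{-1}v\cdot \delta_1 w] \;=\; [\delta_0 v\cdot \delta_0 w] + \tfrac{\hbar}{2}\langle v, w\rangle \quad\text{in}\quad H^0(\mathcal O^q(\mathbbst{V}(I)^*))\,;
\]
antisymmetry of $\langle\text{-},\text{-}\rangle$ flips the sign of the correction when the roles of $v$ and $w$ are swapped, so the $\mathbbst{E}_1$-commutator reproduces exactly the Weyl relation $[v,w]=\hbar\langle v, w\rangle$.

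Finally, the Poincaré--Birkhoff--Witt (symmetrization) isomorphism $\Sym(\Vrect)\llbracket\hbar\rrbracket \cong \mathsf{Weyl}_\hbar(\Vrect_{\langle\text{-},\text{-}\rangle})$ of $\mathbbst{k}\llbracket\hbar\rrbracket$-modules identifies the Weyl product with the commutative $\Sym$-product plus precisely the $\tfrac{\hbar}{2}$-correction computed above, so the previous step identifies the associative algebra $H^0(\mathcal O^q(\mathbbst{V}(I)^*))$ with $\mathsf{Weyl}_\hbar(\Vrect_{\langle\text{-},\text{-}\rangle})$. Since $\mathcal O^q(\mathbbst{V}(I)^*)$ has cohomology concentrated in a single degree for every $I$, a standard formality argument for $\mathbbst{E}_1$-algebras over a field of characteristic zero produces an equivalence with $H^0$; compatibility of the local-constancy structure maps with $\iota_{t_0}$ (which, after identification, reduce to identities of the Weyl algebra) then upgrades the pointwise identification to the desired equivalence of $\Drect_1^{\frac12}$-algebras.

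The main obstacle is the explicit chain-level BV computation in the middle paragraph: carefully tracking signs and the boundary overlap values of $\langle\!\langle\text{-},\text{-}\rangle\!\rangle$ between $\tau,\rho,\delta_0 v,\delta_0 w$ is delicate, and it is precisely the $\tfrac12$-factor built into the definition of the pairing on $\mathbbst{V}(a,b)$ that makes the various contributions combine into the single $\tfrac{\hbar}{2}\langle v,w\rangle$-correction needed to reproduce the Weyl relation.
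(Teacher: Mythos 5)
Your outline follows the same overall strategy as the paper — reduce to $\Hrect^0$, compute the $\mathbbst{E}_1$-commutator via the BD/BV relation $d(fg)=df\cdot g+(-1)^{|f|}f\cdot dg+\hbar\{f,g\}$, and identify the result with the Weyl algebra — but the execution differs at each step, and the paper's version is somewhat more economical.

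For the reduction to degree $0$, the paper simply observes that $\mathcal O^q(\mathbbst{V}^*)\to\Hrect^0(\mathcal O^q(\mathbbst{V}^*))$ is a quasi-isomorphism (the complex is concentrated in non-positive degrees and $\Hrect^0$ is $1$-functorial, so this is automatically a map of $\Drect_1^{1/2}$-algebras), verified mod $\hbar$ using that $\Sym$ preserves quasi-isomorphisms in characteristic $0$. Your appeal to ``a standard formality argument for $\mathbbst{E}_1$-algebras'' is a sledgehammer here: it is the same observation, but phrased in a way that obscures what makes it work (non-positivity of the grading), and characteristic $0$ is not what is needed for formality per se but for the mod-$\hbar$ control.

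For the commutator computation, you use symmetric displacement $\delta_{-1}v$, $\delta_1 w$ and propagators $\tau=\delta_{-1}\overline{v}$, $\rho=\delta_0\overline{w}$ to establish the Moyal-type shift formula $[\delta_{-1}v\cdot\delta_1 w]=[\delta_0 v\cdot\delta_0 w]+\tfrac{\hbar}{2}\langle v,w\rangle$, and then antisymmetrize. The paper uses asymmetric displacement ($\delta_0 v$, $\delta_1 w$) and computes the commutator directly in three lines, needing only the single identity $\delta_1 w-\delta_0 w=d(\delta_0\overline{w})$ and two BD rewritings. Both yield $\hbar\langle v,w\rangle$; I checked that your chain-level expansion does close up correctly, with the three boundary-bracket terms $\langle\!\langle\delta_0 v,\rho\rangle\!\rangle$, $\langle\!\langle\tau,\delta_0 w\rangle\!\rangle$, $\langle\!\langle\tau,d\rho\rangle\!\rangle$ each contributing $\pm\tfrac{\hbar}{2}\langle v,w\rangle$ and summing to one $\tfrac{\hbar}{2}\langle v,w\rangle$, but yours is the longer route.

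For the final identification, the paper defines the algebra map $\upvarphi\colon\Trect(\Vrect)\llbracket\hbar\rrbracket\to\Hrect^0(\mathcal O^q(\mathbbst{V}(-1,2)^*))$, $v\mapsto[\delta_0 v]$, shows it satisfies the Weyl relation, factors through $\mathsf{Weyl}_\hbar$, and checks the induced map is an isomorphism mod $\hbar$. Your route via the PBW symmetrization $\Sym(\Vrect)\llbracket\hbar\rrbracket\cong\mathsf{Weyl}_\hbar$ can be made to work, but the phrase ``the commutative $\Sym$-product plus precisely the $\tfrac{\hbar}{2}$-correction'' is only accurate on linear generators: on higher symmetric powers the Moyal product has higher-order $\hbar^k$-corrections. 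What really closes the argument is the same pair of inputs the paper uses — the commutator relation on generators plus $\hbar$-flatness and the classical limit — so the conclusion stands, but the PBW framing as stated papers over that step. The paper's tensor-algebra approach avoids this issue entirely since everything is determined by the images of the generators.
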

We first show that the cohomology of $\mathcal O^{q}(\mathbbst{V}^*)$ is concentrated in degree $0$: 
\begin{lem}
The map 
$\mathcal O^{q}(\mathbbst{V}^*)\longrightarrow \Hrect^0\big(\mathcal O^q(\mathbbst{V}^*)\big)$ is a quasi-isomorphism and,  moreover, 
$\Hrect^0\big(\mathcal O^{q}(\mathbbst{V}^*)(a,b)\big)\cong \Sym(\Vrect)\llbracket\hbar\rrbracket$ as $\mathbbst{k}\llbracket\hbar\rrbracket$-modules. 
\end{lem}
\begin{proof} The idea is to apply the homological perturbation lemma (see e.g.\ \cite{crainic_perturbation_2004}). Let $(a,b)\subseteq \mathbbst{R}$ be an interval of size $>1$. First, applying Proposition \ref{Proposition-locally-constant-V} together with the fact that $\Sym$ preserves quasi-isomorphisms, we obtain a deformation retract
$$
\begin{tikzcd}
\mathcal{O}^{c\ell}(\mathbbst{V}^*)(a,b)= \big(\Sym(\mathbbst{V}(a,b)),Q\big) \arrow[out=183,in=177,loop,looseness=3, "h"] \ar[r, shift left=1,"p"]   & \big(\Sym(\Vrect),0\big)\cong \Hrect^0\mathcal{O}^{c\ell}(\mathbbst{V}^*)(a,b) \ar[l, shift left=1,"i"] 
\end{tikzcd}.
$$
Recall that we are working over a field $\mathbbst{k}$ of characteristic zero. Then, we extend scalars to $\mathbbst{k}\llbracket \hbar\rrbracket$ and perturb the differential on the left hand side with $\hbar\Delta$. Note that this deformation is small since $(1-\hbar \Delta h)$ is invertible over $\mathbbst{k}\llbracket\hbar\rrbracket$, as it is of the form $1+\hbar(\cdots)$. The resulting perturbed ``homotopy equivalence data"
$$
\begin{tikzcd}
\mathcal{O}^{q}(\mathbbst{V}^*)(a,b)= \big(\Sym(\mathbbst{V}(a,b))\llbracket\hbar\rrbracket,Q+\hbar\Delta\big) \arrow[out=183,in=177,loop,looseness=3, "h'"] \ar[r, shift left=1,"p'"]   & \big(\Sym(\Vrect)\llbracket\hbar\rrbracket,b'\big) \ar[l, shift left=1,"i'"] 
\end{tikzcd}
$$
satisfies: $\Sym(\Vrect)\llbracket\hbar\rrbracket$ is concentrated in degree $0$, and $\Sym(\mathbbst{V}(a,b))\llbracket\hbar\rrbracket$ is concentrated in non-positive degrees. Hence, $b'=p(1-\hbar\Delta h)^{-1}\hbar\Delta i$ must vanish, as $\Delta i=0$ by degree reasons.
\end{proof}

\begin{proof}[Proof of Theorem \ref{thm: Quantum Observables as Weyl algebra}]
It remains to prove that there is an equivalence of $\Drect_1^{\frac12}$-algebras 
\[
\Hrect^0\big(\mathcal O^q(\mathbbst{V}^*)\big)\cong \upgamma^*\mathsf{Weyl}_\hbar(\Vrect_{\langle\text{-},\text{-}\rangle})\,.
\]
Note that both are concentrated in degree $0$ and strictly locally constant (meaning that unary operations are isomorphisms). Hence they can be regarded as plain associative algebras: 
\begin{itemize}
\item The Weyl algebra $\mathsf{Weyl}_\hbar(\Vrect_{\langle\text{-},\text{-}\rangle})$ on the one hand; 
\item $\Hrect^0\Big(\mathcal O^q\big(\mathbbst{V}(-1,2)^*\big)\Big)$ on the other hand, where the product is described as follows: 
\[
\begin{tikzcd}[ampersand replacement=\&]
\Hrect^0\Big(\mathcal O^q\big(\mathbbst{V}(-1,2)^*\big)\Big)\otimes \Hrect^0\Big(\mathcal O^q\big(\mathbbst{V}(-1,2)^*\big)\Big)\ar[d, leftarrow, "\cong"',"\text{local constancy}"]\\
\Hrect^0\Big(\mathcal O^q\big(\mathbbst{V}(-1,\frac12)^*\big)\Big)\otimes \Hrect^0\Big(\mathcal O^q\big(\mathbbst{V}(\frac12,2)^*\big)\Big)
\ar[d,"\text{factorization product}"]
\\
\Hrect^0\Big(\mathcal O^q\big(\mathbbst{V}(-1,2)^*\big)\Big)
\end{tikzcd}
\]
\end{itemize}

We then introduce the algebra map 
\[
\upvarphi\colon\Trect(\Vrect)\llbracket\hbar\rrbracket\longrightarrow \Hrect^0\Big(\mathcal O^q\big(\mathbbst{V}(-1,2)^*\big)\Big)
\]
defined by $\Vrect\ni v\longmapsto [\delta_0v]\in \Hrect^0\Big(\mathcal O^q\big(\mathbbst{V}(-1,2)^*\big)\Big)$, and prove that for any $v,w\in\Vrect$, $\upvarphi(v)\upvarphi(w)-\upvarphi(w)\upvarphi(v)=\hbar\langle v,w\rangle$: 
\begin{itemize}
    \item Recall that $\delta_{1}v-\delta_0v=d(\delta_{0}\overline{v})$, where $d=Q+\hbar\Updelta$. 
    \item Therefore 
    \begin{eqnarray*}
    [\delta_0v][\delta_0w]-[\delta_0w][\delta_0v] & = & [\delta_0v][\delta_{1}w]-[\delta_0w][\delta_{1}v]=[\delta_0v\cdot \delta_{1}w-\delta_0w\cdot \delta_{1}v] \\
    & = & \big[\delta_0v\cdot \big(\delta_0w+d(\delta_0\overline{w})\big)-\delta_0w\cdot \big(\delta_0v+d(\delta_0\overline{v})\big)\big] \\
    & = & \big[\delta_0v\cdot d(\delta_0\overline{w})]-\big[\delta_0w\cdot d(\delta_0\overline{v})]
    \end{eqnarray*}
    \item We observe that $d(\delta_0w\cdot\delta_0\overline{v})-\delta_0w\cdot d(\delta_0\overline{v})=\hbar\{\delta_0w,\delta_0\overline{v}\}=\frac{\hbar}2\langle v,w\rangle$. Similarly, $d(\delta_0v\cdot\delta_0\overline{w})-\delta_0v\cdot d(\delta_0\overline{w})=\hbar\{\delta_0v,\delta_0\overline{w}\}=\frac{\hbar}2\langle w,v\rangle$. Hence
    \[
    [\delta_0v][\delta_0w]-[\delta_0w][\delta_0v]=\hbar\langle v,w\rangle\,.
    \]
\end{itemize}
As a consequence, the map $\upvarphi$ factors through $\mathsf{Weyl}_\hbar(\Vrect_{\langle\text{-},\text{-}\rangle})$. 

Finally, it remains to prove that the induced algebra morphism 
\[
\mathsf{Weyl}_\hbar(\Vrect_{\langle\text{-},\text{-}\rangle})\longrightarrow \Hrect^0\Big(\mathcal O^q\big(\mathbbst{V}(-1,2)^*\big)\Big)
\]
is an isomorphism. This can be checked ``mod $\hbar$'' (i.e.~after applying $\otimes_{\mathbbst{k}\llbracket\hbar\rrbracket}\mathbbst{k}$), where the map becomes the identity on $\Sym(\Vrect)$. 
\end{proof}

\appendix

\section{Weak approximations of $\infty$-operads}\label{sect:CounterexampleWeakApprox}
In this brief appendix, we show that weak approximations (see \cite[Definition 4.2.14]{harpaz_little_nodate} or \cite[Definition 2.3.3.6]{lurie_higher_nodate}) are a no go to prove our main Theorem \ref{thm_LocalizationForTruncatedDisj}. This tool is what J.\ Lurie employed to prove that locally constant factorization algebras over $\mathbbst{R}^n$ are $\mathbbst{E}_n$-algebras, see \cite[Theorem 5.4.5.9]{lurie_higher_nodate}. 
  
\begin{thm}\label{thm:LocalizationOfBigDiscs}
		The morphism $\upgamma\colon \Drect_n^{R}\to\mathbbst{E}_n$ is not a weak approximation.
\end{thm}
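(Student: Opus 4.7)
The plan is to directly exhibit a target color of $\Drect_n^R$ and a binary operation in $\mathbbst{E}_n$ whose space of lifts to $\Drect_n^R$ with that target is empty. Both Lurie's \cite[Definition 2.3.3.6]{lurie_higher_nodate} and Harpaz's \cite[Definition 4.2.14]{harpaz_little_nodate} formulations of weak approximation require, among other conditions, that for every active multi-operation $\mu$ with inputs $(X_1',\ldots,X_k')$ and output $Y'$ in the target $\infty$-operad, and every lift $Y$ of $Y'$ in the source, the space of active lifts of $\mu$ with target $Y$ be weakly contractible, in particular non-empty; hence producing a single empty fiber suffices.

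The key observation is that the scale restriction defining $\Drect_n^R$ obstructs such lifts. I would fix once and for all $\upepsilon\in(0,R)$ and let $U\subset\mathbbst{R}^n$ be an open disc of radius $R+\upepsilon$, which is a color of $\Drect_n^R$ that lifts the unique color $\mathbbst{D}$ of $\mathbbst{E}_n$. Next, I would choose $\uprho>0$ small enough that $\uprho<R/(R+\upepsilon)$ and $\uprho<1/2$ (both conditions can be met since $R/(R+\upepsilon)>1/2$), and consider the binary operation $\mu\in\mathbbst{E}_n(2)$ realized by two disjoint open subdiscs $D_1,D_2\subset\mathbbst{D}$, each of radius $\uprho$. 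By definition of $\upgamma$, a lift of $\mu$ with target $U$ would amount to two disjoint open discs $U_1,U_2\subset U$ of radius strictly bigger than $R$, such that the unique orientation-preserving affine bijection $U\xrightarrow{\sim}\mathbbst{D}$ sends $U_i$ onto $D_i$. Since this bijection rescales radii by the factor $(R+\upepsilon)^{-1}$, the discs $U_i$ would be forced to have radius $\uprho(R+\upepsilon)<R$, contradicting the membership condition for $\Drect_n^R$. Therefore no such lift exists.

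This immediately yields the theorem: the space of lifts of $\mu$ with target $U$ is empty, hence not weakly contractible. I do not anticipate any serious technical obstacle; the only delicate point is to ensure the argument is robust across the slight variations between Lurie's and Harpaz's formulations of weak approximation, but since both impose non-emptiness of the fiber, the counterexample applies uniformly. The underlying conceptual point is that the shrinking techniques Lurie and Harpaz exploit for the full operad morphism $\Disc(\mathbbst{R}^n)\to\mathbbst{E}_n$ become unavailable once the scale-truncation is imposed, which is precisely why the alternative fattened-configuration-space approach of Subsection \ref{subsect:ProofLocalizationTHM} is needed.
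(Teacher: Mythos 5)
Your plan is close in spirit to the paper's proof (both exhibit a target disc $U$ too small to admit binary operations and conclude the fiber comparison fails), but there is a genuine gap in the fiber computation. You identify ``the space of lifts of $\mu$ with target $U$'' with the set of pairs $(U_1,U_2)\subset U$ whose image under the affine rescaling $U\cong\mathbbst{D}$ is \emph{exactly} $(D_1,D_2)$, and you deduce emptiness from the rescaling factor $(R+\upepsilon)^{-1}$ and the smallness of $\uprho$. This computes a strict set-theoretic fiber, not the relevant $\infty$-categorical one. The map $\gamma^\otimes$ is not a categorical fibration (its source is the nerve of a $1$-category, its target the operadic nerve of a topological operad), so in both Lurie's and Harpaz's formulations the condition concerns the \emph{homotopy} fiber. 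That homotopy fiber receives a contribution from every pair $(U_1,U_2)\subset U$ of disjoint discs of radius $>R$ whose image in $\mathbbst{E}_n(2)$ is merely path-connected to $\mu$; and since $\mathbbst{E}_n(2)\simeq S^{n-1}$ is connected for $n\geq 2$, that means \emph{every} admissible pair contributes, regardless of $\uprho$. Your choice of $\uprho$ is thus doing no work at all.

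What actually saves the argument is a different, elementary geometric fact that you do not state: with $\upepsilon\in(0,R)$, \emph{no} pair of disjoint open discs of radius $>R$ fits inside $U$, because their centers would have to lie within distance $\upepsilon<R$ of the center of $U$ (hence within $2\upepsilon<2R$ of each other) while simultaneously being at distance $\geq 2R$ apart. Once this is said, the (homotopy) fiber over $\langle 2\rangle$, and hence over $\mu$, is genuinely empty and your conclusion follows. This is also precisely the mechanism the paper uses: there, one shows $\Prect_m(Z)=\diameter$ for $Z$ small and $m>1$, compares the hocolim over $\Prect_m(Z)$ with $\Emb^{\mathsf{rect}}(\mathbbst{D}^{\sqcup m};\mathbbst{D})$ via the commutative triangle over $(\mathbbst{E}_n^\otimes)^{\text{act}}$, and concludes the slice map cannot be a weak homotopy equivalence because one homotopy fiber is empty and the other has the type of $\Conf_m(\mathbbst{R}^n)$. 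So fix the justification for emptiness (replace the $\uprho$-argument by the ``no pair fits'' argument), and note that for Harpaz's condition $(i)$ you should also spell out why a single empty homotopy fiber obstructs the whole slice map being a weak homotopy equivalence -- that is the role of the commutative triangle in the paper's proof.
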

  \begin{proof}
 While Condition $(ii)$ in \cite[Definition 4.2.14]{harpaz_little_nodate} holds true for $\upgamma\colon \Drect_n^{R}\to\mathbbst{E}_n$, we show that Condition $(i)$ does not.  
  	%For (ii)-(iii), observe that $(\mathbbst{E}_n^{\otimes})_{\langle 1\rangle}$ is equivalent to the terminal category and that $(\Drect_n^{R,\otimes})_{\langle 1\rangle}$ is a poset with a terminal element. Then, $(\Drect_n^{R,\otimes})_{\langle 1\rangle}=\upgamma^{-1}(\mathbbst{E}_n^{\otimes})_{\langle 1\rangle}\to(\mathbbst{E}_n^{\otimes})_{\langle 1\rangle}\simeq *$ exhibits $*$ as the  $\infty$-localization of $(\Drect_n^{R,\otimes})_{\langle 1\rangle}$ at all maps. 
 The assumption $(i)$ asserts that for any disc $ Z\in\Drect_n^{\Rrect}$ 
  	$$
  	(\Drect_n^{R,\otimes})^{\text{act}}_{/ Z}\longrightarrow (\mathbbst{E}_n^{\otimes})_{/\langle 1\rangle}^{\text{act}}
  	$$
  must be a weak homotopy equivalence. As in the proof of \cite[Proposition 5.3.4]{harpaz_little_nodate}, we use the commutative triangle
  	$$
  	\begin{tikzcd}[ampersand replacement=\&]
  	(\Drect_{n}^{R,\otimes})_{/ Z}^{\text{act}}\ar[rr]\ar[rd] \&\& (\mathbbst{E}_{n}^{\otimes})_{/\langle 1\rangle}^{\text{act}}\ar[ld]\\
  	\& (\mathbbst{E}_{n}^{\otimes})^{\text{act}}
  	\end{tikzcd}\quad
  	$$ 
  	to reduce this assertion to see if the homotopy fibers of the diagonal maps are weakly homotopy equivalent. Take $\langle m\rangle\in (\mathbbst{E}_{n}^{\otimes})^{\text{act}}$. On the one hand, the homotopy fiber of the right diagonal map is a Kan complex presenting the weak homotopy type of  
  	$
  	\mathbbst{E}_n(m)=\Emb^{\text{rect}}(\mathbbst{D}^{\sqcup m};\mathbbst{D}).
  	$ 
  	On the other hand, 
  	$(\Drect_{n}^{R,\otimes})_{/ Z}^{\text{act}}
  	$ 
  	can be identified with the subposet of open subsets of $ Z\subseteq \mathbbst{R}^n$ which are disjoint unions of finite collections of discs with radii strictly bigger than $R$, i.e.\ $\Prect(Z)=\bigcup_m\Prect_m(Z)\subset \mathsf{Open}(Z)$ where 
  	$$  
  	\Prect_m( Z)=\begin{Bmatrix}
  	 U_1\sqcup\cdots\sqcup  U_m\overset{\text{open}}{\hookrightarrow}  Z \text{ s.t. } U_i=\mathbbst{D}(x_i,\upepsilon)\text{ with }\upepsilon>R \text{ for }1\leq i\leq m 
  	\end{Bmatrix}.
  	$$ 
  	We describe the homotopy fiber over $\langle m\rangle$ of the left diagonal map  as the total space of a Grothendieck construction. More precisely, the Grothendieck construction corresponds to the functor $\Prect_{m}(Z)\to \EuScript{S}\mathsf{pc}$ given by
  	$$
  	( U_1,\dots, U_m)\longmapsto \prod_{j\in\langle m\rangle^{\text{o}}}\Emb^{\text{rect}}(\mathbbst{D}; U_j)
  	$$
  	and consequently, the homotopy fiber is
    $
  	\underset{( U_1,\dots, U_m)}{\hocolim}\prod_{j}\Emb^{\text{rect}}(\mathbbst{D}; U_j), 
  	$
  	where the homotopy colimit runs over $\Prect_m( Z)$. Thus, we are reduced to check that the map
  	$$
  	\underset{( U_1,\dots, U_m)}{\hocolim}\prod_{j\in\langle m\rangle^{\text{o}}}\Emb^{\text{rect}}(\mathbbst{D}; U_j)\longrightarrow \Emb^{\text{rect}}(\mathbbst{D}^{\sqcup m};\mathbbst{D})
  	$$
  	is not a weak equivalence. Taking $ Z$ small enough, $\Prect_{m}( Z)$ is empty (for $m>1$). Hence, the left hand side of the map is the empty space whereas $\Emb^{\text{rect}}(\mathbbst{D}^{\sqcup m};\mathbbst{D})$ has the homotopy type of $\Conf_{m}(\mathbbst{R}^n)$. 
 \end{proof}

\begin{rem} An alternative criterion to detect $\infty$-localizations
of $\infty$-operads can be found in the work of Karlsson--Scheimbauer--Walde, see \cite[Appendix A]{karlsson_assembly_2024}. Unfortunately, it also doesn't help in our case, i.e.\ to check that $\upgamma\colon \Drect^R_n\to \mathbbst{E}_n$ is an $\infty$-localization (by a counterexample similar to the one observed in Theorem \ref{thm:LocalizationOfBigDiscs}).
\end{rem}

\section{Localizations of operator categories}\label{sect:LocalizationQuasioperads}
Given an $\infty$-operad $\mathdutchcal{O}$ and a subcategory $\mathdutchcal{W}$ of its underlying $\infty$-category, it always exists the $\infty$-localization of $\mathdutchcal{O}$ at $\mathdutchcal{W}$ as an $\infty$-operad (we assume that $\mathdutchcal{W}$ contains all identities for notational convenience later on). Let us denote this device by $\upgamma\colon\mathdutchcal{O}\to\Lfrak_{\mathdutchcal{W}}\mathdutchcal{O}$. Working with quasioperads as our preferred presentation of $\infty$-operads \cite[Definition 2.1.1.10]{lurie_higher_nodate}, 
our task is to check that 
$$
\begin{tikzcd}[ampersand replacement=\&]
	\mathdutchcal{O}^{\otimes}\llbracket\mathdutchcal{W}^{-1}_{\bullet}\rrbracket \ar[rd] \ar[rr, "\sim"] \&\& (\Lfrak_{\mathdutchcal{W}}\mathdutchcal{O})^{\otimes}\ar[ld]\\
	\& \Fin_{*}
\end{tikzcd}
$$ 
are equivalent as quasioperads, where $\mathdutchcal{W}_{\bullet}\subseteq \mathdutchcal{O}^{\otimes}$ is the subcategory 
associated to $\mathdutchcal{W} \subseteq \mathdutchcal{O}^{\otimes}_{\langle 1\rangle}$ via the Segal condition (i.e.\ $\mathdutchcal{W}_{\bullet}$ is the union of its fibers over $\Fin_*$ and its fiber over $\langle n\rangle$ corresponds to $\mathdutchcal{W}^{\times n}$ via the equivalence $\mathdutchcal{O}^{\otimes}_{\langle n\rangle}\simeq (\mathdutchcal{O}^{\otimes}_{\langle 1\rangle})^{\times n}$) and $\mathdutchcal{O}^{\otimes}\llbracket\mathdutchcal{W}^{-1}_{\bullet}\rrbracket$ denotes the $\infty$-localization of $\mathdutchcal{O}^{\otimes}$ at $\mathdutchcal{W}_{\bullet}$ as an $\infty$-category. In other words, the functor $\mathdutchcal{O}^{\otimes}\to \mathdutchcal{O}^{\otimes}\llbracket\mathdutchcal{W}_{\bullet}^{-1}\rrbracket$ over $\Fin_*$ exhibits the $\infty$-localization of $\mathdutchcal{O}^{\otimes}$ at $\mathdutchcal{W}$ as a quasioperad. However, the first problem one encounters is that $\mathdutchcal{O}^{\otimes}\llbracket\mathdutchcal{W}_{\bullet}^{-1}\rrbracket\to \Fin_*$ is not yet known to be a quasioperad. We will see that this is the case by an indirect argument.\footnote{We can slightly modify the subcategory $\mathdutchcal{W}_{\bullet}\subseteq \mathdutchcal{O}^{\otimes}$ without changing the $\infty$-localization by considering that cocartesian lifts of isomorphisms in $\Fin_*$ are also part of $\mathdutchcal{W}_{\bullet}$.}

Recall that the forgetful functor $\Urect\colon\sm\!\Catinfty\to \Opdinfty$ from symmetric monoidal $\infty$-categories, with strong monoidal functors between them, to quasioperads admits a left adjoint $\Env\colon \Opdinfty\to \sm\!\Catinfty$. This sm-envelope was constructed by Lurie (see \cite[Subsection 2.2.4]{lurie_higher_nodate}) as follows: given $\mathdutchcal{O}^{\otimes}\to \Fin_*$, its sm-envelope fits into the pullback of $\infty$-categories 
$$
\begin{tikzcd}
	\Env(\mathdutchcal{O}^{\otimes})\ar[r]\ar[d] & \mathdutchcal{O}^{\otimes}\ar[d]\\
	\mathsf{Act}(\Fin_*)\ar[r,"\mathsf{s}"] & \Fin_*
\end{tikzcd},
$$
where
\begin{itemize}
	\item $\mathsf{Act}(\Fin_*)$ denotes the full subcategory of $\Fun([1],\Fin_*)$ spanned by active morphisms, 
	\item  $\mathsf{s}\colon \mathsf{Act}(\Fin_*)\to \Fin_*$ is the restriction of the source map $\Fun([1],\Fin_*)\to \Fin_*$.   
\end{itemize}
We equip $\Env(\mathdutchcal{O}^{\otimes})$ with the morphism $\Env(\mathdutchcal{O}^{\otimes})\to\Fin_*$ given by composing the functor $\Env(\mathdutchcal{O}^{\otimes})\to\mathsf{Act}(\Fin_*)$  with the target projection $\mathsf{t}\colon\mathsf{Act}(\Fin_*)\to \Fin_*$. Consequently, the previous construction $\Env$ can be consider  as a functor $\Env\colon \Catinfty_{/\Fin_*}\to \Catinfty_{/\Fin_*}$. Actually, in \cite[Corollary 2.1.5]{barkan_envelopes_2022}, this generalized construction was refined to yield a left adjoint functor
\begin{equation}\label{eqt:Generalized envelope}
\Env\colon \Catinfty_{/\Fin_*}\longrightarrow \Catinfty_{/\Fin_*}^{\!\!\!\!\!\!\mathsf{act}\text{-}\mathsf{cocart}},
\end{equation}
where $\Catinfty_{/\Fin_*}^{\!\!\!\!\!\!\mathsf{act}\text{-}\mathsf{cocart}}$ denotes the subcategory of $\Catinfty_{/\Fin_*}$ whose objects have all cocartesian lifts over active maps and whose morphisms preserve these.

Neither of the functors in $\Env\colon \Opdinfty\rightleftarrows\sm\!\Catinfty:\!\Urect$ is fully-faithful. Nevertheless, Haugseng-Kock in \cite{haugseng_infty-operads_2024} noticed that taking into account the image of the unique map of quasioperads $\mathdutchcal{O}^{\otimes}\to \Fin_*$ to the terminal quasioperad, and observing that $\Env(\Fin_*)\simeq \Fin^{\amalg}$, one obtains an adjunction 
$$
\begin{tikzcd}
	\Env\colon \Opdinfty\simeq \Opdinfty_{/\Fin_*} \ar[r, shift left=1]\ar[r, shift right=1, leftarrow] & \sm\!\Catinfty_{/\Fin^{\amalg}}:\!\Urect'  
\end{tikzcd}
$$ 
whose left adjoint is fully-faithful. Actually, they described the essential image of $\Env$  as follows: a strong monoidal functor $\EuScript{C}^{\otimes}\to \Fin^{\amalg}$ comes from a quasioperad if the following two conditions are satisfied
\begin{itemize}
	\item[$(i)$] every object in $\EuScript{C}=\EuScript{C}^{\otimes}_{\langle 1\rangle}$ is equivalent to a tensor product of objects that lie over $\underline{1}:=\{1\}$ in $\Fin=\Fin^{\amalg}_{\langle 1\rangle}$,
	
	\item[$(ii)$] the morphism
	$$
	\coprod_{\upphi\in \Fin(\underline{n},\underline{m})}\,\prod_{j\in \underline{m}}\Map_{\EuScript{C}}\Big(\bigotimes_{i\in \upphi^{-1}(j)}x_i,\,y_j\Big)\longrightarrow \Map_{\EuScript{C}}\Big(\bigotimes_{i\in \underline{n}}x_i,\,\bigotimes_{j\in \underline{m}}y_j\Big),
	$$
	given by tensoring maps, is a weak homotopy equivalence for any collection of objects $\{x_i,y_j\}_{i,j}$ in $\EuScript{C}$ lying over $\underline{1}\in\Fin$. In other words, for any $\upphi\in \Fin(\underline{n},\underline{m})$, the map
	$$
	\prod_{j\in\underline{m}}\Map_{\EuScript{C}}\Big(\bigotimes_{i\in \upphi^{-1}(j)}x_i,\,y_j\Big)\longrightarrow \Map_{\EuScript{C}}^{\upphi}\Big(\bigotimes_{i\in \underline{n}}x_i,\,\bigotimes_{j\in \underline{m}}y_j\Big)
	$$
	is a weak homotopy equivalence, where the right-hand side denotes the union of connected components of the mapping space lying over $\upphi$.
\end{itemize}
See \cite{haugseng_infty-operads_2024} for more details. 

The idea is to show that $\Env(\mathdutchcal{O}^{\otimes}\llbracket\mathdutchcal{W}_{\bullet}^{-1}\rrbracket)$ lies in the essential image of the fully-faithful functor $\Env\colon \Opdinfty\hookrightarrow\sm\!\Catinfty_{/\Fin^{\amalg}}$, by relating this object with an $\infty$-localization of the symmetric monoidal $\infty$-category $\Env(\mathdutchcal{O}^{\otimes})$.

\begin{prop}\label{prop:Localization of sm-envelope} Let $\mathdutchcal{O}^{\otimes}\to \Fin_*$ be a quasioperad and $\mathdutchcal{W}_{\bullet}\subset \mathdutchcal{O}^{\otimes}$ be the subcategory generated by  a subcategory of unary operations $\mathdutchcal{W}\subset\mathdutchcal{O}_{\langle 1\rangle }^{\otimes}$ via the Segal condition. Consider the $\infty$-localization $\Lfrak_{\mathdutchcal{W}}\Env(\mathdutchcal{O}^{\otimes}):=\Env(\mathdutchcal{O}^{\otimes})\llbracket\Env(\mathdutchcal{W}_{\bullet})^{-1}\rrbracket$ of the pair $\Env(\mathdutchcal{W}_{\bullet})\subset \Env(\mathdutchcal{O}^{\otimes})$ in $\Catinfty$. Then, the following statements hold:
	\begin{itemize}
		\item[(a)] The functor $\Lfrak_{\mathdutchcal{W}}\Env(\mathdutchcal{O}^{\otimes})\to \Fin_*$ induced by $\Env(\mathdutchcal{O}^{\otimes})\to \Fin_*$ exhibits $\Lfrak_{\mathdutchcal{W}}\Env(\mathdutchcal{O}^{\otimes})_{\langle 1\rangle}$ as a symmetric monoidal $\infty$-category.
		\item[(b)] The functor $\Lfrak_{\mathdutchcal{W}}\Env(\mathdutchcal{O}^{\otimes})\to \Fin^{\amalg}$ induced by $\Env(\mathdutchcal{O}^{\otimes})\to \Fin^{\amalg}$ is a strong monoidal functor and satisfies conditions $(i)\text{-}(ii)$ of Haugseng-Kock's characterization above.
		\item[(c)] The commutative triangle of strong monoidal functors 
		$$
		\begin{tikzcd}[ampersand replacement=\&]
			\Env(\mathdutchcal{O}^{\otimes})\ar[rr]\ar[rd] \&\& \Lfrak_{\mathdutchcal{W}} \Env(\mathdutchcal{O}^{\otimes}) \ar[ld]\\
			\& \Fin^{\amalg}
		\end{tikzcd}
		$$
		induces an equivalence of $\infty$-categories 
		$$
		\Fun^{\otimes}(\Lfrak_{\mathdutchcal{W}}\Env(\mathdutchcal{O}^{\otimes}), \EuScript{V}^{\otimes})\xrightarrow{\quad\sim\quad} \EuScript{A}\mathsf{lg}_{\mathdutchcal{O},\mathdutchcal{W}}(\EuScript{V}^{\otimes})
		$$
		for any symmetric monoidal $\infty$-category $\EuScript{V}^{\otimes}\to \Fin_*$, where $\EuScript{A}\mathsf{lg}_{\mathdutchcal{O},\mathdutchcal{W}}(\EuScript{V}^{\otimes})$ denotes the full subcategory of $\EuScript{A}\mathsf{lg}_{\mathdutchcal{O}}(\EuScript{V}^{\otimes})$ spanned by algebras which send $\mathdutchcal{W}$ to equivalences.
	\end{itemize}
\end{prop}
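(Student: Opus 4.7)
The plan is first to observe that $\Env(\mathdutchcal{W}_{\bullet})$ is a \emph{monoidal} wide subcategory of $\Env(\mathdutchcal{O}^{\otimes})$. Indeed, via the Segal reformulation of $\mathdutchcal{W}_{\bullet}$ passed through the envelope construction, a morphism in $\Env(\mathdutchcal{W}_{\bullet})$ is just a tuple of morphisms in $\mathdutchcal{W}$ sitting over an identity of $\Fin_{*}$; the concatenation of two such tuples is again of this form, so $\Env(\mathdutchcal{W}_{\bullet})$ contains all identities and is closed under the symmetric monoidal product. In particular, for any $f\in\Env(\mathdutchcal{W}_{\bullet})$ and any object $c$ of $\Env(\mathdutchcal{O}^{\otimes})$, the morphism $f\otimes\id_{c}$ again lies in $\Env(\mathdutchcal{W}_{\bullet})$.

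This compatibility lets me invoke the monoidal localization theorem (Hinich's theorem on Dwyer--Kan localization of symmetric monoidal $\infty$-categories, see also \cite[Proposition 4.1.7.4]{lurie_higher_nodate}): the Dwyer--Kan localization $\Lfrak_{\mathdutchcal{W}}\Env(\mathdutchcal{O}^{\otimes})$ inherits a unique symmetric monoidal structure such that the quotient functor $\Env(\mathdutchcal{O}^{\otimes})\to\Lfrak_{\mathdutchcal{W}}\Env(\mathdutchcal{O}^{\otimes})$ is strong monoidal, and this structure is precisely the cocartesian fibration over $\Fin_{*}$ induced by the original map $\Env(\mathdutchcal{O}^{\otimes})\to\Fin_{*}$. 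This proves (a). For (c), the universal property of the monoidal localization identifies strong monoidal functors out of $\Lfrak_{\mathdutchcal{W}}\Env(\mathdutchcal{O}^{\otimes})$ with strong monoidal functors out of $\Env(\mathdutchcal{O}^{\otimes})$ sending $\Env(\mathdutchcal{W}_{\bullet})$ to equivalences, and through the envelope adjunction $\Env\dashv\Urect$ these correspond to $\mathdutchcal{O}$-algebras in $\EuScript{V}^{\otimes}$ whose underlying operadic morphism inverts $\mathdutchcal{W}$.

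For (b), condition $(i)$ is immediate, since localization preserves the class of objects and $(i)$ already holds in $\Env(\mathdutchcal{O}^{\otimes})$ by construction of the envelope. The substantive content is condition $(ii)$, which I would deduce from (c) combined with Haugseng--Kock's characterization of the essential image of $\Env\colon\Opdinfty\hookrightarrow\sm\!\Catinfty_{/\Fin^{\amalg}}$. Explicitly, the strong monoidal functor $\Lfrak_{\mathdutchcal{W}}\Env(\mathdutchcal{O}^{\otimes})\to\Fin^{\amalg}$ corepresents, by part (c), the same functor on $\sm\!\Catinfty_{/\Fin^{\amalg}}$ as $\Env$ applied to the $\infty$-operadic localization $\Lfrak_{\mathdutchcal{W}}\mathdutchcal{O}$ (which exists, as recalled at the start of this appendix). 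Yoneda inside $\sm\!\Catinfty_{/\Fin^{\amalg}}$ then produces a strong monoidal equivalence $\Lfrak_{\mathdutchcal{W}}\Env(\mathdutchcal{O}^{\otimes})\simeq\Env\big((\Lfrak_{\mathdutchcal{W}}\mathdutchcal{O})^{\otimes}\big)$, and the right-hand side satisfies $(i)$ and $(ii)$ because any sm-envelope does.

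The main obstacle I expect is in Step (a): one must carefully verify that the hypotheses of the monoidal localization theorem apply in our setting, and that the symmetric monoidal structure produced on $\Lfrak_{\mathdutchcal{W}}\Env(\mathdutchcal{O}^{\otimes})$ genuinely lifts the projection to $\Fin_{*}$ induced by $\Env(\mathdutchcal{O}^{\otimes})\to\Fin_{*}$. Once this is secured, (c) is a purely formal consequence of universal properties, and (b) reduces, via Yoneda and the Haugseng--Kock characterization, to the defining property of the envelope.
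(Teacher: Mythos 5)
Your proof of (a) and (c) follows the paper's own route almost verbatim: (a) is the observation that $\Env(\mathdutchcal{W}_{\bullet})$ is a monoidal wide subcategory, so the localization inherits a symmetric monoidal structure over $\Fin_*$ by Hinich/Nikolaus--Sagave; (c) is the universal property of monoidal localization chained with the envelope adjunction $\Env\dashv\Urect$. Both of those match the paper.

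Part (b) is where you genuinely diverge. The paper proves condition $(ii)$ of Haugseng--Kock's characterization directly, by computing mapping spaces of $\Lfrak_{\mathdutchcal{W}}\Env(\mathdutchcal{O}^{\otimes})$ using Mazel-Gee's hammock description (geometric realization of homotopy colimits over zigzag types) and checking that the required product decomposition survives localization. You instead deduce $(ii)$ indirectly: use that the $\infty$-operadic localization $\Lfrak_{\mathdutchcal{W}}\mathdutchcal{O}$ exists abstractly, observe that $\Env\big((\Lfrak_{\mathdutchcal{W}}\mathdutchcal{O})^{\otimes}\big)$ and $\Lfrak_{\mathdutchcal{W}}\Env(\mathdutchcal{O}^{\otimes})$ corepresent the same functor on $\sm\!\Catinfty_{/\Fin^{\amalg}}$ (both compute, after slicing over $\Fin^{\amalg}$, the groupoid core of $\EuScript{A}\mathsf{lg}_{\mathdutchcal{O},\mathdutchcal{W}}(\text{-})$, via part (c) on one side and the universal property of $\Lfrak_{\mathdutchcal{W}}\mathdutchcal{O}$ plus the envelope adjunction on the other), and conclude by Yoneda. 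This is a valid and arguably cleaner route: it trades the explicit zigzag combinatorics of Mazel-Gee for a formal universal-property argument, at the cost of taking the existence and universal property of the operadic localization as a black box. Two small points you should tighten if writing this up: first, part (b) also asserts that $\Lfrak_{\mathdutchcal{W}}\Env(\mathdutchcal{O}^{\otimes})\to\Fin^{\amalg}$ \emph{is} a strong monoidal functor in the first place, which you use but do not separately justify (it follows from the same Nikolaus--Sagave result used in (a)); second, the Yoneda step should be anchored by the explicit comparison map $\Lfrak_{\mathdutchcal{W}}\Env(\mathdutchcal{O}^{\otimes})\to\Env\big((\Lfrak_{\mathdutchcal{W}}\mathdutchcal{O})^{\otimes}\big)$ obtained by factoring $\Env(\mathdutchcal{O}^{\otimes})\to\Env\big((\Lfrak_{\mathdutchcal{W}}\mathdutchcal{O})^{\otimes}\big)$ through the localization, so that one checks this particular morphism induces an equivalence of corepresented functors rather than merely asserting an abstract equivalence. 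With those details filled in, the argument is sound.
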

\begin{proof}
	\begin{itemize}
		\item[\textit{(a)}] We use \cite[Proposition 3.2.2]{hinich_dwyer-kan_2016} or \cite[Proposition A.5]{nikolaus_topological_2018}; morally speaking, the localization of a sm-$\infty$-category is a sm-$\infty$-category provided the tensor product preserves weak equivalences separately in both variables. In our case, the tensor product is formal $\otimes\colon \Env(\mathdutchcal{O}^{\otimes})_{\langle 1\rangle}^{\times 2}\to \Env(\mathdutchcal{O}^{\otimes})_{\langle 1\rangle}$, i.e.\ it takes two formal tensor products of objects and produces the total formal tensor product of all the objects, and $\Env(\mathdutchcal{W}_{\bullet})$ is by definition closed under formal tensor products. 
		\item[\textit{(b)}] By \cite[Proposition A.5 (v)]{nikolaus_topological_2018} we have such a strong monoidal functor. Also, it is clear that condition $(i)$ holds since it holds for $\Env(\mathdutchcal{O}^{\otimes})\to \Fin^{\amalg}$; e.g.\ one can take the localization to be the identity on objects. For condition $(ii)$, we have to do more work. The idea is to exploit Mazel-Gee's description of mapping spaces in the $\infty$-localization of a relative $\infty$-category $(\EuScript{D},\EuScript{J})$ (see \cite[Section 2]{mazel-gee_hammocks_2018} for notation and definitions)
		$$
		\Map_{\EuScript{D}\llbracket\EuScript{J}^{-1}\rrbracket}(x,y)\simeq \left\|\underset{\underline{\textbf{z}}\in\EuScript{Z}^{\op}}{\hocolim}\,\Nrect_{\infty}\big(\Fun_{**}(\underline{\textbf{z}},\EuScript{D})^{\EuScript{J}}\big)\right\|\equiv \left\|\underset{\underline{\textbf{z}}\in\EuScript{Z}^{\op}}{\hocolim}\,\Nrect_{\infty}\big(\underline{\textbf{z}}_{(\EuScript{D},\EuScript{J})}(x,y)\big)\right\|,
		$$
		where the homotopy colimit (in simplicial spaces) runs over $\EuScript{Z}^{\op}$, the opposite of the category of zigzag types, and the vertical lines are meant to denote (homotopy invariant) geometric realization.
		
		First recall that Haugseng-Kock's conditions for $\EuScript{C}^{\otimes}\to \Fin^{\amalg}$ are the same as asserting that the counit transformation $\Env(\Urect'(\EuScript{C}^{\otimes}))\to\EuScript{C}^{\otimes}$ is an equivalence of sm-$\infty$-categories, or equiv.\ that $\Env(\Urect'(\EuScript{C}^{\otimes}))_{\langle 1\rangle}\to\EuScript{C}^{\otimes}_{\langle 1\rangle}$ is an equivalence of $\infty$-categories (see \cite{haugseng_infty-operads_2024}). Applied to $\Env(\mathdutchcal{O}^{\otimes})$, we actually obtain that the counit transformation yields an equivalence of relative $\infty$-categories, when equipping $\Env(\mathdutchcal{O}^{\otimes})_{\langle 1\rangle}$ with $\Env(\mathdutchcal{W}_{\bullet})_{\langle 1\rangle}$. Let us use the notation
		$$
		\big(\Qfrak(\mathdutchcal{O}^{\otimes}),\Qfrak(\mathdutchcal{W}_{\bullet})\big)\xrightarrow{\quad \sim \quad}  \big( \Env(\mathdutchcal{O}^{\otimes})_{\langle 1\rangle},\Env(\mathdutchcal{W}_{\bullet})_{\langle 1\rangle}\big)
		$$
		for this equivalence of relative $\infty$-categories; i.e.\  $\Qfrak(\mathdutchcal{O}^{\otimes})=\big(\Env\circ\Urect'\circ\Env(\mathdutchcal{O}^{\otimes})\big)_{\langle 1\rangle}$. Again borrowing notation from \cite{mazel-gee_hammocks_2018}, for any pair of objects $(x_i)_{i\in \underline{n}}$, $(y_j)_{j\in\underline{m}}\in \Qfrak(\mathdutchcal{O}^{\otimes})$ and any zigzag type $\underline{\textbf{z}}$, one gets an equivalence of $\infty$-categories
		% $$
		% \Fun_{**}\big(\underline{\textbf{z}},\Qfrak(\mathdutchcal{O}^{\otimes})\big)^{\Qfrak(\mathdutchcal{W}_{\bullet})} \xrightarrow{\quad\sim\quad} \Fun_{**}\big(\underline{\textbf{z}},\Env(\mathdutchcal{O}^{\otimes})_{\langle 1\rangle}\big)^{\Env(\mathdutchcal{W}_{\bullet})_{\langle 1\rangle}}. 
		% $$
		$$
		\underline{\textbf{z}}_{\Qfrak(\mathdutchcal{O}^{\otimes})}\big((x_i)_{i\in \underline{n}},(y_j)_{j\in\underline{m}} \big) \xrightarrow{\quad\sim\quad} \underline{\textbf{z}}_{\Env(\mathdutchcal{O}^{\otimes})}\big((x_i)_{i\in \underline{n}},(y_j)_{j\in\underline{m}} \big).
		$$
	
		Restricting to zigzags lying over a fixed $\upphi\in \Fin(\underline{n},\underline{m})$ and by definition of $(\Qfrak(\mathdutchcal{O}^{\otimes}),\Qfrak(\mathdutchcal{W}_{\bullet}))$ (see the proof of \cite[Proposition 2.4.6]{haugseng_infty-operads_2024}), we have an equivalence
		$$
		\underline{\textbf{z}}_{\Qfrak(\mathdutchcal{O}^{\otimes})}\big((x_i)_{i\in \underline{n}},(y_j)_{j\in\underline{m}} \big)^{\upphi} \simeq \prod_{j\in \underline{m}} \underline{\textbf{z}}_{\Qfrak(\mathdutchcal{O}^{\otimes})}\big((x_i)_{i\in \upphi^{-1}(j)},y_j \big),
		$$
		because such decomposition holds in $\Qfrak(\mathdutchcal{O}^{\otimes})$ and arrows in $\Qfrak(\mathdutchcal{W}_{\bullet})$ preserve it. Actually, maps $\underline{\textbf{z}}\to \underline{\textbf{z}}'$ of zigzag types, i.e.\ maps in $\EuScript{Z}$, induce functors which are compatible with these decompositions. See Figure \ref{fig:Decomposition of zigzags in localization} for an instance of this phenomenon.
		\begin{figure}[htp]
			\centering
		$$
		\begin{tikzcd}[ampersand replacement=\&]
		(x_i)_{i\in \underline{6}} \ar[r, "	\left(\begin{matrix}
			w_1\vert^{\{1\}}\\
			w_2\vert^{\{2\}}\\
		\vdots\\
			w_6\vert^{\{6\}}
		\end{matrix}\right)", leftarrow] \&[6mm] (x'_i)_{i\in \underline{6}} \ar[r,"\left(\begin{matrix}
		f_1\vert^{\{4\}}\\
		f_2\vert^{\{5,6\}}\\
		f_3\vert^{\{1,2,3\}}
	\end{matrix}\right)"] \&[6mm] (z_k)_{k\in\underline{3}} \ar[r,"\left(\begin{matrix}
	g_1\vert^{\{1,3\}}\\
	g_2\vert^{\{2\}}
\end{matrix}\right)"] \&[6mm] (y'_j)_{j\in \underline{2}} \ar[r,"\left(\begin{matrix}
v_1\vert^{\{1\}}\\
v_2\vert^{\{2\}}
\end{matrix}\right)", leftarrow] \&[6mm] (y_j)_{j\in \underline{2}}\\ 
\&\& \vert\vert \\
		(x_i)_{i\in \underline{4}} \ar[r, "	\left(\begin{matrix}
	w_1\vert^{\{1\}}\\
	\vdots\\
	w_4\vert^{\{4\}}
\end{matrix}\right)", leftarrow] \&[6mm] (x'_i)_{i\in\underline{4}} \ar[r,"\left(\begin{matrix}
	f_1\vert^{\{4\}}\\
	f_3\vert^{\{1,2,3\}}
\end{matrix}\right)"] \&[6mm] (z_k)_{k\in\{1,3\}} \ar[r,"g_1\vert^{\{1,3\}}"] \&[6mm] y'_1 \ar[r,"
	v_1\vert^{\{1\}}", leftarrow] \&[6mm] y_1\\ 
\&\& \times \\
		(x_i)_{i\in\{5,6\}} \ar[r, "	\left(\begin{matrix}
	w_5\vert^{\{5\}}\\
	w_6\vert^{\{6\}}
\end{matrix}\right)", leftarrow] \&[6mm] (x'_i)_{i\in\{5,6\}} \ar[r,"
	f_2\vert^{\{5,6\}}"] \&[6mm] z_2 \ar[r,"
	g_2\vert^{\{2\}}"] \&[6mm] y'_2 \ar[r,"v_2\vert^{\{2\}}", leftarrow] \&[6mm] y_2\\ 
		\end{tikzcd}
		$$
			\caption{A depiction of an object in $\underline{\textbf{z}}_{\Qfrak(\mathdutchcal{O}^{\otimes})}\big((x_i)_{i\in \underline{6}},(y_j)_{j\in \underline{2}}\big)$ for $\underline{\textbf{z}}=[-1;2;-1]$ and the corresponding factors in the decomposition.  More concretely, this object lies over the map $\upphi\colon \underline{6}\to \underline{2}$ characterized by $\upphi^{-1}(1)=\{1,2,3,4\}$ and $\upphi^{-1}(2)=\{5,6\}$. We have  employed a notation which makes explicit the source and target of each component, e.g.\ 
		    $f_3\vert^{\{1,2,3\}}$ is a morphism $(x_1',x_2',x_3')\to z_3$ in $\Qfrak(\mathdutchcal{O}^{\otimes})$.
			}\label{fig:Decomposition of zigzags in localization}
		\end{figure}
		
		Now use: $(1)$ $\Nrect_{\infty}$ preserves (homotopy) coproducts and homotopy limits, and $(2)$ the previous equivalence for zigzags survives after taking homotopy colimits over $\EuScript{Z}^{\op}$ as it does in the mapping spaces of $\EuScript{D}\llbracket\EuScript{J}^{-1}\rrbracket\times \EuScript{D'}\llbracket \EuScript{J}^{'-1}\rrbracket\simeq (\EuScript{D}\times \EuScript{D}')\llbracket(\EuScript{J}\times \EuScript{J}')^{-1}\rrbracket$; to obtain for any $\upphi\in \Fin(\underline{n},\underline{m})$
		$$
		\begin{tikzcd}[ampersand replacement=\&]
			\prod_{j} \Map_{\Lfrak_{\mathdutchcal{W}}\Env(\mathdutchcal{O}^{\otimes})}\big(\bigotimes_{i\in \upphi^{-1}(j)}x_i,y_j\big)\ar[rr] \&[-4.5cm]\&[-4.5cm] \Map_{\Lfrak_{\mathdutchcal{W}}\Env(\mathdutchcal{O}^{\otimes})}^{\upphi}\big(\bigotimes_{i}x_i,\bigotimes_{j}y_j\big)\\
			\&\& \left\|\underset{\underline{\textbf{z}}\in\EuScript{Z}^{\op}}{\hocolim}\,\Nrect_{\infty}\left(\underline{\textbf{z}}_{\Env(\mathdutchcal{O}^{\otimes})}\big(\bigotimes_i x_i,\bigotimes_j y_j\big)^{\upphi}\right)\right\| \ar[u,"\wr"'] \\
			\&  \left\|\underset{\underline{\textbf{z}}\in\EuScript{Z}^{\op}}{\hocolim}\,\Nrect_{\infty}\left(\prod_{j}\underline{\textbf{z}}_{\Qfrak(\mathdutchcal{O}^{\otimes})}\big((x_i)_{i\in\upphi^{-1}(j)},y_j\big)\right)\right\| \ar[ru, "\sim"' sloped,controls={+(5,0) and +(0.2,-0.7)}]\ar[luu,"\sim"' sloped, bend left=30]
		\end{tikzcd}.
		$$
		Hence, we have seen that $\Lfrak_{\mathdutchcal{W}}\Env(\mathdutchcal{O}^{\otimes})\to \Fin^{\amalg}$ satisfies condition $(ii)$ in Haugseng-Kock's characterization.
		
		\item[\textit{(c)}] Again by \cite[Proposition A.5 (v)]{nikolaus_topological_2018}, the functor $\Env(\mathdutchcal{O}^{\otimes})\to \Lfrak_{\mathdutchcal{W}}\Env(\mathdutchcal{O}^{\otimes})$ identifies $\Fun^{\otimes}(\Lfrak_{\mathdutchcal{W}}\Env(\mathdutchcal{O}^{\otimes}),\EuScript{V}^{\otimes})$ with the full subcategory of $\Fun^{\otimes}(\Env(\mathdutchcal{O}^{\otimes}),\EuScript{V}^{\otimes})$ spanned by strong monoidal functors that send $\Env(\mathdutchcal{W}_{\bullet})$ to equivalences. Via the equivalence
		$$
		\Fun^{\otimes}(\Env(\mathdutchcal{O}^{\otimes}),\EuScript{V}^{\otimes})\xrightarrow{\quad\sim\quad} \EuScript{A}\mathsf{lg}_{\mathdutchcal{O}}(\EuScript{V}^{\otimes}),
		$$
		induced by the adjunction $\Env\dashv \Urect$, the previous subcategory corresponds to the full subcategory $\EuScript{A}\mathsf{lg}_{\mathdutchcal{O},\mathdutchcal{W}}(\EuScript{V}^{\otimes})$ of $\EuScript{A}\mathsf{lg}_{\mathdutchcal{O}}(\EuScript{V}^{\otimes})$ spanned by algebras which send $\mathdutchcal{W}$ to equivalences. 
	\end{itemize}
\end{proof}

\begin{prop}\label{prop:Coincidence of envelopes} With the notation of Proposition \ref{prop:Localization of sm-envelope}, there is a canonical equivalence of $\infty$-categories over $\Fin^{\amalg}$
	$$
	\Lfrak_{\mathdutchcal{W}}\Env(\mathdutchcal{O}^{\otimes})\xrightarrow{\quad\sim\quad}\Env(\mathdutchcal{O}^{\otimes}\llbracket\mathdutchcal{W}_{\bullet}^{-1}\rrbracket)
	$$
	which preserves the cocartesian lifts over active maps in $\Fin_*$ and the subcategory of arrows over inert maps in $\Fin_*$ which come from inert maps in $\mathdutchcal{O}^{\otimes}$. 
\end{prop}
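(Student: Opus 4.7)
The plan is to produce a canonical comparison functor by universal property, then show it is an equivalence by a Yoneda argument combining Barkan's generalized envelope adjunction \eqref{eqt:Generalized envelope} with the universal properties of the two localizations in play. Concretely, applying Barkan's $\Env$ to the localization morphism $\mathdutchcal{O}^{\otimes}\to\mathdutchcal{O}^{\otimes}\llbracket\mathdutchcal{W}_{\bullet}^{-1}\rrbracket$ in $\Catinfty_{/\Fin_*}$ produces a functor $\Env(\mathdutchcal{O}^{\otimes})\to\Env(\mathdutchcal{O}^{\otimes}\llbracket\mathdutchcal{W}_{\bullet}^{-1}\rrbracket)$ in $\Catinfty_{/\Fin_*}^{\!\!\!\!\!\!\mathsf{act}\text{-}\mathsf{cocart}}$. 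Using the pullback description $\Env(-)=(-)\times_{\Fin_*}\mathsf{Act}(\Fin_*)$ together with the Segal decomposition, a morphism in $\Env(\mathdutchcal{W}_{\bullet})$ is built componentwise from $\mathdutchcal{W}$-morphisms, all of which become equivalences in the localization; hence the functor above sends $\Env(\mathdutchcal{W}_{\bullet})$ to equivalences. The universal property of $\Lfrak_{\mathdutchcal{W}}\Env(\mathdutchcal{O}^{\otimes})$ in $\Catinfty$ then produces the desired canonical comparison
\[
\Psi\colon \Lfrak_{\mathdutchcal{W}}\Env(\mathdutchcal{O}^{\otimes})\longrightarrow \Env(\mathdutchcal{O}^{\otimes}\llbracket\mathdutchcal{W}_{\bullet}^{-1}\rrbracket),
\]
which automatically lives over $\Fin^{\amalg}=\Env(\mathsf{pt})$ by naturality of $\Env$ applied to the terminal map.

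To show that $\Psi$ is an equivalence, I would apply Yoneda in $\Catinfty_{/\Fin_*}^{\!\!\!\!\!\!\mathsf{act}\text{-}\mathsf{cocart}}$; note that the source $\Lfrak_{\mathdutchcal{W}}\Env(\mathdutchcal{O}^{\otimes})$ does belong to this subcategory thanks to Proposition \ref{prop:Localization of sm-envelope}(a), because any symmetric monoidal $\infty$-category over $\Fin_*$ has cocartesian lifts over arbitrary (hence in particular active) morphisms of $\Fin_*$. For any test object $\mathcal{E}\in\Catinfty_{/\Fin_*}^{\!\!\!\!\!\!\mathsf{act}\text{-}\mathsf{cocart}}$, I would compute the induced mapping spaces in two ways. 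On one hand, Barkan's adjunction together with the universal property of the $\infty$-localization $\mathdutchcal{O}^{\otimes}\to\mathdutchcal{O}^{\otimes}\llbracket\mathdutchcal{W}_{\bullet}^{-1}\rrbracket$ identifies $\Map(\Env(\mathdutchcal{O}^{\otimes}\llbracket\mathdutchcal{W}_{\bullet}^{-1}\rrbracket),\mathcal{E})$ with the full subspace of $\Map_{\Catinfty_{/\Fin_*}}(\mathdutchcal{O}^{\otimes},\mathcal{E})$ spanned by those $F$ that invert $\mathdutchcal{W}_{\bullet}$. On the other hand, the universal property of $\Lfrak_{\mathdutchcal{W}}\Env(\mathdutchcal{O}^{\otimes})$ in $\Catinfty$ (restricted to the subcategory) together with Barkan's adjunction identifies $\Map(\Lfrak_{\mathdutchcal{W}}\Env(\mathdutchcal{O}^{\otimes}),\mathcal{E})$ with the subspace of those $F$ whose envelope-adjunct $\Env(F)$ inverts $\Env(\mathdutchcal{W}_{\bullet})$.

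The heart of the proof, and the main obstacle I anticipate, is to prove that these two conditions on $F\colon\mathdutchcal{O}^{\otimes}\to\mathcal{E}$ coincide: $F$ inverts $\mathdutchcal{W}_{\bullet}$ if and only if $\Env(F)$ inverts $\Env(\mathdutchcal{W}_{\bullet})$. The ``only if'' direction follows by pre-composing with the unit $\mathdutchcal{O}^{\otimes}\hookrightarrow\Env(\mathdutchcal{O}^{\otimes})$, $x\mapsto(x,\id)$, since $F=\Env(F)\circ\mathsf{unit}$ and this embedding sends $\mathdutchcal{W}_{\bullet}$ into $\Env(\mathdutchcal{W}_{\bullet})$. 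The ``if'' direction requires the Segal description of $\Env(\mathdutchcal{W}_{\bullet})$ as the wide subcategory obtained from $\mathdutchcal{W}_{\bullet}$ by closing under formal tensor products: any such morphism then decomposes as a tuple of $\mathdutchcal{W}$-morphisms, and $\Env(F)$ sends it to a tensor product in $\mathcal{E}$ of $F$-images, which is invertible once every factor is. Once this compatibility is in place, Yoneda gives the equivalence $\Psi$ in $\Catinfty_{/\Fin_*}^{\!\!\!\!\!\!\mathsf{act}\text{-}\mathsf{cocart}}$; the compatibility over $\Fin^{\amalg}$, the preservation of cocartesian lifts over active maps in $\Fin_*$, and the preservation of the subcategory of inert arrows coming from $\mathdutchcal{O}^{\otimes}$ then all follow by naturality of both $\Env$ and $\Lfrak_{\mathdutchcal{W}}$, since the unit $\mathdutchcal{O}^{\otimes}\to\Env(\mathdutchcal{O}^{\otimes})$ manifestly preserves all of these features.
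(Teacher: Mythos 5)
Your proposal takes a genuinely different route from the paper's proof. The paper constructs the same canonical comparison functor via the universal property of localization, checks essential surjectivity, and then establishes full faithfulness by explicitly computing mapping spaces on both sides using Mazel-Gee's hammock calculus: it identifies $\Map_{\Env(\mathdutchcal{O}^{\otimes}\llbracket\mathdutchcal{W}_{\bullet}^{-1}\rrbracket)}$ as a homotopy fiber product via the pullback description of $\Env$, rewrites it as a realization of a homotopy colimit over zigzag types, and compares termwise with the zigzag categories of $\Env(\mathdutchcal{O}^{\otimes})$, crucially using that $\big(\Env(\mathdutchcal{O}^{\otimes}),\Env(\mathdutchcal{W}_{\bullet})\big)\to\big(\mathdutchcal{O}^{\otimes},\mathdutchcal{W}_{\bullet}\big)$ is a pullback of relative $\infty$-categories over $(\mathsf{Act}(\Fin_*),\{\mathrm{identities}\})\to(\Fin_*,\{\mathrm{identities}\})$. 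Your approach is instead a purely formal Yoneda argument in $\Catinfty_{/\Fin_*}^{\!\!\!\!\!\!\mathsf{act}\text{-}\mathsf{cocart}}$ that reduces everything to the compatibility between the two universal properties, exploiting the Barkan adjunction $\Env\dashv\mathsf{inc}$. The advantage is that it avoids the explicit zigzag computations entirely; the disadvantage is that it hides more of the machinery in the claim that the two inverting-conditions on $F$ coincide.

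That compatibility claim is correct, but the way you phrase the ``if'' direction is slightly off. You argue that a morphism of $\Env(\mathdutchcal{W}_{\bullet})$ ``decomposes as a tuple of $\mathdutchcal{W}$-morphisms, and $\Env(F)$ sends it to a tensor product in $\mathcal{E}$ of $F$-images, which is invertible once every factor is.'' This reads as if it needs the Segal condition on $\mathcal{E}$ to split $\Env(F)((w,\id_\alpha))$ into components, but a general test object $\mathcal{E}\in\Catinfty_{/\Fin_*}^{\!\!\!\!\!\!\mathsf{act}\text{-}\mathsf{cocart}}$ need not satisfy the Segal condition, so that version of the argument would have a gap. The correct, and cleaner, argument is: a morphism of $\Env(\mathdutchcal{W}_{\bullet})$ has the form $(w,\id_\alpha)$ with $w\in\mathdutchcal{W}_{\bullet}$ and $\alpha$ active (the second component is the identity morphism of $\mathsf{Act}(\Fin_*)$, since $\Env(\mathdutchcal{W}_{\bullet})$ is the pullback of \emph{relative} $\infty$-categories with $\mathsf{Act}(\Fin_*)$ marked minimally); then $\Env(F)\big((w,\id_\alpha)\big)\simeq\alpha_{!}\,F(w)$, which is an equivalence because $F(w)$ is one (since $F$ inverts $\mathdutchcal{W}_{\bullet}$) and the active-cocartesian pushforward $\alpha_{!}$, like any functor, preserves equivalences. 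No Segal hypothesis on $\mathcal{E}$ is needed. It is also worth being careful when asserting ``the universal property of $\Lfrak_{\mathdutchcal{W}}\Env(\mathdutchcal{O}^{\otimes})$ in $\Catinfty$ (restricted to the subcategory)'': since $\Catinfty_{/\Fin_*}^{\!\!\!\!\!\!\mathsf{act}\text{-}\mathsf{cocart}}$ is not a full subcategory, you should note that the localization map is strong monoidal (by Prop.\ \ref{prop:Localization of sm-envelope}(a)) and essentially surjective, so it creates active-cocartesian lifts and the restricted universal property does indeed hold. With these two clarifications, your strategy goes through and offers an instructive alternative to the paper's hammock computation.
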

\begin{proof} First note that the functor $\Env(\mathdutchcal{O}^{\otimes})\to \Env(\mathdutchcal{O}^{\otimes}\llbracket\mathdutchcal{W}_{\bullet}\rrbracket)$ over $\Fin^{\amalg}$ induces a canonical functor as stated. In fact, it is compatible with the specified classes of maps because: $(i)$  cocartesian lifts over active maps on $\Env(\mathdutchcal{O}^{\otimes}\llbracket\mathdutchcal{W}_{\bullet}\rrbracket)$ are created by construction of $\Env$, and $(ii)$ the marking by arrows over inerts on $\Env(\mathdutchcal{O}^{\otimes}\llbracket\mathdutchcal{W}_{\bullet}\rrbracket)$ comes from the analogous marking on $\mathdutchcal{O}^{\otimes}\llbracket\mathdutchcal{W}_{\bullet}\rrbracket$, which is the image of the inert maps in $\mathdutchcal{O}^{\otimes}$ via the functor $\mathdutchcal{O}^{\otimes}\to \mathdutchcal{O}^{\otimes}\llbracket\mathdutchcal{W}_{\bullet}\rrbracket$. Also, observe that the induced functor $\Lfrak_{\mathdutchcal{W}}\Env(\mathdutchcal{O}^{\otimes})\to \Env(\mathdutchcal{O}^{\otimes}\llbracket\mathdutchcal{W}_{\bullet}\rrbracket)$ is essentially surjective. Thus, to conclude the claim, we are reduced to show that the map
	$$
	\Map_{\Lfrak_{\mathdutchcal{W}}\Env(\mathdutchcal{O}^{\otimes})}\big((X,\upalpha),(Y,\upbeta)\big)\longrightarrow \Map_{\Env(\mathdutchcal{O}^{\otimes}\llbracket\mathdutchcal{W}_{\bullet}\rrbracket)}\big((X,\upalpha),(Y,\upbeta)\big)
	$$
	is a weak homotopy equivalence for any pair of objects $(X,\upalpha),(Y,\upbeta)\in \Env(\mathdutchcal{O}^{\otimes})$. As in the proof of statement \textit{(b)} in Proposition \ref{prop:Localization of sm-envelope}, we use Mazel-Gee's machinery to describe mapping spaces of $\infty$-localizations \cite{mazel-gee_hammocks_2018} for this task.
	
	On the one hand, since $\Env(\mathdutchcal{O}^{\otimes}\llbracket\mathdutchcal{W}_{\bullet}\rrbracket)$ is given by a pullback of $\infty$-categories, 
	\begin{align*}
		\Map_{\Env(\mathdutchcal{O}^{\otimes}\llbracket\mathdutchcal{W}_{\bullet}\rrbracket)}\big((X,\upalpha),(Y,\upbeta)\big) &\simeq \Map_{\mathdutchcal{O}^{\otimes}\llbracket \mathdutchcal{W}_{\bullet}\rrbracket}\big(X,Y\big) \overset{\uph}{\underset{\Fin_{*}(\mathsf{s}\upalpha,\mathsf{s}\upbeta)}{\times}} \mathsf{Act}(\Fin_*)(\upalpha,\upbeta)\\[4mm]
		&\simeq \left\|\underset{\underline{\textbf{z}}\in\EuScript{Z}^{\op}}{\hocolim}\,\Nrect_{\infty}\big(\underline{\textbf{z}}_{\mathdutchcal{O}^{\otimes}}\big(X,Y\big)\big)\right\|    \overset{\uph}{\underset{\Fin_{*}(\mathsf{s}\upalpha,\mathsf{s}\upbeta)}{\times}} \mathsf{Act}(\Fin_*)(\upalpha,\upbeta)\\[4mm]
		&\simeq  \left\|\underset{\underline{\textbf{z}}\in\EuScript{Z}^{\op}}{\hocolim}\,\left[\Nrect_{\infty}\left(\underline{\textbf{z}}_{\mathdutchcal{O}^{\otimes}}\big(X,Y\big) \overset{\uph}{\underset{\Fin_{*}(\mathsf{s}\upalpha,\mathsf{s}\upbeta)}{\times}} \mathsf{Act}(\Fin_*)(\upalpha,\upbeta)\right)\right]\right\|  .
	\end{align*}
	We have applied: $(1)$ homotopy colimits in $\EuScript{S}\mathsf{pc}$ are universal, $(2)$ maps between zigzag types $\underline{\textbf{z}}\to \underline{\textbf{z}}'$ produce functors $\underline{\textbf{z}}'_{\mathdutchcal{O}^{\otimes}}\big(X,Y\big)\to \underline{\textbf{z}}_{\mathdutchcal{O}^{\otimes}}\big(X,Y\big)$ compatible with their projections into $\Fin_*\big(\uppi(X),\uppi(Y)\big)$ (because arrows in $\mathdutchcal{W}_{\bullet}$ lie over identities in $\Fin_*$), and $(3)$ $\Nrect_{\infty}$ commutes with homotopy limits.
	
	On the other hand,
	\begin{align*}
		\Map_{\Lfrak_{\mathdutchcal{W}}\Env(\mathdutchcal{O}^{\otimes})}\big((X,\upalpha),(Y,\upbeta)\big) 
		&\simeq \left\|\underset{\underline{\textbf{z}}\in\EuScript{Z}^{\op}}{\hocolim}\,\Nrect_{\infty}\big(\underline{\textbf{z}}_{\Env(\mathdutchcal{O}^{\otimes})}\big((X,\upalpha),(Y,\upbeta)\big) \big)\right\|.
	\end{align*}
	Therefore, it suffices to prove that the canonical functor
	$$
	\underline{\textbf{z}}_{\Env(\mathdutchcal{O}^{\otimes})}\big((X,\upalpha),(Y,\upbeta)\big) \xrightarrow{\quad \phantom{\sim}\quad} \underline{\textbf{z}}_{\mathdutchcal{O}^{\otimes}}\big(X,Y\big) \overset{\uph}{\underset{\Fin_{*}(\mathsf{s}\upalpha,\mathsf{s}\upbeta)}{\times}} \mathsf{Act}(\Fin_*)(\upalpha,\upbeta)
	$$
	is an equivalence of $\infty$-categories for any zigzag type $\underline{\textbf{z}}$. This follows from the fact that 
	$$
	\begin{tikzcd}[ampersand replacement=\&]
		\big(\Env(\mathdutchcal{O}^{\otimes}),\Env(\mathdutchcal{W}_{\bullet})\big)\ar[r]\ar[d] \& \big(\mathdutchcal{O}^{\otimes},\mathdutchcal{W}_{\bullet}\big)\ar[d] \\
		\big(\mathsf{Act}(\Fin_*), \{\text{identities}\}\big)\ar[r,"\mathsf{s}"] \& \big(\Fin_*, \{\text{identities}\}\big)
	\end{tikzcd}
	$$
	is a pullback of relative $\infty$-categories and since $\underline{\textbf{z}}_{\Env(\mathdutchcal{O}^{\otimes})}\big((X,\upalpha),(Y,\upbeta)\big)$ is just a shortcut notation for $\Fun_{**}(\underline{\textbf{z}}, \Env(\mathdutchcal{O}^{\otimes}))^{\Env(\mathdutchcal{W}_{\bullet})}$, which is part of a closed $\EuScript{R}\mathsf{el}\!\Catinfty$-enrichment of bipointed relative $\infty$-categories (see \cite[Notation 2.2]{mazel-gee_hammocks_2018}).    
\end{proof}

\begin{cor}\label{cor:InfinityLocalizationOfOperatorCats} The canonical functor $\mathdutchcal{O}^{\otimes}\llbracket\mathdutchcal{W}_{\bullet}^{-1}\rrbracket \to \Fin_*$ is a quasioperad and the map of quasioperads $\mathdutchcal{O}^{\otimes}\to\mathdutchcal{O}^{\otimes}\llbracket\mathdutchcal{W}_{\bullet}^{-1}\rrbracket$ exhibits the $\infty$-localization of $\mathdutchcal{O}^{\otimes}$ at $\mathdutchcal{W}$ as a quasioperad. In particular, restriction along $\mathdutchcal{O}^{\otimes}\to\mathdutchcal{O}^{\otimes}\llbracket\mathdutchcal{W}_{\bullet}^{-1}\rrbracket$ induces an equivalence of $\infty$-categories
	$$
	\EuScript{A}\mathsf{lg}_{\mathdutchcal{O}^{\otimes}\llbracket\mathdutchcal{W}_{\bullet}^{-1}\rrbracket}(\EuScript{V}^{\otimes})\xrightarrow{\,\,\sim\,\,}	\EuScript{A}\mathsf{lg}_{\mathdutchcal{O},\mathdutchcal{W}}(\EuScript{V}^{\otimes}),
	$$
	where $\EuScript{A}\mathsf{lg}_{\mathdutchcal{O},\mathdutchcal{W}}(\EuScript{V}^{\otimes})$ denotes the full subcategory of $\EuScript{A}\mathsf{lg}_{\mathdutchcal{O}}(\EuScript{V}^{\otimes})$ spanned by $\mathdutchcal{O}$-algebras which send $\mathdutchcal{W}$ to equivalences, for any symmetric monoidal $\infty$-category $\EuScript{V}^{\otimes}$.
\end{cor}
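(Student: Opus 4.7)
The plan is to combine Propositions \ref{prop:Localization of sm-envelope} and \ref{prop:Coincidence of envelopes} with Haugseng--Kock's characterization of the essential image of the fully faithful envelope functor $\Env\colon\Opdinfty\hookrightarrow\sm\!\Catinfty_{/\Fin^{\amalg}}$. The idea is first to recognize $\mathdutchcal{O}^{\otimes}\llbracket\mathdutchcal{W}_{\bullet}^{-1}\rrbracket\to\Fin_{*}$ as a quasioperad, and then to read off the algebra equivalence from the envelope adjunction.

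First, I would invoke Proposition \ref{prop:Coincidence of envelopes} to identify the candidate envelope $\Env(\mathdutchcal{O}^{\otimes}\llbracket\mathdutchcal{W}_{\bullet}^{-1}\rrbracket)$ with $\Lfrak_{\mathdutchcal{W}}\Env(\mathdutchcal{O}^{\otimes})$ over $\Fin^{\amalg}$. Proposition \ref{prop:Localization of sm-envelope}(a)-(b) then tells us that this is a symmetric monoidal $\infty$-category whose structural functor to $\Fin^{\amalg}$ is strong monoidal and satisfies Haugseng--Kock's conditions (i)-(ii), so that it lies in the essential image of $\Env\colon\Opdinfty\hookrightarrow\sm\!\Catinfty_{/\Fin^{\amalg}}$. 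Since this functor is fully faithful with explicit partial inverse $\Urect'$ given by restriction to the fiber over $\langle 1\rangle$, the underlying quasioperad must itself be $\mathdutchcal{O}^{\otimes}\llbracket\mathdutchcal{W}_{\bullet}^{-1}\rrbracket\to\Fin_{*}$; in particular, the canonical functor from $\mathdutchcal{O}^{\otimes}$ upgrades to a map of quasioperads.

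Next, for the algebra equivalence, I would chain three equivalences through the envelope adjunction $\Env\dashv\Urect'$ and the two propositions:
\[
\EuScript{A}\mathsf{lg}_{\mathdutchcal{O}^{\otimes}\llbracket\mathdutchcal{W}_{\bullet}^{-1}\rrbracket}(\EuScript{V}^{\otimes})\simeq \Fun^{\otimes}\!\big(\Env(\mathdutchcal{O}^{\otimes}\llbracket\mathdutchcal{W}_{\bullet}^{-1}\rrbracket),\EuScript{V}^{\otimes}\big) \simeq \Fun^{\otimes}\!\big(\Lfrak_{\mathdutchcal{W}}\Env(\mathdutchcal{O}^{\otimes}),\EuScript{V}^{\otimes}\big) \simeq \EuScript{A}\mathsf{lg}_{\mathdutchcal{O},\mathdutchcal{W}}(\EuScript{V}^{\otimes}),
\]
where the first equivalence is the envelope adjunction, the second is Proposition \ref{prop:Coincidence of envelopes}, and the last is Proposition \ref{prop:Localization of sm-envelope}(c). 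The universal property characterizing $\mathdutchcal{O}^{\otimes}\llbracket\mathdutchcal{W}_{\bullet}^{-1}\rrbracket$ as the $\infty$-localization of $\mathdutchcal{O}^{\otimes}$ at $\mathdutchcal{W}$ in $\Opdinfty$ follows by running the same chain with $\EuScript{V}^{\otimes}$ replaced by an arbitrary test quasioperad.

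There is no serious obstacle left once Propositions \ref{prop:Localization of sm-envelope} and \ref{prop:Coincidence of envelopes} are in hand; the bulk of the work is the passage through the sm-envelope, which has already been performed. The only delicate point is the bookkeeping ensuring that the quasioperad produced abstractly by Haugseng--Kock from $\Lfrak_{\mathdutchcal{W}}\Env(\mathdutchcal{O}^{\otimes})$ coincides with the $\infty$-localization $\mathdutchcal{O}^{\otimes}\llbracket\mathdutchcal{W}_{\bullet}^{-1}\rrbracket$ computed in $\Catinfty_{/\Fin_{*}}$. This is resolved by observing that the generalized envelope of (\ref{eqt:Generalized envelope}) is a pullback along the source map $\mathsf{s}\colon\mathsf{Act}(\Fin_{*})\to\Fin_{*}$, so that its restriction to the fiber over $\langle 1\rangle$ recovers the original object of $\Catinfty_{/\Fin_{*}}$, and this restriction evidently commutes with localization at $\mathdutchcal{W}_{\bullet}$.
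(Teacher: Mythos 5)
Your overall strategy matches the paper's: combine Propositions \ref{prop:Localization of sm-envelope} and \ref{prop:Coincidence of envelopes} with the Haugseng--Kock characterization of the essential image of $\Env$. However, two steps have genuine gaps.

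First, the bookkeeping you flag as delicate is not resolved by the observation you make. You claim $\Urect'$ is ``restriction to the fiber over $\langle 1\rangle$,'' and that the fiber over $\langle 1\rangle$ of the generalized envelope ``recovers the original object of $\Catinfty_{/\Fin_*}$.'' Neither is accurate. The $\Fin_*$-structure on $\Env(\C)$ comes from the \emph{target} projection $\mathsf{Act}(\Fin_*)\to\Fin_*$, so the fiber over $\langle 1\rangle$ consists of all pairs $(c,\upalpha)$ with $c$ lying over any $\langle n\rangle$ and $\upalpha\colon\langle n\rangle\to\langle 1\rangle$ active --- this is essentially all of $\C$, not $\C_{\langle 1\rangle}$ or $\C$ itself as an object of $\Catinfty_{/\Fin_*}$. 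The Haugseng--Kock right adjoint $\Urect'$ is more subtle (it involves passing to a suitable subcategory of maps, taking into account the structure map to $\Fin^{\amalg}$). What you actually need is full faithfulness of the \emph{generalized} envelope on a domain that contains both the abstract quasioperad $\mathdutchcal{P}^{\otimes}$ detected by Haugseng--Kock and the not-yet-known-to-be-a-quasioperad $\mathdutchcal{O}^{\otimes}\llbracket\mathdutchcal{W}_{\bullet}^{-1}\rrbracket$; the paper establishes this via the larger commutative diagram of $\infty$-categories involving marked $\infty$-categories over $\Fin_*$ and \cite[Lemma 2.3.3]{haugseng_infty-operads_2024}. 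This middle layer is exactly what makes the argument non-circular, and your proposal omits it.

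Second, your last sentence for the universal property --- ``running the same chain with $\EuScript{V}^{\otimes}$ replaced by an arbitrary test quasioperad'' --- does not work. The equivalence $\Fun^{\otimes}(\Env(\mathdutchcal{O}^{\otimes}),\EuScript{V}^{\otimes})\simeq\EuScript{A}\mathsf{lg}_{\mathdutchcal{O}}(\EuScript{V}^{\otimes})$ coming from $\Env\dashv\Urect$ requires $\EuScript{V}^{\otimes}$ to be a symmetric monoidal $\infty$-category; the middle term $\Fun^{\otimes}(-,\mathdutchcal{P}^{\otimes})$ does not make sense for a general $\infty$-operad $\mathdutchcal{P}^{\otimes}$. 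To pass from equivalences on algebras in symmetric monoidal targets to the universal property over all $\infty$-operads, the paper instead observes that the canonical comparison map $(\Lfrak_{\mathdutchcal{W}}\mathdutchcal{O})^{\otimes}\to\mathdutchcal{O}^{\otimes}\llbracket\mathdutchcal{W}_{\bullet}^{-1}\rrbracket$ is essentially surjective and induces equivalences on algebra categories, which by \cite[Proposition 4.1.23]{harpaz_little_nodate} forces it to be a DK-equivalence of quasioperads. You need some criterion of that kind here; merely re-running the envelope chain does not supply it.
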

\begin{proof} Observe that there is a commutative diagram of $\infty$-categories
	$$
	\begin{tikzcd}[ampersand replacement=\&]
		\Opdinfty_{/\Fin_*} \ar[rr, hookrightarrow, "\Env"]\ar[d, hookrightarrow] \&\& \sm\!\Catinfty_{/\Fin^{\amalg}} \ar[d, hookrightarrow]\\
		\Catinfty^{\!\!\!\!\!\!\mathsf{mrk}}_{/\Fin_*} \ar[rr, hookrightarrow, "\Env"]\ar[d] \&\& (\Catinfty^{\!\!\!\!\!\!\mathsf{act}\text{-}\mathsf{cocart},\,\mathsf{mrk}}_{/\Fin_*})_{/\Fin^{\amalg}} \ar[d]\\
		\Catinfty_{/\Fin_*} \ar[rr,"\Env"] \&\& \Catinfty^{\!\!\!\!\!\!\mathsf{act}\text{-}\mathsf{cocart}}_{/\Fin_*}
	\end{tikzcd},
	$$
	where the lower horizontal arrow refers to the functor in Equation (\ref{eqt:Generalized envelope}) (see also \cite[\textsection 2.1-2.3]{barkan_envelopes_2022}) and the superscript ``$\mathsf{mrk}$" means that the $\infty$-categories are equipped with a marking given by arrows over inert maps in $\Fin_*$ and functors between them preserve these markings. 
	
	The reason for the middle horizontal arrow to be fully-faithful is the same as for the top one (it admits a right adjoint such that the unit transformation is a natural equivalence; see \cite[Lemma 2.3.3]{haugseng_infty-operads_2024}). The decorated vertical maps are fully-faithful because 
 $$\EuScript{A}\mathsf{lg}_{\mathdutchcal{Q}}(\mathdutchcal{P})\subset \Fun_{\Fin_*}(\mathdutchcal{Q}^{\otimes},\mathdutchcal{P}^{\otimes})\quad (\text{resp.\ }\Fun^{\otimes}(\EuScript{C}^{\otimes},\EuScript{D}^{\otimes})\subset \Fun_{\Fin_*}(\EuScript{C}^{\otimes},\EuScript{D}^{\otimes}))
 $$
 is the full subcategory spanned by functors preserving cocartesian lifts of inert maps (resp.\ functors preserving all cocartesian maps).
	
	Since $\Env$ applied to $(\mathdutchcal{O}^{\otimes}\llbracket\mathdutchcal{W}_{\bullet}^{-1}\rrbracket\to \Fin_*) \in \Catinfty^{\!\!\!\!\!\!\mathsf{mrk}}_{/\Fin_*}$ yields an object in the essential image of $\Env\colon \Opdinfty_{/\Fin_*}\hookrightarrow \sm\!\Catinfty_{/\Fin^{\amalg}}$ (use Propositions \ref{prop:Localization of sm-envelope} and \ref{prop:Coincidence of envelopes}), we deduce that $\mathdutchcal{O}^{\otimes}\llbracket\mathdutchcal{W}_{\bullet}^{-1}\rrbracket\to \Fin_*$ actually belongs to $\Opdinfty$.
	
	It remains to check the $\infty$-localization claim. Combining Proposition \ref{prop:Coincidence of envelopes}, statement $(c)$ in Proposition \ref{prop:Localization of sm-envelope} and the fact that $\Env\colon \Opdinfty\rightleftarrows \sm\!\Catinfty:\!\Urect$ is an adjunction, one gets a commutative diagram of $\infty$-categories
	$$
	\begin{tikzcd}[ampersand replacement=\&]
		\EuScript{A}\mathsf{lg}_{\mathdutchcal{O}^{\otimes}\llbracket\mathdutchcal{W}_{\bullet}^{-1}\rrbracket}(\EuScript{V}^{\otimes}) \ar[rr]\ar[d, leftarrow, "\wr"'] \&\&  \EuScript{A}\mathsf{lg}_{\mathdutchcal{O},\mathdutchcal{W}}(\EuScript{V}^{\otimes})\ar[d, leftarrow, "\wr"]\\
		\Fun^{\otimes}(\Env(\mathdutchcal{O}^{\otimes}\llbracket\mathdutchcal{W}_{\bullet}^{-1}\rrbracket),\EuScript{V}^{\otimes}) \ar[rr, "\sim"] \&\& \Fun^{\otimes}(\Lfrak_{\mathdutchcal{W}}\Env(\mathdutchcal{O}^{\otimes}),\EuScript{V}^{\otimes})
	\end{tikzcd}
	$$
	for any symmetric monoidal $\infty$-category $\EuScript{V}^{\otimes}\to \Fin_*$. The equivalence of $\infty$-categories of algebras implies that the canonical map of quasioperads $(\Lfrak_{\mathdutchcal{W}}\mathdutchcal{O})^{\otimes}\to \mathdutchcal{O}^{\otimes}\llbracket\mathdutchcal{W}_{\bullet}^{-1}\rrbracket$ is a DK-equivalence; just note that this map is essentially surjective and apply \cite[Proposition 4.1.23]{harpaz_little_nodate}.
\end{proof}

\paragraph{Localization criteria for $\infty$-categories.} We have established that we can check if a map of quasioperads $\mathdutchcal{O}^{\otimes}\to \mathdutchcal{P}^{\otimes}$ exhibits an $\infty$-localization of quasioperads when it exhibits an $\infty$-localization of $\infty$-categories. Now, we provide the criterion we use in the main body of the text to detect $\infty$-localizations of $\infty$-categories, which is based on $\infty$-Rezk nerves as developed in \cite{mazel-gee_universality_2019} and which generalizes Hinich's Key Lemma in \cite{hinich_dwyer-kan_2016}.

First, recall that $\infty$-categories can be found among simplicial spaces $\Fun(\Updelta^{\op},\EuScript{S}\mathsf{pc})$ as complete-Segal spaces $\CSSpc$. In fact, the canonical cosimplicial object $[\bullet]$ in $\Catinfty$ induces an equivalence of $\infty$-categories $\Nrect_{\infty}\colon \Catinfty\to\CSSpc$.

Given a relative $\infty$-category $\mathdutchcal{C}=(\C,\W)$, the $\infty$\emph{-Rezk pre-nerve} is the simplicial $\infty$-category
\[
\mathsf{pre}\Nrect^{\Rrect}_{\infty}(\mathdutchcal{C})_{\bullet}=\Fun([\bullet],\C)^{\W}\equiv \mathsf{weq} ([\bullet], \mathdutchcal{C}).
\]
That is, the composite functor
\[
\Delta^{\op}
\xrightarrow{\;[\bullet]\;} \Catinfty^{\op}\xrightarrow{\;\mathsf{min}^{\op}\;} \EuScript{R}\mathsf{el}\!\Catinfty^{\op}\xrightarrow{\;\Fun(\star,\C)^{\W}\;} \Catinfty.
\]
The $\infty$\emph{-Rezk nerve} is given by postcomposing this construction with the groupoid completion, i.e.\ it is the following simplicial space/$\infty$-groupoid
\[
\Nrect^{\Rrect}_{\infty}(\mathdutchcal{C})_{\bullet}=\left(\Fun([\bullet],\C)^{\W}\right)^{\mathsf{gpd}}\colon \Delta^{\op}\xrightarrow{\;\;\mathsf{pre}\Nrect^{\Rrect}_{\infty}\;\;} \Catinfty\xrightarrow{\;\;(-)^{\mathsf{gpd}}\;\;}\EuScript{S}\mathsf{pc}.
\]

Denoting by $\Lrect_{\mathsf{CS}}\colon \Fun(\Delta^{\op},\EuScript{S}\mathsf{pc})\rightleftarrows \CSSpc\colon \mathsf{inc}$ the reflection of $\infty$-categories within simplicial spaces  given by complete-Segal-envelope, an important result of Mazel-Gee \cite[Theorem 3.8]{mazel-gee_universality_2019} states:
\begin{prop}\label{prop_RezkNerveComputesLocalizations}
	The map $\Nrect_{\infty}(\C)_{\bullet}\to \Lrect_{\mathsf{CS}}\Nrect^{\Rrect}_{\infty}(\C,\W)_{\bullet}$ exhibits the $\infty$-localization of $\C$ at $\W$ in $\CSSpc\simeq \Catinfty$ for any relative $\infty$-category $(\C,\W)$.
\end{prop}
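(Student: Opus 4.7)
The plan is to verify the universal property of an $\infty$-localization directly. Namely, for any $\infty$-category $\D$ — equivalently, for any complete Segal space $\D_\bullet := \Nrect_{\infty}(\D)_\bullet$ — I want to check that the induced map
$$
\Map_{\CSSpc}\!\big(\Lrect_{\mathsf{CS}}\Nrect^{\Rrect}_{\infty}(\C,\W)_\bullet,\, \D_\bullet\big) \longrightarrow \Map_{\CSSpc}\!\big(\Nrect_{\infty}(\C)_\bullet,\, \D_\bullet\big) \simeq \Fun(\C,\D)^{\simeq}
$$
is fully faithful with essential image the sub-$\infty$-groupoid of functors sending every morphism in $\W$ to an equivalence of $\D$.

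First, I would strip away the formal adjunctions. By $\Lrect_{\mathsf{CS}}\dashv\mathsf{inc}$ and the fact that $\Nrect_{\infty}(\D)_\bullet$ is already complete Segal, the LHS equals the mapping space $\Map_{\Fun(\Delta^{\op},\EuScript{S}\mathsf{pc})}\!\big(\Nrect^{\Rrect}_{\infty}(\C,\W)_\bullet,\Nrect_{\infty}(\D)_\bullet\big)$. Next, since $\Nrect^{\Rrect}_{\infty}$ is obtained from $\mathsf{pre}\Nrect^{\Rrect}_{\infty}$ by applying groupoid completion levelwise, and $\Nrect_{\infty}(\D)_k$ is an $\infty$-groupoid at each $k$, the levelwise adjunction $(-)^{\mathsf{gpd}}\dashv\mathsf{inc}$ turns this into $\Map_{\Fun(\Delta^{\op},\Catinfty)}\!\big(\mathsf{pre}\Nrect^{\Rrect}_{\infty}(\C,\W)_\bullet,\Nrect_{\infty}(\D)_\bullet\big)$, which is computed by the end
$$
\int_{[k]\in\Delta}\Fun\!\big(\Fun([k],\C)^{\W},\Fun([k],\D)^{\simeq}\big)\,.
$$

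The decisive step is to identify this end with the sub-$\infty$-groupoid of $\Fun(\C,\D)^{\simeq}$ spanned by $\W$-inverting functors. I would first examine the levels $k=0$ and $k=1$. Level $0$ produces a functor $F\colon\C\to\D$ up to equivalence; level $1$, which requires a functor $\Fun([1],\C)^{\W}\to\Fun([1],\D)^{\simeq}$ compatible with source and target, forces $F$ to send each $w\in \W\subset\Fun([1],\C)^{\W}$ to an equivalence in $\D$. The Segal and completeness conditions on $\Nrect_{\infty}(\D)_\bullet$ then guarantee that all higher-level data are determined essentially uniquely by this $k\leq 1$ information, in the spirit of the Joyal–Lurie identification $\Catinfty\simeq\CSSpc$.

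The main obstacle will be precisely this end computation: verifying that the matching at $k=0,1$ together with the higher Segal conditions really recovers the $\infty$-groupoid of $\W$-inverting functors with no extra data. The subtlety is that the ``relative'' functor-categories $\Fun([k],\C)^{\W}$ do not themselves satisfy a Segal condition, so the argument has to transport the Segal structure of $\Nrect_{\infty}(\D)_\bullet$ through the end. One concrete route is to show that the simplicial $\infty$-category $[k]\mapsto \Fun([k],\C)^{\W}$ is, in $\Fun(\Delta^{\op},\Catinfty)$, the iterated (homotopy) colimit of its $(\leq 1)$-skeleton along the spine maps of $\Delta$ — an identification which I expect to be the technical heart of the argument.
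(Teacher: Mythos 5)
The paper does not prove this proposition: it is cited verbatim as \cite[Theorem 3.8]{mazel-gee_universality_2019}, so your attempt is being measured against Mazel-Gee's argument, not the paper's. The formal reductions at the start are correct and clean: the adjunction $\Lrect_{\mathsf{CS}}\dashv\mathsf{inc}$ and the levelwise $(-)^{\mathsf{gpd}}\dashv\mathsf{inc}$ (legitimate because each $\Nrect_{\infty}(\D)_k$ is a space) do reduce the problem to computing $\Map_{\Fun(\Delta^{\op},\Catinfty)}\big(\mathsf{pre}\Nrect^{\Rrect}_{\infty}(\C,\W)_\bullet,\Nrect_{\infty}(\D)_\bullet\big)$. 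But everything after that is where the content lives, and that part has concrete errors and a large unresolved gap.

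The analysis of the low levels is wrong. Level $0$ of $\mathsf{pre}\Nrect^{\Rrect}_{\infty}(\C,\W)$ is $\Fun([0],\C)^{\W}=\C^{\W}$, the wide subcategory of $\C$ on $\W$-morphisms. So the $k=0$ component of your end is $\Fun(\C^{\W},\D^{\simeq})$, which is a functor out of $\C^{\W}$, not ``a functor $F\colon\C\to\D$ up to equivalence.'' The non-$\W$ morphisms of $\C$ only show up as \emph{objects} of the higher levels $\Fun([k],\C)^{\W}$; recovering a genuine functor $\C\to\D$ therefore requires information from levels $1$, $2$ (for composition) and beyond, together with a nontrivial coherence argument. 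Your description of what $k\leq1$ encodes is not just loose notation, it misplaces where the data lives.

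The proposed ``concrete route''---expressing $[k]\mapsto\Fun([k],\C)^{\W}$ as a colimit of its $(\leq 1)$-skeleton along spine maps---is not available in the form you need. Spine inclusions being inner anodyne does give equivalences $\Fun([k],\C)\xrightarrow{\sim}\Fun([1],\C)\times_{\C}\cdots\times_{\C}\Fun([1],\C)$, and one can check the analogous Segal condition survives the passage to $\W$-markings. But a Segal condition is not the same as ``left Kan extended from $\Delta_{\leq 1}$,'' and in any case the end you wrote does not reduce to levels $\leq1$ merely because $\Nrect_{\infty}(\D)_\bullet$ is Segal: one still has to show higher levels impose no extra coherence data, and that groupoid-completing levelwise commutes with everything (it does not preserve pullbacks, which is precisely why the paper's Corollary \ref{cor_CriteriaForLocalizationFor0and1} singles out the case where $\Nrect_{\infty}^{\Rrect}$ happens to be Segal). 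You acknowledge this step is the technical heart, but no argument is supplied, and what is sketched would not go through. Mazel-Gee's actual proof takes a different route, reducing to strict relative categories and invoking the Barwick--Kan and Rezk comparison results for relative $1$-categories; I don't see a way to close your end computation without comparable machinery.
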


Equipped with such a result, the criterion is immediate.
\begin{prop}\label{prop_HinichCriteria} Let $f\colon \mathdutchcal{C}\to\mathdutchcal{C}'$ be a relative functor in $\EuScript{R}\mathsf{el}\!\Catinfty$.
If the map of simplicial sets
	\[
	\begin{tikzcd}[ampersand replacement=\&]
		f_*\colon \mathsf{weq}\left[[k],\mathdutchcal{C}\right]\ar[r] \& \mathsf{weq}\left[[k],\mathdutchcal{C}'\right]
	\end{tikzcd}
	\]
	is a weak homotopy equivalence for any $k\geq 0$, $f$ induces an equivalence of $\infty$-localizations $\C\llbracket\W^{-1}\rrbracket\longrightarrow\C'\llbracket\W'^{-1}\rrbracket$ in $\Catinfty$.	In particular, the claim holds if for any $k\geq 0$ and any $\upsigma\in \mathsf{weq}\left[[k],\mathdutchcal{C}'\right]$, the slice $\infty$-category 
	\[
	\mathsf{weq}\left[[k],\mathdutchcal{C}\right]_{\upsigma/} \qquad (\text{resp.\; } \mathsf{weq}\left[[k],\mathdutchcal{C}\right]_{/\upsigma})
	\]
	is weakly contractible.
\end{prop}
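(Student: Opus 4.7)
The plan is to invoke Mazel-Gee's description of $\infty$-localizations via the $\infty$-Rezk nerve (Proposition \ref{prop_RezkNerveComputesLocalizations}) and reduce everything to a level-wise statement about simplicial spaces. First, I would observe that the $\infty$-localizations $\C\llbracket\W^{-1}\rrbracket$ and $\C'\llbracket\W'^{-1}\rrbracket$ are obtained, in $\CSSpc\simeq \Catinfty$, by applying the reflection $\Lrect_{\mathsf{CS}}$ to the Rezk nerves $\Nrect^{\Rrect}_{\infty}(\mathdutchcal{C})_{\bullet}$ and $\Nrect^{\Rrect}_{\infty}(\mathdutchcal{C}')_{\bullet}$ respectively. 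Since $\Lrect_{\mathsf{CS}}$ preserves equivalences, it suffices to show that $f$ induces a level-wise weak equivalence of simplicial spaces between these Rezk nerves.

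Next, I would unwind the definition of the Rezk nerve. At level $k$, it is given by the groupoid completion of the $\infty$-category of relative functors from $[k]$ to $\mathdutchcal{C}$, i.e.\ $\Nrect^{\Rrect}_{\infty}(\mathdutchcal{C})_{k}=\mathsf{weq}\!\left[[k],\mathdutchcal{C}\right]^{\mathsf{gpd}}$. Groupoid completion sends weak homotopy equivalences of $\infty$-categories to equivalences of spaces, so the hypothesis that $f_*\colon \mathsf{weq}\!\left[[k],\mathdutchcal{C}\right]\to \mathsf{weq}\!\left[[k],\mathdutchcal{C}'\right]$ is a weak homotopy equivalence for every $k\geq 0$ directly yields a level-wise equivalence $\Nrect^{\Rrect}_{\infty}(\mathdutchcal{C})_{\bullet}\simeq \Nrect^{\Rrect}_{\infty}(\mathdutchcal{C}')_{\bullet}$ in $\Fun(\Delta^{\op},\EuScript{S}\mathsf{pc})$. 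Composing with the reflection $\Lrect_{\mathsf{CS}}$ and using Proposition \ref{prop_RezkNerveComputesLocalizations} concludes the first part of the statement.

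For the ``in particular'' clause, I would appeal to the $\infty$-categorical version of Quillen's Theorem A: a functor between $\infty$-categories whose slices (or coslices) over every object are weakly contractible is itself a weak homotopy equivalence. Applying this level-wise to $f_*\colon \mathsf{weq}\!\left[[k],\mathdutchcal{C}\right]\to \mathsf{weq}\!\left[[k],\mathdutchcal{C}'\right]$, the assumption that either $\mathsf{weq}\!\left[[k],\mathdutchcal{C}\right]_{\upsigma/}$ or $\mathsf{weq}\!\left[[k],\mathdutchcal{C}\right]_{/\upsigma}$ is weakly contractible for every $\upsigma\in \mathsf{weq}\!\left[[k],\mathdutchcal{C}'\right]$ and every $k\geq 0$ implies that each $f_*$ is a weak homotopy equivalence, reducing us back to the main statement.

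No step is particularly delicate here; the proof is essentially a formal manipulation. The only point of care is to ensure that ``weak homotopy equivalence'' is interpreted consistently across $\infty$-categories and simplicial sets, but this is standard: the nerve realizes the groupoid completion, so the two notions agree. The main content of the proposition is really packed into the already-cited result of Mazel-Gee identifying $\infty$-localizations with Rezk nerves.
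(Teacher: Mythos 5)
Your proof is correct and follows essentially the same route as the paper's: pass to $\infty$-Rezk nerves, observe that the hypothesis yields a level-wise equivalence of simplicial spaces after groupoid completion, apply $\Lrect_{\mathsf{CS}}$ and Mazel-Gee's identification, and then dispatch the ``in particular'' clause via Quillen's Theorem A. The only cosmetic difference is that the paper phrases the middle step as ``Reedy-equivalence implies CS-equivalence'' while you phrase it as ``$\Lrect_{\mathsf{CS}}$, being a functor of $\infty$-categories, preserves (level-wise) equivalences'' — these are the same observation.
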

\begin{proof} Applying the $\infty$-Rezk nerve, we obtain a commutative square of simplicial spaces
	\[
	\begin{tikzcd}[ampersand replacement=\&]
		\Nrect_{\infty}^{\Rrect}(\mathdutchcal{C})_{\bullet}\ar[d]\ar[rr,"\Nrect f_{\bullet}"] \& \& \Nrect_{\infty}^{\Rrect}(\mathdutchcal{C}')_{\bullet} \ar[d]\\
		\Lrect_{\mathsf{CS}}\Nrect_{\infty}^{\Rrect}(\mathdutchcal{C})_{\bullet}\ar[rr] \& \& \Lrect_{\mathsf{CS}}\Nrect_{\infty}^{\Rrect}(\mathdutchcal{C}')_{\bullet}
	\end{tikzcd},
	  \] 
	where the left vertical maps are instances of the universal $\mathsf{CS}$-equivalence. By Proposition \ref{prop_RezkNerveComputesLocalizations}, we need to show that the lower horizontal map is a Reedy-equivalence (or equivalently a $\mathsf{CS}$-equivalence). This would follow if we prove that $\Nrect f_{\bullet}$ is a Reedy-equivalence (since Reedy implies $\mathsf{CS}$).
	
	Unwinding the definitions, we should prove that for any $k\geq 0$,
	\[
	f_*\colon \big(\Fun([k], \C)^{\W}\big)^{\mathsf{gpd}}\longrightarrow \big(\Fun([k], \C')^{\W'}\big)^{\mathsf{gpd}}%=\Map_{\Catinfty}([n],\D)
	\]
	is a weak homotopy equivalence of spaces. Equivalently,  $\Fun([k],\C)^{\W}\to \Fun([k], \C')^{\W'}$ is a weak homotopy equivalence in $\Catinfty$.
	
	By Quillen's Theorem A for $\infty$-categories \cite[Theorem 4.1.3.1]{lurie_higher_2009}, this condition follows if for any object $\upsigma\in \Fun([k],\C')^{\W'}$, the slice $\infty$-category 
\[
\left(\Fun([k],\C)^{\W}\right)_{\upsigma/} \quad (\text{resp.\ } \left(\Fun([k],\C)^{\W}\right)_{/\upsigma})
\]
	is weakly contractible. Both slices are models for the homotopy fiber of $f_*$ at $\upsigma$ if the condition holds for any $\upsigma$. 
\end{proof}

\begin{rem}\label{rem:HinichCriteriaPLUS} In Proposition \ref{prop_HinichCriteria}, we could have taken the target relative $\infty$-category to be of the form $\D^{\sharp}=
(\D,\D^{\simeq})$. In that case, the conclusion translates into $f\colon \C\to \D$ exhibiting the $\infty$-localization of $\C$ at $\W$.
\end{rem}

\begin{cor}\label{cor_CriteriaForLocalizationFor0and1} In Proposition \ref{prop_RezkNerveComputesLocalizations}, it suffices to check the condition for $k=0$ and $k=1$ when $\Nrect_{\infty}^{\Rrect}(\mathdutchcal{C})_{\bullet}$ and $\Nrect_{\infty}^{\Rrect}(\mathdutchcal{C}')_{\bullet}$ are Segal spaces.
\end{cor}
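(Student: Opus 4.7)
The plan is to reduce the criterion of Proposition \ref{prop_HinichCriteria} (which demands weak homotopy equivalences at every level $k\geq 0$) to the levels $k=0$ and $k=1$ by invoking the Segal condition. Recall from the proof of Proposition \ref{prop_HinichCriteria} that the task is to show that the induced morphism of Rezk nerves
\[
\Nrect f_\bullet\colon \Nrect^{\Rrect}_\infty(\mathdutchcal{C})_\bullet \longrightarrow \Nrect^{\Rrect}_\infty(\mathdutchcal{C}')_\bullet
\]
is a Reedy equivalence of simplicial spaces, since this implies a $\mathsf{CS}$-equivalence and hence an equivalence of $\infty$-localizations via Proposition \ref{prop_RezkNerveComputesLocalizations}.

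Under the extra hypothesis that both $\Nrect^{\Rrect}_\infty(\mathdutchcal{C})_\bullet$ and $\Nrect^{\Rrect}_\infty(\mathdutchcal{C}')_\bullet$ are Segal spaces, the $k$-th level is computed, up to weak equivalence, as the iterated homotopy pullback
\[
\Nrect^{\Rrect}_\infty(\mathdutchcal{C})_k \; \simeq\; \Nrect^{\Rrect}_\infty(\mathdutchcal{C})_1 \underset{\Nrect^{\Rrect}_\infty(\mathdutchcal{C})_0}{\overset{\uph}{\times}}\cdots\underset{\Nrect^{\Rrect}_\infty(\mathdutchcal{C})_0}{\overset{\uph}{\times}}\Nrect^{\Rrect}_\infty(\mathdutchcal{C})_1\,,
\]
and similarly for $\mathdutchcal{C}'$. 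Because $\Nrect f_\bullet$ is compatible with the Segal maps, if the level $0$ and level $1$ components are weak homotopy equivalences, then the levels $k\geq 2$ are obtained as homotopy pullbacks of weak equivalences over weak equivalences, hence are themselves weak equivalences. A short two-out-of-three argument, or equivalently an induction on $k$ using the Segal decomposition, delivers the levelwise (Reedy) equivalence.

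Concretely, the steps to write up are: $(i)$ apply Proposition \ref{prop_HinichCriteria} (or rather its proof) to reduce to checking that $\Nrect f_\bullet$ is a Reedy equivalence; $(ii)$ invoke the Segal condition on source and target; $(iii)$ conclude by induction on $k$ that equivalences at $k=0,1$ propagate to all $k$. There is no real obstacle here: the one point to keep in mind is that the Segal condition is assumed on the Rezk nerves (which are $\infty$-groupoids), not on the pre-nerves $\mathsf{weq}[[k],\mathdutchcal{C}]$, so the induction is run on the space-valued simplicial objects after groupoidification, where homotopy pullbacks behave as expected in $\EuScript{S}\mathsf{pc}$.
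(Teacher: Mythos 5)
Your proof is correct, but it takes a different route than the paper. The paper disposes of this in one sentence: instead of establishing that $\Nrect f_\bullet$ is a Reedy equivalence (the condition used in the proof of Proposition \ref{prop_HinichCriteria}), it observes that for Segal spaces it is enough for $\Nrect f_\bullet$ to be a Dwyer--Kan equivalence, and then appeals to Rezk's theorem that $\Lrect_{\mathsf{CS}}$ inverts DK-equivalences between Segal spaces. Since a DK-equivalence is detected on levels $0$ and $1$, the corollary follows. You instead show that a weak equivalence on levels $0$ and $1$ already forces a levelwise (Reedy) equivalence, using the Segal decomposition $X_k\simeq X_1\times^{\uph}_{X_0}\cdots\times^{\uph}_{X_0}X_1$ and stability of homotopy pullbacks under weak equivalences, after which the proof of Proposition \ref{prop_HinichCriteria} applies verbatim. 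Your argument is more elementary and self-contained in that it bypasses the Dwyer--Kan/complete-Segal machinery, at the small cost of proving a stronger intermediate conclusion (full Reedy equivalence rather than merely DK-equivalence) than is strictly necessary; the paper's proof is shorter but leans on Rezk's theory. Your parenthetical caution that the Segal condition is imposed on the space-valued Rezk nerves (post-groupoidification), not on the pre-nerve categories $\mathsf{weq}[[k],\mathdutchcal{C}]$, is exactly the right point to flag.
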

\begin{proof} In the proof of the cited result, replace ``$\Nrect_{\infty}^{\Rrect}(\mathdutchcal{C})_{\bullet}\to \Nrect_{\infty}^{\Rrect}(\mathdutchcal{C}')_{\bullet}$ is a Reedy-equivalence'' by Dwyer-Kan equivalence between Segal spaces. 
\end{proof}

\begin{rem} In \cite{karlsson_assembly_2024}, it has also been observed that a map of quasioperads $\mathdutchcal{O}^{\otimes}\to \mathdutchcal{P}^{\otimes}$ exhibits an $\infty$-localization of $\infty$-operads if it does so as $\infty$-categories (see Lemma A.6 in loc.cit.). Their result, while useful in many situations, is weaker than Corollary \ref{cor:InfinityLocalizationOfOperatorCats}. For instance, our combined use of the cited Corollary with Proposition \ref{prop_HinichCriteria} to prove Lemma \ref{lem:DRvsClosedDR} (and hence our main Theorem \ref{thm_LocalizationForTruncatedDisj}) cannot be justified only with \cite[Lemma A.6]{karlsson_assembly_2024}.
\end{rem}

\section*{Declarations}

\subsection*{Competing interests}

The authors have no competing interests to declare that are relevant to the content of this article.

\subsection*{Data availability}

We do not analyse or generate any datasets, because our work proceeds within a theoretical and mathematical approach. One can obtain the relevant materials from the references below.

\bibliography{Bibliography}

\end{document}